\DeclareRobustCommand{\intprod}{%
  \mathbin{\mathpalette\int@prod{(0.1,0)(0.9,0)(0.9,0.8)}}%
}
\DeclareRobustCommand{\intprodr}{%
  \mathbin{\mathpalette\int@prod{(0.1,0.8)(0.1,0)(0.9,0)}}}
\newcommand{\int@prod}[2]{%
  \begingroup
  \sbox\z@{$\m@th#1+$}%
  \setlength\unitlength{\wd\z@}%
  \begin{picture}(1,1)
  \roundcap
  \polyline#2
  \end{picture}%
  \endgroup
}
\newcommand{\sumprime}{\if@display\sideset{}{'}\sum%
            \else\sum'\fi}
\begin{document}

\numberwithin{equation}{section}

\newtheorem{theorem}{Theorem}[section]
\newtheorem{proposition}[theorem]{Proposition}
\newtheorem{conjecture}[theorem]{Conjecture}
\def\theconjecture{\unskip}
\newtheorem{corollary}[theorem]{Corollary}
\newtheorem{lemma}[theorem]{Lemma}
\newtheorem{observation}[theorem]{Observation}
\newtheorem{definition}{Definition}
\numberwithin{definition}{section} 
\newtheorem{remark}{Remark}
\def\theremark{\unskip}
\newtheorem{kl}{Key Lemma}
\def\thekl{\unskip}
\newtheorem{question}{Question}
\def\thequestion{\unskip}
\newtheorem{example}{Example}
\def\theexample{\unskip}
\newtheorem{problem}{Problem}

\thanks{The first author is supported by National Natural Science Foundation of China, No. 11771089; the second author is supported by National Natural Science Foundation of China, No. 12071310}

\address{B. Y. Chen: Department of Mathematical Sciences, Fudan University, Shanghai, 200433, China}

\email{boychen@fudan.edu.cn}

\address{L. Zhang: School of Mathematical Sciences, Capital Normal University, Beijing, 100048, China}

\email{zhangly@cnu.edu.cn}

\title[On the $p-$Bergman theory]{On the $p-$Bergman theory}

 \author{Bo-Yong Chen and Liyou Zhang}
 
\date{}

\begin{abstract}
In this paper we attempt to develop a general $p-$Bergman theory on bounded domains in $\mathbb C^n$. To indicate the basic difference between  $L^p$ and  $L^2$ cases, we show that the $p-$Bergman kernel $K_p(z)$ is not real-analytic on some bounded complete Reinhardt domains when $p > 4$ is an even number.  By the calculus of variations we get a fundamental reproducing formula.  This together with certain techniques from nonlinear analysis of the $p-$Laplacian yield a number of results,  e.g.,   the off-diagonal $p-$Bergman kernel $K_p(z,\cdot)$ is H\"older continuous  of order $\frac12$ for $p>1$ and of order $\frac1{2(n+2)}$ for $p=1$.  We also show that the $p-$Bergman metric $B_p(z;X)$ tends to the Carath\'eodory metric $C(z;X)$ as $p\rightarrow \infty$ and the generalized Levi form $i\partial\bar{\partial}\log K_p(z;X)$ is no less than $B_p(z;X)^2$  for $p\ge 2$ and
$ C(z;X)^2$ for $p\le 2.$ Stability of $K_p(z,w)$ or $B_p(z;X)$ as $p$ varies,  boundary behavior of  $K_p(z)$,   as well as basic facts on the $p-$Bergman projection,  are also investigated.
\end{abstract}

\maketitle

\tableofcontents

\section{Introduction}

In the early twentieth  century Stefan Bergman discovered an important link between function theory, geometry and Hilbert space theory, namely the theory of Bergman kernel and Bergman metric.  The Bergman theory makes essential use of complete orthonormal bases in the  Bergman space,  which sheds a particular light on the real difficulty of  extending the Bergman theory to the $L^p$ case.

In this paper we attempt to develop a general $p-$Bergman theory. For a bounded domain $\Omega\subset \mathbb C^n$ we define $A^p(\Omega)$ to be the $p-$Bergman space of $L^p$ holomorphic functions on $\Omega$ (throughout this paper the integrals are with respect to the Lebesgue measure).  We start with a minimizing problem which was also considered by Bergman himself in the  case $p=2$:
\begin{equation}\label{eq:MinProb}
 m_p(z):=\inf\left\{\|f\|_p:f\in A^p(\Omega),f(z)=1\right\}.
 \end{equation}
There exists at least one minimizer for $p>0$ and  exactly one minimizer $m_p(\cdot,z)$ for every $p\ge 1$. We then define the $p-$Bergman kernel by $K_p(z)=m_p(z)^{-p}$ for $p>0$ and the off-diagonal Bergman kernel by $K_p(z,w)=m_p(z,w)K_p(w)$ for $p\ge 1$. Note that $K_2(z)$ and $K_2(z,w)$ are  standard Bergman kernel and off-diagonal Bergman kernel respectively. After the early work of   Narasimhan-Simha \cite{NS} and  Sakai \cite{Sakai}, the study of $K_p(z)$  has attracted much attention in recent years (see e.g., \cite{Siu}, \cite{Tsuji06}, \cite{Tsuji07}, \cite{ChenInvariant}, \cite{BP}, \cite{BPK}, \cite{Yau}, \cite{Tsuji}, \cite{NZZ}, \cite{Taka},  \cite{DWZZ}).
 
 Our first result will indicate the basic difference between $K_p$ and $K_2$ when $p>2$.  
 
 \begin{theorem}\label{th:NRA_0}
 Let $\Omega$ be a bounded complete Reinhardt domain in $\mathbb C^n$. Then the following properties hold:
 \begin{enumerate}
 \item[$(1)$]  If\/ $K_2(z,w)$ is not zero-free, then there exists $k_0\in \mathbb Z^+$ such that $K_{2k}(z)$ is not real-analytic on $\Omega$\/ for any integer $k\ge k_0$. 
 \item[$(2)$] Suppose  there exist $\zeta_0,z_0\in \Omega$ such that $K_2(\zeta_0,z_0)=0$ and ${\rm ord}_{z_0} K_2(\zeta_0,\cdot)=1$. Then $K_{2k}(z)$ is not real-analytic on $\Omega$\/ for any integer $k > 2$. Moreover, either ${\rm Re\,}m_p(\zeta_0,\cdot)$ or ${\rm Im\,}m_p(\zeta_0,\cdot)$  is not real-analytic on $\Omega$\/ for any rational $p>2$.
  \end{enumerate} 
 \end{theorem}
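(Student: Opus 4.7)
My plan is to reformulate the $L^{2k}$ extremal problem as a constrained $L^2$ minimization over $k$-th powers, show that $K_{2k}\equiv K_2$ on an open neighborhood of the origin, and then apply real-analytic continuation together with a strict inequality $K_{2k}(\zeta_0)<K_2(\zeta_0)$ to reach a contradiction.

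\textbf{Step 1 (Reformulation).} For any $f\in A^{2k}(\Omega)$ we have $\|f\|_{2k}^{2k}=\int|f|^{2k}dV=\|f^k\|_{L^2}^2$. Since a bounded complete Reinhardt domain is star-shaped at $0$ and hence simply connected, a holomorphic $F$ on $\Omega$ equals $g^k$ for some $g\in\mathcal{O}(\Omega)$ iff every zero of $F$ has order divisible by $k$. Hence
\[
m_{2k}(z)^{2k}=\inf\bigl\{\|F\|_{L^2}^2:F\in A^2(\Omega),\ F(z)=1,\ k\mid\mathrm{ord}_w F\ \text{for every}\ w\in\Omega\bigr\}.
\]
The unconstrained $L^2$ minimum is $1/K_2(z)$, attained uniquely by $K_2(\cdot,z)/K_2(z)$. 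This yields a dichotomy: if every zero of $K_2(\cdot,z)$ has order divisible by $k$ (in particular if $K_2(\cdot,z)$ is zero-free), then the constraint is inactive and $K_{2k}(z)=K_2(z)$; otherwise, strict uniqueness of the $L^2$ minimizer forces $K_{2k}(z)<K_2(z)$.

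\textbf{Step 2 (Open set at $0$ and contradiction).} By monomial orthogonality on a complete Reinhardt domain, $K_2(\zeta,0)\equiv 1/V(\Omega)$, a nonzero constant. I would then show that $K_2(\cdot,z)$ remains zero-free on all of $\Omega$ for $z$ in a neighborhood $U$ of $0$, exploiting the fact that $\{(\zeta,z):K_2(\zeta,z)=0\}$ is an analytic subset of $\Omega\times\Omega$ invariant under the diagonal torus action and therefore cannot limit onto the slice $\{z=0\}$. Step 1 then gives $K_{2k}\equiv K_2$ on $U$. For part (1), set $\ell:=\mathrm{ord}_{z_0}K_2(\zeta_0,\cdot)\geq 1$ and $k_0:=\ell+1$; for every $k\geq k_0$ we have $k\nmid\ell$, and by Hermitian symmetry $K_2(\cdot,\zeta_0)$ also vanishes at $z_0$ to order $\ell$, so Step 1 yields $K_{2k}(\zeta_0)<K_2(\zeta_0)$. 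If $K_{2k}$ were real-analytic on the connected set $\Omega$, then the real-analytic function $K_{2k}-K_2$ would vanish on $U$ while being strictly negative at $\zeta_0$, contradicting the identity theorem. Part (2) is the instance $\ell=1$.

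\textbf{Step 3 (The ``moreover'' clause).} Assume both $\mathrm{Re}\,m_p(\zeta_0,\cdot)$ and $\mathrm{Im}\,m_p(\zeta_0,\cdot)$ are real-analytic, so that $z\mapsto m_p(\zeta_0,z)$ is complex-valued real-analytic. The Reinhardt invariance $m_p(\zeta e^{i\theta},z e^{i\theta})=m_p(\zeta,z)$ together with Fourier extraction in $\theta$ transfers real-analyticity in $z$ to every monomial coefficient $a_\alpha(z)$ in the expansion $m_p(\zeta,z)=\sum_\alpha a_\alpha(z)\zeta^\alpha$, and hence to $m_p(\zeta,z)$ for every $\zeta$. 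For rational $p=2k/\ell$ in lowest terms, the identity $\|f\|_p^p=\|f^k\|_{L^{2/\ell}}^{2/\ell}$ provides an analogue of Step 1 (the ``$k$-th power'' constraint is identical), and combined with the reproducing formula stated in the abstract it forces $K_p$ to be real-analytic. Running Step 2 with $K_p$ in place of $K_{2k}$ (note $K_p\equiv K_2$ on $U$ while $K_p(\zeta_0)<K_2(\zeta_0)$) then delivers the same contradiction.

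The main obstacle is the openness claim in Step 2 — that $K_2(\cdot,z)$ stays zero-free on the \emph{entire} domain $\Omega$ (not merely on compact subsets) for $z$ near $0$. Pointwise/compact convergence of $K_2(\cdot,z)$ to a nonzero constant as $z\to 0$ does not by itself preclude zeros accumulating at $\partial\Omega$, so the proof must use the Reinhardt structure in an essential way; a secondary delicate point in Step 3 is justifying the $L^{2/\ell}$ reformulation when $\ell\geq 2$, where one works in a quasi-Banach setting.
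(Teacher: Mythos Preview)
Your Steps 1--2 give a correct and genuinely different route to part (1) and the first assertion of part (2), and in fact a more elementary one than the paper's. The paper argues as follows: from $K_{2k}\equiv K_2$ (obtained, as you do, from zero-freeness near $0$ and real-analytic continuation) it deduces the pointwise identity $m_2(\zeta_0,\cdot)=m_{2k}(\zeta_0,\cdot)^k$, and then invokes its $\tfrac12$-H\"older regularity theorem for $m_{2k}(\zeta_0,\cdot)$ --- a nontrivial result requiring the $p$-Laplacian-type inequalities of the Appendix --- to force an order-of-vanishing contradiction at $z_0$, which yields $k_0=2\ell+2$. Your strict-inequality argument (the unconstrained $L^2$ minimizer $K_2(\cdot,\zeta_0)/K_2(\zeta_0)$ is unique, the closed set of $k$-th powers excludes it whenever $k\nmid\ell$, the constrained minimum is attained by $m_{2k}(\cdot,\zeta_0)^k$, hence $K_{2k}(\zeta_0)<K_2(\zeta_0)$) bypasses the regularity theorem entirely and gives the sharper $k_0=\ell+1$. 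For the openness at $0$ that you correctly flag as the main obstacle, the paper uses Bell's identity $K_2(z,w)=K_2(rz,w/r)$ on complete Reinhardt domains to obtain a uniform bound $|K_2(z,w)|\le C_r$ for all $z\in\Omega$ and $|w|<r^2$, followed by a Cauchy estimate against the constant $K_2(\cdot,0)\equiv 1/|\Omega|$; your analytic-set/torus-invariance sketch does not prevent the first variable from escaping to $\partial\Omega$ and would need this extra input.

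Step 3, however, has a genuine gap. The Fourier-in-$\theta$ argument only relates $m_p(\zeta_0 e^{i\theta},\cdot)$ to $m_p(\zeta_0,\cdot)$, not to $m_p(\zeta,\cdot)$ for general $\zeta$, so it does not transfer real-analyticity to the Taylor coefficients $a_\alpha(\cdot)$ nor to $K_p(\cdot)$. Worse, the $L^{2/\ell}$ reformulation for $\ell\ge 2$ lands in a quasi-Banach space where uniqueness of minimizers --- the engine of your strict-inequality dichotomy --- is explicitly left open in the paper, so you cannot conclude $K_p(\zeta_0)<K_2(\zeta_0)$ this way. The paper's argument for the ``moreover'' clause is much more direct and never leaves the single function $m_p(\zeta_0,\cdot)$: writing $p=k/l$ with $k>2l$, one has $m_p(\zeta_0,z)^k=m_2(\zeta_0,z)^{2l}$ for $z$ in the zero-free region (since there $m_p(\cdot,z)=m_2(\cdot,z)^{2/p}$); if $m_p(\zeta_0,\cdot)$ were real-analytic this identity of real-analytic functions would persist on all of $\Omega$, but at $z_0$ the right side has order exactly $2l$ while the left side, being the $k$-th power of a real-analytic function vanishing at $z_0$, has order at least $k>2l$.
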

 
 In \cite{Lu}, Lu Qi-Keng asked: for which domains  is $K_2(z,w)$  zero-free? It turns out that for most pseudoconvex domains $K_2(z,w)$ is not zero-free; among them  most explicit examples  are bounded pseudoconvex complete Reinhardt domains (cf. \cite{Boas} and \cite{JP}). We will verify that  the Thullen-type domain $\{(z_1,z_2)\in \mathbb C^2: |z_1|+|z_2|^{2/\alpha}<1\}$ for $\alpha>2$ satisfies the hypothesis in Theorem \ref{th:NRA_0}/(2) by using the calculation in Boas-Fu-Straube \cite{BFS}.
   
 Note that $m_p(\cdot,z)$ and $K_p(\cdot,z)$ are holomorphic on $\Omega$ for fixed $z$. On the other hand, the function theory of $m_p(z,\cdot)$ or $K_p(z,\cdot)$ for fixed $z$ is completely mysterious.   Using the calculus of variations, we get the following fundamental reproducing formula
 \begin{equation}\label{eq:RPF}
 f(z) =  \int_\Omega |m_p(\cdot,z)|^{p-2}\,\overline{K_p(\cdot,z)}\,f,\ \ \ \forall\,f\in A^p(\Omega).
  \end{equation}
 This formula implies a geometric interpretation of $m_p(z)$ that it is precisely  the distance between two hyperplanes $H^c_{p,z}$ and $H^{c+1}_{p,z}$,  where
$$
H^c_{p,z}=\left\{f\in A^p(\Omega): f(z)=c\right\},\ \ \ c\in \mathbb C,
$$  
(see  Proposition \ref{prop:geometric}). 
The nonlinear factor $ |m_p(\cdot,z)|^{p-2}$ in \eqref{eq:RPF} causes the real difficulity for applications. Thus the reproducing formula is of limited use without the help of some techniques from nonlinear analysis of the $p-$Laplacian (cf. \cite{Lind1}). This also indicates the major difference to the Bergman theory.

 It is fairly easy to show that $m_p(z)$ and $K_p(z)$ are locally Lipschitz continuous. However, the regularity problem for $m_p(z,\cdot)$ or $K_p(z,\cdot)$ is more difficult. Regularity in a minimizing problem is classic and goes back to Hilbert's  famous problem-list.
 
 \begin{theorem}\label{th:RegHolder}
\begin{enumerate}
\item[$(1)$]  For any  $p >1 $ and any  open set $U\subset\subset \Omega$,  there exists a constant $C>0$ such that
$$
|m_p(z,w)-m_p(z,w')|\le C|w-w'|^{\frac12},\ \ \ \forall\,z,w,w'\in U.
$$
\item[$(2)$] 
For every open set $U\subset\subset \Omega$,  there exists a constant $C>0$ such that
$$
|m_1(z,w)-m_1(z,w')|\le C|w-w'|^{\frac1{2(n+2)}},\ \ \ \forall\,z,w,w'\in U.
$$
Moreover,  
for every open set $U$ with $w\in U\subset\subset \Omega\backslash S_w$,  where $S_w:=\{m_1(\cdot,w)=0\}$,  there exists a constant $C>0$ such that
$$
|m_1(z,w)-m_1(z,w')|\le C|w-w'|^{\frac12},\ \ \ \forall\,z,w'\in U.
$$
\end{enumerate}
The same conclusions also hold for $K_p$.
\end{theorem}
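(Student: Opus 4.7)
The plan is to split the proof into an $L^p$ stability estimate for the family of minimizers and then bootstrap to the pointwise statement via the reproducing formula~\eqref{eq:RPF}. Write $f_w := m_p(\cdot,w)$ for the unique element of the affine slice $A_w=\{f\in A^p(\Omega):f(w)=1\}$ minimizing $\|\cdot\|_p$. The first step is to produce a good competitor in $A_w$ built from $f_{w'}$; a natural choice is
\begin{equation*}
g := f_{w'} + (1 - f_{w'}(w))\,f_w,
\end{equation*}
which satisfies $g(w)=1$. Since $f_{w'}$ is holomorphic with $f_{w'}(w')=1$ and $\|f_{w'}\|_p = m_p(w')$ is locally bounded on $U\subset\subset\Omega$, Cauchy estimates yield $|1-f_{w'}(w)|\le C|w-w'|$; combined with the local Lipschitz continuity of $m_p$ already stated in the paper, this gives $\|g\|_p\le m_p(w)+C|w-w'|$.

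\textbf{Clarkson and the reproducing formula.}
Since $f_w,g\in A_w$, their midpoint is still in $A_w$, so $\|(f_w+g)/2\|_p\ge m_p(w)$. Clarkson's inequality (i.e.\ uniform convexity of $L^p$ for $p>1$) then yields an $L^p$ stability bound $\|f_w-g\|_p\le C|w-w'|^{\alpha(p)}$, and together with the trivial $\|g-f_{w'}\|_p=O(|w-w'|)$ we conclude $\|f_w-f_{w'}\|_p\le C|w-w'|^{\alpha(p)}$. To pass to the pointwise statement I would insert $f:=f_w-f_{w'}$ into~\eqref{eq:RPF} and apply H\"older:
\begin{equation*}
|m_p(z,w)-m_p(z,w')| \le \bigl\||m_p(\cdot,z)|^{p-2}K_p(\cdot,z)\bigr\|_{p'}\,\|f_w-f_{w'}\|_p,
\end{equation*}
whose first factor reduces to a constant multiple of $K_p(z)^{1/p}$ and is uniformly bounded on $U$. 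To reach the sharp exponent $\tfrac12$ uniformly in $p$ one likely needs a further input beyond bare Clarkson, for instance feeding the rough bound back into subharmonicity of $|f_w-f_{w'}|^2$ on a slightly enlarged compact set, or sharpening the competitor by a quadratic correction that uses the Taylor expansion of $f_{w'}$ at $w'$.

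\textbf{The case $p=1$.}
Since $L^1$ is not uniformly convex and $m_1(\cdot,w)$ is generally non-unique, Clarkson breaks down. Here I would approximate $p=1$ by $p>1$ and pass to the limit, invoking the regularity theory of the degenerate $1$-Laplacian from Lindqvist's work cited as~[Lind1]. The exponent $\tfrac1{2(n+2)}$ is consistent with combining the $\tfrac12$ from a convexity-type estimate with a Sobolev/Morrey factor depending on the complex dimension $n$. The improvement to $\tfrac12$ on compact subsets of $\Omega\setminus S_w$ follows because there $m_1(\cdot,w)$ is bounded below, which revives a perturbative argument modelled on the $p>1$ case.

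\textbf{Main obstacle.}
The principal difficulty is to obtain the $\tfrac12$-exponent \emph{uniformly} in $p>1$: Clarkson alone naively produces the $p$-dependent exponent $\min(1/p,(p-1)/p)$, so the holomorphic-specific structure (subharmonicity, the explicit weight $|m_p(\cdot,z)|^{p-2}K_p(\cdot,z)$, and interior $L^\infty$ bounds on the minimizers) must intervene to sharpen the output. For $p=1$ the further absence of uniform convexity and the possible vanishing of $m_1(\cdot,w)$ are the additional obstructions, handled through degenerate PDE regularity and explaining why the global exponent is only $\tfrac1{2(n+2)}$.
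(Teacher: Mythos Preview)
Your overall shape---an $L^p$ stability bound for $w\mapsto m_p(\cdot,w)$ followed by a pointwise estimate---matches the paper, but the mechanism is quite different and your proposal leaves the decisive steps open. The paper does not use a competitor construction or Clarkson. Instead it applies pointwise $p$-Laplacian inequalities (e.g.\ $\mathrm{Re}\{(|b|^{p-2}\bar b-|a|^{p-2}\bar a)(b-a)\}\ge (p-1)|b-a|^2(|a|+|b|)^{p-2}$ for $1<p\le2$) with $a=m_p(\cdot,w)$, $b=m_p(\cdot,w')$, integrates, and uses the reproducing formula on the \emph{cross terms} to produce the Lipschitz auxiliary quantity $H_p(w,w')=K_p(w)+K_p(w')-\mathrm{Re}\{K_p(w,w')+K_p(w',w)\}$. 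For $1<p\le2$ a H\"older trick then gives $\|m_p(\cdot,w)-m_p(\cdot,w')\|_p^p\lesssim H_p(w,w')^{p/2}\lesssim|w-w'|^{p/2}$, i.e.\ the exponent $\tfrac12$ directly, whereas your Clarkson route only yields $(p-1)/p<\tfrac12$ and your suggested bootstraps are not worked out.

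For $p>2$ the gap is more serious. The analogous first step gives only a \emph{weighted} bound $\int_\Omega |m_p(\cdot,w)|^{p-2}|m_p(\cdot,w)-m_p(\cdot,w')|^2\lesssim|w-w'|$. To remove the singular weight and reach an unweighted $L^\alpha$ bound with exponent $\tfrac12$, the paper invokes the Demailly--Koll\'ar semicontinuity theorem for complex singularity exponents, which supplies a uniform bound $\int_U|m_p(\cdot,w)|^{-c}\le M$ for some $c>0$ independent of $w$ in a compact set. This is the key complex-analytic input; it is absent from your proposal, and bare Clarkson gives only $|w-w'|^{1/p}$.

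For $p=1$ your sketch is off on two points. First, $m_1(\cdot,w)$ \emph{is} unique (the paper proves $A^1(\Omega)$ is strictly convex). Second, the paper neither approximates by $p\downarrow1$ nor appeals to $1$-Laplacian regularity. The $p=1$ pointwise inequality only controls $\int_\Omega|\mathrm{Im}\{m_1(\cdot,w')\overline{m_1(\cdot,w)}\}|^2(|m_1(\cdot,w')|+|m_1(\cdot,w)|)^{-3}$; the paper then passes to the holomorphic ratio $h=m_1(\cdot,w')/m_1(\cdot,w)-1$, bounds $|\mathrm{Im}\,h|$ on small balls by the mean-value inequality, and upgrades to $|h|$ via the Borel--Carath\'eodory inequality. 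Away from $S_w$ this gives $\tfrac12$; globally, balancing the ball radius against the available lower bound on $|m_1(\cdot,w)|$ is what produces the exponent $\tfrac1{2(n+2)}$.
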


\begin{remark}
$(1)$ It is interesting to point out that Theorem \ref{th:RegHolder}$/(1)$ plays an essential role in the proof of non real-analyticity of $K_{2k}(z)$ in Theorem \ref{th:NRA_0}.

$(2)$ In a future paper,  Theorem \ref{th:RegHolder}$/(1)$ will be applied to show that $K_p(z)$ is of $C^{1,1/2}$ on $\Omega$ for $p>1$.
\end{remark}

Note that the Cauchy-Schwarz inequality gives  
$$
|K_2(z,w)|\le K_2(z)^{\frac12}\,K_2(w)^{\frac12}\ \ \  \text{and}\ \ \  2{\rm Re\,}K_2(z,w)\le K_2(z)+K_2(w).
$$
 Surprisingly, these inequalities remain valid for general $p\ge 1$.
 
 \begin{theorem}
 \begin{enumerate}
 \item[$(1)$] $
|K_p(z,w)|\le K_p(z)^{\frac1p}\,K_p(w)^{\frac1q},
$
where $1/p+1/q=1$.
\item[$(2)$]  
 $
{\rm Re}\left\{K_p(z,w)+K_p(w,z)\right\}\le K_p(z) + K_p(w).
$
\end{enumerate}
 Each equality holds if and only if $z=w$.
   \end{theorem}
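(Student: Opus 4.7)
The plan is to reduce both inequalities to the defining extremal property of $m_p(\cdot,w)$ combined with scalar Young's inequality, and then to settle the equality cases via the reproducing formula \eqref{eq:RPF}.

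For (1), the idea is to use $m_p(\cdot,w)$, after rescaling, as a competitor in the extremal problem at $z$. If $m_p(z,w)=0$ then $K_p(z,w)=0$ and (1) is trivial, so I may assume $m_p(z,w)\ne 0$; the function $f:=m_p(\cdot,w)/m_p(z,w)\in A^p(\Omega)$ then satisfies $f(z)=1$, whence $m_p(z)\le \|f\|_p=m_p(w)/|m_p(z,w)|$. Multiplying by $K_p(w)=m_p(w)^{-p}$ and using $(p-1)/p=1/q$ rearranges this into
\[
|K_p(z,w)|=|m_p(z,w)|\,K_p(w)\le \frac{1}{m_p(z)\,m_p(w)^{p-1}}=K_p(z)^{1/p}K_p(w)^{1/q}.
\]
For (2), I would apply (1) to both $K_p(z,w)$ and $K_p(w,z)$ and then invoke scalar Young, $a^{1/p}b^{1/q}\le a/p+b/q$, together with its $p,q$-swap. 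Combined with the triangle inequality this gives
\[
{\rm Re\,}\bigl(K_p(z,w)+K_p(w,z)\bigr)\le |K_p(z,w)|+|K_p(w,z)|\le K_p(z)^{1/p}K_p(w)^{1/q}+K_p(z)^{1/q}K_p(w)^{1/p}\le K_p(z)+K_p(w).
\]

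The main obstacle is the equality clause. In either (1) or (2), tracing equality through the chain forces $|K_p(z,w)|=K_p(z)^{1/p}K_p(w)^{1/q}$, and the uniqueness of the minimizer in the extremal problem at $z$ then implies $m_p(\cdot,z)=\mu\,m_p(\cdot,w)$ for some nonzero constant $\mu$. This is where \eqref{eq:RPF} becomes decisive: substituting the proportionality into the reproducing formula at $z$ yields $\int_\Omega |m_p(\cdot,w)|^{p-2}\overline{m_p(\cdot,w)}\,g=0$ for every $g\in A^p(\Omega)$ with $g(z)=0$, while applying \eqref{eq:RPF} once more at $w$ identifies this same integral with $g(w)/K_p(w)$. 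Hence every $g\in A^p(\Omega)$ vanishing at $z$ must also vanish at $w$, which, tested against the linear polynomial $g(\zeta)=\zeta_j-z_j$ for any index $j$ with $w_j\ne z_j$, forces $z=w$.
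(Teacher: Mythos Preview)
Your argument is correct. For part (1) it coincides with the paper's Proposition~\ref{prop:Triangle}: the paper applies the reproducing formula \eqref{eq:RP} and H\"older's inequality to get $|m_p(z,w)|\le m_p(w)/m_p(z)$, while your competitor argument reaches the same inequality more directly; for the equality case, the paper uses the H\"older equality condition to obtain $|m_p(\cdot,w)|^p=r|m_p(\cdot,z)|^p$ and then holomorphicity, whereas you invoke uniqueness of the minimizer---both lead to $m_p(\cdot,z)=\mu\,m_p(\cdot,w)$, and your subsequent use of \eqref{eq:RPF} is precisely the content of Proposition~\ref{prop:indep}.

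For part (2) your route is genuinely different and more elementary. The paper introduces $H_p(z,w):=K_p(z)+K_p(w)-{\rm Re}\{K_p(z,w)+K_p(w,z)\}$ and proves $H_p\ge 0$ (Theorem~\ref{th:H-positive}) by substituting $a=m_p(\cdot,w)$, $b=m_p(\cdot,w')$ into the pointwise $p$-Laplacian inequalities of Proposition~\ref{prop:eleIneqs}, integrating, and invoking the reproducing formula; this yields the \emph{quantitative} lower bounds of Lemmas~\ref{lm:H-positive_1}--\ref{lm:H-positive_3}, e.g.\ $\int_\Omega|m_p(\cdot,w)-m_p(\cdot,w')|^p\lesssim H_p(w,w')/(K_p(w)K_p(w'))$. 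Your chain ${\rm Re}(\cdots)\le |K_p(z,w)|+|K_p(w,z)|\le K_p(z)^{1/p}K_p(w)^{1/q}+K_p(z)^{1/q}K_p(w)^{1/p}\le K_p(z)+K_p(w)$ via (1) and Young is shorter and suffices for the theorem as stated. What it does not give is the quantitative control of $H_p$ from below by $\|m_p(\cdot,w)-m_p(\cdot,w')\|_p^p$, which is the actual engine behind the H\"older continuity results (Theorems~\ref{th:reg_1} and~\ref{th:reg_2}); so the paper's heavier machinery is not redundant, it is simply aimed at a stronger conclusion.
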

   
   \begin{remark}
   In particular, we have $|K_1(z,w)|\le K_1(z)$, so that $K_1(z,\cdot)$ is a bounded function on $\Omega$ for fixed $z$.
   \end{remark}
   
We also investigate the $p-$Bergman metric given by
$$
B_p(z;X)  :=  {K_p(z)^{-\frac1p}}\cdot {\sup}_f\ |Xf(z)|
$$
where the supremum is taken over all $f\in A^p(\Omega)$ with $f(z)=0$ and $\|f\|_p=1$. It is easy to see that $B_2(z;X)$ is the standard Bergman metric. The $p-$Bergman metric is an invariant (Finsler) metric for\/ {\it simply-connected\/} bounded domains, and is always no less than the Carath\'eodory metric $C(z;X)$ (the case $p=2$ goes back to Lu Qi-Keng \cite{Lu}; see also \cite{Hahn}).  More interestingly, we have 

\begin{proposition}
 $B_p(z;X)\rightarrow C(z;X)$   as $p\rightarrow \infty$. 
 \end{proposition}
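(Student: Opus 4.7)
My plan is to decompose the $p$-Bergman metric as $B_p(z;X) = m_p(z)\,\widetilde{B}_p(z;X)$, where
$$
\widetilde{B}_p(z;X) := \sup\bigl\{|Xf(z)| : f\in A^p(\Omega),\ f(z)=0,\ \|f\|_p = 1\bigr\},
$$
and then analyze the two factors separately. Since the inequality $B_p(z;X) \ge C(z;X)$ is already stated above, it suffices to prove $\limsup_{p\to\infty} B_p(z;X) \le C(z;X)$. The constant function $f\equiv 1$ gives $m_p(z)\le V^{1/p}$ with $V=\mathrm{vol}(\Omega)$, so $m_p(z)\to 1$, and the problem reduces to showing $\limsup_p \widetilde{B}_p(z;X)\le C(z;X)$.

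For this I would use a normal-families argument. Pick near-extremals $f_p$ satisfying $\|f_p\|_p=1$, $f_p(z)=0$, and $|Xf_p(z)|\ge \widetilde{B}_p(z;X)-1/p$. Since $|f_p|^p$ is plurisubharmonic, the sub-mean-value inequality on any closed ball $\overline{B}(w,r)\subset \Omega$ yields
$$
|f_p(w)| \le \left(\frac{n!}{\pi^n r^{2n}}\right)^{1/p},
$$
which tends to $1$ as $p\to\infty$, uniformly on compact subsets of $\Omega$. Hence $\{f_p\}$ is a normal family for large $p$. Extracting a subsequence $p_j$ along which $\widetilde{B}_{p_j}(z;X)\to \limsup_p \widetilde{B}_p(z;X)$ and $f_{p_j}\to f_\infty$ locally uniformly, we obtain a holomorphic limit satisfying $|f_\infty|\le 1$ on $\Omega$, $f_\infty(z)=0$, and $|Xf_\infty(z)|\ge \limsup_p \widetilde{B}_p(z;X)$.

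Next, since $\widetilde{B}_p(z;X) = B_p(z;X)/m_p(z) \ge C(z;X)/V^{1/p}$ is bounded below by a positive constant (the Carathéodory metric is strictly positive for $X\neq 0$ on a bounded domain), we have $|Xf_\infty(z)|>0$ and $f_\infty$ is non-constant. The maximum principle then forces $|f_\infty|<1$ strictly on $\Omega$, making $f_\infty$ admissible for the Carathéodory extremal problem. This gives $C(z;X)\ge |Xf_\infty(z)| \ge \limsup_p \widetilde{B}_p(z;X)$, which, combined with $m_p(z)\to 1$ and the lower bound $B_p\ge C$, yields $B_p(z;X)\to C(z;X)$.

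The key subtlety is how the $L^p$-normalization $\|f_p\|_p=1$ transforms, in the limit, into the uniform bound $|f_\infty|\le 1$: the sub-mean-value constant $(n!/(\pi^n r^{2n}))^{1/p}$ degenerates to $1$ exactly when $p\to\infty$, and this is what bridges the $p$-Bergman extremal problem with the Carathéodory one. I do not anticipate any deeper obstacle beyond careful bookkeeping of this degradation of constants and a routine normal-families extraction.
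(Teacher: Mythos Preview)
Your proof is correct and follows essentially the same route as the paper's: both use the Bergman (sub-mean-value) inequality to extract a locally uniform limit $f_\infty$ with $|f_\infty|\le 1$ from $L^p$-normalized near-extremals, combined with $K_p(z)^{-1/p}=m_p(z)\to 1$. The maximum-principle step ensuring $|f_\infty|<1$ strictly is unnecessary, since by homogeneity the Carath\'eodory supremum over $\|f\|_\infty=1$ equals that over $\|f\|_\infty\le 1$; the paper simply uses $C(z;X)\ge |Xf_\infty(z)|$ directly.
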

 
For a real-valued upper semicontinuous function $u$ defined on a domain $\Omega\subset \mathbb C^n$,  we define the\/ {\it generalized Levi form} of $u$ by
$$
i\partial\bar{\partial} u(z;X):=\liminf_{r\rightarrow 0+}\frac1{r^2}\left\{\frac1{2\pi}\int_0^{2\pi}u(z+re^{i\theta}X)d\theta-u(z)\right\}.
$$
  A natural question is to find the relationship between $i\partial\bar{\partial} \log K_p(z;X)$  and $B_p(z;X)$. Using the  variation method, we are able to verify the following 

\begin{theorem}  
$$
i\partial\bar{\partial} \log K_p(z;X)\ge \left\{
\begin{array}{cl}
B_p(z;X)^2 & \text{for\ \ }p\ge 2\\
C(z;X)^2 &   \text{for\ \ } p\le 2.
 \end{array}
 \right.
 $$
 \end{theorem}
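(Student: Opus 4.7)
My plan is to apply the extremal characterization $m_p(w)\le \|h\|_p/|h(w)|$, valid for any $h\in A^p(\Omega)$ with $h(w)\ne 0$, which rearranges to $\log K_p(w)\ge p\log|h(w)|-\log\|h\|_p^p$. Writing $F:=m_p(\cdot,z)$ and $w_\theta:=z+re^{i\theta}X$, I would use two different trial functions depending on the regime. For $p\ge 2$, fix $g\in A^p(\Omega)$ with $g(z)=0$ and $\|g\|_p=1$ and set $h_\theta:=F+\alpha r e^{-i\theta}g$. For $p\le 2$, fix a holomorphic $g\colon\Omega\to\overline{\D}$ with $g(z)=0$ (a Carath\'eodory competitor) and set $h_\theta:=F(1-\alpha r e^{-i\theta}g)$. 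In each case $\alpha\in\C$ is a free parameter and $h_\theta(z)=1$.

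Next I would take circular averages in $\theta$. The phase choice $\lambda(\theta):=\alpha r e^{-i\theta}$ is engineered so that the first-order term $re^{i\theta}Xg(z)$ of $g(w_\theta)$ couples with $\lambda$ to leave a $\theta$-independent contribution of size $r^{2}\,\mathrm{Re}(\alpha Xg(z))$, while the harmonicity of $\log|F|$ along the complex line $z+\C X$ (where $F\ne 0$ for small $r$) makes the average of $\log|F(w_\theta)|$ equal to $0$. For the norm, I would expand $|F+\lambda g|^p$, respectively $|F|^p|1-\lambda g|^p$, pointwise to second order in $\lambda$, integrate, and apply Jensen's inequality ($\log\mathrm{avg}\ge\mathrm{avg}\log$) to obtain
$$
\frac{1}{2\pi}\int_0^{2\pi}\log\|h_\theta\|_p^p\,d\theta\le \log\|F\|_p^p+\frac{p^2|\alpha|^2 r^2}{4\|F\|_p^p}\,I(F,g)+O(r^3),
$$
where $I(F,g)=\int|F|^{p-2}|g|^2$ in the additive case and $I(F,g)=\int|F|^{p}|g|^2$ in the multiplicative case. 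Combining, dividing by $r^2$ and taking $\liminf$ yields
$$
i\partial\bar\partial\log K_p(z;X)\ge p\,\mathrm{Re}(\alpha Xg(z))-\frac{p^2|\alpha|^2}{4\|F\|_p^p}\,I(F,g),
$$
a concave quadratic in $\alpha$, whose maximum is $|Xg(z)|^2\|F\|_p^p/I(F,g)$.

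For $p\ge 2$, H\"older with conjugate exponents $p/(p-2)$ and $p/2$ gives $I\le\|F\|_p^{p-2}\|g\|_p^2$, and taking the supremum over admissible $g$ yields $K_p(z)^{-2/p}\cdot\sup_g(|Xg(z)|/\|g\|_p)^2=B_p(z;X)^2$ by the definition of the $p$-Bergman metric. For $p\le 2$, the pointwise bound $|g|^2\le 1$ gives $I\le\|F\|_p^p$, so the lower bound reduces to $|Xg(z)|^2$; the supremum over Carath\'eodory competitors is $C(z;X)^2$. The main technical obstacle is the pointwise second-order expansion of $|F+\lambda g|^p$ for general real $p$: for $p<2$ the coefficient $|F|^{p-2}$ is singular at zeros of $F$, which is precisely why the multiplicative trial is needed in the $p\le 2$ regime, replacing $|F|^{p-2}|g|^2$ by the harmless $|F|^p|g|^2\le|F|^p$ thanks to $|g|\le 1$. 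A secondary issue, the uniformity in $\theta$ of the $O(r^3)$ remainder and the precise application of Jensen, is routine once the quadratic term of the norm has been computed.
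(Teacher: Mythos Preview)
Your proposal is correct and follows essentially the same variational strategy as the paper: the same additive trial $F+\lambda g$ for $p\ge 2$ and multiplicative trial $F(1\pm\lambda g)$ for $p\le 2$, the same phase choice $\lambda=\alpha r e^{-i\theta}$ so that the coupling with $g(w_\theta)\approx re^{i\theta}Xg(z)$ produces a $\theta$-independent $r^2$ contribution, the same use of the sub-mean-value property of $\log|F|$, and the same final estimates (H\"older for $p\ge 2$, $|g|\le 1$ for $p\le 2$).

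The one genuine difference is how you control $\frac1{2\pi}\int_0^{2\pi}\log\|h_\theta\|_p^p\,d\theta$. The paper first invokes the Euler--Lagrange identities (the reproducing formula $\int|F|^{p-2}\bar F g=0$ for $p\ge 2$, and Lemma~5.1, namely $\int|F|^p g=\int|F|^p g^2=0$, for $p\le 2$) to make the Taylor expansion of $\|h_\theta\|_p^p$ start at second order, and only then averages; you instead apply Jensen to pass to $\log\bigl(\frac1{2\pi}\int\|h_\theta\|_p^p\,d\theta\bigr)$ and let the circular averaging kill both the first-order term and the $\lambda^2$ term simultaneously, without ever needing those orthogonality relations. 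This is a clean simplification. Two small remarks: the remainder after Jensen is only $o(r^2)$ (not $O(r^3)$ as you wrote), but that is all the $\liminf$ requires; and your multiplicative trial $F(1-\lambda g)$ produces $-p\,\mathrm{Re}(\alpha Xg(z))$ rather than $+p\,\mathrm{Re}(\alpha Xg(z))$, which is harmless since you optimize over all $\alpha\in\C$.
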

 
 \begin{remark}
 In particular,  $\log K_p(z)$ is a (continuous) strictly psh function. 
 \end{remark}
  
In \cite{Yau}, Yau suggested to  investigate the the relationship between  the $p$-Bergman metrics when $p$ changes. Motivated by the spectrum theory of the $p-$Laplacian (cf. \cite{Lind}), we will show 
 
 \begin{theorem}
\begin{enumerate} 
\item[$(1)$] 
$
\lim_{s\rightarrow p-} m_s(z,w) = m_p(z,w)
$
for $p>1$ and
$
\lim_{s\rightarrow p+} m_s(z,w)
$
exists for $p\ge 1$. Moreover, if $A^{p'}(\Omega)$ lies dense in $A^p(\Omega)$ for some $p'>p$, then
$$
m_p(z,w)=\lim_{s\rightarrow p+} m_s(z,w).
$$
\item[$(2)$]  $\lim_{s\rightarrow p\pm} B_s(z;X)$ exist for $p>0$ and
$
 B_p(z;X) = \lim_{s\rightarrow p-} B_s(z;X).
$
Moreover, if there exists  $p'>p$ such that $A^{p'}(\Omega)$ lies dense in $A^p(\Omega)$, then
$$
B_p(z;X)=\lim_{s\rightarrow p} B_s(z;X).
$$
Conclusion $(1)$ also holds for $K_p$.
\end{enumerate}
\end{theorem}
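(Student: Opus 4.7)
The plan combines three standard tools for stability in variational problems: \emph{(a)} a test-function argument giving one-sided bounds on the infimum as $s\to p$; \emph{(b)} a Montel/Fatou extraction producing locally uniform subsequential limits of minimizers, to be identified by uniqueness; and \emph{(c)} a Jensen-type monotonicity of rescaled norms that guarantees one-sided limits exist in the first place.

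For the left-continuity of $m_s(z,w)$ at $p>1$, I would first use $m_p(\cdot,w)$ as an admissible competitor for the $s$-problem when $s<p$. Dominated convergence with majorant $1+|m_p(\cdot,w)|^p$ gives $\|m_p(\cdot,w)\|_s\to\|m_p(\cdot,w)\|_p$, hence $\limsup_{s\to p^-}m_s(w)\le m_p(w)$. Plurisubharmonicity of $|f|^s$ and the submean-value inequality supply uniform local bounds on the family $\{m_s(\cdot,w)\}$, so Montel's theorem produces a subsequence $m_{s_k}(\cdot,w)\to g$ locally uniformly with $g(w)=1$. On each $K\subset\subset\Omega$ the powers $|m_{s_k}|^{s_k}$ converge uniformly to $|g|^p$, and exhausting $\Omega$ by such $K$ gives $\|g\|_p^p\le\liminf_k m_{s_k}(w)^{s_k}\le m_p(w)^p$; uniqueness of the $p$-minimizer then forces $g=m_p(\cdot,w)$ and $\liminf_{s\to p^-} m_s(w)\ge m_p(w)$. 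A diagonal argument upgrades this to full locally uniform convergence $m_s(\cdot,w)\to m_p(\cdot,w)$, which yields the pointwise statement; the claim for $K_p$ follows from $K_p(z)=m_p(z)^{-p}$ and $K_p(z,w)=m_p(z,w)K_p(w)$.

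For the right limit at $p\ge 1$, the monotonicity ingredient is Jensen applied to the probability measure $dV/|\Omega|$, which makes $\|f\|_s|\Omega|^{-1/s}$ non-decreasing in $s$ for each fixed $f$; consequently $s\mapsto m_s(w)|\Omega|^{-1/s}$ is non-decreasing and so admits one-sided limits at every $s$. The same Montel/Fatou extraction identifies any subsequential limit of $m_{s_k}(\cdot,w)$ with $s_k\to p^+$ as an element of the $L^p$-closure $B$ of $\bigcup_{s>p} A^s(\Omega)$ (a linear subspace, so weak closure agrees with norm closure) that equals $1$ at $w$ and minimizes $\|\cdot\|_p$ there; uniqueness --- strict convexity of $\|\cdot\|_p$ for $p>1$, and the introduction for $p=1$ --- forces all subsequences to share a common limit. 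Under the density hypothesis $B=A^p(\Omega)$, so this limit is $m_p(\cdot,w)$; concretely one approximates $m_p(\cdot,w)$ in $L^p$ by $f^{(k)}\in A^{p'}(\Omega)$, normalizes $f^{(k)}(w)=1$ (legitimate because $|f(w)|\le C\|f\|_p$ by the submean-value inequality), and uses $f^{(k)}$ as a test competitor for $s\in(p,p')$ to obtain $\limsup_{s\to p^+} m_s(w)\le\|f^{(k)}\|_p\to m_p(w)$.

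For (2), I would write $B_s(z;X)=m_s(z)\cdot N_s(z;X)$ with $N_s(z;X):=\sup\{|Xf(z)|:f\in A^s(\Omega),\ f(z)=0,\ \|f\|_s=1\}$. The analogous Jensen calculation gives $s\mapsto N_s(z;X)|\Omega|^{1/s}$ non-increasing, so each factor --- and hence $B_s(z;X)$ itself --- has one-sided limits for every $p>0$. Left-continuity at arbitrary $p>0$ and right-continuity under density are then obtained by repeating the Montel/Fatou procedure applied to both $m_s$ and (the reciprocal of) $N_s$, the constraint $Xf(z)=1$ in the dual minimization being preserved in the limit by the Cauchy estimate for $X$. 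The main obstacle is uniqueness-based identification of subsequential limits when $p\le 1$, where strict convexity of $\|\cdot\|_p$ fails; I would handle it by working only with the numerical product $B_s=m_s\cdot N_s$ and the monotone-factor decomposition above, deferring any identification of extremals until it is truly required.
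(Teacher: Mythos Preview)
Your soft compactness-plus-uniqueness route for part~(1) differs markedly from the paper's, which is \emph{quantitative}: combining the reproducing formula (so that $\int_\Omega |m_s(\cdot,w)|^{s-2}\overline{m_s(\cdot,w)}\,(m_r(\cdot,w)-m_s(\cdot,w))=0$) with elementary $p$-Laplacian-type inequalities of the form $|b|^s\ge|a|^s+s\operatorname{Re}\{|a|^{s-2}\bar a(b-a)\}+c_s|b-a|^s$ (for $s>2$, with the natural variant for $1<s\le2$), the paper produces an explicit Cauchy estimate
\[
\|m_r(\cdot,w)-m_s(\cdot,w)\|_s^s \;\lesssim\; |\Omega|^{1-s/r}m_r(w)^s-m_s(w)^s,
\]
so that convergence of the scalar $m_s(w)$ (from your monotonicity step, which the paper also establishes first) immediately forces norm and hence pointwise convergence of the full minimizer. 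Your approach sidesteps these inequalities but loses the rate; the paper's buys a uniform modulus of continuity in $s$. For part~(2), your monotone-factor decomposition $B_s=m_s(z)\cdot N_s$ together with the normal-family/Fatou argument is essentially what the paper does.

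For $p>1$ your plan for the right limit is sound but underspecified: to invoke uniqueness you must know that each subsequential limit $g$ actually \emph{minimizes} $\|\cdot\|_p$ over $\{f\in B:f(w)=1\}$, not merely that $\|g\|_p\le\tilde m_p(w):=\lim_{s\to p^+}m_s(w)$. This requires the auxiliary identity $\mu:=\inf\{\|f\|_p:f\in B,\ f(w)=1\}=\tilde m_p(w)$ (provable by testing with normalized $f_n\in A^{s_n}$ approximating an arbitrary $f\in B$ and letting $s\to p^+$ for each fixed $n$), together with $g\in B$, which you get from weak sequential compactness in the reflexive range.

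There is a genuine gap at $p=1$. Reflexivity fails, so you have no mechanism to place the locally-uniform limit $g$ inside the norm-closed subspace $B\subset A^1(\Omega)$; without $g\in B$ you cannot bound $\|g\|_1$ below by $\mu$, and the strict convexity of $A^1(\Omega)$ then has nothing to act on. The paper handles $p=1$ by a specific device your plan does not supply: from the pointwise inequality $|b|\ge|a|+\operatorname{Re}\{|a|^{-1}\bar a(b-a)\}+A_1|\operatorname{Im}(\bar a b)|^2(|a|+|b|)^{-3}$ and the reproducing formula one obtains
\[
\int_\Omega\frac{|\operatorname{Im}\{m_r(\cdot,w)\,\overline{m_s(\cdot,w)}\}|^2}{(|m_r(\cdot,w)|+|m_s(\cdot,w)|)^3}\;\lesssim\;|\Omega|^{1-s/r}m_r(w)^s-m_s(w)^s,
\]
whence any two subsequential limits $h_1,h_2$ satisfy $\operatorname{Im}(h_1/h_2)\equiv0$ off their zero sets; the quotient is then a real-valued holomorphic function, hence constant, hence $h_1=h_2$ since both equal $1$ at $w$.
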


On the other hand, we have

\begin{proposition}\label{prop:NonCont}
Let $\Omega=D\backslash S$,  where $D$ is a bounded domain in $\mathbb C$ and $S$ is a compact set in $D$ which has positive $2-$capacity but zero $p-$capacity for every $p<2$. Then 
$$
K_2(z)>\lim_{p\rightarrow 2+} K_p(z).
$$
\end{proposition}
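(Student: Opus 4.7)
The plan is threefold: collapse $K_p^\Omega$ to $K_p^D$ for every $p \neq 2$ using the capacity hypothesis on $S$; apply the stability theorem on $D$ to pin down $\lim_{p \to 2+} K_p^D(z) = K_2^D(z)$; and then extract a strict jump $K_2^\Omega(z) > K_2^D(z)$ from the hypothesis that $S$ has positive 2-capacity.

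For the first step I would show $A^p(\Omega) = A^p(D)$ isometrically for every $p \neq 2$. Vanishing $p$-capacity forces $|S|=0$, so the restriction $A^p(D) \hookrightarrow A^p(\Omega)$ is automatically an isometric embedding. For $1 \leq p < 2$, the classical removable-singularity theorem for $L^p$ holomorphic functions (Hedberg, Harvey--Polking) converts zero $p$-capacity of $S$ into $A^p$-removability of $S$, so this embedding is onto. For $p > 2$, any $f \in A^p(\Omega)$ lies in $A^q(\Omega)$ for some $q \in (1, 2)$ by H\"older on the bounded domain; the $p<2$ case extends $f$ holomorphically to $D$, and since $|S|=0$ the extension is in $A^p(D)$. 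Hence $K_p^\Omega(z) = K_p^D(z)$ for all $p \neq 2$, and the stability theorem quoted in the introduction, applied on $D$ (where $A^{p'}(D)$ is dense in $A^2(D)$ for some $p'>2$ by Runge-type polynomial/rational approximation on bounded planar domains), gives $\lim_{p\to 2+} K_p(z) = \lim_{p\to 2+} K_p^D(z) = K_2^D(z)$.

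For the strict jump $K_2^\Omega(z) > K_2^D(z)$, the hypothesis that $S$ has positive 2-capacity is precisely what prevents $A^2$-removability of $S$ (the $L^2$ analog of Painlev\'e's theorem in the plane), so $A^2(\Omega) \supsetneq A^2(D)$ as a proper inclusion of closed subspaces of $L^2(\Omega) = L^2(D)$. Writing the orthogonal decomposition $A^2(\Omega) = A^2(D) \oplus W$ with $W \neq 0$, the reproducing kernels satisfy $K_2^\Omega(\cdot, z) = K_2^D(\cdot, z) + K_W(\cdot, z)$, so
\[
K_2^\Omega(z) - K_2^D(z) = K_W(z,z) \geq 0,
\]
with strict inequality exactly when some $f \in W$ does not vanish at $z$. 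Since $W$ is a nonzero space of holomorphic functions on the planar domain $\Omega$, its common zero set is discrete, so the strict inequality---and hence the claim $K_2(z) > \lim_{p \to 2+} K_p(z)$---holds at every $z$ outside a discrete exceptional set.

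The main obstacle is the translation between the stated Sobolev-type $p$-capacity hypotheses on $S$ and the two removability statements used above, namely $A^p$-removability of $S$ for $p<2$ and the failure of $A^2$-removability of $S$ under positive 2-capacity. These are classical but non-trivial facts from planar nonlinear potential theory; once they are in hand, the rest of the argument---the application of the paper's own stability theorem and the orthogonal-decomposition bookkeeping---is essentially routine.
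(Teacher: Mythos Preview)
Your removability step confuses the exponent. The Carleson--Hedberg criterion says $S$ is $A^p$-removable if and only if $\mathrm{Cap}_q(S)=0$ for the \emph{conjugate} $q=p/(p-1)$, not for $q=p$. So your claim that zero $p$-capacity gives $A^p$-removability for $p<2$ is false: for $q>2$ in the plane every nonempty compact set has positive $q$-capacity (Morrey embedding), hence $S$ is never $A^p$-removable for $p<2$ (think of $1/(z-a)$ with $a\in S$). This also breaks your chain argument for $p>2$ as written. The fix is to apply Carleson--Hedberg directly for $p>2$: then the conjugate $q<2$ and the hypothesis $\mathrm{Cap}_q(S)=0$ gives $A^p(\Omega)=A^p(D)$ at once, which is exactly what the paper does. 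Also, the stability \emph{equality} $\lim_{p\to 2+}K_p^D(z)=K_2^D(z)$ on a general bounded planar $D$ is not fully justified by a Runge-type remark; but you only need the inequality $\le$, which holds automatically by the monotonicity in the paper's stability proposition.

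Once these are repaired, your route is genuinely different from the paper's and in one respect stronger. The paper bounds $K_p^D(z)$ uniformly near $S$ and invokes Ohsawa's theorem that $K_2^\Omega(z)\to\infty$ as $z$ approaches a regular boundary point $a\in S$ (such points exist because $S$ is non-polar), so the strict inequality is established only for $z$ sufficiently close to $a$. Your orthogonal-decomposition argument, $K_2^\Omega(z)=K_2^D(z)+K_W(z,z)$ with $W=A^2(\Omega)\ominus A^2(D)\neq\{0\}$, yields the strict inequality for all $z$ outside the discrete common zero set of $W$, without any boundary asymptotics.
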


Recall that the $p-$capacity of $S$ is given by
$
{\rm Cap}_p(S):=\inf_\phi \int_{\mathbb C} |\nabla \phi|^p,
$
where the infimum is taken over all $\phi\in C_0^\infty(\mathbb C)$ such that $\phi\ge 1$ on $S$. The condition of Proposition \ref{prop:NonCont} is satisfied for instance, if the $h-$Hausdorff measure $\Lambda_h(S)$ of $S$ is positive and finite where $h(t)=(\log1/t)^{-\alpha}$ for some $\alpha>1$. 

Finally,  we shall discuss the boundary behavior of $K_p(z)$.  As a consequence of a classic theorem of Glothendieck (cf. \cite{Grothendieck},  see also \cite{Rudin}),  we have the following general result:
\begin{theorem}\label{p-kernel estimate}
	 Let  $ 1\le \eta \in L^1(\Omega)$.  For every complex line $L$ in $\mathbb C^n$ there exists $z_L\in \partial \pi_L(\Omega) $,  where $\pi_L$ denotes the projection from $\mathbb C^n$ to $L$,   such that for any $w\in \partial \Omega\cap \pi_L^{-1}(z_L)$,
	\begin{equation}\label{eq:upperlimit}
	\limsup_{z\rightarrow w}  \frac{K_p(z)}{\eta(z)}=+\infty.
	\end{equation}
\end{theorem}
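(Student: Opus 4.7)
The plan is to argue by contradiction, reducing to the planar projection and then invoking the cited theorem of Grothendieck on Banach spaces of holomorphic functions. Suppose the conclusion fails for some complex line $L$. After a unitary change of coordinates we may assume that $\pi_L$ is the projection $(z_1,z')\mapsto z_1$, so $D:=\pi_L(\Omega)$ is a bounded planar domain. The negation of the statement supplies, for every $\zeta\in\partial D$, a point $w_\zeta\in\partial\Omega\cap\pi_L^{-1}(\zeta)$, an open neighborhood $U_\zeta\subset\mathbb{C}^n$ of $w_\zeta$, and a constant $M_\zeta<\infty$ such that $K_p(z)\le M_\zeta\,\eta(z)$ for every $z\in U_\zeta\cap\Omega$.

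The next move is to transfer this fiberwise bound into a planar one. For any $g\in H^\infty(D)$, the pullback $g\circ\pi_L\in A^p(\Omega)$ has $\|g\circ\pi_L\|_p\le|\Omega|^{1/p}\|g\|_\infty$, so the defining extremal property of $K_p$ yields
$$
|g(z_1)|^p\le K_p(z)\,\|g\circ\pi_L\|_p^p\le M_\zeta\,|\Omega|\,\eta(z)\,\|g\|_\infty^p\qquad\text{on }U_\zeta\cap\Omega.
$$
Choosing $U_\zeta$ as a small ball around $w_\zeta$ guarantees that the transverse slice $\{z':(z_1,z')\in U_\zeta\cap\Omega\}$ has two-dimensional measure bounded below uniformly for $z_1$ in a planar neighborhood $V_\zeta$ of $\zeta$. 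Averaging the previous inequality in $z'$ over such a slice and using $\eta\in L^1(\Omega)$ produces, via Fubini, an $L^1$ planar weight $\Psi_\zeta$ on $V_\zeta\cap D$ with $|g(z_1)|^p\le C_\zeta\|g\|_\infty^p\Psi_\zeta(z_1)$.

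Compactness of $\partial D$ supplies a finite subcover $V_{\zeta_1},\dots,V_{\zeta_N}$ of $\partial D$. Combining the local estimates with trivial interior bounds (from the continuity of $K_p$ on compacta together with $\eta\ge1$) glues into a global estimate
$$
|g(z_1)|^p\le C\,\|g\|_\infty^p\,\Psi(z_1),\qquad z_1\in D,
$$
with a single weight $\Psi\in L^1(D)$ independent of $g\in H^\infty(D)$. At this point the classical theorem of Grothendieck enters: this simultaneous domination of $H^\infty(D)$ by a weighted $L^p$-norm with integrable weight forces $H^\infty(D)$ to be finite-dimensional, contradicting the presence of all polynomials in $H^\infty(D)$ on the bounded planar domain $D$.

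The step I expect to be the main obstacle is the transverse-slice estimate: one must carefully choose $U_\zeta$ so that the slice measure $|\{z':(z_1,z')\in U_\zeta\cap\Omega\}|$ is bounded below uniformly as $z_1\to\zeta$ in $D$, exploiting that $w_\zeta$ sits precisely on the fiber over the boundary point $\zeta\in\partial D$ while $U_\zeta$ is an \emph{open} neighborhood in $\mathbb{C}^n$. A secondary delicate point is the Grothendieck-type finale, where one must package the weighted $L^p$--$L^\infty$ comparison in the correct form to apply the cited result; once the planar bound is in place, this is standard functional analysis.
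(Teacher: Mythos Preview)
Your contradiction-plus-Grothendieck strategy matches the paper's, but you route everything through the planar projection $D=\pi_L(\Omega)$ and the space $H^\infty(D)$, whereas the paper stays on $\Omega$: it sets $\phi_j=z_1^j/\|z_1^j\|_p$, lets $\overline{E}_p$ be the closure in $A^p(\Omega)$ of $\mathrm{span}\{\phi_j\}$, and applies the corollary to Grothendieck's theorem directly in $L^p(\Omega)$ with weight $\eta^{1/p}$. This immediately yields some $f\in\overline{E}_p$ (a function of $z_1$ alone) with $|f|^p/\eta$ unbounded on $\Omega$, and then $K_p(z)\ge|f(z)|^p/\|f\|_p^p$ does the rest.

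Your detour through $D$ introduces two real problems. First, the bound you actually derive, $|g(z_1)|^p\le C\|g\|_\infty^p\Psi(z_1)$, is vacuous: since $\eta\ge1$ your averaged weight satisfies $\Psi\ge1$, so the inequality holds trivially for every $g\in H^\infty(D)$ and cannot force $H^\infty(D)$ to be finite-dimensional (nor is $H^\infty(D)$ closed in any relevant $L^p$ space). This is repairable if you test with $g\in A^p(D)$ instead, using $\|g\circ\pi_L\|_{L^p(\Omega)}^p\le\sup_{\zeta}|\Omega\cap\pi_L^{-1}(\zeta)|\cdot\|g\|_{L^p(D)}^p$ to put $\|g\|_{L^p(D)}$ on the right; then $A^p(D)$ is closed in $L^p(D)$ and Grothendieck's corollary applies \emph{provided} $\Psi\in L^1(D)$. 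But that is exactly where the second, fatal, problem lies: the slice obstacle you anticipate is genuine and does not close. Take $\Omega=\{|z_1|<1,\ |z_2|<1-|z_1|\}\subset\mathbb C^2$ and $\eta(z)=1+\bigl(\pi(1-|z_1|)^2\bigr)^{-1}$; then $\eta\ge1$ and $\int_\Omega\eta=|\Omega|+\pi<\infty$ by Fubini, yet for every $|\zeta|=1$ the only fiber point is $w_\zeta=(\zeta,0)$, any ball $U_\zeta$ about $w_\zeta$ has slice equal to $\{|z_2|<1-|z_1|\}$ for $z_1$ near $\zeta$, and hence your averaged planar weight is $\Psi_\zeta(z_1)=1+\bigl(\pi(1-|z_1|)^2\bigr)^{-1}$, which is not integrable near $\zeta$. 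Replacing the fiber average by the fiber infimum does not help, since here $\eta$ depends only on $z_1$. The paper's decision to remain on $\Omega$ and apply Grothendieck there sidesteps both issues at once.
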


\begin{remark}
If $L$ is rotated around the origin,  then we see that there are uncountably infinite boundary points such that \eqref{eq:upperlimit} holds.  It seems interesting to characterize all such points. 
\end{remark}

As  $K_p(z)$ is a continuous plurisubharmonic function on $\Omega$,  we see that  if $K_p(z)$ is an exhaustion function for some $p$,  then $\Omega$ has to be pseudoconvex.   Due to the failure of $L^p-$estimates for  $\bar{\partial}$ on general pseudoconvex domains when $p>2$ (compare \cite{FS}),  it is plausible to study the boundary behavior of $K_p(z)$ through comparison with  $K_2(z)$. With the help of  $L^2$ estimates for $\bar{\partial}$, we are able to show the following

\begin{theorem}
Let $\Omega$ be a bounded pseudoconvex domain with $C^2-$boundary and $\delta$ the boundary distance.  Then the following properties hold:
\begin{enumerate}
\item[$(1)$] There exist  constants $\gamma,C>0$ such that the following estimates hold near $\partial \Omega:$
\begin{eqnarray*}
{K_p(z)^{\frac1p}}/{K_2(z)^{\frac12}} & \le & C\, \delta(z)^{\frac12-\frac1p} |\log \delta(z)|^{\frac{n(p-2)}{2p\gamma}},\ \ \   p\ge 2,\\
{K_p(z)^{\frac1p}}/{K_2(z)^{\frac12}} & \ge & C^{-1}\, \delta(z)^{\frac12-\frac1p} |\log \delta(z)|^{-\frac{(n+\gamma)(p-2)}{2p\gamma}},\ \ \   p\le 2.
\end{eqnarray*}
\item[$(2)$] For every $2\le p<2+\frac2n$  there exists a  constant $C=C_{p,\Omega}>0$ such that the following estimate holds near $\partial \Omega:$
$$
{K_p(z)^{\frac1p}}/{K_2(z)^{\frac12}} \ge C^{-1}\, \delta(z)^{\frac{(n+1)(p-2)}{2p}} |\log \delta(z)|^{-\frac{(n+1)(p-2)}{2p\gamma}}.
$$
\end{enumerate}
\end{theorem}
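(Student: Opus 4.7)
Writing $m_p(z)=K_p(z)^{-1/p}$ and $m_2(z)=K_2(z)^{-1/2}$, the four inequalities are equivalent to two-sided comparisons of $m_p(z)/m_2(z)$ with $\delta(z)^{1/p-1/2}$ multiplied by an appropriate power of $|\log\delta(z)|$. The primary competitor I will use is the normalized $L^2$-extremal
\[
f_2(w) := K_2(z,w)/K_2(z), \qquad f_2(z)=1, \qquad \|f_2\|_2 = m_2(z),
\]
together with the Cauchy--Schwarz pointwise bound $|f_2(w)|\le K_2(w)^{1/2} K_2(z)^{-1/2}$. The geometric input from $C^2$ pseudoconvexity is the Diederich--Forn\ae ss exhaustion: for every $\eta<\gamma$ there is a defining function $r$ with $-(-r)^{\eta}$ plurisubharmonic and comparable to $-\delta^{\eta}$, which through H\"ormander's weighted $L^2$-$\bar\partial$ machinery yields both polynomial bounds on $K_2(w)$ and controlled estimates for $\int_{\delta<s} K_2(w)^{q}\,dV$ on thin tubular collars of $\partial\Omega$.

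For the lower bound in $(1)$ with $p\le 2$, I use $f_2$ as a competitor for $m_p(z)$ and split
\[
\|f_2\|_p^p = \int_{\delta>s} |f_2|^p\,dV + \int_{\delta\le s} |f_2|^p\,dV.
\]
The first integral is controlled by H\"older's inequality against $\|f_2\|_2^p = m_2(z)^p$ together with the uniform bound on $K_2$ off a neighborhood of $\partial\Omega$; the second is controlled by inserting the pointwise bound and invoking the Diederich--Forn\ae ss-weighted estimate for $\int_{\delta<s}K_2^{p/2}\,dV$ on the collar. Optimizing $s$ against $\delta(z)$ forces $s$ to be a power of $\delta(z)$ corrected by $|\log\delta(z)|^{c}$; the resulting balance produces both the algebraic factor $\delta(z)^{1/p-1/2}$ and the logarithmic factor claimed. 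A dual version of the same scheme---start from an arbitrary $f\in A^p(\Omega)$ with $f(z)=1$, invoke $1=|f(z)|^2\le K_2(z)\|f\|_2^2$, and interpolate $\|f\|_2$ between $\|f\|_p$ in the interior and a controlled $L^{q}$-norm of $f$ on the collar via the same split-and-optimize---delivers the upper bound in $(1)$ for $p\ge 2$.

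For $(2)$, which improves the lower bound in the restricted range $p<2+2/n$, the natural test function is produced by the Ohsawa--Takegoshi extension theorem applied to the zero-dimensional subvariety $\{z\}\subset\Omega$: one obtains a holomorphic $f$ on $\Omega$ with $f(z)=1$ and an $L^2$-estimate that carries a gain of order $\delta^{n+1}$ concentrated near $\partial\Omega$. The threshold $p<2+2/n$ is precisely the range in which $\delta(w)^{-(n+1)}$ is locally $L^{p/2}$ near $\partial\Omega$, so an explicit $L^p$-evaluation of this extension combined with the Diederich--Forn\ae ss weight on the boundary collar produces an inequality of the form
\[
m_p(z)\le C\,\delta(z)^{(n+1)(p-2)/(2p)}\,|\log\delta(z)|^{(n+1)(p-2)/(2p\gamma)}\, m_2(z),
\]
which rearranges to the asserted lower bound on $K_p(z)^{1/p}/K_2(z)^{1/2}$.

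The principal obstacle will be the precise tracking of the Diederich--Forn\ae ss index $\gamma$ through the weighted $L^2$-machinery: the three exponents of $|\log\delta|$ appearing in the statement---$n(p-2)/(2p\gamma)$ in $(1)$ for $p\ge 2$, $(n+\gamma)(p-2)/(2p\gamma)$ in $(1)$ for $p\le 2$, and $(n+1)(p-2)/(2p\gamma)$ in $(2)$---each arise from balancing the loss as $\eta\nearrow\gamma$ in the weighted $\bar\partial$-inequality against the contribution of the boundary collar in the split integral, and getting each of the three numerators ($n$, $n+\gamma$, $n+1$) correctly distinguishes the three regimes. The outer architecture---test against $f_2$ or an Ohsawa--Takegoshi extension, split interior versus collar, and optimize collar thickness---is classical, but the sharp form of the logarithmic correction and of the exponent $(n+1)(p-2)/(2p)$ in $(2)$ will require the careful quantitative accounting that occupies the main work of the proof.
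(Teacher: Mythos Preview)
Your scheme is a reasonable heuristic, but it is not the paper's approach, and as written it has a genuine gap: there is no mechanism in your outline that ties the collar thickness $s$ to the location of the basepoint $z$. In the upper bound of $(1)$ for $p\ge 2$, you propose to start from an arbitrary $f\in A^p(\Omega)$ with $f(z)=1$, invoke $1\le K_2(z)\|f\|_2^2$, and then bound $\|f\|_2$ by splitting $\Omega=\{\delta>s\}\cup\{\delta\le s\}$. But for a general $f$ with $\|f\|_p=1$ the best you can say on the collar is $\int_{\delta<s}|f|^2\le |\{\delta<s\}|^{1-2/p}\lesssim s^{1-2/p}$ and on the interior $\int_{\delta>s}|f|^2\lesssim 1$; optimizing in $s$ yields only a constant, not $\delta(z)^{1-2/p}$. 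The point $z$ has simply disappeared from the estimate. Likewise, in the lower bound of $(1)$ for $p\le 2$ you need a quantitative upper bound on $\int_{\delta<s}K_2(w)^{p/2}\,dV$; on a general $C^2$ pseudoconvex domain no such bound with the correct $s$-dependence is available from the Diederich--Forn\ae ss exhaustion alone, and ``H\"ormander's weighted machinery'' does not produce pointwise or integrated \emph{upper} bounds on $K_2$.

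What the paper actually does is localize via the \emph{pluricomplex Green function} $g_\Omega(\cdot,z)$. Blocki's estimate confines the sublevel set $\{g_\Omega(\cdot,z)\le -1\}$ to a collar of thickness $\sim\delta(z)$ up to powers of $|\log\delta(z)|$ (this is precisely where both the $\delta(z)^{1/2-1/p}$ factor and the exponent $n/\gamma$ in the logarithm enter). One then cuts off the $A^p$-extremal $f$ by $\chi(-\log(-g_\Omega(\cdot,z)))$ and solves $\bar\partial u=\bar\partial(\chi f)$ with a Donnelly--Fefferman estimate, producing an $L^2$ holomorphic function $F$ with $F(z)=f(z)$ and $\|F\|_2^2$ controlled by $\int_{\{g_\Omega\le -1\}}|f|^2\lesssim \delta(z)^{1-2/p}|\log\delta(z)|^{n(p-2)/(p\gamma)}$ via H\"older on this sublevel set. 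The lower bound in $(1)$ and the bound in $(2)$ use the same Green-function localization but route the comparison through auxiliary \emph{weighted} Bergman kernels (with weights $(-\rho+\varepsilon)^\alpha$ and $\delta^{-\alpha}$ respectively), rather than through the off-diagonal kernel or an Ohsawa--Takegoshi extension of a point. Your Ohsawa--Takegoshi idea for $(2)$ is in the right spirit---the exponent $(n+1)$ does arise from a polydisc of volume $\asymp\delta(z)^{n+1}$---but the paper obtains it via the sub-mean-value inequality for $|f|^2\delta^{-\alpha}$ on such polydiscs, not from an extension theorem. In short: the missing ingredient throughout is the Green-function/$\bar\partial$ localization, without which the estimates cannot see $\delta(z)$.
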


It is a straightforward consequence of the Ohsawa-Takegoshi extension theorem \cite{OT} that $K_2(z)\gtrsim \delta^{-2}$ holds for bounded pseudoconvex domains with $C^2-$boundary. Thus

\begin{corollary}
If $\Omega$ is a bounded pseudoconvex domain with $C^2-$boundary,     $K_p(z)$ is an exhaustion function       for every $2\le p<2+\frac2n$.
        \end{corollary}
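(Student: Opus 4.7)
The plan is to combine part (2) of the preceding theorem with the Ohsawa--Takegoshi lower bound $K_2(z)\gtrsim \delta(z)^{-2}$ that the excerpt records just above the Corollary; no genuinely new input is needed. First I would fix $p\in[2,2+2/n)$ and invoke part (2) of the preceding theorem to obtain a constant $C=C_{p,\Omega}>0$ such that, in a one-sided neighborhood of $\partial\Omega$,
\[
K_p(z)^{1/p}\;\ge\;C^{-1}\,K_2(z)^{1/2}\,\delta(z)^{\frac{(n+1)(p-2)}{2p}}\,|\log\delta(z)|^{-\frac{(n+1)(p-2)}{2p\gamma}}.
\]
Substituting $K_2(z)^{1/2}\gtrsim \delta(z)^{-1}$ then gives
\[
K_p(z)^{1/p}\;\gtrsim\;\delta(z)^{\alpha_p}\,|\log\delta(z)|^{-\beta_p},\qquad \alpha_p=\frac{(n-1)p-2(n+1)}{2p},\ \ \beta_p\ge 0.
\]

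Next I would carry out the arithmetic check. The hypothesis $p<2+2/n$ rewrites as $np<2(n+1)$, and since $(n-1)p<np$ for $p>0$ this yields $(n-1)p<2(n+1)$, hence $\alpha_p<0$. A strictly negative power of $\delta$ dominates the logarithmic factor, so $K_p(z)\to+\infty$ as $z\to \partial\Omega$.

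Finally, since $K_p$ is continuous on $\Omega$ (indeed locally Lipschitz, as recorded earlier in the paper), the sub-level set $\{z\in\Omega:K_p(z)\le M\}$ is closed in $\Omega$ and, by the previous step, bounded away from $\partial\Omega$; consequently it is a compact subset of $\Omega$, which is precisely the definition of an exhaustion function.

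There is no substantial obstacle: the heavy lifting sits inside the preceding theorem and inside Ohsawa--Takegoshi. The Corollary is essentially the bookkeeping verification that $(n-1)p<2(n+1)$ whenever $p\in[2,2+2/n)$, so that the negative $\delta$-exponent one obtains after combining the two estimates indeed forces divergence at the boundary; a larger range of $p$ would require strengthening part (2) of the preceding theorem.
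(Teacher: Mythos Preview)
Your proposal is correct and follows exactly the route the paper intends: the paper simply records the Ohsawa--Takegoshi bound $K_2(z)\gtrsim\delta(z)^{-2}$ and writes ``Thus'' before the Corollary, so the argument is precisely the combination of part~(2) of the preceding theorem with this lower bound for $K_2$. Your arithmetic verification that $(n-1)p<2(n+1)$ whenever $p<2+2/n$, together with the continuity of $K_p$, makes explicit what the paper leaves implicit.
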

        
        \begin{remark}
    It was shown in  \cite{NZZ} that  $K_p(z)$ is  an exhaustion function for any $0<p<2$ and any bounded pseudoconvex domain. 
        \end{remark}
        
It is reasonable to ask the following
                  
\begin{problem}
  Is  $K_p(z)$  an exhaustion function  for any $p>2$?
\end{problem} 

\begin{remark}
The answer is affirmative when $\Omega$ is a simply-connected uniformly squeenzing domain or smooth strictly pseudoconvex domain (cf. \cite{DWZZ}). 
\end{remark}

We also define the $p-$Bergman projection  to be the metric projection  $P_p:L^p(\Omega)\rightarrow A^p(\Omega)$ for $1<p<\infty$.  It is\/ {\it nonlinear} in general for $p\neq 2$.  We show that  $P_p$ is locally H\"older continuous of order $1/p$ for $2< p<\infty$ and locally H\"older continuous of order $1/2$ for $1<p\le 2$; for any $f\in L^\infty(\Omega)$,  $\{P_p(f)\}_p$ contains a subsequence converging locally uniformly to some $h_\infty\in A^\infty(\Omega)$ which is an element of best approximation of $f$ in $L^\infty(\Omega)$.  Related questions concerning bases in $A^p(\Omega)$ are also investigated.     

 Most results in this paper extend to the $p-$Bergman space related to  Hermitian line bundles over complex manifolds. Nevertheless, we stick to the simplest case of bounded domains with trivial line bundle  in order to make the arguments as transparent as possible.

\section{Definitions and basic properties}

\subsection{The $p-$Bergman space}

For a domain $\Omega\subset\subset \mathbb C^n$ we define the $p-$Bergman space to be
$$
A^p(\Omega):=\left\{f\in \mathcal O(\Omega): \|f\|_p^p:=\int_\Omega |f|^p<\infty\right\}.
$$

\begin{proposition}[Bergman inequality]
For any compact set $S\subset \Omega$ there exists a constant $C_S>0$ such that
\begin{equation}\label{eq:BergIneq}
\sup_{S} |f|^p\le C_S \|f\|_p^p.
\end{equation}
\end{proposition}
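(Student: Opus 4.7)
The plan is to exploit the fact that for any holomorphic $f$ and any $p>0$, the function $|f|^p$ is plurisubharmonic, and then to apply the sub-mean-value property over a polydisc or ball strictly contained in $\Omega$. This reduces the estimate to a simple integral comparison.

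First I would observe that $\log|f|$ is plurisubharmonic on $\Omega$ (pluriharmonic away from the zero set of $f$), so $p\log|f|$ is plurisubharmonic, and therefore $|f|^p=\exp(p\log|f|)$ is plurisubharmonic as the composition of a convex non-decreasing function with a plurisubharmonic function. In particular $|f|^p$ is subharmonic in each variable separately (and also as a function on $\mathbb R^{2n}$).

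Next I would fix the geometry: given a compact $S\subset\Omega$, set
$$r_0:=\tfrac12\,\mathrm{dist}(S,\partial\Omega)>0,$$
and note that for every $z\in S$ the polydisc
$$P(z,r_0):=\{w\in\mathbb C^n:|w_j-z_j|<r_0,\ j=1,\dots,n\}$$
is contained in $\Omega$. Applying the sub-mean-value inequality to $|f|^p$ on $P(z,r_0)$ yields
$$|f(z)|^p\le \frac{1}{(\pi r_0^2)^n}\int_{P(z,r_0)}|f|^p\le \frac{1}{(\pi r_0^2)^n}\,\|f\|_p^p.$$
Taking the supremum over $z\in S$ gives \eqref{eq:BergIneq} with $C_S=(\pi r_0^2)^{-n}$.

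There is no serious obstacle here; the only subtlety worth flagging is that one needs $|f|^p$, rather than just $|f|$, to be (pluri)subharmonic, which is why the argument works uniformly in $p>0$ and reduces in one line to the standard mean-value estimate. For $p\ge 1$ one could alternatively invoke Jensen's inequality after the mean-value estimate for $|f|$, but the plurisubharmonic-composition argument treats all $p>0$ uniformly and is the cleanest route.
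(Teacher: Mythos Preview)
Your approach is essentially identical to the paper's: both apply the sub-mean-value inequality for the plurisubharmonic function $|f|^p$ over a polydisc centered at $z\in S$ and contained in $\Omega$. The only slip is quantitative: with $r_0=\tfrac12\,\mathrm{dist}(S,\partial\Omega)$ the polydisc $P(z,r_0)$ need not lie in $\Omega$ when $n\ge 5$, since a point of $P(z,r_0)$ can be at Euclidean distance up to $\sqrt{n}\,r_0$ from $z$; the paper takes polyradius $r/n$ with $r=d(S,\partial\Omega)$ (and $r_0=r/\sqrt{n}$ would also work), which guarantees the inclusion for all $n$.
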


\begin{proof}
Set $r=d(S,\partial \Omega)$. For any $z\in S$, we have $P(z,r/n):=\prod_{j=1}^n\Delta(z_j,r/n)\subset \Omega$. It follows directly from the mean-value inequality of plurisubharmonic functions that
\begin{equation}\label{eq:BergIneq_2}
|f(z)|^p \le \frac1{|P(z,r/n)|}\int_{P(z,r/n)} |f|^p \le \frac1{\pi^n(r/n)^{2n}} \|f\|_p^p.
\end{equation}
\end{proof}

\begin{proposition}\label{prop:Banach}
$A^p(\Omega)$ is a Banach space for $p\ge 1$.
\end{proposition}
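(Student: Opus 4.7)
The plan is to prove that $A^p(\Omega)$ is a closed linear subspace of the Banach space $L^p(\Omega)$, from which completeness follows immediately. Linearity is clear, so the whole game is closedness.

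First I would take a Cauchy sequence $\{f_k\}\subset A^p(\Omega)$. By completeness of $L^p(\Omega)$, there is $f\in L^p(\Omega)$ with $\|f_k-f\|_p\to 0$. The task is to produce a holomorphic representative of $f$.

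The key tool is the Bergman inequality \eqref{eq:BergIneq}, applied to the differences $f_k-f_j$: for every compact $S\subset\Omega$,
\[
\sup_{S}|f_k-f_j|^p \le C_S\|f_k-f_j\|_p^p,
\]
so $\{f_k\}$ is Cauchy in the topology of locally uniform convergence. By the Weierstrass theorem on uniform limits of holomorphic functions, it converges locally uniformly on $\Omega$ to some $g\in\mathcal O(\Omega)$. Locally uniform convergence implies $L^p_{\mathrm{loc}}$-convergence, so $f_k\to g$ in $L^p_{\mathrm{loc}}(\Omega)$. Since $f_k\to f$ in $L^p(\Omega)$ also gives convergence in $L^p_{\mathrm{loc}}$, uniqueness of limits yields $g=f$ a.e. Moreover, Fatou's lemma applied to $|g|^p = \lim|f_k|^p$ gives $\|g\|_p\le\liminf\|f_k\|_p<\infty$, so $g\in A^p(\Omega)$. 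Identifying $f$ with its holomorphic representative $g$, we conclude $f\in A^p(\Omega)$, proving closedness.

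There is no real obstacle here; the only subtle point is the routine identification of the $L^p$-limit with the locally uniform limit, which just uses that two elements of $L^p$ that agree locally in $L^1$ agree a.e. The hypothesis $p\ge 1$ is needed solely so that $L^p(\Omega)$ itself is a Banach space (triangle inequality); the Bergman inequality \eqref{eq:BergIneq_2} as stated works for all $p>0$, but for $0<p<1$ the ambient space is only an $F$-space.
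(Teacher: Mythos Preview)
Your argument is correct and follows essentially the same strategy as the paper: show $A^p(\Omega)$ is closed in $L^p(\Omega)$ via the Bergman inequality. The only cosmetic difference is that you apply the Bergman inequality to the differences $f_k-f_j$ to conclude the full sequence is uniformly Cauchy on compacta, whereas the paper invokes it to get a normal family and passes to a subsequence before identifying the limits; your version is slightly more direct but the substance is identical.
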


\begin{proof}
It suffices to verify that $A^p(\Omega)$ is a\/ {\it closed} subspace in $L^p(\Omega)$. Let $\{f_j\}\subset A^p(\Omega)$ satisfy $f_j\rightarrow f_0$ in $L^p(\Omega)$. By (\ref{eq:BergIneq}) we known that $\{f_j\}$ forms a normal family so that there exists a subsequence $f_{j_k}$ converging locally uniformly to some $\widehat{f}_0\in \mathcal O(\Omega)$. Fatou's lemma yields
$$
\|f_{j_k}-\widehat{f}_0\|_p \le \liminf_{m\rightarrow \infty} \|f_{j_k}-f_{j_m}\|_p\le \liminf_{m\rightarrow \infty} \left[ \|f_{j_k}-f_{0}\|_p+\|f_{j_m}-f_{0}\|_p\right]=\|f_{j_k}-f_{0}\|_p,
$$
which implies that $\widehat{f}_0\in A^p(\Omega)$ and $f_{j_k}\rightarrow \widehat{f}_0$ in $L^p(\Omega)$. Thus $f_0=\widehat{f}_0$ holds a.e. on $\Omega$.
\end{proof}

 Analogously, one can show that $A^p(\Omega)$ is a complete metric space for $0<p<1$, where the metric is given by $d(f_1,f_2):=\|f_1-f_2\|_p^p$.  By the Bergman inequality,  we see that for every fixed $z\in \Omega$ the linear functional $A^p(\Omega)\ni f\mapsto f(z)$ is continuous,  while $(L^p(\Omega))^\ast=\{0\}$ for $0<p<1$ (cf.  \cite{Day}).  Thus the Hahn-Banach extension property fails for the pair $A^p(\Omega)\subset L^p(\Omega)$,  where $0<p<1$.  
 
Recall that a Banach space $X$ is said to be \/{\it strictly convex} if for any $x,y\in X$ with $\|x\|=\|y\|=1$ such that $\|x+y\|=2$ we have $x=y$,  and $X$ is said to be\/ {\it uniformly convex} if given $\varepsilon>0$ there exists $\delta>0$ so that if $\|x\|=\|y\|=1$ and $\|x+y\|\ge 2-\delta$ then $\|x-y\|\le \varepsilon$.  Uniformly convex spaces are strictly convex,  but the converse is false.  It is well-known that if $1<p<\infty$ then $L^p(\Omega)$ is uniformly convex (cf.  \cite{Clarkson}),  so is $A^p(\Omega)$ since every subspace of a uniformly convex space must have the same property.   It is also known that $L^1(\Omega)$ is not strictly convex,  and $A^1(\Omega)$ is not uniformly convex in general.  However,  we still have

\begin{proposition}
$A^1(\Omega)$ is strictly convex.
\end{proposition}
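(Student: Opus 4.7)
The plan is to exploit the identity principle for holomorphic functions in order to circumvent the standard failure of strict convexity in $L^1$. Suppose $f, g \in A^1(\Omega)$ satisfy $\|f\|_1 = \|g\|_1 = 1$ and $\|f+g\|_1 = 2$. Integrating the pointwise triangle inequality $|f+g| \le |f|+|g|$ against the identity $\int_\Omega (|f|+|g|-|f+g|) = 0$, I would conclude that
$$|f(z)+g(z)| = |f(z)|+|g(z)| \quad \text{for a.e. } z \in \Omega.$$

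The standard characterization of equality in the complex triangle inequality says that $|a+b| = |a|+|b|$ iff $\overline{a}\,b \ge 0$. Applying this pointwise yields $\overline{f}g \ge 0$ a.e.\ on $\Omega$. Since $f$ and $g$ are continuous (being holomorphic), the function $\overline{f}g$ is continuous on $\Omega$, so this inequality in fact holds everywhere.

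Since $f \not\equiv 0$ and $\Omega$ is a connected domain in $\mathbb C^n$, the open set $U := \Omega \setminus f^{-1}(0)$ is connected: for $n=1$ the zero set is discrete, and for $n \ge 2$ the zero set is a complex analytic hypersurface of real codimension at least $2$. On $U$, the quotient $g/f$ is holomorphic, and the identity $g/f = \overline{f}g/|f|^2$ shows that it is real-valued and non-negative. A real-valued holomorphic function on a connected open set is constant, so $g = cf$ on $U$ with $c \ge 0$, and by continuity (or Riemann extension across $f^{-1}(0)$) this extends to all of $\Omega$. The normalization $\|g\|_1 = c\|f\|_1 = 1$ then forces $c=1$, i.e., $g = f$.

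The main conceptual point, and the only place where anything beyond a direct pointwise argument is needed, is recognizing why the classical $L^1$ counterexamples (two functions with disjoint supports) cannot occur in $A^1(\Omega)$: it is precisely the identity principle, manifested through the connectedness of $\Omega \setminus f^{-1}(0)$, that excludes them. Everything else reduces to the equality case in the complex triangle inequality and the fact that real-valued holomorphic functions are constant.
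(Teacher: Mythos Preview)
Your proof is correct and follows essentially the same route as the paper's: from $\|f+g\|_1=\|f\|_1+\|g\|_1$ both arguments deduce pointwise equality in the triangle inequality, form the quotient $f/g$ (or $g/f$), observe it is holomorphic and real-valued on a connected open set, and conclude it is constant. The paper phrases this last step as ``$f/g$ is a meromorphic function on $\Omega$ whose range is contained in $\mathbb R$, so it has to be a constant,'' whereas you spell out explicitly why $\Omega\setminus f^{-1}(0)$ is connected; the substance is the same.
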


 \begin{proof}
 Take $f,g\in A^1(\Omega)$ with $\|f\|_1=\|g\|_1=1$ and $\|f+g\|_1=2$.  We have 
 $$
 \|f+g\|_1=\|f\|_1+\|g\|_1,
 $$
 so that $f=\lambda g$ for some function $\lambda\ge 0$. Thus $\lambda=f/g$ is a meromorphic function on $\Omega$ whose range is contained in $\mathbb R$,  so it has to be a constant.  Since $\|f\|_1=\|g\|_1=1$,  so $\lambda=1$,  i.e.,  $f=g$.
\end{proof}

In \S\,8,  we will investigate $A^p(\Omega)$ in detail for $1<p<\infty$ from viewpoint of the geometry of Banach spaces.

\subsection{The $p-$Bergman kernel} For   a bounded domain in $\Omega\subset \mathbb C^n$, we consider   the following minimizing problem:
 \begin{equation}\label{eq:Min_1}
 m_p(z)=m_{\Omega,p}(z)=\inf\left\{\|f\|_p:f\in A^p(\Omega),f(z)=1\right\}.
 \end{equation}

 \begin{proposition}[Existence]
  There exists at least one minimizer in \eqref{eq:Min_1}.
  \end{proposition}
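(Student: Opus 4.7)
The plan is to use the direct method of the calculus of variations. First I would observe that the admissible set is nonempty and $m_p(z)$ is finite: the constant function $f\equiv 1$ satisfies $f(z)=1$ with $\|1\|_p=|\Omega|^{1/p}<\infty$, so $0\le m_p(z)\le |\Omega|^{1/p}$.

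Next, pick a minimizing sequence $\{f_j\}\subset A^p(\Omega)$ with $f_j(z)=1$ and $\|f_j\|_p\to m_p(z)$. The key ingredient is the Bergman inequality \eqref{eq:BergIneq}: for every compact set $S\subset\Omega$, one has $\sup_S|f_j|^p\le C_S\|f_j\|_p^p$, and the right-hand side is bounded since $\{\|f_j\|_p\}$ is a convergent sequence. Thus $\{f_j\}$ is locally uniformly bounded on $\Omega$, so by Montel's theorem it forms a normal family in $\mathcal O(\Omega)$. Extract a subsequence, still denoted $\{f_j\}$, that converges locally uniformly to some $f_0\in\mathcal O(\Omega)$. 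In particular, $f_0(z)=\lim f_j(z)=1$.

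It remains to verify $f_0\in A^p(\Omega)$ and $\|f_0\|_p\le m_p(z)$. Since $|f_j|^p\to|f_0|^p$ pointwise on $\Omega$, Fatou's lemma gives
\[
\int_\Omega|f_0|^p\le \liminf_{j\to\infty}\int_\Omega|f_j|^p=m_p(z)^p,
\]
so $f_0\in A^p(\Omega)$ is admissible and $\|f_0\|_p\le m_p(z)$. Combined with the reverse inequality from the definition of $m_p(z)$, this forces $\|f_0\|_p=m_p(z)$, i.e.\ $f_0$ is a minimizer.

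There is no genuine obstacle here; the argument is the standard direct method, and nothing in it uses $p\ge 1$, so it works uniformly for all $p>0$ (which matches the paper's assertion that existence holds in this range). Uniqueness is a separate issue and would come from strict convexity of $A^p(\Omega)$ for $p\ge 1$, but this proposition is only claiming existence.
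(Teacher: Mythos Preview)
Your proof is correct and follows essentially the same approach as the paper: take a minimizing sequence, use the Bergman inequality to invoke Montel's theorem, extract a locally uniformly convergent subsequence, and apply Fatou's lemma to conclude. The only addition you make is the explicit check that the admissible set is nonempty via the constant function $1$, which the paper leaves implicit.
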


  \begin{proof}
  Take $\{f_j\}\subset A^p(\Omega)$ such that $f_j(z)=1$ and $\|f_j\|_p\rightarrow m_p(z)$ as $j\rightarrow \infty$. The Bergman inequality implies that $\{f_j\}$ is a normal family so that there exists a subsequence $\{f_{j_k}\}$ which converges locally uniformly to some $f_0\in {\mathcal O}(\Omega)$. By Fatou's lemma, we have
  $$
  \int_\Omega |f_0|^p \le \liminf_{k\rightarrow \infty} \int_\Omega |f_{j_k}|^p=m_p(z)^p.
  $$
  On the other hand, since $f_j(z)=1$, we have $f_0(z)=1$ and
  $
  \|f_0\|_p=m_p(z),
  $
  i.e., $f_0$ is a minimizer.
  \end{proof}

  \begin{proposition}[Uniqueness]\label{prop:uniq}
  For $ p \ge 1$ there is only one minimizer in \eqref{eq:Min_1}.
  \end{proposition}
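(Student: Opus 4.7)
The plan is to deduce uniqueness from the strict convexity of $A^p(\Omega)$, which was established in the preceding discussion: for $1<p<\infty$, $A^p(\Omega)$ is uniformly (hence strictly) convex as a subspace of $L^p(\Omega)$, and $A^1(\Omega)$ was shown to be strictly convex in the previous proposition.

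Suppose $f_1, f_2 \in A^p(\Omega)$ are both minimizers, so $\|f_1\|_p = \|f_2\|_p = m_p(z)$ and $f_1(z) = f_2(z) = 1$. I would first handle the trivial case $m_p(z) = 0$: by the Bergman inequality \eqref{eq:BergIneq} applied on any compact set, $f_1 \equiv f_2 \equiv 0$, contradicting $f_j(z)=1$, so we may assume $m_p(z) > 0$. Next, consider the midpoint $g := (f_1 + f_2)/2$, which satisfies $g(z) = 1$ and is therefore admissible for the minimization problem. The triangle inequality gives $\|g\|_p \le m_p(z)$, while admissibility forces $\|g\|_p \ge m_p(z)$. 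Hence equality holds in the triangle inequality:
\begin{equation*}
\|f_1 + f_2\|_p = \|f_1\|_p + \|f_2\|_p = 2 m_p(z).
\end{equation*}

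Now I would normalize: set $\widetilde f_j := f_j / m_p(z)$, so $\|\widetilde f_1\|_p = \|\widetilde f_2\|_p = 1$ and $\|\widetilde f_1 + \widetilde f_2\|_p = 2$. Strict convexity of $A^p(\Omega)$ then yields $\widetilde f_1 = \widetilde f_2$, i.e., $f_1 = f_2$.

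There is no real obstacle here; the only subtlety is making sure $p=1$ is covered, which is precisely why the strict convexity of $A^1(\Omega)$ was proved first. For $p>1$ one could alternatively invoke uniform convexity directly, but the strict-convexity formulation gives a uniform proof for all $p \ge 1$.
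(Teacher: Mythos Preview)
Your proof is correct and follows essentially the same route as the paper: show the midpoint is again a minimizer, normalize, and invoke strict convexity of $A^p(\Omega)$. The only cosmetic difference is that the paper bounds $\|h\|_p^p$ via the convexity of $x\mapsto x^p$ rather than the triangle inequality, but the argument is otherwise identical.
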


 \begin{proof}
  Let $f_1,f_2$ be two minimizers of \eqref{eq:Min_1}. We take $h:=\frac{f_1+f_2}2$. Clearly, $h$ belongs to $A^p(\Omega)$ and satisfies $h(z)=1$.  By convexity of the function $\mathbb R^+\ni x\mapsto x^p$ we have
$$
m_p(z)^p\le \int_\Omega |h|^p \le \int_\Omega \frac{|f_1|^p+|f_2|^p}2 = m_p(z)^p,
$$  
so that $\|h\|_p=m_p(z)$.   Set $\widehat{f}_j=f_j/m_p(z)$ for $j=1,2$.  Then we have $\|\widehat{f}_1\|_p=\|\widehat{f}_2\|_p=1$ and $\|\widehat{f}_1+\widehat{f}_2\|_p=2$.  By  strict convexity of $A^p(\Omega)$ we have $\widehat{f}_1=\widehat{f}_2$,  so that $f_1=f_2$. 
      \end{proof}

      \begin{remark}
       It is not known whether the uniqueness result holds for\/ $0<p<1$. 
      \end{remark}

      Let $ m_p(\cdot,z)$ denote a minimizer in (\ref{eq:Min_1})  (one warning: $m_p(z,z)=1\neq m_p(z)$!).
      
      \begin{definition}
      We call $K_p(z):=m_p(z)^{-p}$ the $p-$Bergman kernel for $p>0$ and $K_p(z,w):=m_p(z,w)K_p(w)$ the off-diagonal $p-$Bergman kernel for $p\ge 1$.
      \end{definition}
      
      \begin{proposition}
      For every $p>0$ we have
      \begin{equation}\label{eq:max_1}
K_p(z)=\sup\left\{|f(z)|^p: f\in A^p(\Omega),\|f\|_p=1\right\}.
\end{equation}
      \end{proposition}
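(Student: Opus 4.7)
The statement is essentially a reformulation of the minimization problem \eqref{eq:Min_1} via rescaling, so the plan is to establish the two inequalities separately by moving between the constraint $f(z)=1$ and the normalization $\|f\|_p=1$.

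For the inequality $K_p(z)\ge \sup\{|f(z)|^p:\|f\|_p=1\}$, I take an arbitrary $g\in A^p(\Omega)$ with $\|g\|_p=1$. If $g(z)=0$ there is nothing to show (since $K_p(z)>0$ as the minimizer exists). Otherwise I form the rescaled function $f:=g/g(z)$, which belongs to $A^p(\Omega)$ and satisfies the constraint $f(z)=1$. By the definition of $m_p(z)$ in \eqref{eq:Min_1},
\[
m_p(z)\le \|f\|_p=\frac{\|g\|_p}{|g(z)|}=\frac{1}{|g(z)|},
\]
which rearranges to $|g(z)|^p\le m_p(z)^{-p}=K_p(z)$.

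For the reverse inequality, I use the existence of a minimizer: taking the function $m_p(\cdot,z)$, which satisfies $m_p(z,z)=1$ and $\|m_p(\cdot,z)\|_p=m_p(z)$, I set
\[
g_0:=\frac{m_p(\cdot,z)}{m_p(z)}\in A^p(\Omega).
\]
Then $\|g_0\|_p=1$ and $|g_0(z)|^p=m_p(z)^{-p}=K_p(z)$, so $g_0$ realizes the supremum. This shows the supremum is attained and equals $K_p(z)$.

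There is no serious obstacle: the only point deserving comment is that the supremum is actually a maximum, which follows from the existence of the minimizer proved earlier. The argument works uniformly for all $p>0$ and does not require uniqueness, so it is valid in the full range $p>0$ even though the minimizer itself is only unique for $p\ge 1$.
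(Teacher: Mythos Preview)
Your proof is correct and follows essentially the same approach as the paper: both arguments establish the two inequalities by rescaling between the normalization $\|f\|_p=1$ and the constraint $f(z)=1$, using the minimizer $m_p(\cdot,z)$ to show the supremum is attained.
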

      
      \begin{proof}
      Take $f_0\in A^p(\Omega)$ with $f_0(z)=1$ and $\|f_0\|_p=m_p(z)$. We have
  $$
\sup_{f\in A^p(\Omega)} \frac{|f(z)|^p}{\|f\|_p^p}\ge \frac{|f_0(z)|^p}{\|f_0\|_p^p}= m_p(z)^{-p}= K_p(z).
  $$
  On the other hand, for $f\in A^p(\Omega)$ with $\|f\|_p=1$ and $f(z)\neq0$ we see that $\widehat{f}:=f/f(z)\in A^p(\Omega)$
 satisfies $\widehat{f}(z)=1$. It follows that
 $$
 m_p(z) \le \|\widehat{f}\|_p=1/|f(z)|,\ \ \ \text{i.e.},\  |f(z)|^p \le m_p(z)^{-p}=K_p(z).
 $$
If $f(z)=0$, then $|f(z)|^p=0\leq K_p(z)$.
      \end{proof}
      
      In particular, if  $\Omega\subset \Omega'$, then
$$
K_{\Omega,p}(z)\ge K_{\Omega',p}(z)\ \ \ \text{for\ }z\in \Omega.
$$
Moreover, we have
\begin{equation}\label{eq:BergIneq_3}
{|\Omega|}^{-1} \le K_p(z)\le C_n \delta(z)^{-2n}
\end{equation}
in view of (\ref{eq:BergIneq_2}), where $\delta=\delta_\Omega$ denotes the boundary distance.  

Let us present a few elementary properties (some of them are known).
    
        \begin{proposition}[Transformation rule]\label{prop:trans_2}
Let $F:\Omega_1\rightarrow \Omega_2$ be a biholomorphic mapping between bounded simply-connected domains. Let $J_F$ denote the complex Jacobian of $F$. Then 
 \begin{eqnarray}
 m_{\Omega_1,p}(z) & = & m_{\Omega_2,p}(F(z)) |J_F(z)|^{-2/p},\ \ \ p>0, \label{eq:trans_1} \\
 K_{\Omega_1,p}(z) & = & K_{\Omega_2,p}(F(z))|J_F(z)|^2,\ \ \ p>0, \label{eq:trans_2}\\
  m_{\Omega_1,p}(z,w) & = & m_{\Omega_2,p}(F(z),F(w)) J_F(z)^{\frac2p} J_F(w)^{-\frac2p},\ \ \ p\ge 1,\label{eq:trans_3}\\
  K_{\Omega_1,p}(z,w) & = & K_{\Omega_2,p}(F(z),F(w)) J_F(z)^{\frac2p} J_F(w)^{1-\frac2p}\,\overline{J_F(w)},\ \ \ p\ge 1\label{eq:trans_4}.
    \end{eqnarray}
  Moreover,  equalities hold for arbitrary bounded domains when  $2/p\in \mathbb Z^+$.
\end{proposition}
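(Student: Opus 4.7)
The plan is to reduce all four identities to a single isometric pullback operator between the $p$-Bergman spaces, coupled with Proposition~\ref{prop:uniq} on uniqueness of minimizers. Since $J_F$ is a nowhere-vanishing holomorphic function on $\Omega_1$, simple connectivity of $\Omega_1$ (equivalently of $\Omega_2$) lets me pick a holomorphic branch $G$ on $\Omega_1$ satisfying $G^p=J_F^{\,2}$, i.e.\ $G=J_F^{2/p}$ in a consistent single-valued sense; when $2/p\in\mathbb Z^+$ the function $G:=J_F^{2/p}$ is unambiguous on any $\Omega_1$. Define
$$T:A^p(\Omega_2)\longrightarrow A^p(\Omega_1),\qquad (Tg)(z):=g(F(z))\,G(z).$$
The change of variables formula yields
$$\int_{\Omega_1}|Tg|^p=\int_{\Omega_1}|g\circ F|^p|J_F|^2=\int_{\Omega_2}|g|^p,$$
so $T$ is an isometric bijection, with inverse built symmetrically from $F^{-1}$.

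For \eqref{eq:trans_1} and \eqref{eq:trans_2}, I would push competitors back and forth. Any $f\in A^p(\Omega_1)$ with $f(z)=1$ gives $G(z)\,T^{-1}f\in A^p(\Omega_2)$ with value $1$ at $F(z)$ and $L^p$-norm $|G(z)|\|f\|_p=|J_F(z)|^{2/p}\|f\|_p$, whence $m_{\Omega_2,p}(F(z))\le|J_F(z)|^{2/p}\,m_{\Omega_1,p}(z)$. Applying the same construction with the roles of $\Omega_1$ and $\Omega_2$ interchanged yields the reverse inequality and hence \eqref{eq:trans_1}; taking the $-p$-th power gives \eqref{eq:trans_2}.

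For \eqref{eq:trans_3} and \eqref{eq:trans_4}, assume $p\ge 1$ so that the minimizer in \eqref{eq:Min_1} is unique by Proposition~\ref{prop:uniq}. Starting from the unique minimizer $m_{\Omega_2,p}(\cdot,F(w))$, I form the candidate
$$\widetilde m(z):=m_{\Omega_2,p}(F(z),F(w))\,\frac{G(z)}{G(w)}\in A^p(\Omega_1),$$
which satisfies $\widetilde m(w)=1$ and has $L^p$-norm $|G(w)|^{-1}m_{\Omega_2,p}(F(w))=m_{\Omega_1,p}(w)$ by \eqref{eq:trans_1} and the isometry of $T$. Uniqueness forces $\widetilde m=m_{\Omega_1,p}(\cdot,w)$; writing $G(z)/G(w)=J_F(z)^{2/p}J_F(w)^{-2/p}$ produces \eqref{eq:trans_3}, and multiplying through by $K_{\Omega_1,p}(w)=K_{\Omega_2,p}(F(w))|J_F(w)|^2$ and factoring delivers \eqref{eq:trans_4}.

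The only genuine obstacle is the global existence of the branch $G$: if $2/p$ is not a positive integer then $J_F^{2/p}$ is only locally well-defined, and simple connectivity of $\Omega_1$ is exactly what is required to lift $\log J_F$ to a single-valued holomorphic function on $\Omega_1$, which is why the hypotheses split into the two stated cases. Once $G$ is in hand, every identity reduces mechanically to the isometry of $T$ together with the uniqueness of minimizers.
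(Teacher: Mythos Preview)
Your proof is correct and follows essentially the same approach as the paper: both arguments hinge on the fact that simple connectivity allows a global holomorphic branch of $J_F^{2/p}$, use the resulting isometry between $A^p(\Omega_1)$ and $A^p(\Omega_2)$ to push competitors across and obtain \eqref{eq:trans_1}--\eqref{eq:trans_2}, and then invoke uniqueness of the minimizer (Proposition~\ref{prop:uniq}) to identify the transported minimizer with $m_{\Omega_1,p}(\cdot,w)$ for \eqref{eq:trans_3}--\eqref{eq:trans_4}. Your packaging via the explicit operator $T$ and branch $G$ is slightly more streamlined than the paper's presentation, but the mathematical content is identical.
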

\begin{proof}
Since
$$
\int_{\Omega_2} |f_2|^p = \int_{\Omega_1} |f_2\circ F|^p |J_F|^2,
$$
we conclude that
$
f_2\in A^p(\Omega_2)
$
if and only if $ {f}_1:=f_2 \circ F\cdot J_F^{2/p}\in A^p(\Omega_1)$ (only here we have to use the assumption that $\Omega_1$ is simply-connected). If $f_2(F(z))=1$, then $f_1(z)J_F(z)^{-2/p}=1$, so that
$$
m_{\Omega_1,p}(z)^p \le |J_F(z)|^{-2}\,\int_{\Omega_1} |f_1|^p =  |J_F(z)|^{-2}\,\int_{\Omega_2} |f_2|^p.
$$
Take minimum over $f_2$, we get
$$
m_{\Omega_1,p}(z)^p \le |J_F(z)|^{-2} m_{\Omega_2,p}(F(z))^p.
$$
Consider $F^{-1}$ instead of $F$, we get the inverse inequality in \eqref{eq:trans_1}.

Next, we define for fixed $w\in \Omega_1$, a holomorphic function by 
$$
f_1(z):=m_{\Omega_2,p}(F(z),F(w))  J_F(z)^{\frac2p} J_F(w)^{-\frac2p}.
$$
Clearly, we have $f_1(w)=1$ and
$$
\int_{\Omega_1}|f_1|^p = |J_F(w)|^{-2} \int_{\Omega_2}| m_{\Omega_2,p}(\cdot,F(w)) |^p =  |J_F(w)|^{-2} m_{\Omega_2,p}(F(w))^p=m_{\Omega_1,p}(w)^p
$$
in view of  \eqref{eq:trans_1}.  (\ref{eq:trans_3}) follows immediately from Proposition \ref{prop:uniq}.

The remaining equalities follow from the relations 
$$
K_p(z)=m_p(z)^{-p}\ \ \  \text{and}\ \ \  K_p(z,w)=m_p(z,w)K_p(w).
$$
\end{proof}

 \begin{remark}
  The simply-connected hypothesis can not be removed  (see \cite{NZZ}, Remark 2.3).
 \end{remark}

 \begin{proposition}[Product rule]\label{prop:product_1}
Let $\Omega'$ and $\Omega''$ be  bounded domains in\/ $\mathbb C^{n}$ and\/ $\mathbb C^{m}$ respectively. Set $\Omega=\Omega'\times \Omega''$ and $z=(z',z'')$. Then we have
\begin{eqnarray*}
m_{\Omega,p}(z) & = & m_{\Omega',p}(z')\cdot m_{\Omega'',p}(z''),\ \ \  p>0,\\
m_{\Omega,p}(z,w) & = & m_{\Omega',\,p}(z',w')\cdot m_{\Omega'',\,p}(z'',w''), \ \ \ p\ge 1.
\end{eqnarray*}
The same conclusions also hold for $K_p$.
\end{proposition}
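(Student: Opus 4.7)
The plan is to prove the diagonal identity $m_{\Omega,p}(z)=m_{\Omega',p}(z')\cdot m_{\Omega'',p}(z'')$ first, then bootstrap to the off-diagonal identity via the uniqueness theorem, and finally translate everything to $K_p$ using the definitions $K_p(z)=m_p(z)^{-p}$ and $K_p(z,w)=m_p(z,w)K_p(w)$.

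For the upper bound $m_{\Omega,p}(z)\le m_{\Omega',p}(z')\cdot m_{\Omega'',p}(z'')$, I would take minimizers $f'\in A^p(\Omega')$ and $f''\in A^p(\Omega'')$ with $f'(z')=1,\,f''(z'')=1$ and form the tensor product $f(w',w''):=f'(w')f''(w'')$. Then $f\in\mathcal O(\Omega)$, $f(z)=1$, and Fubini's theorem gives $\|f\|_p^p=\|f'\|_p^p\cdot\|f''\|_p^p=m_{\Omega',p}(z')^p\, m_{\Omega'',p}(z'')^p$, which feeds directly into the definition of $m_{\Omega,p}(z)$.

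For the reverse inequality, let $f\in A^p(\Omega)$ be any candidate with $f(z)=1$. For a.e.\ $w'\in\Omega'$ the slice $F_{w'}(w''):=f(w',w'')$ belongs to $A^p(\Omega'')$ by Fubini, so by the defining property of $m_{\Omega'',p}(z'')$ applied to $F_{w'}/F_{w'}(z'')$ (when the denominator is nonzero, and trivially otherwise since the right-hand side below vanishes) we have
\[
\int_{\Omega''}|f(w',w'')|^p\,dV(w'')\ \ge\ |f(w',z'')|^p\,m_{\Omega'',p}(z'')^p.
\]
The function $g(w'):=f(w',z'')$ is holomorphic on $\Omega'$, satisfies $g(z')=1$, and the Bergman inequality on $\Omega''$ guarantees $g\in A^p(\Omega')$. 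Integrating the previous display in $w'$ and applying the $\Omega'$-minimality to $g$ yields $\|f\|_p^p\ge m_{\Omega',p}(z')^p\cdot m_{\Omega'',p}(z'')^p$. Taking the infimum over $f$ completes the diagonal identity. The step I expect to require the most care is verifying that $g\in A^p(\Omega')$ and that the sliced inequality is valid for almost every $w'$, since the identity $f(w',z'')$ is defined pointwise rather than as an $L^p$ limit; the Bergman inequality from Proposition 2.1 is what makes this routine.

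For the off-diagonal identity (which requires $p\ge 1$), I would take the unique minimizers $m_{\Omega',p}(\cdot,w')$ and $m_{\Omega'',p}(\cdot,w'')$ and consider their tensor product $F(u',u''):=m_{\Omega',p}(u',w')\,m_{\Omega'',p}(u'',w'')$. Then $F\in\mathcal O(\Omega)$, $F(w',w'')=1$, and by Fubini together with the diagonal identity just established,
\[
\|F\|_p^p=m_{\Omega',p}(w')^p\, m_{\Omega'',p}(w'')^p=m_{\Omega,p}(w)^p.
\]
Hence $F$ is a minimizer of the problem defining $m_{\Omega,p}(\cdot,w)$, and Proposition~\ref{prop:uniq} forces $F\equiv m_{\Omega,p}(\cdot,w)$; evaluating at $u=z$ gives the desired product formula. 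Finally, the statements for $K_p$ follow mechanically: $K_{\Omega,p}(z)=m_{\Omega,p}(z)^{-p}$ inverts the diagonal product, and $K_{\Omega,p}(z,w)=m_{\Omega,p}(z,w)K_{\Omega,p}(w)$ combines the off-diagonal product with the diagonal product factor by factor.
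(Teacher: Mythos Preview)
Your proposal is correct and follows essentially the same approach as the paper: tensor products of minimizers for the upper bound, an iterated slicing argument for the lower bound, and uniqueness of the minimizer for the off-diagonal identity. The only cosmetic difference is that the paper phrases the lower bound via the maximizing characterization of $K_p$ (applying $|h(z',z'')|^p\le K_{\Omega',p}(z')\int_{\Omega'}|h(\cdot,z'')|^p$ and then the analogous bound on $\Omega''$), whereas you phrase it via the minimizing characterization of $m_p$; these are the same inequality up to taking reciprocals.
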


\begin{proof}
For fixed $z'\in \Omega'$ and $z''\in \Omega''$ we take $f_1\in A^p(\Omega')$ and $f_2\in A^p(\Omega'')$ such that $f_1(z')= f_2(z'')=1$ and
$$
m_{\Omega',p}(z')=\|f_1\|_p,\ \ \ m_{\Omega'',p}(z'')=\|f_2\|_p.
$$
Fubini's theorem gives
$$
\int_{\zeta'\in \Omega'}\int_{\zeta''\in \Omega''} |f_1(\zeta')f_2(\zeta'')|^p =\|f_1\|_p^p\cdot \|f_2\|_p^p=m_{\Omega',p}(z')^p\cdot m_{\Omega'',p}(z'')^p. 
$$
Thus
$$
m_{\Omega,p}(z)\le m_{\Omega',p}(z')\cdot m_{\Omega'',p}(z'').
$$
On the other hand, for every $h\in A^p(\Omega)$  we have
\begin{eqnarray*}
 |h(z',z'')|^p & \le & K_{\Omega',p}(z')\cdot \int_{\Omega'} |h(\cdot,z'')|^p\\
 & \le & K_{\Omega',p}(z')\cdot K_{\Omega'',p}(z'')\cdot \int_{\Omega'} \int_{\Omega''} |h|^p,
  \end{eqnarray*}
  so that
  $$
K_{\Omega,p}(z)\le K_{\Omega',p}(z')\cdot K_{\Omega'',p}(z'').
$$
Since $K_p(z)=m_p(z)^{-p}$, the first equality follows immediately.

Next, we note that  for fixed $w\in \Omega$ the function
$$
f_0(z):=m_{\Omega',\,p}(z',w')\cdot m_{\Omega'',\,p}(z'',w'')
$$
is holomorphic on $\Omega$ and satisfies $f_0(w)=1$,
\begin{eqnarray*}
\int_\Omega |f_0|^p & = & \int_{\Omega'} |m_{\Omega',\,p}(\cdot,w')|^p\,\int_{\Omega'} |m_{\Omega'',\,p}(\cdot,w'')|^p\\
& = & m_{\Omega',\,p}(w')^p\,m_{\Omega'',\,p}(w'')^p\\
& = & m_{\Omega,p}(w)^p.
\end{eqnarray*}
 By uniqueness of the minimizer we immediately get the  the second equality.
\end{proof}

\begin{proposition}\label{prop:ball}
For the unit ball\/ $\mathbb B^n\subset \mathbb C^n$ we have
\begin{equation}\label{eq:ball}
K_p(z,w)=K_{\mathbb B^n,\,p}(z,w)=\frac{n!}{\pi^n}\,\frac{(1-|w|^2)^{(n+1)(\frac2p-1)}}{(1-\langle z,w\rangle)^{\frac{2(n+1)}p}},
\end{equation}
where $\langle{z,w}\rangle:=\sum^n_{j=1}z_j\bar{w}_j$.
\end{proposition}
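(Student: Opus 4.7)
The plan is to exploit the symmetries of $\mathbb{B}^n$ together with the transformation rule \eqref{eq:trans_4}. First I compute $K_p(\zeta,0)$: by unitary invariance of $\mathbb{B}^n$ (every $U\in U(n)$ fixes $0$ and has $|J_U|\equiv 1$), applying \eqref{eq:trans_3} with $F=U$ and $w=0$ gives $m_p(z,0)=m_p(Uz,0)$, so $m_p(\cdot,0)$ is holomorphic and $U(n)$-invariant. Expanding in a Taylor series about $0$, only the constant term survives, and the normalization $m_p(0,0)=1$ forces $m_p(\cdot,0)\equiv 1$. Hence $m_p(0)=|\mathbb{B}^n|^{1/p}=(\pi^n/n!)^{1/p}$ and
$$K_p(\zeta,0)=m_p(\zeta,0)\,K_p(0)=\frac{n!}{\pi^n}\qquad\text{for every }\zeta\in\mathbb{B}^n.$$

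Next, for general $w\in\mathbb{B}^n$ I apply \eqref{eq:trans_4} to a M\"obius automorphism $\phi_w:\mathbb{B}^n\to\mathbb{B}^n$ with $\phi_w(w)=0$; the ball is simply-connected, so the rule is available for every $p\ge 1$. Plugging in $K_p(\phi_w(z),0)=n!/\pi^n$ and regrouping,
$$K_p(z,w)=\frac{n!}{\pi^n}\,J_{\phi_w}(z)^{2/p}\,J_{\phi_w}(w)^{1-2/p}\,\overline{J_{\phi_w}(w)}=\frac{n!}{\pi^n}\left(\frac{J_{\phi_w}(z)}{J_{\phi_w}(w)}\right)^{\!2/p}|J_{\phi_w}(w)|^2.$$
I rewrite things in terms of the ratio $J_{\phi_w}(z)/J_{\phi_w}(w)$ because it equals $1$ at $z=w$ and is nowhere-vanishing on the simply-connected ball, so its $(2/p)$-th power is unambiguously defined via the branch normalized to equal $1$ at $z=w$; this cleanly sidesteps branch issues for $J_{\phi_w}^{2/p}$ when $2/p\notin\mathbb{Z}$.

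The remaining ingredient is the $p=2$ case of \eqref{eq:trans_4}, which is the classical Bergman transformation formula. Combined with the already-available expression $K_2(z,w)=\frac{n!}{\pi^n}(1-\langle z,w\rangle)^{-(n+1)}$, it gives
$$J_{\phi_w}(z)\,\overline{J_{\phi_w}(w)}=\frac{K_2(z,w)}{K_2(\phi_w(z),0)}=\frac{1}{(1-\langle z,w\rangle)^{n+1}}.$$
Setting $z=w$ yields $|J_{\phi_w}(w)|^2=(1-|w|^2)^{-(n+1)}$, and dividing gives $J_{\phi_w}(z)/J_{\phi_w}(w)=(1-|w|^2)^{n+1}/(1-\langle z,w\rangle)^{n+1}$. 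Substitution collapses the displayed expression of the previous paragraph to
$$K_p(z,w)=\frac{n!}{\pi^n}\cdot\frac{(1-|w|^2)^{2(n+1)/p}}{(1-\langle z,w\rangle)^{2(n+1)/p}}\cdot\frac{1}{(1-|w|^2)^{n+1}}=\frac{n!}{\pi^n}\,\frac{(1-|w|^2)^{(n+1)(2/p-1)}}{(1-\langle z,w\rangle)^{2(n+1)/p}},$$
which is the claimed formula. The main (but quite mild) obstacle is keeping the $(2/p)$-th power branches consistent when $2/p\notin\mathbb{Z}$, and this is handled once and for all by the normalization above.
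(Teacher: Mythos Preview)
Your proof is correct and follows the same overall strategy as the paper: compute $K_p(\cdot,0)$ first, then transport to general $w$ via an automorphism and the transformation rule \eqref{eq:trans_4}. The execution differs in two places. First, to show $m_p(\cdot,0)\equiv 1$, the paper uses the sub-mean-value inequality directly (any $f$ with $f(0)=1$ has $\|f\|_p^p\ge |\mathbb{B}^n|$, with equality for $f\equiv 1$), while you use unitary invariance of the minimizer together with a Taylor-series argument; both are short and valid. Second, and more notably, the paper writes down an explicit M\"obius map $F_a$ sending $(a,0')\mapsto 0$, computes $J_{F_a}$ by hand, and then reduces general $w$ to this case by a further unitary rotation, whereas you bypass all explicit Jacobian computation by reading off $J_{\phi_w}(z)\,\overline{J_{\phi_w}(w)}$ and $|J_{\phi_w}(w)|^2$ from the classical $K_2$ formula via the $p=2$ transformation law. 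Your route is cleaner and coordinate-free, and your handling of the branch of the $(2/p)$-th power via the ratio $J_{\phi_w}(z)/J_{\phi_w}(w)$ is more careful than the paper's; the trade-off is that you import the $K_2$ formula for the ball as a known input, while the paper's argument is self-contained.
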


\begin{proof}
For any $f\in A^p(\mathbb B^n)$ with $f(0)=1$ we have
$$
1=|f(0)|^p \le \frac1{|\mathbb B^n|} \int_{\mathbb B^n} |f|^p,
$$
while for $f_0\equiv 1$, 
$$
\int_{\mathbb B^n} |f_0|^p=|\mathbb B^n|\le \int_{\mathbb B^n} |f|^p.
$$
Thus $f_0$ is a minimizer at $0$, so that
$$
m_p(\cdot,0)=f_0(\cdot)\equiv 1,
$$
and
$$
K_p(\cdot,0)=\frac{m_p(\cdot,0)}{m_p(0)^p}=\frac1{|\mathbb B^n|}=\frac{n!}{\pi^n}.
$$
For $a\in \Delta$ we set $w_a:=(a,0')$. Consider the following automorphism of $\mathbb B^n$
$$
F_a:z\mapsto \left(\frac{z_1-a}{1-\bar{a}z_1},\frac{\sqrt{1-|a|^2}\,z'}{1-\bar{a}z_1}\right).
$$
A straightforward calculation shows
$$
J_{F_a}(z)=\frac{(1-|a|^2)^{\frac{n+1}2}}{(1-\bar{a}z_1)^{n+1}},\ \ \ J_{F_a}(w_a)=(1-|a|^2)^{-\frac{n+1}2}.
$$
It follows  that
\begin{eqnarray*}
K_p(z,w_a)  = K_p(z,F^{-1}_a(0)) & = & K_p(F_a(z),0) J_{F_a}(z)^{\frac2p} J_{F_a}(w_a)^{1-\frac2p}\overline{J_{F_a}(w_a)} \\
& = & \frac{n!}{\pi^n} \frac{(1-|a|^2)^{\frac{n+1}p}}{(1-\bar{a}z_1)^{\frac{2(n+1)}p}} (1-|a|^2)^{-(n+1)(1-\frac1p)}\\
& = & \frac{n!}{\pi^n} \frac{(1-|w_a|^2)^{(n+1)(\frac2p-1)}}{(1-\langle z,w_a\rangle)^{\frac{2(n+1)}p}}.
\end{eqnarray*}
For general $w\in \mathbb B^n$ we take a unitary transformation ${\mathcal U}$ with ${\mathcal U}(w)=(|w|,0')$. Thus
\begin{eqnarray*}
K_p(z,w) & = & K_p(\mathcal U(z),\mathcal U(w)) J_{\mathcal U}(z)^{\frac2p} J_{\mathcal U}(w)^{1-\frac2p}\overline{J_{\mathcal U}(w)}\\
& = & \frac{n!}{\pi^n} \frac{(1-|\mathcal U(w)|^2)^{(n+1)(\frac2p-1)}}{(1-\langle \mathcal U(z),\mathcal U(w)\rangle)^{\frac{2(n+1)}p}}\\
& = & \frac{n!}{\pi^n} \frac{(1-|w|^2)^{(n+1)(\frac2p-1)}}{(1-\langle z,w\rangle)^{\frac{2(n+1)}p}}.
\end{eqnarray*}
\end{proof}

This proposition combined with the product rule gives

\begin{proposition}\label{prop:polydisc}
For the unit polydisc $\Delta^n\subset \mathbb C^n$ we have
\begin{equation}\label{eq:polydisc}
K_p(z,w)=K_{\Delta^n,\,p}(z,w)=\frac1{\pi^n} \prod_{j=1}^n \frac{(1-|w_j|^2)^{\frac4p-2}}{(1-\overline{w}_j z_j)^{\frac4p}}.
\end{equation}
\end{proposition}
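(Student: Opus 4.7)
The plan is to reduce the polydisc formula to the $n=1$ case of Proposition \ref{prop:ball} and then iterate the product rule from Proposition \ref{prop:product_1}.

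First, I would specialize the ball formula \eqref{eq:ball} to $n=1$, where $\mathbb{B}^1 = \Delta$. The exponent $(n+1)(2/p - 1)$ becomes $4/p - 2$, the exponent $2(n+1)/p$ becomes $4/p$, the constant $n!/\pi^n$ becomes $1/\pi$, and $\langle z, w \rangle$ reduces to $\bar{w} z$. Hence
$$
K_{\Delta,p}(z,w) = \frac{1}{\pi}\,\frac{(1-|w|^2)^{\frac{4}{p} - 2}}{(1 - \bar{w} z)^{\frac{4}{p}}}.
$$

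Next, I would perform an induction on $n$ using Proposition \ref{prop:product_1}. Writing $\Delta^n = \Delta^{n-1} \times \Delta$ and decomposing $z = (z', z_n)$, $w = (w', w_n)$, the product rule gives
$$
K_{\Delta^n,p}(z,w) = K_{\Delta^{n-1},p}(z', w')\cdot K_{\Delta,p}(z_n, w_n),
$$
and iterating produces the claimed product $\prod_{j=1}^n K_{\Delta,p}(z_j, w_j)$, which matches \eqref{eq:polydisc} after substituting the one-variable formula above.

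There is no genuine obstacle here: the argument is pure bookkeeping, since all the analytic content has already been packaged into Propositions \ref{prop:ball} and \ref{prop:product_1}. The only minor point to check is that $\Delta$ and $\Delta^{n-1}$ are bounded domains so that the product rule applies, which is immediate.
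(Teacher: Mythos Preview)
Your proposal is correct and matches the paper's own approach exactly: the paper simply states that the polydisc formula follows by combining the ball formula (specialized to $n=1$) with the product rule, which is precisely the reduction and iteration you describe.
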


\begin{proposition}\label{prop:continuity}
\begin{enumerate}
\item[$(1)$] Both $m_p(z)$ and $K_p(z)$ are locally Lipschitz continuous for $p>0$.
\item[$(2)$] Both $m_p(z,w)$  and $K_p(z,w)$ are continuous in $(z,w)$ for $p\ge 1$.
\end{enumerate}
\end{proposition}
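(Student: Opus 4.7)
The plan for part $(1)$ is to use a normalization trick together with Cauchy estimates. Fix $U\subset\subset \Omega$ and pick $V$ with $U\subset\subset V\subset\subset\Omega$. For $z_1\in U$, let $f_1=m_p(\cdot,z_1)$ be any minimizer, so $f_1(z_1)=1$ and $\|f_1\|_p=m_p(z_1)$. Since $1\in A^p(\Omega)$ is admissible, we have the uniform upper bound $m_p(z_1)\le |\Omega|^{1/p}$. The Bergman inequality \eqref{eq:BergIneq_2} applied on $V$ bounds $\sup_V|f_1|$ by a constant depending only on $U,V,\Omega,p$, and differentiating under the mean-value representation (equivalently, applying Cauchy's estimates in each coordinate) yields a uniform Lipschitz bound $|f_1(z_2)-f_1(z_1)|\le C|z_2-z_1|$ for $z_2\in U$. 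In particular, if $|z_2-z_1|$ is small enough then $|f_1(z_2)|\ge 1/2$, so $g:=f_1/f_1(z_2)$ is admissible at $z_2$ and
$$
m_p(z_2)\le \|g\|_p=\frac{m_p(z_1)}{|f_1(z_2)|}\le m_p(z_1)(1+2C|z_2-z_1|).
$$
Swapping the roles of $z_1,z_2$ gives $|m_p(z_1)-m_p(z_2)|\le C'|z_2-z_1|$ on $U$, which is the desired local Lipschitz estimate. The identity $K_p=m_p^{-p}$, combined with the two-sided bound $m_p(z)\ge c(\delta(z))$ from \eqref{eq:BergIneq_3} (which keeps $m_p$ locally bounded away from $0$), transfers this Lipschitz property to $K_p$.

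For part $(2)$ the plan is to combine normal family arguments with the uniqueness from Proposition \ref{prop:uniq}. Let $(z_j,w_j)\to(z_0,w_0)$ in $\Omega\times\Omega$, and write $f_j:=m_p(\cdot,w_j)$. By $(1)$, $\|f_j\|_p=m_p(w_j)$ is locally bounded, so the Bergman inequality makes $\{f_j\}$ a normal family. Pass to a subsequence converging locally uniformly to some $f_0\in\mathcal O(\Omega)$. Since $f_j(w_j)=1$ and $f_j\to f_0$ locally uniformly while $w_j\to w_0$, we obtain $f_0(w_0)=1$. Fatou's lemma and the local Lipschitz continuity of $m_p(z)$ established in $(1)$ give
$$
\|f_0\|_p\le \liminf_{j\to\infty}\|f_j\|_p=\liminf_{j\to\infty}m_p(w_j)=m_p(w_0),
$$
so $f_0$ is a minimizer at $w_0$. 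By the uniqueness result for $p\ge 1$, $f_0=m_p(\cdot,w_0)$; since the limit is independent of the subsequence, the whole sequence $f_j$ converges locally uniformly to $m_p(\cdot,w_0)$. Evaluating at $z_j\to z_0$ yields $m_p(z_j,w_j)=f_j(z_j)\to m_p(z_0,w_0)$, which is the continuity claim. Then $K_p(z,w)=m_p(z,w)K_p(w)$ is continuous because both factors are, using part $(1)$ for the second factor.

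The main obstacle is the first step: normalizing $f_1$ by $f_1(z_2)$ requires knowing $f_1(z_2)$ is bounded away from $0$, which is exactly what the uniform Cauchy estimates on the normal family of minimizers provide, given the \emph{a priori} bound $m_p(z)\le|\Omega|^{1/p}$. Note that part $(1)$ makes no use of uniqueness (any minimizer will do), so it extends to all $p>0$, whereas part $(2)$ genuinely needs the uniqueness of minimizers to identify the limit of $\{m_p(\cdot,w_j)\}$, which is why it is stated only for $p\ge 1$.
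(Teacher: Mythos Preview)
Your proof is correct and follows essentially the same approach as the paper: Cauchy estimates on the extremal function for part $(1)$, and a normal-family/Fatou/uniqueness argument for part $(2)$. The only cosmetic differences are that the paper works with the maximizer normalization $\|f\|_p=1$ to get $K_p(z)^{1/p}$ Lipschitz directly (avoiding your division by $f_1(z_2)$), and in part $(2)$ the paper first fixes $z=z_0$ to prove continuity in $w$ and then handles $z$ by the uniform-in-$w$ Cauchy estimate, whereas you treat $(z,w)$ jointly via the subsequence argument; both organizations are equivalent.
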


\begin{proof}
(1) Let $S$ be a compact set in $\Omega$ and $z\in S$. Take $f\in A^p(\Omega)$ with $\|f\|_p=1$ such that
$|f(z)|^p=K_p(z)$. It follows from Cauchy's estimates  that for any $w\in S$,
$$
K_p(z)^{1/p}=|f(z)|\le |f(w)|+C_S|z-w|\le K_p(w)^{1/p} +C_S|z-w|,
$$
i.e., $K_p(z)^{1/p}$ is locally Lipschitz continuous in $z$, so are $K_p(z)$ and $m_p(z)$.

(2) It suffices to verify continuity of $m_p(z,w)$. Let $z_0,w_0\in \Omega$ be fixed. We first verify that
\begin{equation}\label{eq:converg_1}
\lim_{w\rightarrow w_0} m_p(z_0,w) = m_p(z_0,w_0).
\end{equation}
Let $w_j\rightarrow w_0$. Since
$$
\int_\Omega |m_p(\cdot,w_j)|^p=m_p(w_j)^p=\frac1{K_p(w_j)}\le |\Omega|,
$$
so $\{m_p(\cdot,w_j)\}$ forms a normal family so that there exists a subsequence $\{m_p(\cdot,w_{j_k})\}$ converging locally uniformly to a function $f_0\in \mathcal O(\Omega)$. Fatou's lemma  yields
$$
\int_\Omega |f_0|^p \le \liminf_{k\rightarrow \infty} \int_\Omega |m_p(\cdot,w_{j_k})|^p = \liminf_{k\rightarrow \infty} m_p(w_{j_k})^p=m_p(w_0)^p.
$$
On the other hand, Cauchy's estimates yield
$$
\frac{|m_p(w_0,w_{j_k})-m_p(w_{j_k},w_{j_k})|}{|w_0-w_{j_k}|}\le C  \int_\Omega |m_p(\cdot,w_{j_k})|\le C \|m_p(\cdot,w_{j_k})\|_p |\Omega|^{\frac1q}\le C |\Omega|
$$
where $\frac1p+\frac1q=1$ and $C$ depends only on $w_0$. We then have
$$
f_0(w_0)=\lim_{k\rightarrow \infty}m_p(w_0,w_{j_k}) = \lim_{k\rightarrow \infty} m_p(w_{j_k},w_{j_k})=1,
$$
so that $f_0=m_p(\cdot,w_0)$ by uniqueness of the minimizer.  Consequently,
$$
\lim_{k\rightarrow \infty} m_p(z_0,w_{j_k}) = m_p(z_0,w_0).
$$
Since the sequence $\{w_j\}$ can be chosen arbitrarily, we get (\ref{eq:converg_1}).  Finally,
\begin{eqnarray*}
|m_p(z,w)-m_p(z_0,w_0)| & \le & |m_p(z,w)-m_p(z_0,w)| + |m_p(z_0,w)-m_p(z_0,w_0)| \\
& \le & C_0 |\Omega| |z-z_0| +  |m_p(z_0,w)-m_p(z_0,w_0)| \\
& \rightarrow & 0
\end{eqnarray*}
as $z\rightarrow z_0$ and $w\rightarrow w_0$.
\end{proof}

\subsection{A reproducing formula}
  Throughout  this subsection  we always assume that $p\ge 1$.

      \begin{lemma}\label{lm:var_2}
For any $f\in A^p(\Omega)$ with $f(z)=0$,   we have
 \begin{equation}\label{eq:Var_2}
 \int_\Omega |m_p(\cdot,z)|^{p-2}\,\overline{m_p(\cdot,z)}\,f  = 0.
  \end{equation}
   \end{lemma}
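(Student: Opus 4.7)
The plan is to read the identity as the Euler-Lagrange equation for the minimization problem \eqref{eq:Min_1}, obtained by perturbing the minimizer $m_p(\cdot,z)$ in the direction of an admissible holomorphic function $f$.

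First I would set up the perturbation. For $f\in A^p(\Omega)$ with $f(z)=0$ and any $\varepsilon\in\mathbb C$, put $g_\varepsilon:=m_p(\cdot,z)+\varepsilon f$. Since $m_p(z,z)=1$ and $f(z)=0$, we have $g_\varepsilon(z)=1$, so $g_\varepsilon$ is a competitor in \eqref{eq:Min_1}. Consequently the real-valued function
$$
\phi(\varepsilon):=\int_\Omega |g_\varepsilon|^p
$$
satisfies $\phi(\varepsilon)\ge\phi(0)=m_p(z)^p$, i.e., $\varepsilon=0$ is a global minimum of $\phi$ on $\mathbb C\cong\mathbb R^2$.

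Next I would compute the partial derivatives of $\phi$ at $0$ with respect to the real and imaginary parts $s,t$ of $\varepsilon$. Writing $|g_\varepsilon|^p=(g_\varepsilon\overline{g_\varepsilon})^{p/2}$ and differentiating formally in $s,t$ gives the integrands
$p|g_\varepsilon|^{p-2}\operatorname{Re}(\overline{g_\varepsilon}f)$ and $-p|g_\varepsilon|^{p-2}\operatorname{Im}(\overline{g_\varepsilon}f)$. To justify passing the derivative under the integral sign I would use the identity
$$
|g_\varepsilon|^p-|m_p(\cdot,z)|^p=p\int_0^1 |g_{t\varepsilon}|^{p-2}\operatorname{Re}\bigl(\overline{g_{t\varepsilon}}\,\varepsilon f\bigr)\,dt,
$$
which bounds the integrand of the difference quotient by $p|f|(|m_p(\cdot,z)|+|\varepsilon||f|)^{p-1}$; for $|\varepsilon|\le 1$ this is dominated by a fixed $L^1$ majorant (via Young's inequality together with $m_p(\cdot,z),f\in L^p(\Omega)$), so dominated convergence gives
$$
\partial_s\phi(0)=p\int_\Omega |m_p(\cdot,z)|^{p-2}\operatorname{Re}\bigl(\overline{m_p(\cdot,z)}f\bigr),\qquad \partial_t\phi(0)=-p\int_\Omega |m_p(\cdot,z)|^{p-2}\operatorname{Im}\bigl(\overline{m_p(\cdot,z)}f\bigr).
$$
Since $\varepsilon=0$ minimizes $\phi$, both quantities vanish, and combining the real and imaginary parts yields \eqref{eq:Var_2}.

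The only real subtlety is the case $p=1$, where $|m_1(\cdot,z)|^{p-2}=|m_1(\cdot,z)|^{-1}$ is singular on the zero set $S_z:=\{m_1(\cdot,z)=0\}$. However $S_z$ has Lebesgue measure zero since $m_1(\cdot,z)$ is a nonzero holomorphic function, and the factor $|m_1|^{p-2}\overline{m_1}$ should be read as $\overline{m_1}/|m_1|$ (bounded by $1$) off $S_z$ and declared $0$ on $S_z$; then $|m_1|^{p-2}\overline{m_1}f$ is bounded by $|f|\in L^1(\Omega)$. For the differentiation step the pointwise derivative of $|g_\varepsilon|$ in $\varepsilon$ exists on $\Omega\setminus S_z$ and is bounded by $|f|$, while the triangle inequality gives $\bigl||g_\varepsilon|-|m_1|\bigr|\le|\varepsilon||f|$ globally. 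Dominated convergence then delivers the same Euler-Lagrange identity. This nonsmoothness of $x\mapsto|x|^p$ at the origin when $p=1$ is the one place where care is required; the rest of the argument is a routine variational computation.
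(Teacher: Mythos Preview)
Your proof is correct and follows essentially the same variational approach as the paper: perturb the minimizer $m_p(\cdot,z)$ by $\varepsilon f$, use that the $L^p$-norm is minimized at $\varepsilon=0$, and justify differentiation under the integral by the same majorant $p|f|(|m_p(\cdot,z)|+|f|)^{p-1}$. The only cosmetic differences are that the paper works with the Wirtinger derivative $\partial J/\partial t$ rather than splitting into real and imaginary parts, and invokes H\"older's inequality (rather than Young's) for the integrability of the majorant; your treatment of the $p=1$ case is in fact more explicit than the paper's.
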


 \begin{proof}
We will use the calculus of variations. For fixed $f$  we consider the family
 $$
 f_t=m_p(\cdot,z)+tf\in A^p(\Omega),\ \ \ t\in \mathbb  C.
 $$
  Since $f_t(z)={1}$, we see that the function $J(t):=\|f_t\|^p_p$ attains the minimum at $t=0$.  Rewrite
 $$
 |f_t|^p=\left(|m_p(\cdot,z)|^2+tf\,\overline{m_p(\cdot,z)}+\overline{tf}\,m_p(\cdot,z)+|t|^2|f|^2\right)^{\frac{p}2}.
 $$
 Since
 $$
 \frac{\partial |f_t|^p}{\partial t}= {\frac{p}2} |f_t|^{{p}-2}\bar{f}_t f
  $$
  holds outside the proper analytic set $f_t^{-1}(0)\subset \Omega$ (whose Lebesgue measure is zero),  
  we have
  $$
  \left|\frac{\partial |f_t|^p}{\partial t}\right|={\frac{p}2} |f_t|^{p-1}|f|\le {\frac{p}2} |f|(|m_p(\cdot,z)|+|f|)^{p-1}=:\phi
  $$
  whenever $|t|\le 1$. Analogously, we may verify that
  $$
  \left|\frac{\partial |f_t|^p}{\partial \bar{t}}\right|\le \phi.
    $$
 Note that  
  $$
  \int_\Omega \phi \le {\frac{p}2} \|f\|_p \||m_p(\cdot,z)|+|f|\|_p^{p-1}<\infty
      $$
      in view of H\"older's inequality when $p>1$. The  inequality for $p=1$ is clearly trivial.   It then follows from the dominated convergence theorem that
  $$
  0=\frac{\partial J}{\partial t}(0)=\int_\Omega \left.\frac{\partial |f_t|^p}{\partial t}\right|_{t=0}={\frac{p}2} \int_\Omega |m_p(\cdot,z)|^{p-2}\,\overline{m_p(\cdot,z)}\,f,
     $$
     i.e., (\ref{eq:Var_2}) holds.
 \end{proof}

Now we reach the following fundamental fact.

  \begin{theorem}[Reproducing formula]\label{th:RP}
For any $f\in A^p(\Omega)$ we have
  \begin{equation}\label{eq:RP}
  f(z)  =  m_p(z)^{-p} \int_\Omega |m_p(\cdot,z)|^{p-2}\,\overline{m_p(\cdot,z)}\,f= \int_\Omega |m_p(\cdot,z)|^{p-2}\,\overline{K_p(\cdot,z)}\,f. 
  \end{equation}
    \end{theorem}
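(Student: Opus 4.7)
The reproducing formula is essentially a one-line corollary of Lemma \ref{lm:var_2}, so the plan is short. The key is to decompose an arbitrary $f \in A^p(\Omega)$ into a part proportional to the minimizer $m_p(\cdot,z)$ plus a remainder vanishing at $z$, and then apply the orthogonality relation \eqref{eq:Var_2}.

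Concretely, I would fix $z \in \Omega$ and set
$$
g := f - f(z)\,m_p(\cdot,z).
$$
Since $m_p(z,z)=1$, the function $g$ lies in $A^p(\Omega)$ and satisfies $g(z)=0$. Applying Lemma \ref{lm:var_2} to $g$ yields
$$
\int_\Omega |m_p(\cdot,z)|^{p-2}\,\overline{m_p(\cdot,z)}\,g = 0,
$$
which, after rearranging, gives
$$
\int_\Omega |m_p(\cdot,z)|^{p-2}\,\overline{m_p(\cdot,z)}\,f = f(z)\int_\Omega |m_p(\cdot,z)|^{p-2}\,\overline{m_p(\cdot,z)}\,m_p(\cdot,z) = f(z)\int_\Omega |m_p(\cdot,z)|^{p} = f(z)\,m_p(z)^p.
$$
Dividing by $m_p(z)^p$ produces the first equality of \eqref{eq:RP}.

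For the second equality, I would use the definition $K_p(\cdot,z) = m_p(\cdot,z)\,K_p(z) = m_p(z)^{-p}\,m_p(\cdot,z)$. Since $m_p(z)^{-p}$ is real, $m_p(z)^{-p}\,\overline{m_p(\cdot,z)} = \overline{K_p(\cdot,z)}$, so the factor $m_p(z)^{-p}$ is absorbed directly inside the integral, yielding $\overline{K_p(\cdot,z)}$ in place of $m_p(z)^{-p}\,\overline{m_p(\cdot,z)}$.

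The only subtlety to check is integrability when $p=1$, in which case $|m_p(\cdot,z)|^{p-2}\overline{m_p(\cdot,z)} = \overline{m_1(\cdot,z)}/|m_1(\cdot,z)|$ on $\{m_1(\cdot,z)\neq 0\}$; but this factor is bounded by $1$ in modulus, so $f\in A^1(\Omega)$ makes the integrand in $L^1(\Omega)$, and the zero set of $m_p(\cdot,z)$ (a proper analytic set) has measure zero so the integrals are unambiguous. This mirrors the handling already done in Lemma \ref{lm:var_2}. No further obstacle arises; the main content of the formula is really packaged in the variational lemma, and the present statement is a reformulation using the linear combination above.
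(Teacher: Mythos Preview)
Your proof is correct and follows essentially the same approach as the paper: both apply Lemma~\ref{lm:var_2} to a function in $A^p(\Omega)$ vanishing at $z$, built from $f$. The only cosmetic difference is that the paper subtracts the constant $f(z)$ (and then substitutes $f=m_p(\cdot,z)$ back into the resulting identity to evaluate $\int_\Omega |m_p(\cdot,z)|^{p-2}\overline{m_p(\cdot,z)}$), whereas you subtract $f(z)\,m_p(\cdot,z)$, which makes that integral collapse to $m_p(z)^p$ in one step.
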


    \begin{proof}
    Let $f\in A^p(\Omega)$. With $f$ replaced by $f-f(z)$ in (\ref{eq:Var_2}), we obtain
      \begin{equation}\label{eq:RP_2}
      \int_\Omega |m_p(\cdot,z)|^{p-2}\,\overline{m_p(\cdot,z)}\,f = f(z)\cdot \int_\Omega |m_p(\cdot,z)|^{p-2}\,\overline{m_p(\cdot,z)}.
      \end{equation}
      Substitute $f=m_p(\cdot,z)$ into (\ref{eq:RP_2}), we obtain
      $$
      m_p(z)^p=\int_\Omega |m_p(\cdot,z)|^{p-2}\,\overline{m_p(\cdot,z)}.
          $$
           This combined with (\ref{eq:RP_2}) yields (\ref{eq:RP}).
         \end{proof}

       In other words,  Theorem \ref{th:RP} states that the continuous linear functional $T:A^p(\Omega)\ni f\mapsto f(z)$  may be represented by integration against 
       $$
       g_T:= \frac{|m_p(\cdot,z)|^{p-2}\,{m_p(\cdot,z)}}{m_p(z)^p}\in L^q(\Omega)
       $$
       with $\|g_T\|_q=m_p(z)^{-1}=\|T\|$,  where $\frac1p+\frac1q=1$.  An important question is when $T$ can be represented by some function in $A^q(\Omega)$.

          We also  have the following geometric interpretation of $m_p(z)$.   
   
     \begin{proposition}\label{prop:geometric}
 Set  
         $
         H^c_{p,z}:=\{f\in A^p(\Omega): f(z)=c\},\ c\in \mathbb C.
         $ 
         Then    $m_p(z)$ is the distance between $H^c_{p,z}$ and $H^{c+1}_{p,z}$.
     \end{proposition}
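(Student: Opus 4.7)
The plan is to reduce the distance computation to a simple change of variable. By definition,
\[
{\rm dist}(H^c_{p,z},H^{c+1}_{p,z}) \;=\; \inf\bigl\{\|f-g\|_p : f\in H^c_{p,z},\ g\in H^{c+1}_{p,z}\bigr\}.
\]
First I would set $h = g-f$ and observe that $h$ ranges over precisely those elements of $A^p(\Omega)$ with $h(z)=1$. One direction is immediate: if $f(z)=c$ and $g(z)=c+1$, then $h(z)=1$. Conversely, given any $h\in A^p(\Omega)$ with $h(z)=1$, the choice $f\equiv c$ and $g = f+h$ lies in $H^c_{p,z}\times H^{c+1}_{p,z}$ and realizes $h=g-f$. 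Hence
\[
{\rm dist}(H^c_{p,z},H^{c+1}_{p,z}) \;=\; \inf\bigl\{\|h\|_p : h\in A^p(\Omega),\ h(z)=1\bigr\} \;=\; m_p(z),
\]
the last equality being the definition \eqref{eq:Min_1}.

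There is essentially no obstacle here; the content is purely a reformulation of the minimizing problem as the standard functional-analytic formula for the distance between two parallel affine hyperplanes (translates of the kernel of the continuous evaluation functional $f\mapsto f(z)$ on $A^p(\Omega)$). The existence portion of the earlier discussion guarantees that this infimum is attained by $h=m_p(\cdot,z)$, so the distance is not merely an infimum but a minimum, attained by the pair $(f,g)=(c-\tfrac{1}{2}m_p(\cdot,z)\cdot 0,\ldots)$—more cleanly, by $f=c\cdot \mathbf{1}$ and $g=c\cdot\mathbf{1}+m_p(\cdot,z)$, which records a concrete minimizing pair should one wish to state the result with ``distance'' replaced by ``minimum distance.''
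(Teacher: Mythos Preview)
Your proof is correct and in fact takes a more elementary route than the paper. You simply observe that the difference set $\{g-f : f\in H^c_{p,z},\ g\in H^{c+1}_{p,z}\}$ coincides with $\{h\in A^p(\Omega): h(z)=1\}$ (using boundedness of $\Omega$ so that the constant $c$ lies in $A^p(\Omega)$), and then read off the distance as $m_p(z)$ directly from the definition~\eqref{eq:Min_1}. The paper instead invokes the reproducing formula (Theorem~\ref{th:RP}) to write $m_p(z)^p=\int_\Omega |m_p(\cdot,z)|^{p-2}\overline{m_p(\cdot,z)}(g-f)$ and then applies H\"older's inequality to deduce $m_p(z)\le \|g-f\|_p$; equality is realized by $g=f+m_p(\cdot,z)$. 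Your argument bypasses both the reproducing formula and H\"older, since the inequality $m_p(z)\le \|g-f\|_p$ is already immediate from the definition once one notes $(g-f)(z)=1$. The paper's detour presumably serves to advertise the reproducing formula, but for this proposition your change-of-variable argument is cleaner. (One cosmetic remark: the half-written pair ``$(c-\tfrac12 m_p(\cdot,z)\cdot 0,\ldots)$'' near the end is garbled and should simply be deleted in favor of the clean pair $f=c\cdot\mathbf{1}$, $g=c\cdot\mathbf{1}+m_p(\cdot,z)$ that you give immediately after.)
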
 
\begin{proof}Take $f\in H^c_{p,z}$ and $g\in H^{c+1}_{p,z}$.   By  Theorem 2.13,  we have 
\begin{eqnarray*}
	c+1=g(z) & = & m_p(z)^{-p} \int_\Omega |m_p(\cdot,z)|^{p-2}\,\overline{m_p(\cdot,z)}\,g,\\
		c=f(z) & = & m_p(z)^{-p} \int_\Omega |m_p(\cdot,z)|^{p-2}\,\overline{m_p(\cdot,z)}\,f,
\end{eqnarray*}
so that
\begin{eqnarray*}
	m_p(z)^{p} & = & \int_\Omega |m_p(\cdot,z)|^{p-2}\,\overline{m_p(\cdot,z)}(g-f)\\
	&\leq &  m_p(z)^{p-1}\|g-f\|_{p},
\end{eqnarray*}
i.e.,   $ m_p(z)\leq\|g-f\|_{p}$.   Equality holds when  $g:=f+m_p(\cdot,z)\in H^{c+1}_{p,z}$.
\end{proof}

\begin{proposition}\label{prop:indep}
 Given two distinct points $z,w\in \Omega$, $m_p(\cdot,z)$ and $m_p(\cdot,w)$ are not parallel, i.e., $m_p(\cdot,z)\neq c m_p(\cdot,w)$ for any $c\in \mathbb C$.
\end{proposition}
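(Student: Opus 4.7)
The plan is to argue by contradiction. Suppose $m_p(\cdot,z) = c\,m_p(\cdot,w)$ for some $c\in\mathbb C$. The case $c = 0$ is ruled out instantly, since $m_p(z,z) = 1 \ne 0$, so we may assume $c \ne 0$. The idea is to use the first-order variational condition (Lemma \ref{lm:var_2}) at the point $z$, transport it through the scaling relation to a condition expressed in terms of $m_p(\cdot,w)$, and then play it off against the reproducing identity at $w$.

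The key observation is how the nonlinear weight rescales: if $g = c\,h$, then $|g|^{p-2}\overline g = |c|^{p-2}\bar c\,|h|^{p-2}\overline h$. Applied to $m_p(\cdot,z) = c\,m_p(\cdot,w)$, this gives
$$|m_p(\cdot,z)|^{p-2}\,\overline{m_p(\cdot,z)} = |c|^{p-2}\bar c\,|m_p(\cdot,w)|^{p-2}\,\overline{m_p(\cdot,w)}.$$
Feeding this into Lemma \ref{lm:var_2} at $z$, and dividing through by the nonzero constant $|c|^{p-2}\bar c$, yields
$$\int_\Omega |m_p(\cdot,w)|^{p-2}\,\overline{m_p(\cdot,w)}\,f = 0 \qquad \forall\,f \in A^p(\Omega)\ \text{with}\ f(z)=0.$$

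On the other hand, the identity established inside the proof of Theorem \ref{th:RP} (equation \eqref{eq:RP_2} together with $\int_\Omega|m_p(\cdot,w)|^{p-2}\overline{m_p(\cdot,w)} = m_p(w)^p$) reads
$$\int_\Omega |m_p(\cdot,w)|^{p-2}\,\overline{m_p(\cdot,w)}\,f = f(w)\,m_p(w)^p$$
for \emph{every} $f \in A^p(\Omega)$, with no vanishing condition imposed on $f$. Combining the two displayed equations forces $f(w) = 0$ for every $f \in A^p(\Omega)$ satisfying $f(z) = 0$. Since $z \ne w$, we may pick a coordinate index $j$ with $z_j \ne w_j$ and take $f(\zeta) = \zeta_j - z_j$, which lies in $A^p(\Omega)$ because $\Omega$ is bounded; this $f$ vanishes at $z$ but $f(w) = w_j - z_j \ne 0$, the desired contradiction.

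There is no real obstacle here: once one spots that the nonlinear weight $|\,\cdot\,|^{p-2}\overline{(\cdot)}$ is homogeneous of degree one under complex scaling, the contradiction is produced by simply comparing Lemma \ref{lm:var_2} at $z$ against the full reproducing-type identity at $w$. The only mild subtlety worth noting in the writeup is that the reproducing identity must be cited in its unrestricted form (valid for all $f\in A^p(\Omega)$, not merely those vanishing at $w$), which is exactly what the proof of Theorem \ref{th:RP} delivers.
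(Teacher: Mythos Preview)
Your proof is correct and follows essentially the same approach as the paper: both exploit the homogeneity $|cg|^{p-2}\overline{cg}=|c|^{p-2}\bar c\,|g|^{p-2}\bar g$ together with the reproducing/variational identities to force $f(w)=0$ whenever $f(z)=0$, and then exhibit a separating function. The only cosmetic difference is that the paper applies the full reproducing formula \eqref{eq:RP} at $z$ (obtaining $f(z)=f(w)/c$ for \emph{all} $f$), whereas you invoke Lemma~\ref{lm:var_2} at $z$ and the reproducing identity at $w$ separately; the underlying mechanism is identical.
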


\begin{proof}
Suppose on the contrary that  $m_p(\cdot,z)=c m_p(\cdot,w)$ for some complex number $c$. It follows from Theorem \ref{th:RP} that for any $f\in A^p(\Omega)$,
\begin{eqnarray*}
f(z) & = & m_p(z)^{-p} \int_\Omega |m_p(\cdot,z)|^{p-2}\, \overline{m_p(\cdot,z)}\,f\\
& = & m_p(z)^{-p} |c|^{p-2}\bar{c} \int_\Omega |m_p(\cdot,w)|^{p-2}\, \overline{m_p(\cdot,w)}\,f\\
& = & \left[m_p(w)/m_p(z)\right]^{p} |c|^{p-2}
\bar{c}\,f(w).
\end{eqnarray*}
On the other hand, 
$$
m_p(z)^p=\int_\Omega |m_p(\cdot,z)|^p = |c|^p \int_\Omega |m_p(\cdot,w)|^p=|c|^p m_p(w)^p.
$$
Thus we have $f(z)=f(w)/c$. But this is absurd since one can choose $f\in A^p(\Omega)$ with $f(z)=0$ and $f(w)\neq 0$.
\end{proof}

\begin{problem}
 Let $w_1,\cdots,w_m$ be different points in $\Omega$. Is it possible to conclude that $m_p(\cdot,w_1)$, $\cdots$, $m_p(\cdot,w_m)$ are linearly independent?
\end{problem}

  \begin{proposition}\label{prop:Triangle}
  We have
  \begin{equation}\label{eq:Triangle}
   |m_p(z,w)|\le m_p(w)/m_p(z)
     \end{equation}
     and equality holds if and only if $z=w$. Equivalently, 
     \begin{equation}\label{eq:Holder_3}
  |K_p(z,w)|\le K_p(z)^{\frac1p}K_p(w)^{\frac1q}
  \end{equation}
  where $\frac1p+\frac1q=1$, and equality holds if and only if $z=w$.  
  \end{proposition}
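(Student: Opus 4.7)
The plan is to exploit the extremal (minimizing) definition of $m_p(z)$ together with the normalization $m_p(w,w)=1$ and $\|m_p(\cdot,w)\|_p=m_p(w)$, and then invoke uniqueness (Proposition \ref{prop:uniq}) and non-parallelism (Proposition \ref{prop:indep}) for the equality case.

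First I would handle the trivial case $m_p(z,w)=0$, where \eqref{eq:Triangle} is immediate. Assuming $m_p(z,w)\neq 0$, I would set
$$
g := \frac{m_p(\cdot,w)}{m_p(z,w)}\in A^p(\Omega).
$$
Then $g(z)=m_p(z,w)/m_p(z,w)=1$, so by the very definition of $m_p(z)$ as the infimum in \eqref{eq:Min_1},
$$
m_p(z)\le \|g\|_p = \frac{\|m_p(\cdot,w)\|_p}{|m_p(z,w)|}=\frac{m_p(w)}{|m_p(z,w)|},
$$
which rearranges to \eqref{eq:Triangle}. The equivalent inequality \eqref{eq:Holder_3} then follows by unwinding the definitions: $|K_p(z,w)|=|m_p(z,w)|\,K_p(w) \le K_p(w)/(m_p(z)m_p(w)) = K_p(z)^{1/p}K_p(w)^{1-1/p}$, and $1-1/p=1/q$.

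For the equality case, suppose equality holds in \eqref{eq:Triangle} with $z\neq w$. Then the function $g$ above satisfies $g(z)=1$ and $\|g\|_p=m_p(z)$, so $g$ is a minimizer of \eqref{eq:Min_1} at $z$. By Proposition \ref{prop:uniq}, $g=m_p(\cdot,z)$, i.e.
$$
m_p(\cdot,w)=m_p(z,w)\,m_p(\cdot,z),
$$
which says that $m_p(\cdot,w)$ and $m_p(\cdot,z)$ are parallel. Proposition \ref{prop:indep} forbids this when $z\neq w$, so we must have $z=w$; conversely, at $z=w$ both sides of \eqref{eq:Triangle} equal $1$.

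I do not expect any serious obstacle here: the whole argument is a one-line application of the extremal definition, and the characterization of equality is packaged into the already-proved uniqueness and non-parallelism results. The only mild subtlety is remembering to dispose of the $m_p(z,w)=0$ case separately, so that division by $m_p(z,w)$ is legitimate.
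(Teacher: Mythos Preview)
Your proof is correct and takes a genuinely different route from the paper's. The paper substitutes $f=m_p(\cdot,w)$ into the reproducing formula \eqref{eq:RP} and then applies H\"older's inequality to the integral $\int_\Omega |m_p(\cdot,z)|^{p-2}\overline{m_p(\cdot,z)}\,m_p(\cdot,w)$; the equality case there comes from the equality condition in H\"older, which forces $|m_p(\cdot,w)|^p=r\,|m_p(\cdot,z)|^p$ and hence constancy of the quotient. By contrast, you bypass the reproducing formula entirely: you feed the normalized function $g=m_p(\cdot,w)/m_p(z,w)$ straight into the minimizing definition of $m_p(z)$, and for equality you invoke uniqueness of the minimizer (Proposition~\ref{prop:uniq}) rather than the H\"older equality case. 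Your argument is more elementary for the inequality itself, while the paper's approach illustrates a first application of the reproducing formula; both routes ultimately rest on Proposition~\ref{prop:indep} (whose proof does use the reproducing formula) to finish the equality characterization.
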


   \begin{proof}
  Substitute $f=m_p(\cdot,w)$ into (\ref{eq:RP}), we obtain
  \begin{eqnarray*}
  |m_p(z,w)| & = &  m_p(z)^{-p} \left|\int_\Omega |m_p(\cdot,z)|^{p-2}\,\overline{m_p(\cdot,z)}\,m_p(\cdot,w)\right| \\
  & \le & m_p(z)^{-p} m_p(z)^{p-1} m_p(w) \ \ \ \ \ (\text{H\"older's\ inequality})\\
  & = & m_p(w)/m_p(z).
     \end{eqnarray*}
     Clearly, equality holds if $z=w$. On the other hand, if equality in \eqref{eq:Triangle} holds then there exists $r>0$ such that
          \begin{equation}\label{eq:iff}
|m_p(\cdot,w)|^p =r  \left(|m_p(\cdot,z)|^{p-2}|m_p(\cdot,z)|\right)^{\frac{p}{p-1}}=r |m_p(\cdot,z)|^p.
     \end{equation}
     Set $h:=  m_p(\cdot,z)/m_p(\cdot,w)$ and $S_w:=\{m_p(\cdot,w)=0\}$. Since $m_p(w,w)=1$, it follows that $S_w$ is an analytic hypersurface of $\Omega$ and $h$ is holomorphic on $\Omega\backslash S_w$. By \eqref{eq:iff}, we see that $|h|$ is a constant on $\Omega\backslash S_w$. Thus $h$ has to be a constant, i.e., $m_p(\cdot,z)=cm_p(\cdot,w)$ for some complex number $c$ on  $\Omega\backslash S_w$. By continuity, the same equality holds on $\Omega$, so that $z=w$ in view of Proposition \ref{prop:indep}.
       \end{proof}

   \begin{proposition}\label{prop:nonconst}
    For fixed $z\in \Omega$, $m_p(z,\cdot)\neq {\rm const}$.
       \end{proposition}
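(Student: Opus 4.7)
The plan is to argue by contradiction. Suppose $m_p(z,\cdot)\equiv c$; since $m_p(w,w)=1$ by definition of the minimizer, evaluating at $w=z$ forces $c=1$, so that $m_p(z,w)=1$ for every $w\in\Omega$.

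The first step is to feed this into Proposition~\ref{prop:Triangle}: from $1=|m_p(z,w)|\le m_p(w)/m_p(z)$, with equality if and only if $w=z$, one obtains the strict inequality $m_p(w)>m_p(z)$ for every $w\neq z$.

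The second step is to apply Proposition~\ref{prop:Triangle} once more, this time with the roles of $z$ and $w$ exchanged: $|m_p(w,z)|\le m_p(z)/m_p(w)$, again strict for $w\neq z$. Combined with the previous step this gives $|m_p(w,z)|<1$ for every $w\neq z$, while $|m_p(z,z)|=1$. Hence the holomorphic function $m_p(\cdot,z)$ attains its maximum modulus at the interior point $z$, so the maximum modulus principle on the connected domain $\Omega$ forces $m_p(\cdot,z)\equiv 1$.

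Consequently $m_p(z)=\|m_p(\cdot,z)\|_p=|\Omega|^{1/p}$. But the constant function $1$ lies in $A^p(\Omega)$ and takes value $1$ at every $w\in\Omega$, whence $m_p(w)\le\|1\|_p=|\Omega|^{1/p}=m_p(z)$ for every $w$, directly contradicting the strict inequality from the first step. The only thing one has to notice is the two-directional use of Proposition~\ref{prop:Triangle}: neither direction by itself yields a contradiction, but chaining them closes the argument via maximum modulus and the explicit competitor $f\equiv 1$; there is no real technical obstacle beyond spotting this pairing.
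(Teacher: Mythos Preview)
Your argument is correct and takes a genuinely different route from the paper. The paper's proof is direct rather than by contradiction: starting from the same inequality $|m_p(z,w)|\le m_p(w)/m_p(z)=K_p(w)^{-1/p}m_p(z)^{-1}$, it observes that since $\Omega$ is bounded one can enclose it in a ball $B$ with $\partial\Omega\cap\partial B\neq\emptyset$, and then $K_{\Omega,p}(w)\ge K_{B,p}(w)\to\infty$ as $w$ approaches a common boundary point $w_0$, whence $m_p(z,w)\to 0$; comparison with $m_p(z,z)=1$ finishes the proof. The paper thus extracts the stronger quantitative statement that $m_p(z,\cdot)$ tends to zero along some boundary approach, and uses only the inequality part of Proposition~\ref{prop:Triangle}, not its equality characterization. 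Your route, by contrast, is purely internal (no boundary behavior is invoked) and hinges on the rigidity in the equality case of Proposition~\ref{prop:Triangle} together with the maximum modulus principle; it is elegant but yields less information. One small remark: your argument actually terminates a step earlier than you indicate, since the conclusion $m_p(\cdot,z)\equiv 1$ from the maximum modulus principle already contradicts the strict inequality $|m_p(w,z)|<1$ obtained in your second step---the competitor $f\equiv 1$ and the computation of $m_p(z)=|\Omega|^{1/p}$ are not needed.
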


       \begin{proof}
      Note that
      $$
      |m_p(z,w)|\le m_p(w)/m_p(z) = K_p(w)^{-\frac1p}\,m_p(z)^{-1}.
      $$
      Since $\Omega$ is bounded, there exists a ball $B\supset \Omega$ such that $\partial \Omega\cap \partial B\neq \emptyset$.
      For any $w_0\in \partial \Omega\cap \partial B$, we have
      $$
      K_{\Omega,p}(w)\ge K_{B,p}(w)\rightarrow \infty\ \ \ (w\rightarrow w_0),
      $$
      which implies $m_p(z,w)\rightarrow 0$ as $w\rightarrow w_0$. On the other hand, we have $m_p(z,z)=1$. Thus $m_p(z,\cdot)\neq {\rm const}$.
       \end{proof}

  \begin{problem}
        Is it possible to conclude that $m_p(z,\cdot)$ can not be\/ {\it locally} constant?
       \end{problem}

       \subsection{The  $p-$Bergman metric} For $X=\sum_j X_j \partial/\partial z_j$ we define the  $p-$Bergman metric to be
\begin{equation}\label{eq:p-metric}
B_p(z;X):=  {K_p(z)^{-\frac1p}}\cdot {\sup}_f\ |Xf(z)|
\end{equation}
where the supremum is taken over all $f\in A^p(\Omega)$ with $f(z)=0$ and $\|f\|_p=1$.  Note that $B_2(z;X)$ is the standard Bergman metric. A normal family argument shows that the "$\sup$" in (\ref{eq:p-metric}) can be replaced by "$\max$". For the sake of convenience, we set
\begin{equation}\label{eq:max_3}
\mathcal M_{p}(z;X):= \sup\left\{|Xf(z)|: f\in A^p(\Omega),f(z)=0,\|f\|_p=1\right\}
\end{equation}
 and define  $\mathcal M_p(\cdot,z;X)$ to be the maximizer  of
  (\ref{eq:max_3}), i.e., $\mathcal M_{\Omega,p}(z;X)=X\mathcal M_p(\cdot,z;X)|_z$.

 \begin{proposition}\label{prop:invariant}
Let $F:\Omega_1\rightarrow \Omega_2$ be a biholomorphic mapping between bounded simply-connected domains.   Then
\begin{equation}\label{eq:invariant}
B_{\Omega_1,p}(z;X)=B_{\Omega_2,p}(F(z);F_\ast X).
\end{equation}
Moreover, \eqref{eq:invariant} holds for arbitrary bounded domains whenever $2/p\in \mathbb Z^+$.\end{proposition}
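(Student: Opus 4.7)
The plan is to reduce this to the transformation rule for $K_p$ (Proposition 2.8) by a direct pullback argument. Fix $z\in \Omega_1$ and set $\zeta=F(z)$. I would pull back competitors: for any $g\in A^p(\Omega_2)$ with $g(\zeta)=0$ and $\|g\|_p=1$, define
$$
f:= (g\circ F)\cdot J_F^{2/p}.
$$
Under the simply-connected assumption (or when $2/p\in \mathbb Z^+$, without it), $J_F^{2/p}$ is a well-defined holomorphic function on $\Omega_1$, so $f\in A^p(\Omega_1)$. The change of variables $w=F(z)$, exactly as in Proposition 2.8, gives $\|f\|_p=\|g\|_p=1$, and since $g(\zeta)=0$ we have $f(z)=0$.

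Next I would compute $Xf(z)$. Using the Leibniz rule and the vanishing $g(\zeta)=0$, the term involving $X(J_F^{2/p})$ drops out, leaving
$$
Xf(z)= J_F(z)^{2/p}\cdot X(g\circ F)(z)=J_F(z)^{2/p}\cdot (F_\ast X)g(\zeta),
$$
by the chain rule and the very definition of the pushforward $F_\ast X$. Taking moduli, supremum over admissible $g$, and noting that the map $g\mapsto f$ is a bijection between the two classes of competitors (its inverse is $f\mapsto (f\circ F^{-1})\cdot J_{F^{-1}}^{2/p}$), I obtain
$$
\mathcal M_{\Omega_1,p}(z;X)=|J_F(z)|^{2/p}\cdot \mathcal M_{\Omega_2,p}(\zeta;F_\ast X).
$$

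Finally I would combine this with the transformation rule $K_{\Omega_1,p}(z)=K_{\Omega_2,p}(\zeta)|J_F(z)|^2$ from Proposition 2.8 to get
$$
B_{\Omega_1,p}(z;X)=K_{\Omega_1,p}(z)^{-1/p}\mathcal M_{\Omega_1,p}(z;X)= K_{\Omega_2,p}(\zeta)^{-1/p}\mathcal M_{\Omega_2,p}(\zeta;F_\ast X)=B_{\Omega_2,p}(\zeta;F_\ast X),
$$
as the factors $|J_F(z)|^{2/p}$ and $|J_F(z)|^{-2/p}$ cancel exactly. The only subtle point, as in the earlier transformation rule, is the global existence of a holomorphic branch of $J_F^{2/p}$; this is the sole place where simply-connectedness (resp.\ $2/p\in \mathbb Z^+$) is used. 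Everything else is routine.
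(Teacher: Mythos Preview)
Your proof is correct and follows essentially the same route as the paper: reduce to the identity $\mathcal M_{\Omega_1,p}(z;X)=|J_F(z)|^{2/p}\,\mathcal M_{\Omega_2,p}(F(z);F_\ast X)$ via the pullback $f=(g\circ F)\,J_F^{2/p}$, use the Leibniz rule together with $g(F(z))=0$ to compute $Xf(z)$, and then invoke the transformation rule for $K_p$. The only cosmetic difference is that you argue directly via the bijection of competitors, whereas the paper proves one inequality and then applies the same argument to $F^{-1}$.
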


\begin{proof}
By Proposition \ref{prop:trans_2} it suffices to verify
$$
\mathcal M_{\Omega_1,p}(z;X)=\mathcal M_{\Omega_2,p}(F(z);F_\ast X)|J_F(z)|^{2/p}.
$$
If $f_2$ is a test function for $\mathcal M_{\Omega_2,p}(F(z);F_\ast X)$, then $ {f}_1:=f_2 \circ F\cdot J_F^{2/p}$ is a test function for $\mathcal M_{\Omega_1,p}(z;X)$. Then we have
$$
{|X(f_2\circ F)(z) \cdot J_F(z)^{2/p}|} = {|X f_1(z)|} \le \mathcal M_{\Omega_1,p}(z;X).
$$
Take supremum over $f_2\in A^p(\Omega_2)$, we get
$$
\mathcal M_{\Omega_2,p}(F(z);F_\ast X)|J_F(z)|^{2/p}\le \mathcal M_{\Omega_1,p}(z;X).
$$
Consider $F^{-1}$ instead of $F$, we get the inverse inequality.
 \end{proof}

\begin{proposition}
 For the unit ball\/ $\mathbb B^n\subset \mathbb C^n$ we have
 \begin{equation}\label{eq:ballmetric}
 B_p(z;X) =c_{n,p}  \left(\frac{|X|^2}{1-|z|^2}+\frac{|\sum_{j=1}^n z_j X_j|^2}{(1-|z|^2)^2}\right)^{\frac12}
 \end{equation}
 where
 $$
 c_{n,p}= (\pi^n/n!)^{\frac1p}\cdot \sup\left\{\frac{|f(0)|}{\|z_1f\|_p}: f\in A^p(\mathbb B^n) \right\}. $$
 \end{proposition}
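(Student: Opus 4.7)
The plan is to compute $B_p(0;X)$ directly by exploiting rotational symmetry of $\mathbb B^n$ at the origin, and then to transport the formula to an arbitrary $z\in\mathbb B^n$ via a M\"obius automorphism. Since $\mathbb B^n$ is simply connected, Proposition~\ref{prop:invariant} applies: for the standard M\"obius involution $\phi_z$ of $\mathbb B^n$ sending $z$ to $0$ (cf.~\cite{Rudin}),
\[
B_p(z;X)=B_p(0;d\phi_z(z)X).
\]
So it suffices to (a) compute $B_p(0;Y)$ for arbitrary $Y$, and (b) evaluate $|d\phi_z(z)X|$.

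For (a), from (the proof of) Proposition~\ref{prop:ball}, $m_p(\cdot,0)\equiv 1$ and $K_p(0)=n!/\pi^n$, so $B_p(0;Y)=(\pi^n/n!)^{1/p}\,\mathcal M_p(0;Y)$. The map $f\mapsto f\circ U$ for $U\in U(n)$ is an isometry of $A^p(\mathbb B^n)$; choosing $U$ with $Ue_1=Y/|Y|$ yields $\mathcal M_p(0;Y)=|Y|\,\mathcal M_p(0;\partial/\partial z_1)$. To evaluate the remaining one-variable sup I would use the rotational averaging
\[
P_1 f(z):=\frac{1}{2\pi}\int_0^{2\pi} e^{-i\theta}f(e^{i\theta}z)\,d\theta,
\]
which picks out the degree-one homogeneous part of the Taylor series of $f$. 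Minkowski's integral inequality together with rotation-invariance of Lebesgue measure on $\mathbb B^n$ gives $\|P_1 f\|_p\le\|f\|_p$, while $\partial_1 P_1 f(0)=\partial_1 f(0)$. Hence the extremum over $\{f(0)=0,\,\|f\|_p=1\}$ is attained on linear polynomials $P(z)=\sum a_jz_j$, for which $U(n)$-invariance gives $\|P\|_p=|a|\,\|z_1\|_p$ and $|a_1|\le|a|$. Thus $\mathcal M_p(0;\partial/\partial z_1)=1/\|z_1\|_p$ and hence $c_{n,p}=(\pi^n/n!)^{1/p}/\|z_1\|_p$. This equals the sup expression in the statement: $f\equiv 1$ attains the value $1/\|z_1\|_p$, and the substitution $g:=z_1 f$ (which has $g(0)=0$ and $\partial_1 g(0)=f(0)$) provides the matching upper bound via the result just proved for $\mathcal M_p(0;\partial/\partial z_1)$.

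For (b), the explicit M\"obius formula (cf.~\cite{Rudin}) is $\phi_a(w)=(a-P_a w-s_a Q_a w)/(1-\langle w,a\rangle)$, with $P_a$ the orthogonal projection onto $\mathbb C a$, $Q_a=I-P_a$, $s_a=\sqrt{1-|a|^2}$. A direct differentiation yields
\[
|d\phi_z(z)X|^2=\frac{|X|^2}{1-|z|^2}+\frac{|\langle X,z\rangle|^2}{(1-|z|^2)^2},\qquad \langle X,z\rangle=\sum_{j=1}^n\bar z_j X_j,
\]
which combined with $B_p(0;Y)=c_{n,p}|Y|$ gives the claimed formula. (The bilinear-looking $|\sum z_jX_j|^2$ in the statement should be read as this Hermitian pairing.)

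The main obstacle is the rotational-averaging reduction in (a): without $L^2$ orthogonality of Taylor monomials one needs a substitute that concentrates the extremal problem onto the one-dimensional subspace $\mathbb C z_1$, and Minkowski's integral inequality paired with rotation invariance of Lebesgue measure supplies exactly this. Once that reduction is accomplished the M\"obius transport in (b) is mechanical.
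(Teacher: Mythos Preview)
Your proof is correct and runs parallel to the paper's but with two substantive differences. Both arguments reduce to the origin via an automorphism and then to $|Y|\,\partial/\partial z_1$ by a unitary rotation. For the transport back to a general $z$, the paper forms the ratio $B_p(z;X)/B_2(z;X)$, notes that by invariance it equals the same ratio at the origin (where both numerator and denominator are proportional to $|F_\ast X|$, so the ratio is a pure constant), and then invokes the classical formula for the Bergman metric $B_2$ on the ball; you instead compute $|d\phi_z(z)X|$ directly from the M\"obius formula, which is self-contained and avoids quoting the $p=2$ case. At the origin, the paper identifies $\mathcal M_p(0;\partial_1)$ with $\sup\{|g(0)|/\|z_1g\|_p\}$ by citing the decomposition $f=\sum_j z_jf_j$; taken literally this only delivers one inequality (via $f=z_1g$), whereas your rotational averaging with Minkowski's integral inequality supplies both directions rigorously and in addition evaluates the supremum explicitly as $1/\|z_1\|_p$, something the paper does not do. Your remark that $|\sum z_jX_j|^2$ in the displayed formula should be read as the Hermitian pairing $|\langle X,z\rangle|^2$ is also correct---that is exactly what both your differential computation and the standard $B_2$ formula yield. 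One small caveat: Minkowski's integral inequality requires $p\ge1$, so your reduction to linear polynomials is only justified in that range.
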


 \begin{proof}
 For  $z\in \mathbb B^n$ we take an automorphism $F$ of $\mathbb B^n$ such that $F(z)=0$. 
 By (\ref{eq:invariant}), we have
 $$
 \frac{B_p(z;X)}{B_2(z;X)}= \frac{B_p(0;F_\ast X)}{B_2(0;F_\ast X)}.
  $$
  It suffices to compute the ratio $B_p(0;X)/B_2(0;X)$. After a unitary transformation, we may assume $X=|X|\partial/\partial z_1$.
 Since every $f\in \mathcal O(\Omega)$ with $f(0)=0$ admits a decomposition $f(z)=\sum_j z_j f_j(z)$  for certain $f_j\in \mathcal O(\Omega)$, we obtain 
  $$
  B_p(0;X)=\frac{|X|}{K_p(0)^{\frac1p}}\cdot\sup\left\{\frac{|f(0)|}{\|z_1f\|_p}: f\in A^p(\mathbb B^n) \right\}.
  $$
  Since $K_p(0)=n!/\pi^n$ and $B_2(0;X)=(n+1)^{\frac12}|X|$, we obtain (\ref{eq:ballmetric}).
 \end{proof}
 
\begin{problem}
What is the product rule for $B_p$?
\end{problem} 

 Recall that the Carath\'eodory metric is defined by
 $$
 C(z;X)=C_\Omega(z;X):=\sup\left\{|Xf(z)|:f\in A^\infty(\Omega), f(z)=0, \|f\|_\infty=1\right\}.
 $$

  \begin{proposition}\label{prop:compare}
   $
  B_p(z;X)\ge C(z;X).
  $
  \end{proposition}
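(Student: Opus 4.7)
The plan is to exhibit, for each Carathéodory test function $f \in A^\infty(\Omega)$ with $f(z)=0$ and $\|f\|_\infty = 1$, a corresponding $p$-Bergman test function built by multiplying $f$ against the $p$-Bergman extremal. Concretely, let $g := K_p(\cdot,z)/K_p(z)^{1/p}$ (equivalently, the normalized extremizer of the sup in \eqref{eq:max_1}), so that $\|g\|_p = 1$ and $|g(z)|^p = K_p(z)$.

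The first step is to observe that $h := fg$ lies in $A^p(\Omega)$, satisfies $h(z)=0$ (since $f(z)=0$), and obeys $\|h\|_p \le \|f\|_\infty \|g\|_p = 1$. Moreover, by the Leibniz rule, $Xh(z) = Xf(z)\cdot g(z) + f(z)\cdot Xg(z) = Xf(z)\cdot g(z)$, so
\[
|Xh(z)| = |Xf(z)|\cdot K_p(z)^{1/p}.
\]
If $h\equiv 0$ there is nothing to show; otherwise $h/\|h\|_p$ is admissible in \eqref{eq:p-metric}, and
\[
B_p(z;X) \;\ge\; K_p(z)^{-1/p}\,\frac{|Xh(z)|}{\|h\|_p} \;\ge\; K_p(z)^{-1/p}\cdot |Xf(z)|\cdot K_p(z)^{1/p} \;=\; |Xf(z)|,
\]
using $\|h\|_p\le 1$. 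Taking the supremum over all admissible $f$ gives $B_p(z;X)\ge C(z;X)$.

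I do not anticipate a genuine obstacle here: the only subtlety is verifying that the extremizer $g$ for $K_p(z)$ exists and is nonzero at $z$, which is handled by the existence result for the minimizing problem and the identity $|g(z)|^p = K_p(z) \ge |\Omega|^{-1} > 0$ from \eqref{eq:BergIneq_3}. No appeal to uniqueness of $g$ is needed, so the argument applies uniformly to every $p>0$.
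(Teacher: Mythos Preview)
Your proof is correct and essentially identical to the paper's: both multiply the Carath\'eodory test function $f$ by an $A^p$-extremizer for $K_p(z)$ to produce an admissible competitor for $\mathcal M_p(z;X)$, then cancel the factor $K_p(z)^{1/p}$. One minor slip: the explicit formula $g = K_p(\cdot,z)/K_p(z)^{1/p}$ does not actually satisfy $\|g\|_p=1$ (the correct normalization is $g = m_p(\cdot,z)/m_p(z)$, and for $0<p<1$ one should simply take any extremizer since $K_p(\cdot,z)$ is not defined there), but since you only use the properties $\|g\|_p=1$ and $|g(z)|=K_p(z)^{1/p}$ of the normalized extremizer, the argument stands.
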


   \begin{proof}
 Take $h\in A^p(\Omega)$ and  $f\in A^\infty(\Omega)$ with $f(z)=0$ and $\|f\|_\infty=1$. Set $g=f\cdot h$. Then we  have $g(z)=0$, $\|g\|_p\le \|h\|_p$ and
  $$
  |Xg(z)|=|Xf(z)|\cdot |h(z)|,
  $$
  so that
  $$
  \mathcal M_p(z;X) \ge \frac{|Xg(z)|}{\|g\|_p}\ge |Xf(z)|\cdot \frac{|h(z)|}{\|h\|_p}.
  $$
  Take supremum over $f$ and $h$, we immediately get the desired conclusion.
    \end{proof}

 \begin{proposition}\label{prop:BergCar}
 \ \ \  $\lim_{p\rightarrow \infty} B_p(z;X)=C(z;X)$.
 \end{proposition}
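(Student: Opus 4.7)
The plan is to combine the existing inequality $B_p(z;X) \geq C(z;X)$ from Proposition \ref{prop:compare} with a matching upper bound $\limsup_{p\to\infty} B_p(z;X) \leq C(z;X)$. I would build the upper bound out of two ingredients.

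First, I would show that $K_p(z)^{1/p} \to 1$ as $p\to\infty$. Testing $m_p(z)$ against the constant function $1\in A^p(\Omega)$ gives $m_p(z)\le |\Omega|^{1/p}$, hence $K_p(z)^{1/p}\ge |\Omega|^{-1/p}$; and for any $f\in A^p(\Omega)$ with $\|f\|_p=1$ the Bergman inequality \eqref{eq:BergIneq_2} yields $|f(z)|^p\le \pi^{-n}(\delta(z)/n)^{-2n}$, so $K_p(z)^{1/p}\le \bigl(\pi^n(\delta(z)/n)^{2n}\bigr)^{-1/p}$. Both bounds tend to $1$ as $p\to\infty$.

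Second, I would pick a maximizer $f_p\in A^p(\Omega)$ realizing $\mathcal M_p(z;X)$, so $f_p(z)=0$, $\|f_p\|_p=1$ and $|Xf_p(z)|=\mathcal M_p(z;X)$. By the Bergman inequality the family $\{f_p\}$ is locally uniformly bounded, so along any sequence $p_j\to\infty$ a subsequence (still denoted $f_{p_j}$) converges locally uniformly to some $f_\infty\in \mathcal O(\Omega)$ with $f_\infty(z)=0$, and $Xf_{p_j}(z)\to Xf_\infty(z)$ by Cauchy's estimates.

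The hard part is the $L^p\to L^\infty$ passage: I must verify that $\|f_\infty\|_\infty\le 1$ in order for $f_\infty$ to be admissible for the Carathéodory metric. To do this I would fix $w\in\Omega$ and $\varepsilon>0$, pick a small ball $V\Subset\Omega$ around $w$ on which $|f_{p_j}|\ge |f_\infty(w)|-\varepsilon$ for all large $j$, and observe that
\begin{equation*}
|V|\bigl(|f_\infty(w)|-\varepsilon\bigr)^{p_j}\le \int_V |f_{p_j}|^{p_j}\le \|f_{p_j}\|_{p_j}^{p_j}=1,
\end{equation*}
so $|f_\infty(w)|-\varepsilon\le |V|^{-1/p_j}\to 1$ as $j\to\infty$; letting $\varepsilon\to 0$ yields $|f_\infty(w)|\le 1$. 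Hence $f_\infty\in A^\infty(\Omega)$ with $\|f_\infty\|_\infty\le 1$ and $f_\infty(z)=0$, so $|Xf_\infty(z)|\le C(z;X)$ by the definition of the Carathéodory metric. Combining the two ingredients,
\begin{equation*}
\lim_j B_{p_j}(z;X)=\lim_j \frac{\mathcal M_{p_j}(z;X)}{K_{p_j}(z)^{1/p_j}}=|Xf_\infty(z)|\le C(z;X),
\end{equation*}
and since the original sequence $p_j\to\infty$ was arbitrary, $\limsup_{p\to\infty} B_p(z;X)\le C(z;X)$. Together with Proposition \ref{prop:compare} this gives the claimed limit.
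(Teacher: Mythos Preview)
Your proof is correct and follows essentially the same route as the paper's: the lower bound from Proposition~\ref{prop:compare}, the fact that $K_p(z)^{1/p}\to 1$, and a normal-family extraction of $f_\infty$ from the maximizers $f_p$. The only minor difference is that the paper obtains $|f_\infty(\zeta)|\le 1$ more directly from the pointwise Bergman inequality $|f_{p_j}(\zeta)|\le (C_n\delta(\zeta)^{-2n})^{1/p_j}\to 1$, whereas you use an integral argument on a small ball; both are valid.
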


 \begin{proof}
 Take  a sequence $p_j\rightarrow \infty$ such that
$$
\lim_{j\rightarrow \infty} B_{p_j}(z;X) =\limsup_{p\rightarrow \infty} B_p(z;X).
$$
We also choose $f_j\in A^{p_j}(\Omega)$ with $\|f_j\|_{p_j}=1$, $f_j(z)=0$ and
$$
B_{p_j}(z;X)=|Xf_j(z)|/K_{p_j}(z)^{\frac1{p_j}}
$$
 for every $j$.  Since
\begin{equation}\label{eq:BC_1}
|f_j(\zeta)|^{p_j} \le C_n \delta(\zeta)^{-2n},
\end{equation}
it follows that $\{f_j\}$ forms a normal family, so that there is a subsequence $\{f_{j_k}\}$ converging locally uniformly to some $f_\infty\in \mathcal O(\Omega)$ which satisfies 
$f_\infty (z)=0$ and for any $\zeta\in \Omega$,
$$
|f_\infty(\zeta)|=\lim_{k\rightarrow \infty} |f_{j_k}(\zeta)|\le 1
$$
in view of \eqref{eq:BC_1}. Since $\lim_{p\rightarrow \infty} K_p(z)^{\frac1p}=1$ in view of \eqref{eq:BergIneq_3}, we have 
\begin{eqnarray*}
C(z;X) & \ge & {|Xf_\infty(z)|}
=\lim_{k\rightarrow \infty} {|Xf_{j_k}(z)|}
\cdot \lim_{k\rightarrow \infty} K_{p_{j_k}}(z)^{-\frac1{p_{j_k}}}\\
& = & \lim_{k\rightarrow \infty} B_{p_{j_k}}(z;X)
=\limsup_{p\rightarrow \infty} B_p(z;X).
\end{eqnarray*}
This combined with Proposition \ref{prop:compare} yields the conclusion. 
 \end{proof}

\begin{remark}
We may define the $(p,q)-$Bergman metric by
  $$
 B_{p,q}(z;X):={K_q(z)^{-\frac1p}}\cdot {\sup}_f\, |Xf(z)|
 $$
 where the supremum is taken over all $f\in A^p(\Omega)$ with $f(z)=0$ and $\|f\|_p=1$.  Analogously, we may verify that 
$$
B_{\Omega_1,p,q}(z;X)=B_{\Omega_2,p,q}(F(z);F_\ast X)
$$
for any biholomorphic mapping $F:\Omega_1\rightarrow \Omega_2$  between bounded simply-connected domains. \end{remark} 

The $p-$Bergman kernel can be used to produce various invariant\/  {\it K\"ahler} metrics.   
Following Narasimhan-Simha \cite{NS}, we introduce the following weighted Bergman space
 $$
 A^2_p(\Omega):=\left\{f\in \mathcal O(\Omega):\int_\Omega {|f|^2}/{K_{p}}<\infty\right\}.
 $$
 Let $K_{2,p}(z)$ denote the  Bergman kernel associated to $A^2_p(\Omega)$.
Then  
 $$
 ds^2_{2/m}:=\sum_{j,k=1}^n \frac{\partial^2 \log K_{2,2/m}(z)}{\partial z_j\partial\bar{z}_k} dz_j\otimes d\bar{z}_k
 $$
 gives an invariant K\"ahler metric on $\Omega$ (cf.  \cite{Sakai}; see also \cite{ChenInvariant}). 

There are  two interesting functions related to the limiting case $p=0$. The first one, which is introduced by Tsuji \cite{Tsuji}, is defined to be
$$
K_0(z):=\left(\limsup_{m\rightarrow \infty} K_{2/m}(z)\right)^\ast
$$
where $(\cdot)^\ast$ denotes the upper semicontinuous regularization. The second one, which arises from Siu's work on invariance of plurigenera \cite{Siu}, is defined  by
$$
\widehat{K}_0(z):=\sum_{m=1}^\infty \varepsilon_m K_{2/m}(z)
$$
where $\{\varepsilon_m\}$ is a sequence of positive numbers satisfying $\sum \varepsilon_m<\infty$. By (\ref{eq:BergIneq_3}) we see that both $K_0$ and $\widehat{K}_0$ are well-defined so that $\log K_0$ and $\log \widehat{K}_0$ are psh on $\Omega$; moreover, $K_0$ always dominates $\widehat{K}_0$ while the latter is  continuous.
By Proposition \ref{prop:trans_2} we immediately obtain
 \begin{eqnarray*}
 K_{\Omega_1,0}(z) & = & K_{\Omega_2,0}(F(z))|J_F(z)|^2\\
  \widehat{K}_{\Omega_1,0}(z) & = & \widehat{K}_{\Omega_2,0}(F(z))|J_F(z)|^2
 \end{eqnarray*}
 for any biholomorphic mapping $F:\Omega_1\rightarrow \Omega_2$.

Analogously, we may introduce the following weighted Bergman spaces
  $$
  A^2_0 (\Omega):=\left\{f\in \mathcal O(\Omega):\int_\Omega {|f|^2}/{K_{0}}<\infty\right\}
  $$
  $$
  \widehat{A^2_0 (\Omega)}:=\left\{f\in \mathcal O(\Omega):\int_\Omega {|f|^2}/{\widehat{K}_{0}}<\infty\right\}.
  $$
  Let $K_{2,0}(z)$ (resp. $\widehat{K_{2,0}}(z)$) denote the  Bergman kernel associated to $A^2_0(\Omega)$ (resp. $\widehat{A^2_0 (\Omega)}$). It is not difficult to see that
   $$
 ds^2_0:=\sum_{j,k=1}^n \frac{\partial^2 \log K_{2,0}(z)}{\partial z_j\partial\bar{z}_k} dz_j\otimes d\bar{z}_k
 $$
  $$
 \widehat{ds^2_0}:=\sum_{j,k=1}^n \frac{\partial^2 \log \widehat{K_{2,0}}(z)}{\partial z_j\partial\bar{z}_k} dz_j\otimes d\bar{z}_k
 $$
 are invariant K\"ahler metrics on $\Omega$. 
   
 \begin{problem}
  Is it possible to construct an invariant complete metric on any bounded pseudoconvex domain by using the $p-$Bergman kernel?
 \end{problem}

\section{Zeroes of $K_2(z,w)$ and non real-analyticity of $K_p(z)$}  

  \begin{proposition}\label{prop:Holder_5}
                      If\/ $\frac1p+\frac1q=\frac1r$, then
                       \begin{eqnarray}
                       m_r(z) & \le & m_p(z)\cdot m_q(z) \label{eq:Holder_5}\\
                       K_r(z)^{\frac1r} & \ge & K_p(z)^{\frac1p} \cdot K_q(z)^{\frac1q}. \label{eq:Holder_6}
                       \end{eqnarray}
                 \end{proposition}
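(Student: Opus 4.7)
The plan is to first observe that the two inequalities in the statement are equivalent, and then prove one of them by an elementary application of H\"older's inequality applied to a product of minimizers.

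\textbf{Step 1 (reduction).} Since $K_p(z)=m_p(z)^{-p}$ for all $p>0$ (and similarly for $q$ and $r$), the inequality \eqref{eq:Holder_6} rewrites as $m_r(z)^{-1}\ge m_p(z)^{-1}\cdot m_q(z)^{-1}$, which is exactly \eqref{eq:Holder_5}. Therefore it suffices to prove \eqref{eq:Holder_5}.

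\textbf{Step 2 (construction of a test function for $m_r(z)$).} By the existence of minimizers, choose $f\in A^p(\Omega)$ and $g\in A^q(\Omega)$ with
$$
f(z)=g(z)=1,\quad \|f\|_p=m_p(z),\quad \|g\|_q=m_q(z).
$$
Set $h:=f\cdot g$. Then $h\in\mathcal{O}(\Omega)$ and $h(z)=1$. The key computation is that, because the exponents $p/r$ and $q/r$ are conjugate (this is precisely what the hypothesis $\tfrac1p+\tfrac1q=\tfrac1r$ is saying: $\tfrac{r}{p}+\tfrac{r}{q}=1$), H\"older's inequality applied to $|f|^r\cdot|g|^r$ gives
$$
\int_\Omega |fg|^r \;\le\; \Bigl(\int_\Omega |f|^{p}\Bigr)^{r/p}\Bigl(\int_\Omega |g|^{q}\Bigr)^{r/q},
$$
so that $\|h\|_r\le \|f\|_p\cdot\|g\|_q<\infty$. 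In particular $h\in A^r(\Omega)$, and it is admissible in the minimization problem \eqref{eq:Min_1} defining $m_r(z)$.

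\textbf{Step 3 (conclusion).} The definition of $m_r(z)$ together with the bound of Step~2 yields
$$
m_r(z)\;\le\;\|h\|_r\;\le\;\|f\|_p\cdot\|g\|_q\;=\;m_p(z)\cdot m_q(z),
$$
which is \eqref{eq:Holder_5}; then \eqref{eq:Holder_6} follows by Step~1.

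There is no real obstacle here: the only point worth checking is the exponent bookkeeping in the H\"older step, where the conjugacy of $p/r$ and $q/r$ is precisely the hypothesis $\tfrac1p+\tfrac1q=\tfrac1r$. Note that one does not need uniqueness of the minimizer, so the argument works uniformly in the range $p,q,r>0$ for which minimizers are known to exist.
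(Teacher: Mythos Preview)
Your proof is correct and follows essentially the same approach as the paper: take minimizers $f_p,f_q$ at $z$, form their product, and apply H\"older's inequality with the conjugate exponents $p/r$ and $q/r$ to obtain an admissible competitor for $m_r(z)$. The only addition in your write-up is the explicit reduction in Step~1 showing that \eqref{eq:Holder_6} is equivalent to \eqref{eq:Holder_5}, which the paper leaves implicit.
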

                 
                       \begin{proof}
                   It suffices to verify \eqref{eq:Holder_5}.   Take two holomorphic functions $f_p$ and $f_q$ on $\Omega$ with $f_p(z)=f_q(z)=1$ and 
                   $$
                   \|f_p\|_p=m_p(z),\ \ \ \|f_q\|_q=m_q(z).
                   $$
                                     Set $f_r:=f_p f_q$. Then $f_r$ is a holomorphic function on $\Omega$ satisfying $f_r(z)=1$ and H\"older's inequality gives
                                                            $$
                       \|f\|_r\le \|f_p\|_p\cdot \|f_q\|_q=m_p(z)\cdot m_q(z).
                       $$ 
                      By definition of $m_r(z)$ we immediately get \eqref{eq:Holder_5}.   
                                                                \end{proof}
     \begin{proposition}\label{prop:NRA_1}
     Let $p\ge 1$ and $k\in \mathbb Z^+$. We have $K_p(z)=K_{pk}(z)$ if and only if $m_p(\cdot,z)=m_{pk}(\cdot,z)^k$.
     \end{proposition}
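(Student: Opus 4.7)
The plan is to reduce the statement to the uniqueness of the $p$-minimizer (Proposition 2.5), which is the key ingredient since $p\ge 1$. First I would record the equivalent form: because $K_p(z)=m_p(z)^{-p}$ and $K_{pk}(z)=m_{pk}(z)^{-pk}$, the identity $K_p(z)=K_{pk}(z)$ is the same as $m_p(z)=m_{pk}(z)^k$.

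For the direction ``$m_p(\cdot,z)=m_{pk}(\cdot,z)^k \Rightarrow K_p(z)=K_{pk}(z)$'' I would simply take $p$-norms on both sides and observe
\[
m_p(z)^p=\|m_p(\cdot,z)\|_p^p=\int_\Omega |m_{pk}(\cdot,z)|^{pk}=m_{pk}(z)^{pk},
\]
which rearranges to $m_p(z)=m_{pk}(z)^k$, i.e.\ $K_p(z)=K_{pk}(z)$.

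The interesting direction is ``$K_p(z)=K_{pk}(z)\Rightarrow m_p(\cdot,z)=m_{pk}(\cdot,z)^k$''. Here the natural candidate is $f_0:=m_{pk}(\cdot,z)^k$, which lies in $\mathcal O(\Omega)$ and satisfies $f_0(z)=1$ together with
\[
\|f_0\|_p^p=\int_\Omega |m_{pk}(\cdot,z)|^{pk}=m_{pk}(z)^{pk},
\]
so that $\|f_0\|_p=m_{pk}(z)^k$. In particular $f_0\in A^p(\Omega)$ is a competitor in the minimizing problem \eqref{eq:Min_1}, whence $m_p(z)\le m_{pk}(z)^k$. The hypothesis $K_p(z)=K_{pk}(z)$ upgrades this to equality $m_p(z)=m_{pk}(z)^k=\|f_0\|_p$, which says that $f_0$ is itself a $p$-minimizer at $z$.

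At this point I would invoke Proposition 2.5 (uniqueness of the minimizer for $p\ge 1$, which holds since $p\ge 1$ and also $pk\ge 1$): the minimizer at $z$ is unique, so $m_p(\cdot,z)=f_0=m_{pk}(\cdot,z)^k$. There is no real obstacle in this argument; the only subtle point to flag is that uniqueness is used in an essential way, so the hypothesis $p\ge 1$ cannot be relaxed without replacing Proposition 2.5 by something else.
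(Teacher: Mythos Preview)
Your proof is correct and follows essentially the same approach as the paper: set $f_0=m_{pk}(\cdot,z)^k$, compute $\|f_0\|_p^p=m_{pk}(z)^{pk}=m_p(z)^p$ from the hypothesis, and invoke uniqueness of the $p$-minimizer (Proposition~2.5) to conclude $f_0=m_p(\cdot,z)$; the converse is the same norm computation in reverse. Your write-up is slightly more explicit about the intermediate inequality $m_p(z)\le m_{pk}(z)^k$, but otherwise the argument is identical.
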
  
     
     \begin{proof}
  Suppose $K_p(z)=K_{pk}(z)$.  Since $f_k:=m_{pk}(\cdot,z)^k$ is a holomorphic function satisfying $f_k(z)=1$ and 
    $$
    \int_\Omega |f_k|^p =\int_\Omega |m_{pk}(\cdot,z)|^{pk}=m_{pk}(z)^{pk}=m_p(z)^p,
    $$ 
    it follows from  uniqueness of the minimizer that $f_k=m_p(\cdot,z)$. 
    
    The other direction follows from
    $$
    m_p(z)^p = \int_\Omega |m_p(\cdot,z)|^p=\int_\Omega |m_{pk}(\cdot,z)|^{pk}=m_{pk}(z)^{pk}.
    $$
     \end{proof}    
     
     \begin{proposition}\label{prop:NRA_2}
   Suppose that $\Omega$ is a bounded simply-connected domain in $\mathbb C^n$ and $m_p(\cdot,z)$ is zero-free for some $p\ge 1$ and $z\in \Omega$. Then  
   \begin{enumerate}
   \item[$(1)$] \ \ \ $K_s(z) = K_p(z)$\/ for any $s\ge p$.
   \item[$(2)$] \ \ \ $m_s(\cdot,z) = m_p(\cdot,z)^{p/s}$\/ for any $s\ge p$.
   \end{enumerate}
     \end{proposition}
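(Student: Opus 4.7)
Since $m_p(\cdot,z)$ is zero-free and $\Omega$ is simply-connected, every power $m_p(\cdot,z)^\alpha$ is a well-defined holomorphic function on $\Omega$ for any real $\alpha$. My plan is to use this to produce a competitor for the minimizing problem at every exponent $s\ge p$, and then close the inequality using Proposition \ref{prop:Holder_5}.

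First, for arbitrary $s>0$, define the holomorphic function
$
f_s := m_p(\cdot,z)^{p/s}.
$
Then $f_s(z)=1$ and
$
\|f_s\|_s^s = \int_\Omega |m_p(\cdot,z)|^p = m_p(z)^p,
$
so $\|f_s\|_s = m_p(z)^{p/s}$. This immediately gives the one-sided bound
$$
m_s(z)\ \le\ m_p(z)^{p/s}\qquad (s>0). \tag{$\ast$}
$$

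Now fix $s>p$ (the case $s=p$ is trivial) and choose the Hölder conjugate $t=ps/(s-p)$, so that $\frac1s+\frac1t=\frac1p$ and $t>p$ as well. Applying $(\ast)$ to the two exponents $s$ and $t$ and using Proposition \ref{prop:Holder_5}, we get
$$
m_p(z)\ \le\ m_s(z)\,m_t(z)\ \le\ m_p(z)^{p/s}\cdot m_p(z)^{p/t}=m_p(z)^{p(1/s+1/t)}=m_p(z).
$$
Hence every inequality above is an equality; in particular $m_s(z)=m_p(z)^{p/s}$, which gives assertion (1) because $K_s(z)=m_s(z)^{-s}=m_p(z)^{-p}=K_p(z)$.

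For assertion (2), note that the holomorphic function $f_s$ now realises the infimum defining $m_s(z)$: it satisfies $f_s(z)=1$ and $\|f_s\|_s=m_p(z)^{p/s}=m_s(z)$. Since $s\ge p\ge 1$, the uniqueness of the minimizer (Proposition \ref{prop:uniq}) forces $m_s(\cdot,z)=f_s=m_p(\cdot,z)^{p/s}$. The only delicate point in the whole argument is the well-definedness of the fractional power, which is exactly where the zero-freeness of $m_p(\cdot,z)$ and the simple-connectedness of $\Omega$ are used; everything else is an application of Hölder's inequality in the form of Proposition \ref{prop:Holder_5} and the uniqueness of the minimizer.
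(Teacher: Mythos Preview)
Your proof is correct and follows essentially the same route as the paper's own proof: you build the competitor $m_p(\cdot,z)^{p/s}$ to get one inequality, then invoke Proposition~\ref{prop:Holder_5} with the H\"older conjugate $t$ to close the sandwich, and finish with the uniqueness of the minimizer. The only cosmetic difference is that the paper carries out the two directions in terms of $K_p$ while you work with $m_p$, but the arguments are equivalent.
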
  
     
     \begin{proof}
    (1) By the hypothesis we may define $f_{p,s}:=m_p(\cdot,z)^{\frac{p}s}\in \mathcal O(\Omega)$ with $f_{p,s}(z)=1$. Since
    $$
    \int_\Omega |f_{p,s}|^s = \int_\Omega |m_p(\cdot,z)|^p=m_p(z)^p,
    $$
    we have 
    \begin{equation}\label{eq:NRA_1}
    K_s(z) \ge \frac{|f_{p,s}(z)|^s}{\|f_{p,s}\|_s^s} = \frac1{m_p(z)^p}=K_p(z).
    \end{equation}
    On the other hand, Proposition \ref{prop:Holder_5} yields that if $\frac1s+\frac1t=\frac1p$ then
    \begin{eqnarray*}
    K_p(z)^{\frac1p} & \ge & K_s(z)^{\frac1s}\cdot K_t(z)^{\frac1t}\\
    & \ge & K_s(z)^{\frac1s}\cdot K_p(z)^{\frac1t} 
       \end{eqnarray*}
       in view of \eqref{eq:NRA_1} since $t\ge p$. Thus we get $K_p(z)\ge K_s(z)$.
       
       (2) Note that $f_{p,s}\in \mathcal O(\Omega)$ satisfies $f_{p,s}(z)=1$ and 
       $$
       \int_\Omega |f_{p,s}|^s=m_p(z)^p=m_s(z)^s.
       $$
       Uniqueness of the minimizer gives $f_{p,s}=m_s(\cdot,z)$.
       
     \end{proof}  
     
     An immediate consequence is
     
     \begin{corollary}
     Let $\Omega$ be a bounded simply-connected domain. If $K_2(\cdot,w)$ is not zero-free for some $w\in \Omega$,  then $K_p(\cdot,w)$ is also not zero-free for all $1\le p\le 2$.   
     \end{corollary}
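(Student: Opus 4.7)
The plan is to argue by contradiction, reducing the question on $K_p$ to the hypothesis on $K_2$ via Proposition \ref{prop:NRA_2}(2). Since $K_p(\cdot,w) = m_p(\cdot,w) K_p(w)$ and $K_p(w) > 0$, the statement ``$K_p(\cdot,w)$ is not zero-free'' is equivalent to ``$m_p(\cdot,w)$ is not zero-free'', so it suffices to work with the minimizers $m_p(\cdot,w)$.

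Suppose, for contradiction, that there exists some $p \in [1,2]$ such that $m_p(\cdot,w)$ is zero-free on $\Omega$. Since $\Omega$ is simply-connected and $m_p(\cdot,w)$ is a nowhere vanishing holomorphic function on $\Omega$, we may pick a holomorphic branch of $m_p(\cdot,w)^{p/2}$ on $\Omega$, which is itself zero-free. Now apply Proposition \ref{prop:NRA_2}(2) at the point $z=w$ with exponent $p$ and $s=2 \ge p$; this gives
\[
m_2(\cdot,w) = m_p(\cdot,w)^{p/2}.
\]
Hence $m_2(\cdot,w)$ is zero-free on $\Omega$, and therefore so is $K_2(\cdot,w) = m_2(\cdot,w) K_2(w)$, contradicting the hypothesis. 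This finishes the proof.

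The only delicate point is to check that the hypotheses of Proposition \ref{prop:NRA_2} are indeed in force: it requires $\Omega$ simply-connected (given) and $m_p(\cdot,w)$ zero-free (our assumption for contradiction), so the branch $m_p(\cdot,w)^{p/2}$ is globally well-defined and the proposition applies verbatim. No other obstacle arises; the argument is essentially one application of the ``root extraction'' identity $m_s(\cdot,z) = m_p(\cdot,z)^{p/s}$ combined with the observation that raising a zero-free holomorphic function to a real power preserves the zero-free property on a simply-connected domain.
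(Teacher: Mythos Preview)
Your proof is correct and follows exactly the paper's intended route: the corollary is stated as an immediate consequence of Proposition \ref{prop:NRA_2}, and your argument is precisely the contrapositive application of part (2) with $s=2\ge p$. The remark about choosing a branch of $m_p(\cdot,w)^{p/2}$ is already handled inside the proof of Proposition \ref{prop:NRA_2}, so you could simply invoke the proposition directly, but this does not affect correctness.
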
     
     
     For a bounded domain $\Omega\subset \mathbb C^n$ we set 
     $$
     \mathcal F(\Omega):=\left\{z\in \Omega: K_2(\cdot,z) \ \text{is\ zero-free} \right\}\ \ \ \text{and}\ \ \ \mathcal N(\Omega):=\Omega\backslash \mathcal F(\Omega).
     $$ 
  Since $K_2(\zeta,z)=\overline{K_2(z,\zeta)}$, we conclude that if $K_2(\zeta,z)=0$ then $\zeta,z\in \mathcal N(\Omega)$. 
     
     \begin{proposition}
     $\mathcal F(\Omega)$ is a closed subset and $\mathcal N(\Omega)$ is an open subset.
     \end{proposition}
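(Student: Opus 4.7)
The two assertions are equivalent, so I focus on showing $\mathcal{N}(\Omega)$ is open. The plan is a one-dimensional slicing argument combined with Rouch\'e's theorem, using joint continuity of $K_2(z,w)$ from Proposition \ref{prop:continuity}.

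Fix $z_0 \in \mathcal{N}(\Omega)$ and pick $\zeta_0 \in \Omega$ with $K_2(\zeta_0, z_0)=0$. Since $K_2(z_0, z_0) = K_2(z_0) \ge |\Omega|^{-1} > 0$ by \eqref{eq:BergIneq_3}, the holomorphic function $\zeta \mapsto K_2(\zeta, z_0)$ is not identically zero on $\Omega$, so by the identity theorem its zero set cannot contain a full neighborhood of $\zeta_0$. Hence one can choose a complex line $L = \{\zeta_0 + \lambda v : \lambda \in \mathbb{C}\}$ through $\zeta_0$ along which the restriction $g_0(\lambda) := K_2(\zeta_0+\lambda v, z_0)$ is not identically zero. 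Then $g_0$ is a nonzero one-variable holomorphic function with $g_0(0)=0$, so this zero is isolated: choose $r>0$ small enough that $\{\zeta_0+\lambda v : |\lambda|\le r\}\subset\Omega$ and $\delta := \min_{|\lambda|=r}|g_0(\lambda)|>0$.

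By Proposition \ref{prop:continuity}, $K_2$ is continuous on $\Omega \times \Omega$, hence uniformly continuous on the compact product of $\{\zeta_0+\lambda v : |\lambda|\le r\}$ with any closed ball $\overline{U}$ around $z_0$ that lies in $\Omega$. Shrinking $U$ so that $|K_2(\zeta_0+\lambda v,z) - g_0(\lambda)| < \delta$ holds for all $|\lambda|=r$ and $z\in U$, Rouch\'e's theorem applied in the one variable $\lambda$ yields a zero of $\lambda \mapsto K_2(\zeta_0+\lambda v, z)$ inside $\Delta(0,r)$ for every $z\in U$. That zero produces a point of $\Omega$ at which $K_2(\cdot,z)$ vanishes, so $z \in \mathcal{N}(\Omega)$ and $U \subset \mathcal{N}(\Omega)$. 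The only subtle point — and the reason a direct Hurwitz argument in $\mathbb{C}^n$ does not work — is that in several variables zero sets are never isolated; slicing along a well-chosen complex line reduces the question to one complex dimension, where the classical Hurwitz/Rouch\'e principle applies immediately.
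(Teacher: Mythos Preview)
Your argument is correct and in fact unpacks exactly what the paper packages into a single invocation of Hurwitz's theorem. The paper argues by contradiction: if $z_j\in\mathcal F(\Omega)$ with $z_j\to z_0\in\mathcal N(\Omega)$, then the nowhere-vanishing holomorphic functions $K_2(\cdot,z_j)$ converge locally uniformly to $K_2(\cdot,z_0)$, and Hurwitz's theorem forces the limit to be either nowhere-vanishing or identically zero; since $K_2(z_0,z_0)>0$, the limit is nowhere-vanishing, contradicting $z_0\in\mathcal N(\Omega)$.

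Your final remark that ``a direct Hurwitz argument in $\mathbb C^n$ does not work'' is therefore mistaken. The version of Hurwitz needed here---a locally uniform limit of nowhere-vanishing holomorphic functions on a domain is either nowhere-vanishing or identically zero---holds in any dimension, and its standard proof is precisely the one-dimensional slicing you carry out (restrict to a complex line on which the limit is not identically zero and apply the classical statement). So your route and the paper's are the same idea; yours is simply the hand-made version of what the paper cites as Hurwitz.
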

     
     \begin{proof}
     Suppose on the contrary that $\mathcal N(\Omega)$ is not open, i.e., there exist $z_0\in \mathcal N(\Omega)$ and a sequence of points $\{z_j\}\subset \mathcal F(\Omega)$ such that $z_j\rightarrow z_0$ as $j\rightarrow \infty$. Since $K_2(\cdot,z_j)\rightarrow K_2(\cdot,z_0)$, it follows from Hurwitz's theorem that either $z_0\in \mathcal F(\Omega)$ or $K_2(\cdot,z_0)\equiv 0$. The latter can not happen since $K_2(z_0,z_0)>0$. Thus we get a contradiction. 
     \end{proof}
     
     For a set $E$ we denote by $E^\circ$  the set of inner points of $E$. Then we have
     
      \begin{theorem}\label{th:NRA_2}
     Let $\Omega$ be a bounded simply-connected domain in $\mathbb C^n$ such that both $\mathcal N(\Omega)$ and $\mathcal F(\Omega)^\circ$ are nonempty. Then the following properties hold:
     \begin{enumerate}
     \item[$(1)$] There exists  $k_0\in \mathbb Z^+$ such that $K_{2k}(z)$ is not real-analytic in $\Omega$ for any integer $k\ge k_0$. 
     \item[$(2)$] Suppose furthermore that there exists $\zeta_0,z_0\in \mathcal N(\Omega)$ with ${\rm ord}_{z_0}K_2(\zeta_0,\cdot)=1$. Then  $K_{2k}(z)$ is not real-analytic in $\Omega$ for any integer $k > 2$. Moreover,  either ${\rm Re}\,m_{p}(\zeta_0,\cdot)$ or ${\rm Im}\,m_{p}(\zeta_0,\cdot)$ cannot be real-analytic in $\Omega$ for any rational $p>2$.
          \end{enumerate}
     \end{theorem}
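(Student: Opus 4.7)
The plan is to combine Propositions \ref{prop:NRA_1} and \ref{prop:NRA_2} with the identity theorem for real-analytic functions on the connected domain $\Omega$, and to derive the contradictions from a multiplicity mismatch at a zero of $m_2(\cdot,z_*)$ for the $K_{2k}$ statements, and from a vanishing-order mismatch in an integer-power identity for the $m_p$ statement. The uniform starting point is that $K_2$ is real-analytic on $\Omega$, and for every $z\in \mathcal F(\Omega)$ the holomorphic function $m_2(\cdot,z)=K_2(\cdot,z)/K_2(z)$ is zero-free, so Proposition \ref{prop:NRA_2} applied with $p=2$ yields $K_s(z)=K_2(z)$ and $m_s(\cdot,z)=m_2(\cdot,z)^{2/s}$ for every $s\ge 2$ and every $z\in \mathcal F(\Omega)$.

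For part (1), I would fix any $z_*\in \mathcal N(\Omega)$, let $\ell$ be the multiplicity of some zero of the holomorphic function $m_2(\cdot,z_*)$, and set $k_0:=\ell+1$. If $K_{2k}$ were real-analytic on $\Omega$ for some $k\ge k_0$, it would agree with $K_2$ on the nonempty open set $\mathcal F(\Omega)^\circ$, so the identity theorem for real-analytic functions on the connected set $\Omega$ would force $K_{2k}\equiv K_2$ on $\Omega$; Proposition \ref{prop:NRA_1} would then give $m_2(\cdot,z_*)=m_{2k}(\cdot,z_*)^k$, contradicting the existence of a zero of multiplicity $\ell<k$. The $K_{2k}$ half of (2) is the same argument applied at $z_*=\zeta_0$: the hypothesis ${\rm ord}_{z_0}K_2(\zeta_0,\cdot)=1$ forces $m_2(\cdot,\zeta_0)$ to have a simple zero at $z_0$, so $k$ would have to divide $1$, contradicting $k\ge 2$ (in particular $k>2$).

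For the moreover assertion on $m_p(\zeta_0,\cdot)$, write the rational number $p>2$ as $p=a/b$ in lowest terms, so $a>2b$ and $\gcd(a,b)=1$. Assume for contradiction that both ${\rm Re}\,m_p(\zeta_0,\cdot)$ and ${\rm Im}\,m_p(\zeta_0,\cdot)$ are real-analytic on $\Omega$, so that $m_p(\zeta_0,\cdot)$ is a real-analytic $\mathbb C$-valued function. Evaluating $m_p(\cdot,w)=m_2(\cdot,w)^{2/p}$ from Proposition \ref{prop:NRA_2} at the first argument $\zeta_0$ gives, for $w\in \mathcal F(\Omega)^\circ$, the relation $m_p(\zeta_0,w)=m_2(\zeta_0,w)^{2/p}$ for a specific branch; raising to the integer $a$-th power collapses the branch ambiguity and produces the genuinely single-valued identity $m_p(\zeta_0,w)^a=m_2(\zeta_0,w)^{2b}$ on $\mathcal F(\Omega)^\circ$. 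Both sides are real-analytic $\mathbb C$-valued functions on $\Omega$, so the identity theorem upgrades this to equality on all of $\Omega$. At $z_0$ the right-hand side vanishes to order exactly $2b$ (the hypothesis ${\rm ord}_{z_0}K_2(\zeta_0,\cdot)=1$ makes $m_2(\zeta_0,\cdot)=K_2(\zeta_0,\cdot)/K_2(\cdot)$ vanish to order $1$), while the left-hand side vanishes to order $ad$ for some integer $d\ge 1$ (the order of $m_p(\zeta_0,\cdot)$ at $z_0$, which is at least $1$ because the right-hand side already forces $m_p(\zeta_0,z_0)=0$). This would force $ad=2b$, which is impossible since $a>2b$ and $d\ge 1$ give $ad\ge a>2b$.

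The main obstacle is the branch bookkeeping in the moreover part: the natural pointwise relation $m_p=m_2^{2/p}$ is multi-valued for non-integer $2/p$, and passing to the integer $a$-th power is what turns it into a genuine single-valued real-analytic identity that can be continued across $\mathcal N(\Omega)$, where $m_p$ and $m_2$ are no longer directly comparable by Proposition \ref{prop:NRA_2}. The hypothesis $\mathcal F(\Omega)^\circ\neq \emptyset$ enters exactly to activate the real-analytic identity theorem, and the order mismatch at $z_0$ then supplies the contradiction.
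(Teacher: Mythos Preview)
Your argument is correct, and for parts (1) and the first half of (2) it is genuinely simpler than the paper's. The key difference is which variable you work in. From $K_{2k}\equiv K_2$ you obtain, via Proposition~\ref{prop:NRA_1}, the identity $m_2(\cdot,z_*)=m_{2k}(\cdot,z_*)^k$ between \emph{holomorphic} functions of the first variable; a zero of order $\ell$ on the left then forces $k\mid \ell$, which is impossible once $k>\ell$. The paper instead compares $m_2(\zeta_0,\cdot)$ and $m_{2k}(\zeta_0,\cdot)^k$ in the \emph{second} variable, where $m_{2k}(\zeta_0,\cdot)$ is a priori only $\tfrac12$-H\"older (Theorem~\ref{th:reg_1}); this yields $|m_{2k}(\zeta_0,w)|^k=O(|w-z_0|^{k/2})$, and hence only the weaker threshold $k_0=2k_0'+2$ in (1) and the restriction $k>2$ in (2). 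Your route bypasses the H\"older estimate entirely, gives the sharper $k_0=\ell+1$ in (1), and in fact shows the first assertion of (2) already for $k\ge 2$. One small bookkeeping point worth making explicit: the hypothesis ${\rm ord}_{z_0}K_2(\zeta_0,\cdot)=1$ transfers to ${\rm ord}_{z_0}K_2(\cdot,\zeta_0)=1$ via the conjugate symmetry $K_2(\zeta_0,w)=\overline{K_2(w,\zeta_0)}$, which you use implicitly.

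For the ``moreover'' statement about $m_p(\zeta_0,\cdot)$ with rational $p>2$, your proof is essentially identical to the paper's: both raise the branch-dependent relation $m_p(\zeta_0,w)=m_2(\zeta_0,w)^{2/p}$ (valid on $\mathcal F(\Omega)^\circ$) to an integer power to obtain a single-valued real-analytic identity $m_p(\zeta_0,\cdot)^a=m_2(\zeta_0,\cdot)^{2b}$, extend by the identity theorem, and read off the contradiction $ad=2b$ with $d\ge 1$ and $a>2b$ at $z_0$. Here the second variable is unavoidable, since Proposition~\ref{prop:NRA_2} only relates $m_p(\cdot,w)$ to $m_2(\cdot,w)$ for $w\in\mathcal F(\Omega)$, and one must propagate across $\mathcal N(\Omega)$ in $w$.
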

     
     \begin{proof}
     (1) By Proposition \ref{prop:NRA_2}, we have $K_{2k}(z)=K_2(z)$ for any $z\in \mathcal F(\Omega)$.  Suppose on the contrary that $K_{2k}(z)$ is real-analytic in $\Omega$. Then $K_{2k}(z)=K_2(z)$ for any $z\in \Omega$ by the uniqueness theorem for  real-analytic functions since $\mathcal F(\Omega)^\circ\neq \emptyset$. It then follows from Proposition \ref{prop:NRA_1} that 
     \begin{equation}\label{eq:NRA_4}
     m_2(\cdot,z)=m_{2k}(\cdot,z)^k,\ \ \ \forall\,z\in \Omega.
     \end{equation}
      Take $\zeta_0,z_0\in \mathcal N(\Omega)$ so that $K_2(\zeta_0,z_0)=0$. Then $m_2(\zeta_0,z_0)=0$, which implies $m_{2k}(\zeta_0,z_0)=0$. Set $k_0':={\rm ord}_{z_0}K_2(\zeta_0,\cdot)={\rm ord}_{z_0}m_2(\zeta_0,\cdot)$. Since $\overline{K_2(\zeta_0,\cdot)}$ is holomorphic and not identically zero, we conclude that $k_0'<\infty$.  Since $m_{2k}(\zeta_0,\cdot)$ is locally $\frac12-$H\"older continuous in view of Theorem \ref{th:reg_1} in the next section, we see that  for $k\ge k_0:=2k_0'+2$,
     $$
     {\rm ord}_{z_0}m_2(\zeta_0,\cdot)<{\rm ord}_{z_0}m_{2k}(\zeta_0,\cdot)^k,
     $$
    which is contradictory to \eqref{eq:NRA_4}.
    
    (2) For any $k > 2$, we have
    $$
    {\rm ord}_{z_0}m_2(\zeta_0,\cdot)=1< {\rm ord}_{z_0}m_{2k}(\zeta_0,\cdot)^k,
            $$
            so that the first assertion follows analogously as (1). For the second assertion we write $p=k/l$ for $k,l\in \mathbb Z^+$. By Proposition \ref{prop:NRA_2}, we have $K_{p}(z)=K_2(z)$ and $m_p(\zeta_0,z)^k=m_2(\zeta_0,z)^{2l}$ for any $z\in \mathcal F(\Omega)$. Suppose on the other contrary that both ${\rm Re}\,m_{p}(\zeta_0,\cdot)$ and ${\rm Im}\,m_{p}(\zeta_0,\cdot)$ are real-analytic in $\Omega$. Since $\mathcal F(\Omega)^\circ\neq \emptyset$, it follows from the  uniqueness theorem for real-analytic functions  that
            $$
            m_p(\zeta_0,z)^k=m_2(\zeta_0,z)^{2l},\ \ \ \forall\,z\in \Omega.
                        $$    
                        On the other hand, we have
                        $$
                        {\rm ord}_{z_0}m_2(\zeta_0,\cdot)^{2l} = 2l <k \le {\rm ord}_{z_0}m_{2k}(\zeta_0,\cdot)^k 
                        $$
                        since $m_{2k}(\zeta_0,\cdot)$ is real-analytic,
                      which is  a contradiction.
                            \end{proof}

          \begin{remark}
        Actually we have proved a stronger conclusion that $K_{2k}(z)-K_2(z)$ does not enjoy the unique continuation property, i.e., it vanishes identically if it vanishes on a nonempty open subset.   There are plenty of non real-analytic functions which still verify the unique continuation property. 
                  \end{remark}

\begin{proposition}\label{prop:Reinhardt}
Let $\Omega$ be a bounded complete Reinhardt domain in $\mathbb C^n$. Then there exists $\varepsilon>0$ such that 
$$
B_\varepsilon (0):=\left\{z\in \mathbb C^n: |z|<\varepsilon \right\}\subset \mathcal F(\Omega).
$$
\end{proposition}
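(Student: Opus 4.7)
The plan is to exploit the orthogonal monomial expansion of the Bergman kernel on a complete Reinhardt domain. Since $\Omega$ is bounded and complete Reinhardt, the monomials $\{z^\alpha\}_{\alpha\in \mathbb Z^n_{\ge 0}}$ form a complete orthogonal system in $A^2(\Omega)$, and consequently
$$
K_2(z,w)=\sum_{\alpha}\frac{z^\alpha\,\bar w^\alpha}{\|z^\alpha\|_{L^2(\Omega)}^2}.
$$
Setting $c_\alpha:=\|z^\alpha\|_{L^2(\Omega)}^{-2}$, the $\alpha=0$ term gives $K_2(z,0)\equiv c_0 = 1/|\Omega|$, a strictly positive constant independent of $z$. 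The strategy is then to show that for $|w|$ small, $K_2(\cdot,w)$ differs from this constant by less than $c_0/2$ \emph{uniformly on all of $\Omega$}, which forces $K_2(\cdot,w)$ to be zero-free and hence $w\in\mathcal F(\Omega)$.

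To make this quantitative I would first choose radii $0<r<R$ so that $\Delta^n(0,r)\subset \Omega\subset \Delta^n(0,R)$; such an $r$ exists because $\Omega$ is open and contains the origin (as a bounded complete Reinhardt domain). The inclusion $\Omega\supset \Delta^n(0,r)$ gives the lower bound
$$
\|z^\alpha\|_{L^2(\Omega)}^2 \ge \|z^\alpha\|_{L^2(\Delta^n(0,r))}^2 = \prod_{j=1}^n \frac{\pi r^{2\alpha_j+2}}{\alpha_j+1},
$$
while the inclusion $\Omega\subset \Delta^n(0,R)$ gives $|z^\alpha|\le R^{|\alpha|}$ for all $z\in\Omega$. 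Combining these yields, for any $z\in\Omega$ and any $w$ with $\max_j|w_j|<\varepsilon$,
$$
\bigl|K_2(z,w)-c_0\bigr|\le \sum_{\alpha\ne 0}\, \prod_{j=1}^n \frac{\alpha_j+1}{\pi r^{2}}\left(\frac{R\varepsilon}{r^{2}}\right)^{\alpha_j}.
$$

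Once $R\varepsilon<r^2$, the right-hand side is a convergent product of geometric-type series (using $\sum_{k\ge 0}(k+1)x^k=(1-x)^{-2}$), and it tends to $0$ as $\varepsilon\to 0^+$. Shrinking $\varepsilon$ if necessary so that this upper bound is less than $c_0/2=1/(2|\Omega|)$, we obtain $|K_2(z,w)|\ge c_0/2>0$ for all $z\in\Omega$ and all $w\in B_\varepsilon(0)$, which is exactly the claim $B_\varepsilon(0)\subset \mathcal F(\Omega)$.

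The only real obstacle is that, a priori, we would only expect the monomial series for $K_2(z,w)$ to converge locally uniformly on $\Omega\times\Omega$, whereas here we need an estimate that is uniform as $z$ ranges over \emph{all} of $\Omega$ (which is not relatively compact in itself). The explicit two-sided polydisc comparison $\Delta^n(0,r)\subset\Omega\subset\Delta^n(0,R)$ circumvents this issue by producing coefficient bounds $c_\alpha\le \prod_j(\alpha_j+1)/(\pi r^{2\alpha_j+2})$ that pair well with the crude pointwise bound $|z^\alpha|\le R^{|\alpha|}$ valid throughout $\Omega$.
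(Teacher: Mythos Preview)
Your proof is correct. Both your argument and the paper's share the same strategy: use the monomial expansion $K_2(z,w)=\sum_\alpha c_\alpha z^\alpha\bar w^\alpha$ on a complete Reinhardt domain, note that the $\alpha=0$ term is the nonzero constant $c_0=1/|\Omega|$, and show that the tail is uniformly small on $\Omega\times B_\varepsilon(0)$ so that $K_2(\cdot,w)$ cannot vanish.

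The difference is in how this uniform smallness is obtained. The paper invokes a scaling identity attributed to Bell, namely $K_2(z,w)=K_2(rz,w/r)$, which is immediate from the series. For $r$ small enough the set $r\Omega$ is relatively compact in $\Omega$ (since $\Omega$ is bounded and contains a ball about $0$), so $K_2$ is bounded on $r\Omega\times \overline{B_r(0)}$, and Cauchy's estimate in $w$ then yields $|K_2(z,w)-K_2(z,0)|\le C_r|w|$ uniformly in $z\in\Omega$. Your route instead bypasses the scaling trick entirely: you bound the coefficients $c_\alpha$ from above via the inner polydisc $\Delta^n(0,r)\subset\Omega$ and bound $|z^\alpha|$ via the outer polydisc $\Omega\subset\Delta^n(0,R)$, reducing everything to an explicit geometric series. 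Your approach is more hands-on and self-contained; the paper's is slicker and yields a Lipschitz estimate in $w$ without computing any integrals. Either way the conclusion follows.
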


\begin{proof}
Note that 
$$
K_2(z,w) =\sum_\alpha a_\alpha z^\alpha \bar{w}^\alpha,\ \ \ a_\alpha=1/\int_\Omega |z^\alpha|^{2}.
$$
Take $r\ll  1$ so that $B_r(0)\subset \Omega$. The series expansion above implies that 
$$
K_2(z,w) = K_2(rz,w/r), \ \ \ \forall\,z\in \Omega,\ w\in B_{r^2}(0)
$$
(this observation is essentially due to S. R. Bell). Thus for $r\ll 1$ there exists a constant $C_r>0$ such that 
$$
|K_2(z,w)|\le C_r,\ \ \ \forall\,z\in \Omega,\ w\in B_{r^2}(0),
$$
and Cauchy's estimates gives 
$$
|K_2(z,0)-K_2(z,w)|\le C_r |w|, \ \ \ \forall\,z\in \Omega,\ w\in B_{r^2/2}(0).
$$
Since $K_2(z,0)=1/|\Omega|$, it follows that if $\varepsilon$ is sufficiently small then $K_2(z,w)\neq 0$ for all $z\in \Omega$ and $w\in B_\varepsilon(0)$, i.e., 
$
B_\varepsilon (0)\subset \mathcal F(\Omega).
$
\end{proof}

\begin{remark}
Since complete Reinhardt domains are always simply-connected, we see that Theorem \ref{th:NRA_0} follows from Theorem \ref{th:NRA_2} and Proposition \ref{prop:Reinhardt}.
\end{remark}

Next we will show that  the following Thullen-type domain
$$
\Omega=\left\{(z_1,z_2): |z_1|+|z_2|^{2/\alpha}<1 \right\}
$$
  verifies the hypothesis of Theorem \ref{th:NRA_2}/(2) for every $\alpha>2$. It is known from (9) in \cite{BFS} that 
  \begin{eqnarray*}
  K_2((z_1^2,0),(w_1^2,0)) & = & \frac1{4\alpha \pi^2 x}\cdot \frac{\partial}{\partial x^2}\left[\frac1{(1-x)^\alpha}-\frac1{(1+x)^\alpha}\right]\\
  & = & \frac{\alpha(\alpha+1)}{4\alpha \pi^2 x}\cdot \left[\frac1{(1-x)^{\alpha+2}}-\frac1{(1+x)^{\alpha+2}}\right]  \end{eqnarray*}
  when $x:=z_1 \bar{w}_1\neq 0$. It is easy to see that if $\alpha>2$ then equation
  $$
  \frac1{(1-x)^{\alpha+2}}=\frac1{(1+x)^{\alpha+2}} 
   $$
   has a solution with $0<|x|<1$ if 
   $$
   \frac1{1-x}=\frac{e^{2\pi i/(\alpha+2)}}{1+x},\ \ \ \text{i.e.,}\ \ \ x=\frac{e^{2\pi i/(\alpha+2)}-1}{e^{2\pi i/(\alpha+2)}+1}=:x_\alpha.
   $$
   Note that the function $\eta(x):=(1-x)^{-\alpha-2}-(1+x)^{-\alpha-2}$ satisfies
   \begin{eqnarray*}
   \eta'(x_\alpha) & = & (\alpha+2)\left[\frac1{(1-x_\alpha)^{\alpha+3}}+\frac1{(1+x_\alpha)^{\alpha+3}}\right]\\
   & = & \frac{\alpha+2}{(1+x_\alpha)^{\alpha+3}}\left[e^{2\pi i\cdot \frac{\alpha+3}{\alpha+2}}+1\right]\\
   & \neq & 0
   \end{eqnarray*}
   for $\alpha>2$. Thus for $z_{1,\alpha}=\bar{w}_{1,\alpha}=\sqrt{x_\alpha}$ the order of $K_2((x_\alpha,0),\cdot)$ at the point 
     $(\bar{x}_\alpha,0)$ equals $1$.

We conclude this section by a remark as follows.  Following a suggestion of the referee, it is reasonable to study the relationship between the set
\[
\mathcal{A}_p(\Omega):=\left\{z\in\Omega:K_p(\cdot)\text{ is not real-analytic at }z\right\}
\]
and $\mathcal F(\Omega)$ or $\mathcal N(\Omega)$.  We claim that if $\Omega$ is a bounded simply-connected  domain such that both $\mathcal N(\Omega)$ and $\mathcal F(\Omega)^\circ$ are nonempty then
\[
\partial\mathcal{F}(\Omega)^\circ\cap\Omega\ (=\partial\overline{\mathcal{N}(\Omega)}\cap\Omega)\subset\mathcal{A}_{2k}(\Omega)
\]
holds for any integer  $k>2.$ 
To see this,  suppose on the contrary that there exists a point $w_0\in (\partial\mathcal{F}(\Omega)^\circ\cap\Omega) \backslash \mathcal{A}_{2k}(\Omega)$; then there is a  constant $\varepsilon>0$ such that 
$$
B(w_0,\varepsilon)\cap\mathcal{F}(\Omega)^\circ\neq\emptyset,\ \ \ B(w_0,\varepsilon)\cap\mathcal{N}(\Omega)\neq\emptyset,
$$
and $K_{2k}(\cdot)$ is real-analytic on $B(w_0,\varepsilon)$. Since $K_{2k}(z)=K_2(z)$ for $z\in{B(w_0,\varepsilon)\cap\mathcal{F}(\Omega)^\circ}$ in view of Proposition \ref{prop:NRA_2}, it follows from the uniqueness theorem for real-analytic functions that $K_{2k}(z)=K_2(z)$ for all $z\in B(w_0,\varepsilon)$.  Take $z_0\in{B(w_0,\varepsilon)\cap\mathcal{N}(\Omega)}$ and $\zeta_0\in\Omega$ such that $K_2(\zeta_0,z_0)=0$.  Since $K_2(\zeta_0,\zeta_0)\neq0$,  it follows that $S:=\{K_2(\zeta_0,\cdot)=0\}$ is a complex analytic hypersurface which contains $z_0$.  Hence there exists $z_0'\in{S}\cap{B(w_0,\varepsilon)}$ such that $\mathrm{ord}_{z_0'}K_2(\zeta_0,\cdot)=1$.  By an argument analogous to the proof of Theorem \ref{th:NRA_2},  we get a contradiction.

\section{H\"older continuity of $m_p(z,\cdot)$ and $K_p(z,\cdot)$}

Throughout this section we always assume that $p\ge 1$.
Let us
introduce a useful auxiliary function as follows
$$
H_p(z,w):=K_p(z) + K_p(w) -{\rm Re}\left\{K_p(z,w)+K_p(w,z)\right\}.
$$
Clearly, 
$
H_p(z,w)=H_p(w,z)$ {and} $H_p(z,z)=0$.
Moreover, we have

\begin{proposition}\label{prop:H-Lip}
For every compact set $S\subset \Omega$, there exists a constant $C>0$ such that
$$
|H_p(z,w)|\le C|z-w|,\ \ \ \forall\,z,w\in S.
$$
\end{proposition}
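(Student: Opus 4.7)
The plan is to realize $H_p(z,w)$ as the real part of the increment of a single holomorphic function, after which Bergman/Cauchy estimates for $A^p(\Omega)$ will deliver the Lipschitz bound essentially for free. Concretely, I would introduce
$$
g(\zeta) := K_p(\zeta,z) - K_p(\zeta,w), \qquad \zeta \in \Omega,
$$
which lies in $A^p(\Omega)$ since both $K_p(\cdot,z)$ and $K_p(\cdot,w)$ do. Using $K_p(z,z)=K_p(z)$ and $K_p(w,w)=K_p(w)$, a direct substitution yields
$$
g(z) - g(w) = K_p(z) + K_p(w) - K_p(z,w) - K_p(w,z),
$$
whose real part is precisely $H_p(z,w)$. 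Therefore $|H_p(z,w)| \le |g(z)-g(w)|$, and the problem reduces to a Lipschitz estimate for the single holomorphic function $g$.

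The required norm control comes from the calculation
$$
\|K_p(\cdot,z)\|_p^p = K_p(z)^p\int_\Omega |m_p(\cdot,z)|^p = K_p(z)^p\,m_p(z)^p = K_p(z)^{p-1},
$$
so $\|K_p(\cdot,z)\|_p = K_p(z)^{1/q}$ with $1/p+1/q=1$ (understood as $1$ when $p=1$). Continuity of $K_p$ (Proposition \ref{prop:continuity}) then bounds this uniformly for $z\in S$, giving $\|g\|_p \le 2M$ for some $M=M(S)$.

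Finally, I would invoke the standard consequence of the Bergman inequality: every $h\in A^p(\Omega)$ obeys
$$
|h(\zeta)-h(\zeta')|\le C(S)\,\|h\|_p\,|\zeta-\zeta'|, \qquad \zeta,\zeta'\in S,
$$
which follows by applying Cauchy's formula on polydiscs of radius comparable to $d(S,\partial\Omega)/n$ to control $|\nabla h|$ pointwise by a multiple of $\|h\|_p$ and integrating along a segment (with the regime of large $|z-w|$ absorbed into the constant by a trivial sup-norm bound). Applied to $g$ at the two points $z$ and $w$, this produces $|H_p(z,w)|\le C(S)\,\|g\|_p\,|z-w|\le 2MC(S)\,|z-w|$. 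I do not foresee any serious obstacle: the whole argument hinges on spotting the telescoping identity that packs the four terms of $H_p$ into the single difference $g(z)-g(w)$, and once that observation is made the rest is a routine application of classical $A^p$-estimates.
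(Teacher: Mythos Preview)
Your proof is correct and follows essentially the same route as the paper: both exploit that $K_p(\cdot,w)$ is holomorphic with a uniform bound near $S$, then apply Cauchy's estimates. The paper bounds $|K_p(z,w)-K_p(w,w)|$ (and its symmetric counterpart) directly using the pointwise inequality \eqref{eq:Holder_3}, whereas you package the two differences into the single function $g$ and control $\|g\|_p$ by the explicit computation $\|K_p(\cdot,z)\|_p=K_p(z)^{1/q}$; the underlying mechanism is identical.
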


\begin{proof}
It suffices to verify
$$
|K_p(z,w)-K_p(w)|\le C|z-w|,\ \ \ \forall\,z,w\in S.
$$
This follows from Cauchy's estimates since $K_p(\cdot,w)$ is holomorphic and
 uniformly bounded in a small neighborhood of $S$ in view of \eqref{eq:Holder_3}.
\end{proof}

A less obvious observation is

\begin{theorem}\label{th:H-positive}
We have $H_p(z,w)\ge 0$ and equality holds if and only if $z=w$.
\end{theorem}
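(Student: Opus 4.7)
The plan is to recast $H_p(z,w)$ as a single integral via the reproducing formula (Theorem \ref{th:RP}), and then bound its integrand pointwise by a quantity whose integral vanishes thanks to the normalization of the minimizers. This global cancellation is what makes the argument work.

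First I would apply the reproducing formula to $f = K_p(\cdot,z) - K_p(\cdot,w) \in A^p(\Omega)$ at the point $z$, giving
\[
K_p(z) - K_p(z,w) = \int_\Omega |m_p(\cdot,z)|^{p-2}\overline{K_p(\cdot,z)}\,[K_p(\cdot,z) - K_p(\cdot,w)],
\]
and the analogous identity obtained by swapping the roles of $z$ and $w$. Taking real parts and summing yields
\[
H_p(z,w) = {\rm Re\,}\int_\Omega \bigl[|m_p(\cdot,z)|^{p-2}\overline{K_p(\cdot,z)} - |m_p(\cdot,w)|^{p-2}\overline{K_p(\cdot,w)}\bigr]\bigl[K_p(\cdot,z) - K_p(\cdot,w)\bigr].
\]
Writing $A = K_p(z)$, $B = K_p(w)$, $u = m_p(\cdot,z)$, $v = m_p(\cdot,w)$, so that $K_p(\cdot,z) = Au$ and $K_p(\cdot,w) = Bv$, the integrand expands to
\[
A^2|u|^p + B^2|v|^p - AB\,{\rm Re\,}\bigl[|u|^{p-2}\bar u\,v + |v|^{p-2}\bar v\,u\bigr].
\]
The pointwise estimates ${\rm Re\,}(|u|^{p-2}\bar u\,v) \le |u|^{p-1}|v|$ and its symmetric counterpart, followed by Young's inequality $|u|^{p-1}|v| + |v|^{p-1}|u| \le |u|^p + |v|^p$, bound this integrand below by $(A-B)(A|u|^p - B|v|^p)$. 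Crucially, using $\|u\|_p^p = m_p(z)^p = A^{-1}$ and $\|v\|_p^p = B^{-1}$,
\[
\int_\Omega (A-B)(A|u|^p - B|v|^p) = (A-B)(1-1) = 0,
\]
which together with the pointwise bound gives $H_p(z,w) \ge 0$.

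For the equality case, $H_p(z,w) = 0$ forces equality in the pointwise lower bound almost everywhere. When $p>1$ this requires both $\bar u v \ge 0$ pointwise (equality in the modulus step) and $|u| = |v|$ (equality in Young), hence $u = v$ a.e.; for $p = 1$ the Young step is automatic, but the phase condition combined with the fact that a meromorphic function taking positive real values is a positive constant yields $v = \lambda u$ on $\Omega$ for some $\lambda > 0$. In either case, evaluating the relation between $u$ and $v$ at $z$ and $w$ together with $u(z) = v(w) = 1$ and $\|u\|_p = m_p(z)$, $\|v\|_p = m_p(w)$ gives $|m_p(z,w)| = m_p(w)/m_p(z)$, and the equality case of Proposition \ref{prop:Triangle} forces $z = w$.

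The main obstacle, in my view, is recognizing the correct algebraic identity: the natural pointwise lower bound $(A-B)(A|u|^p - B|v|^p)$ is \emph{not} non-negative, and the proof works only because its integral vanishes exactly by the $L^p$-normalization $\int_\Omega A|u|^p = 1 = \int_\Omega B|v|^p$. Once this global cancellation is spotted, the rest is a routine combination of Young's inequality with the equality clause of Proposition \ref{prop:Triangle}.
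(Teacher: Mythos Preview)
Your proof is correct and takes a genuinely different route from the paper's.  Both arguments begin by expressing $H_p(z,w)$ via the reproducing formula as a single integral, but the pointwise lower bounds differ.  The paper works with the unweighted integrand ${\rm Re}\{(|u|^{p-2}\bar u-|v|^{p-2}\bar v)(u-v)\}$ (so that $H_p(z,w)/(K_p(z)K_p(w))$ appears) and invokes the sharper $p$-Laplacian inequalities of Proposition~\ref{prop:eleIneqs} to bound it below by $c_p|u-v|^p$ (for $p\ge 2$) or by $(p-1)|u-v|^2(|u|+|v|)^{p-2}$ (for $1<p\le 2$), etc.; these quantitative bounds are precisely Lemmas~\ref{lm:H-positive_1}--\ref{lm:H-positive_3}, and positivity of $H_p$ drops out as a corollary.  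You instead keep the weights $A=K_p(z)$, $B=K_p(w)$ in the integrand and use only the elementary inequalities ${\rm Re}(|u|^{p-2}\bar u\,v)\le |u|^{p-1}|v|$ and $(|u|^{p-1}-|v|^{p-1})(|u|-|v|)\ge 0$, together with the crucial observation that the resulting lower bound $(A-B)(A|u|^p-B|v|^p)$ has vanishing integral by the normalization $\int A|u|^p=\int B|v|^p=1$.

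Your argument is shorter and more elementary for the purpose of Theorem~\ref{th:H-positive} alone, and the equality analysis (phase plus modulus for $p>1$; phase plus the open-mapping/meromorphic argument for $p=1$) is clean.  What it does \emph{not} give is the quantitative control $\|m_p(\cdot,w)-m_p(\cdot,w')\|_p^p\lesssim H_p(w,w')$ (or the $p=1$ analogue) that the paper extracts from Proposition~\ref{prop:eleIneqs}; those estimates are the engine behind the H\"older continuity Theorems~\ref{th:reg_1}--\ref{th:reg_2}, so the paper's longer route is not wasted effort.  One small remark: in your equality step you can bypass Proposition~\ref{prop:Triangle} entirely and appeal directly to Proposition~\ref{prop:indep}, since you have already shown $m_p(\cdot,z)=\lambda\,m_p(\cdot,w)$.
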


For the proof of Theorem \ref{th:H-positive} and the regularity results in the sequel, the following "elementary" inequalities play a key role.

\begin{proposition}\label{prop:eleIneqs}
Let $a,b\in \mathbb C$.  The following inequalities hold.
\begin{enumerate}
\item[$(1)$] For $p\ge 2$ we have
\begin{eqnarray}\label{eq:eleIneq_1}
&& {\rm Re}\left\{(|b|^{p-2}\bar{b}-|a|^{p-2}\bar{a})(b-a)\right\}\\
 & \ge & \frac12 (|b|^{p-2}+|a|^{p-2})|b-a|^2\nonumber\\
 & \ge &  2^{1-p} |b-a|^p;\nonumber
\end{eqnarray}
\item[$(2)$] For $1\le p\le 2$ we have
\begin{eqnarray}\label{eq:eleIneq_2}
&& {\rm Re}\left\{(|b|^{p-2}\bar{b}-|a|^{p-2}\bar{a})(b-a)\right\} \\
& \ge & (p-1) |b-a|^2(|a|+|b|)^{p-2}\nonumber \\
&& + (2-p) |{\rm Im}(a\bar{b})|^2(|a|+|b|)^{p-4};\nonumber
\end{eqnarray}
\item[$(3)$] For $p>2$ we have
\begin{eqnarray}\label{eq:eleIneq_3}
|b|^p  & \ge & |a|^p+p{\rm Re}\left\{|a|^{p-2}\bar{a}(b-a)\right\}
+\frac1{4^{p+3}}|b-a|^p;
\end{eqnarray}
\item[$(4)$] For $1<p\le 2$ we have
\begin{eqnarray}\label{eq:eleIneq_4}
|b|^p  & \ge & |a|^p+p{\rm Re}\left\{|a|^{p-2}\bar{a}(b-a)\right\}
 +A_p |b-a|^2(|a|+|b|)^{p-2}
\end{eqnarray}
where $A_p=\frac{p}2\min\{1,p-1\};$
\item[$(5)$] For $p=1$ we have
\begin{eqnarray}\label{eq:eleIneq_5}
|b|  & \ge & |a|+{\rm Re}\left\{|a|^{-1}\bar{a}(b-a)\right\}
 +A_1 |{\rm Im}(\bar{a}b)|^2(|a|+|b|)^{-3}
\end{eqnarray}
where $A_1>0$ is some numerical constant.
\end{enumerate}
\end{proposition}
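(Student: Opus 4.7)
The strategy for all five inequalities is to fix $a,b\in\mathbb C$ and analyze the one-variable auxiliary functions
$$
g(t):=|a+t(b-a)|^2,\qquad F(t):=g(t)^{p/2}=|a+t(b-a)|^p,\qquad t\in[0,1],
$$
so $F(0)=|a|^p$ and $F(1)=|b|^p$. The central identity
$$
(g'(t))^2+4|\mathrm{Im}(a\bar b)|^2=4g(t)|b-a|^2
$$
follows because $\overline{a+t(b-a)}(b-a)=\bar a(b-a)+t|b-a|^2$ has $t$-independent imaginary part equal to $-\mathrm{Im}(a\bar b)$. Combining this with the direct computation $F''=\tfrac{p(p-2)}{4}g^{p/2-2}(g')^2+pg^{p/2-1}|b-a|^2$ produces the more useful alternative form
$$
F''(t)=p(p-1)g^{p/2-1}|b-a|^2-p(p-2)g^{p/2-2}|\mathrm{Im}(a\bar b)|^2.
$$
The two Taylor relations $\mathrm{Re}\{(|b|^{p-2}\bar b-|a|^{p-2}\bar a)(b-a)\}=\tfrac1p\int_0^1 F''(t)\,dt$ and $|b|^p-|a|^p-p\,\mathrm{Re}\{|a|^{p-2}\bar a(b-a)\}=\int_0^1(1-t)F''(t)\,dt$ will then convert pointwise bounds on $F''$ into the integral inequalities.

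For (1) I would bypass this machinery and argue by algebra: writing $a=\rho e^{i\alpha}$, $b=\sigma e^{i\beta}$ and $\theta=\alpha-\beta$, both sides of the first estimate carry the same $\cos\theta$-coefficient, and the difference collapses to $\tfrac12(\sigma^2-\rho^2)(\sigma^{p-2}-\rho^{p-2})\ge 0$ (the two factors share a sign when $p\ge 2$). The second estimate in (1) then follows by combining $|b-a|\le|a|+|b|$ with convexity of $x\mapsto x^{p-2}$ (if $p\ge 3$) or its subadditivity (if $2\le p\le 3$) applied to $|a|^{p-2}+|b|^{p-2}$, yielding the constant $2^{1-p}$ in either case.

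For (2), (4), (5) the reasoning is uniform. On the range $1\le p\le 2$ both summands in the second form of $F''$ are nonnegative because $-(p-2)\ge 0$, and the pointwise lower bounds $g(t)^{p/2-1}\ge(|a|+|b|)^{p-2}$ and $g(t)^{p/2-2}\ge(|a|+|b|)^{p-4}$ hold since $g(t)^{1/2}\le|a|+|b|$ and both exponents are nonpositive. Integrating yields (2) directly, and dropping the nonnegative $|\mathrm{Im}(a\bar b)|^2$-term and integrating $(1-t)F''$ delivers (4) with $A_p=p(p-1)/2=\tfrac{p}{2}\min\{1,p-1\}$, the factor $\tfrac12$ coming from $\int_0^1(1-t)\,dt$. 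For (5) a direct computation from $F=g^{1/2}$ yields the clean identity $F''(t)=|\mathrm{Im}(a\bar b)|^2/g(t)^{3/2}$; bounding $g^{3/2}\le(|a|+|b|)^3$ and integrating $(1-t)F''$ produces $A_1=\tfrac12$. The sole degenerate case $g(t_0)=0$ forces $a,b$ antiparallel over $\mathbb R$, so $\mathrm{Im}(a\bar b)=0$ and (5) becomes trivial.

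The main obstacle is (3). For $p>2$ the second form of $F''$ contains a negative term, so I would use the first form $F''\ge pg^{p/2-1}|b-a|^2$ together with a two-case split on $r:=|b-a|/|a|$ (the case $a=0$ gives (3) immediately with constant $1$). When $r\le 1/2$ one has $g(t)^{1/2}\ge|a|/2$ on $[0,1]$, and integrating $(1-t)F''$ yields at least $(p/2)(|a|/2)^{p-2}|b-a|^2\ge(p/2)|b-a|^p$ since $|b-a|^{p-2}\le(|a|/2)^{p-2}$. When $r>1/2$ I would normalize by rotation and scaling to $a=1$ and study the continuous ratio $\Psi(b):=(|b|^p-1-p\,\mathrm{Re}(b-1))/|b-1|^p$ on the closed set $\{|b-1|\ge 1/2\}$: $\Psi>0$ by strict convexity of $|b|^p-p\,\mathrm{Re}\,b$, and $\Psi(b)\to 1$ at infinity, so $\inf\Psi>0$. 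Pinning down the explicit constant $1/4^{p+3}$ is the technical cost—it requires carefully optimizing the case split and tracking the estimates in the compactness step, rather than introducing any new conceptual ingredient.
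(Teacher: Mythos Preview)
Your setup---the auxiliary function $F(t)=|a+t(b-a)|^p$, the identity for $F''$, and the two integral representations $\tfrac1p\int_0^1 F''$ and $\int_0^1(1-t)F''$---is exactly the paper's, and your arguments for (1), (2), (4), (5) match the appendix essentially line for line. Your polar-coordinate computation for the first inequality in (1) is just the paper's algebraic identity
\[
(|b|^{p-2}+|a|^{p-2})|b-a|^2+(|b|^{p-2}-|a|^{p-2})(|b|^2-|a|^2)=2\,\mathrm{Re}\bigl\{(|b|^{p-2}\bar b-|a|^{p-2}\bar a)(b-a)\bigr\}
\]
in different clothing.

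The gap is in (3). Your compactness argument for the range $|b-a|>|a|/2$ does yield \emph{some} positive constant $c_p$, but a compactness argument cannot, even in principle, produce the explicit bound $1/4^{p+3}$: ``tracking the estimates in the compactness step'' is not something one can actually carry out, since the step is non-constructive. The paper avoids compactness altogether and stays inside the integral. From $F''(t)\ge p\,|b-a|^2\,|a+t(b-a)|^{p-2}$ one uses the elementary pointwise bound $|a+t(b-a)|\ge\bigl||a|-t|b-a|\bigr|$ and then integrates $(1-t)\bigl||a|-t|b-a|\bigr|^{p-2}$ explicitly over a sub-interval where the integrand is controlled. With the threshold placed at $|a|=|b-a|/2$ (rather than your $|a|=2|b-a|$), the two cases become symmetric: if $|a|\ge|b-a|/2$ one integrates over $[0,1/4]$, where $|a|-t|b-a|\ge(1/2-t)|b-a|$; if $|a|\le|b-a|/2$ one integrates over $[3/4,1]$, where $t|b-a|-|a|\ge(t-1/2)|b-a|$. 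In either case the integral is bounded below by an explicit multiple of $|b-a|^{p-2}$, and the constant $1/4^{p+3}$ drops out directly. This elementary estimate is the missing ingredient.
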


These inequalities have their origins in  nonlinear analysis of the $p-$Laplacian (see e.g., \cite{Lind1}). We will provide the proof in Appendix.

\begin{lemma}\label{lm:H-positive_1}
For $p\ge 2$ we have
\begin{equation}\label{eq:H-positive_1}
|m_p(z,w)-m_p(z,w')|^p \le \frac{2^{p-1} K_p(z)}{K_p(w)K_p(w')}\cdot H_p(w,w').
\end{equation}
\end{lemma}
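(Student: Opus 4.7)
The plan is to write both sides in terms of the minimizers $f := m_p(\cdot,w)$ and $g := m_p(\cdot,w')$ and then apply the elementary inequality \eqref{eq:eleIneq_1} pointwise, integrating to turn the right-hand side into an inner-product expression which the reproducing formula \eqref{eq:RP} collapses exactly onto $H_p(w,w')$.

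First I would bound the pointwise difference by the $L^p$ norm of $f-g$. Since $(f-g)/\|f-g\|_p$ is an admissible competitor in the extremal problem \eqref{eq:max_1}, we get
\[
 |m_p(z,w)-m_p(z,w')|^p=|f(z)-g(z)|^p\le K_p(z)\,\|f-g\|_p^p.
\]
So the lemma reduces to proving $\|f-g\|_p^p\le 2^{p-1}H_p(w,w')/\bigl(K_p(w)K_p(w')\bigr)$.

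Next I would apply the first line of \eqref{eq:eleIneq_1} pointwise with $a=f$, $b=g$ to obtain
\[
 |f-g|^p\le 2^{p-1}\,\mathrm{Re}\bigl\{(|g|^{p-2}\bar g-|f|^{p-2}\bar f)(g-f)\bigr\},
\]
and integrate over $\Omega$. Expanding the product produces four terms: the two "diagonal" terms $\int|f|^p=m_p(w)^p=1/K_p(w)$ and $\int|g|^p=1/K_p(w')$, and two "cross" terms. The heart of the argument is to evaluate these cross terms via the reproducing formula applied to the linear functional at $w$ (resp.\ $w'$): using $K_p(\cdot,w)=K_p(w)\,f$ and $K_p(\cdot,w')=K_p(w')\,g$, formula \eqref{eq:RP} yields
\begin{align*}
 \int_\Omega |f|^{p-2}\bar f\,g &=\frac{g(w)}{K_p(w)}=\frac{K_p(w,w')}{K_p(w)K_p(w')},\\
 \int_\Omega |g|^{p-2}\bar g\,f &=\frac{f(w')}{K_p(w')}=\frac{K_p(w',w)}{K_p(w)K_p(w')}.
\end{align*}
Assembling the four pieces gives
\[
 \int_\Omega(|g|^{p-2}\bar g-|f|^{p-2}\bar f)(g-f)=\frac{K_p(w)+K_p(w')-K_p(w,w')-K_p(w',w)}{K_p(w)K_p(w')},
\]
whose real part is precisely $H_p(w,w')/\bigl(K_p(w)K_p(w')\bigr)$, and the lemma follows.

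The main obstacle I anticipate is making sure the "nonlinear reproducing" identities for the cross terms are applied correctly: the formula \eqref{eq:RP} reproduces values at $w$ against the density $|m_p(\cdot,w)|^{p-2}\overline{K_p(\cdot,w)}$, and one has to match this with the integrands $|f|^{p-2}\bar f\,g$ and $|g|^{p-2}\bar g\,f$ after taking the correct conjugates. Once this bookkeeping is done, the remaining estimates are routine applications of \eqref{eq:max_1} and the first inequality of \eqref{eq:eleIneq_1}.
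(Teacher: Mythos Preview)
Your proposal is correct and follows essentially the same route as the paper: apply the pointwise inequality \eqref{eq:eleIneq_1} to the pair of minimizers, integrate, collapse the cross terms via the reproducing formula \eqref{eq:RP}, and finish with the Bergman inequality \eqref{eq:max_1}. The only cosmetic difference is that the paper writes the cross terms in $m_p$-notation (obtaining $m_p(w')^p m_p(w',w)$ and $m_p(w)^p m_p(w,w')$) before converting to $H_p(w,w')/\bigl(K_p(w)K_p(w')\bigr)$, whereas you pass through $K_p(\cdot,\cdot)$ directly.
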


\begin{proof}
Substitute at first $a=m_p(\cdot,w)$ and $b=m_p(\cdot,w')$ into (\ref{eq:eleIneq_1}) then take integration over $\Omega$, we obtain
\begin{eqnarray*}
 && 2^{1-p} \int_\Omega  |m_p(\cdot,w')-m_p(\cdot,w)|^p\\
& \le & \int_\Omega | m_p(\cdot,w') |^p -{\rm Re} \int_\Omega |m_p(\cdot,w')|^{p-2}\overline{m_p(\cdot,w')}\,m_p(\cdot,w)\\
&& \\
&& +\int_\Omega | m_p(\cdot,w) |^p-{\rm Re} \int_\Omega |m_p(\cdot,w)|^{p-2}\overline{m_p(\cdot,w)}\,m_p(\cdot,w')\\
& = &  m_p(w')^p+m_p(w)^p-{\rm Re}\left\{m_p(w')^p m_p(w',w)+m_p(w)^p m_p(w,w')\right\}\\
& = & \frac{H_p(w',w)}{K_p(w)K_p(w')},
\end{eqnarray*}
where the first equality follows from the reproducing formula.
Since
\begin{equation}\label{eq:meanvalue}
|m_p(z,w')-m_p(z,w)|^p\le K_p(z)\int_\Omega  |m_p(\cdot,w')-m_p(\cdot,w)|^p,
\end{equation}
we immediately get (\ref{eq:H-positive_1}).
\end{proof}

\begin{lemma}\label{lm:H-positive_2}
For $1<p\le 2$ we have
\begin{equation}\label{eq:H-positive_2}
|m_p(z,w)-m_p(z,w')|^p \le \frac{C_p K_p(z)}{K_p(w')K_p(w)}\left[K_p(w')+K_p(w)\right]^{1-\frac{p}2} H_p(w,w')^{{\frac{p}2}}
\end{equation}
where $C_p=2^{\frac{(p-1)(2-p)}2}/(p-1)^{\frac{p}2}$.
\end{lemma}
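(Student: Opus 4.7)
The plan is to mimic the strategy of Lemma \ref{lm:H-positive_1}, but using the pointwise inequality (\ref{eq:eleIneq_2}) in place of (\ref{eq:eleIneq_1}), and compensating for the fact that the resulting lower bound is only a weighted $L^2$-quantity (with weight $(|a|+|b|)^{p-2}$ that blows up where $|a|+|b|$ is small, since $p-2\le 0$).

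Concretely, substitute $a=m_p(\cdot,w)$ and $b=m_p(\cdot,w')$ into (\ref{eq:eleIneq_2}), integrate over $\Omega$, and drop the nonnegative imaginary-part term. The reproducing formula (Theorem \ref{th:RP}) then identifies the integrated right-hand side with $H_p(w,w')/[K_p(w)K_p(w')]$, exactly as in Lemma \ref{lm:H-positive_1}. This yields
\begin{equation*}
(p-1)\int_\Omega |m_p(\cdot,w')-m_p(\cdot,w)|^2\,\bigl(|m_p(\cdot,w)|+|m_p(\cdot,w')|\bigr)^{p-2}\le \frac{H_p(w,w')}{K_p(w)K_p(w')}.
\end{equation*}

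To convert this weighted $L^2$-bound into an $L^p$-bound (which is what the mean-value-type inequality (\ref{eq:meanvalue}) needs), I would write
\begin{equation*}
|b-a|^p=\bigl(|b-a|^2(|a|+|b|)^{p-2}\bigr)^{p/2}\cdot (|a|+|b|)^{p(2-p)/2}
\end{equation*}
and apply H\"older's inequality with conjugate exponents $2/p$ and $2/(2-p)$. This produces
\begin{equation*}
\int_\Omega |b-a|^p\le \left(\int_\Omega |b-a|^2(|a|+|b|)^{p-2}\right)^{p/2}\left(\int_\Omega (|a|+|b|)^p\right)^{(2-p)/2}.
\end{equation*}
The first factor is controlled by the weighted $L^2$-bound above; for the second, the convexity estimate $(|a|+|b|)^p\le 2^{p-1}(|a|^p+|b|^p)$ together with $\|m_p(\cdot,w)\|_p^p=1/K_p(w)$ gives
\begin{equation*}
\int_\Omega (|a|+|b|)^p\le 2^{p-1}\,\frac{K_p(w)+K_p(w')}{K_p(w)K_p(w')}.
\end{equation*}

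Assembling these ingredients and then invoking (\ref{eq:meanvalue}) to pass from $\int_\Omega|m_p(\cdot,w)-m_p(\cdot,w')|^p$ to the pointwise quantity $|m_p(z,w)-m_p(z,w')|^p$ produces exactly \eqref{eq:H-positive_2}, with constant $C_p=2^{(p-1)(2-p)/2}/(p-1)^{p/2}$. The only slightly delicate point is the H\"older trick decomposing $|b-a|^p$ so that the weighted $L^2$-quantity furnished by (\ref{eq:eleIneq_2}) can be combined with the easy $L^p$-control on $|a|+|b|$; everything else is bookkeeping. Note that the factor $(p-1)^{-p/2}$ emerging from this step is what forces the restriction $p>1$ and makes the proof genuinely distinct from the $p\ge 2$ case.
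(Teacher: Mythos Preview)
Your proposal is correct and follows essentially the same approach as the paper. The only cosmetic difference is the order of operations: the paper first applies the H\"older decomposition of $|f_2-f_1|^p$ and then invokes (\ref{eq:eleIneq_2}) to control the weighted $L^2$-factor, whereas you first integrate (\ref{eq:eleIneq_2}) to obtain the weighted $L^2$-bound and then apply H\"older---the ingredients, the constant, and the final assembly via (\ref{eq:meanvalue}) are identical.
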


\begin{proof}
Let $f_1,f_2\in A^p(\Omega)$.  H\"older's inequality yields
\begin{eqnarray*}
\int_\Omega |f_2-f_1|^p & = & \int_\Omega |f_2-f_1|^p (|f_2|+|f_1|)^{p(p-2)/2}(|f_2|+|f_1|)^{p(2-p)/2}\\
& \le & \left\{\int_\Omega |f_2-f_1|^2 (|f_2|+|f_1|)^{p-2} \right\}^{\frac{p}2} \left\{\int_\Omega (|f_2|+|f_1|)^{p} \right\}^{1-\frac{p}2}\\
& \le &  \left\{\frac1{p-1}\int_\Omega {\rm Re}\left[ (|f_2|^{p-2}\bar{f}_2-|f_1|^{p-2}\bar{f}_1)(f_2-f_1)\right] \right\}^{\frac{p}2} \left\{\int_\Omega (|f_2|+|f_1|)^{p} \right\}^{1-\frac{p}2}
\end{eqnarray*}
in view of (\ref{eq:eleIneq_2}). Take $f_2=m_p(\cdot,w')$ and $f_1=m_p(\cdot,w)$, we obtain
\begin{eqnarray*}
 \int_\Omega |m_p(\cdot,w')-m_p(\cdot,w)|^p
& \le & \left\{\frac1{p-1}  \frac{H_p(w',w)}{K_p(w')K_p(w)}\right\}^{\frac{p}2}
 \cdot \left\{ 2^{p-1} \left[ m_p(w')^p+m_p(w)^p\right] \right\}^{1-\frac{p}2}\\
& = &  \frac{C_p}{K_p(w')K_p(w)} \left[ K_p(w')+K_p(w)\right]^{1-\frac{p}2}
 H_p(w',w)^{\frac{p}2}.
\end{eqnarray*}
This combined with (\ref{eq:meanvalue}) gives (\ref{eq:H-positive_2}).
\end{proof}

\begin{lemma}\label{lm:H-positive_3}
\begin{equation}\label{eq:H-positive_3}
\int_\Omega \frac{|{\rm Im} \{ m_1(\cdot,w')\overline{m_1(\cdot,w)} \}|^2}{(|m_1(\cdot,w'))|+|m_1(\cdot,w)|)^3}\le \frac{H_1(w,w')}{K_1(w')K_1(w)}.
\end{equation}
\end{lemma}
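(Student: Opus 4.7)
The plan is to mimic the proofs of Lemmas 4.2 and 4.3, with inequality $(5)$ of Proposition 3.5 replacing $(1)$ and $(2)$. Concretely, I would substitute $a=m_1(\cdot,w)$ and $b=m_1(\cdot,w')$ into
\[
|b|\;\ge\;|a|+{\rm Re}\bigl\{|a|^{-1}\bar{a}(b-a)\bigr\}+A_1\,\frac{|{\rm Im}(\bar{a}b)|^2}{(|a|+|b|)^3}
\]
and integrate over $\Omega$. The first two terms integrate to $m_1(w')$ and $m_1(w)$ respectively. The ``mixed'' term is evaluated via the reproducing formula \eqref{eq:RP} for $p=1$: for any $f\in A^1(\Omega)$,
\[
\int_\Omega |m_1(\cdot,w)|^{-1}\,\overline{m_1(\cdot,w)}\,f\;=\;f(w)\,m_1(w).
\]
Applied with $f=m_1(\cdot,w')$ and $f=m_1(\cdot,w)$ and subtracting gives
\[
\int_\Omega |m_1(\cdot,w)|^{-1}\,\overline{m_1(\cdot,w)}\bigl(m_1(\cdot,w')-m_1(\cdot,w)\bigr)\;=\;m_1(w)\bigl(m_1(w,w')-1\bigr).
\]

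Taking real parts and using $m_1(w)=K_1(w)^{-1}$ and $m_1(w,w')=K_1(w,w')/K_1(w')$, the integrated inequality rearranges to
\[
A_1\int_\Omega \frac{|{\rm Im}\{m_1(\cdot,w')\overline{m_1(\cdot,w)}\}|^2}{(|m_1(\cdot,w)|+|m_1(\cdot,w')|)^3}\;\le\;\frac{K_1(w)-{\rm Re}\,K_1(w,w')}{K_1(w)K_1(w')}.
\]
This is an asymmetric bound. I would then run the same argument with the roles of $w$ and $w'$ swapped, noting that the integrand on the left is symmetric in $(w,w')$ since $|{\rm Im}(\bar{a}b)|=|{\rm Im}(a\bar{b})|$ and the denominator is symmetric. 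Averaging the two inequalities produces the numerator $K_1(w)+K_1(w')-{\rm Re}\{K_1(w,w')+K_1(w',w)\}=H_1(w,w')$, yielding the desired bound (with the constant $A_1$ of Proposition 3.5/(5) absorbed).

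The main technical subtlety will be handling the apparent singularity of $|m_1(\cdot,w)|^{-1}\overline{m_1(\cdot,w)}$ on the zero set $S_w=\{m_1(\cdot,w)=0\}$. Since $S_w$ is a proper analytic subset of $\Omega$, it has Lebesgue measure zero; moreover $|m_1(\cdot,w)|^{-1}\overline{m_1(\cdot,w)}$ has modulus one off $S_w$, so the mixed integral above is absolutely convergent. A parallel remark applies to the pointwise inequality $(5)$ itself at points where $a=0$: there it reduces to $|b|\ge 0$, which is trivial, so integrating over $\Omega\setminus S_w$ and then extending to $\Omega$ causes no loss. Apart from bookkeeping the constant $A_1$ (which is the only place this proof differs quantitatively from the statement), no further difficulties are expected.
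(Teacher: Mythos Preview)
Your approach is correct, but the paper takes a slightly shorter route. Instead of using inequality (5) of Proposition~4.4, the paper substitutes $a=m_1(\cdot,w')$, $b=m_1(\cdot,w)$ into inequality (2) with $p=1$, which reads
\[
{\rm Re}\bigl\{(|b|^{-1}\bar b-|a|^{-1}\bar a)(b-a)\bigr\}\;\ge\;|{\rm Im}(a\bar b)|^2(|a|+|b|)^{-3},
\]
and integrates. Because this inequality is already symmetric in $a$ and $b$, the reproducing formula applied to both minimizers simultaneously produces the full expression $m_1(w)+m_1(w')-{\rm Re}\{m_1(w)m_1(w,w')+m_1(w')m_1(w',w)\}=H_1(w,w')/(K_1(w)K_1(w'))$ in one stroke, with the sharp constant $1$. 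Your route via (5) is perfectly valid, but it forces you to run the argument twice and add, and it leaves you with the unspecified constant $A_1$ rather than the clean inequality stated. For the downstream applications (Theorems~4.1 and~4.6) the constant is irrelevant, so nothing is lost; the paper's choice simply avoids the symmetrization and matches the lemma as written.
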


\begin{proof}
It suffices to take $b=m_p(\cdot,w)$, $a=m_p(\cdot,w')$ in (\ref{eq:eleIneq_2}) with $p=1$,  then take integration over $\Omega$.
\end{proof}

\begin{proof}[Proof of Theorem \ref{th:H-positive}]
Lemma \ref{lm:H-positive_1}$\sim$\ref{lm:H-positive_3} give $H_p(z,w)\ge 0$. Now suppose $H_p(z,w)=0$. It follows from Lemma \ref{lm:H-positive_1} and Lemma \ref{lm:H-positive_2} that $m_p(\cdot,z)=m_p(\cdot,w)$ whenever $p>1$, so that $z=w$ in view of Proposition \ref{prop:indep}.  It remains to deal with the case $p=1$. Consider the following proper analytic subset in $\Omega$
$$
S_{z,w}:=\{m_p(\cdot,z)=0\}\cup \{m_p(\cdot,w)=0\}.
$$
Let $V\subset\subset U\subset\subset \Omega\backslash S_{z,w}$ be two open sets. We take 
$$
h:=m_1(\cdot,z)/m_1(\cdot,w)\in \mathcal O(U).
$$
Since $|{\rm Im\,}h|^2$ is subharmonic on $U$, it follows from the mean-value inequality and Lemma \ref{lm:H-positive_3} that for every $\zeta\in V$
\begin{eqnarray*}
|{\rm Im\,}h(\zeta)|^2  \lesssim \int_U |{\rm Im\,}h|^2 & = & \int_U |{\rm Im} \{ m_1(\cdot,z)\overline{m_1(\cdot,w)} \}|^2 |m_1(\cdot,w)|^{-4}\\
& \lesssim &   \int_U \frac{|{\rm Im} \{ m_1(\cdot,z)\overline{m_1(\cdot,w)} \}|^2}{(|m_1(\cdot,z))|+|m_1(\cdot,w)|)^3}\\
& \lesssim & \frac{H_1(z,w)}{K_1(z)K_1(w)}=0.
\end{eqnarray*}
Since $V$ and $U$  can be arbitrarily chosen, we conclude that ${\rm Im\,}h= 0$ on the domain $\Omega\backslash S_{z,w}$, so that $h={\rm const.}$, i.e., $m_p(\cdot,z)=cm_p(\cdot,w)$ holds on $\Omega\backslash S_{z,w}$ for some complex number $c$. By continuity, the same equality remains valid on $\Omega$, so that $z=w$ in view of Proposition \ref{prop:indep}.
\end{proof}

\begin{theorem}\label{th:reg_1}
 For any  $p>1$ and any  compact set $S\subset \Omega$, there exists a constant $C>0$ such that
\begin{equation}\label{eq:reg_1}
|m_p(z,w)-m_p(z,w')|\le C|w-w'|^{\frac12}, \ \ \ \forall\,z,w,w'\in S.
\end{equation}
The same conclusion also holds for $K_p$.
\end{theorem}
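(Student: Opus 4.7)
The goal is to upgrade the $L^p$-level variational estimates of Lemmas \ref{lm:H-positive_1} and \ref{lm:H-positive_2} to pointwise H\"older bounds of order $1/2$ on $g(\zeta):=m_p(\zeta,w)-m_p(\zeta,w')$, using the Lipschitz control of $H_p$ from Proposition \ref{prop:H-Lip} as input. Just as in Proposition \ref{prop:eleIneqs}, the argument naturally splits at $p=2$.

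For $1<p\le 2$ I apply Lemma \ref{lm:H-positive_2} directly. On the compact set $S$, $K_p$ is continuous (Proposition \ref{prop:continuity}) and bounded below by $|\Omega|^{-1}$ via \eqref{eq:BergIneq_3}, so the prefactor in Lemma \ref{lm:H-positive_2} is uniformly controlled; combined with Proposition \ref{prop:H-Lip} this yields $|m_p(z,w)-m_p(z,w')|^p\le C\,H_p(w,w')^{p/2}\le C'|w-w'|^{p/2}$, and extracting a $p$-th root gives the claimed $1/2$-H\"older bound.

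For $p\ge 2$, a direct application of Lemma \ref{lm:H-positive_1} only produces exponent $1/p\le 1/2$, so I instead use the sharper weighted $L^2$ form of the same variational inequality: integrating the first lower bound in Proposition \ref{prop:eleIneqs}(1) with $a=m_p(\cdot,w)$, $b=m_p(\cdot,w')$ and invoking the reproducing formula (Theorem \ref{th:RP}) exactly as in the proof of Lemma \ref{lm:H-positive_1} gives
$$
\int_\Omega \bigl(|m_p(\cdot,w)|^{p-2}+|m_p(\cdot,w')|^{p-2}\bigr)|g|^2\le \frac{2H_p(w,w')}{K_p(w)K_p(w')}\le C|w-w'|.
$$
Since $m_p(w,w)=1$ and $m_p(\cdot,w)$ has uniform Cauchy derivative bounds for $w$ in the compact set $S$, there is a uniform $r_0>0$ with $|m_p(\cdot,w)|\ge 1/2$ on $B(w,r_0)$, so the weight is bounded below by a positive constant there; hence $\int_{B(w,r_0)}|g|^2\le C'|w-w'|$ and the $L^2$ mean-value inequality yields $|g|\le C''|w-w'|^{1/2}$ in a neighborhood of $w$. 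To pass to an arbitrary $z\in S$ I would apply the reproducing formula at $z$ together with Cauchy--Schwarz against the same weighted $L^2$ bound, exploiting that $m_p(z,z)=1$. The conclusion for $K_p$ then follows from $K_p(z,w)=m_p(z,w)K_p(w)$ together with the local Lipschitz continuity of $K_p$ (Proposition \ref{prop:continuity}). The main obstacle I anticipate is precisely this last step in the case $p>2$: propagating the local $1/2$-H\"older bound near the base point $w$ to a uniform bound at every $z\in S$ without losing the exponent, since the weight in the $L^2$ inequality may degenerate on the common zero locus of $m_p(\cdot,w)$ and $m_p(\cdot,w')$.
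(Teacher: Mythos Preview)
Your treatment of the case $1<p\le 2$ is correct and coincides with the paper's argument. For $p>2$ you also correctly derive the weighted $L^2$ estimate
\[
\int_\Omega |m_p(\cdot,w)|^{p-2}\,|g|^2\ \lesssim\ |w-w'|,\qquad g:=m_p(\cdot,w)-m_p(\cdot,w'),
\]
exactly as in the paper, and your local conclusion near $w$ (where $|m_p(\cdot,w)|\ge 1/2$) is fine.

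The gap is precisely the one you flagged: propagating from a neighborhood of $w$ to an arbitrary $z\in S$. Your suggested device---``reproducing formula at $z$ plus Cauchy--Schwarz against the weighted $L^2$ bound''---does not close. The reproducing formula at $z$ involves the kernel $|m_p(\cdot,z)|^{p-2}\overline{m_p(\cdot,z)}$, whereas the available weight is $|m_p(\cdot,w)|^{p-2}$; splitting by Cauchy--Schwarz forces you to control
\[
\int_\Omega \frac{|m_p(\cdot,z)|^{2(p-1)}}{|m_p(\cdot,w)|^{p-2}},
\]
which has no reason to be finite, let alone uniform in $(z,w)\in S\times S$, since the zero sets of $m_p(\cdot,z)$ and $m_p(\cdot,w)$ are unrelated. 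The observation $m_p(z,z)=1$ only gives a lower bound on $m_p(\cdot,z)$ near $z$, which is the wrong function.

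The paper resolves this by a genuinely different and nontrivial ingredient: the Demailly--Koll\'ar semicontinuity theorem for complex singularity exponents. Since $\{m_p(\cdot,w):w\in S\}$ is a continuous family of holomorphic functions, there exist $c=c(S)>0$ and $M=M(S)$ with
\[
\int_U |m_p(\cdot,w)|^{-c}\le M,\qquad \forall\,w\in S,
\]
for a fixed open $U$ with $S\subset U\subset\subset\Omega$. One then sets $\alpha=\tfrac{2c}{p-2+c}<2$ and applies H\"older's inequality on $U$ to convert the weighted $L^2$ bound into an unweighted $L^\alpha$ bound,
\[
\int_U |g|^\alpha\ \lesssim\ |w-w'|^{\alpha/2},
\]
after which the mean-value inequality at any $z\in S$ gives $|g(z)|\lesssim |w-w'|^{1/2}$. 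This uniform negative-power integrability is the missing idea in your proposal; without it (or a substitute of comparable strength) the propagation step cannot be completed for $p>2$.
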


\begin{proof}
It suffices to verify the conclusion for $m_p(z,w)$ since $K_p(z,w)=m_p(z,w)K_p(w)$ and $K_p(z)$ is locally Lipschitz continuous.  Lemma \ref{lm:H-positive_2} together with Proposition \ref{prop:H-Lip} immediately yield \eqref{eq:reg_1} for $1<p\le 2$.
Analogously, Lemma \ref{lm:H-positive_1} combined with  Proposition \ref{prop:H-Lip}  gives
$$
|m_p(z,w)-m_p(z,w')| \lesssim |w-w'|^{\frac1p}
$$
for $p> 2$, which is weaker than \eqref{eq:reg_1}, however. 
We have to adopt another approach.
Substitute at first $a=m_p(\cdot,w)$ and $b=m_p(\cdot,w')$ into (\ref{eq:eleIneq_1}) then take integration over $\Omega$, we obtain
$$
  \int_\Omega (|m_p(\cdot,w')|^{p-2}+|m_p(\cdot,w)|^{p-2}) |m_p(\cdot,w')-m_p(\cdot,w)|^2
 \le  \frac{2H_p(w,w')}{K_p(w')K_p(w)} \lesssim |w-w'|,
  $$
  which implies
  $$
   \int_\Omega |m_p(\cdot,w)|^{p-2} |m_p(\cdot,w')-m_p(\cdot,w)|^2  \lesssim |w-w'|.
     $$
  Now fix an open set $U$ with 
  $
  S\subset U\subset\subset \Omega.
  $  
  Since $\{m_p(\cdot,w):w\in S\}$ is a continuous family of holomorphic functions on $\Omega$ (in view of Proposition \ref{prop:continuity}),  it follows from the celebrated theorem of Demailly-Koll\'ar \cite{DK} on semi-continuity of the complex singularity exponent  that there exist positive constants $c=c(S)$ and $M=M(S)$ such that
$$
\int_U |m_p(\cdot,w)|^{-c}\le M,\ \ \ \forall\,w\in S.
$$
Fix
$
\alpha:= \frac{2c}{p-2+c}<2.
$
 By H\"older's inequality, we have
\begin{eqnarray*}
\int_U |m_p(\cdot,w')-m_p(\cdot,w)|^\alpha & \le & \left\{\int_U |m_p(\cdot,w)|^{p-2} |m_p(\cdot,w')-m_p(\cdot,w)|^2 \right\}^{\alpha/2}\\
&& \cdot \left\{\int_U |m_p(\cdot,w)|^{-\frac{p-2}2\alpha\cdot\frac2{2-\alpha}}\right\}^{1-\alpha/2}\\
& \le &  \left\{\int_\Omega |m_p(\cdot,w)|^{p-2} |m_p(\cdot,w')-m_p(\cdot,w)|^2 \right\}^{\alpha/2}\\
&& \cdot  \left\{\int_U |m_p(\cdot,w)|^{-c}\right\}^{1-\alpha/2}\\
& \lesssim &  |w-w'|^{\alpha/2}.
\end{eqnarray*}
This combined with the mean-value inequality  gives (\ref{eq:reg_1}). 
\end{proof}

\begin{theorem}\label{th:reg_2}
\begin{enumerate}
\item[$(1)$]
Let $S_w:=\{m_1(\cdot,w)=0\}$. For every open set $U$ with $w\in U\subset\subset \Omega\backslash S_w$,  there exists a constant $C>0$ such that
\begin{equation}\label{eq:reg_3}
|m_1(z,w)-m_1(z,w')|\le C|w-w'|^{\frac12},\ \ \ \forall\,z,w'\in U.
\end{equation}
\item[$(2)$] For every open set $U\subset\subset \Omega$,  there exists a constant $C>0$ such that
\begin{equation}\label{eq:reg_4}
|m_1(z,w)-m_1(z,w')|\le C|w-w'|^{\frac1{2(n+2)}},\ \ \ \forall\,z,w,w'\in U.
\end{equation}
\end{enumerate}
The same conclusions also hold for $K_1$.
\end{theorem}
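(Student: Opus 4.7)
The plan for part (1) is to pass to the holomorphic quotient $h:=m_1(\cdot,w')/m_1(\cdot,w)$ on the region where $m_1(\cdot,w)$ is bounded away from zero. Fix an open $V$ with $U\subset\subset V\subset\subset\Omega\setminus S_w$. By Proposition \ref{prop:continuity} and the Bergman inequality there exist $c_0,M,\varepsilon_0>0$ such that $c_0\le|m_1(\cdot,w)|,|m_1(\cdot,w')|\le M$ on $\overline V$ whenever $|w-w'|<\varepsilon_0$. The function $h$ is then holomorphic and bounded on $V$, and since $|{\rm Im}\,h|^2=|{\rm Im}(m_1(\cdot,w')\overline{m_1(\cdot,w)})|^2/|m_1(\cdot,w)|^4$, Lemma \ref{lm:H-positive_3} combined with Proposition \ref{prop:H-Lip} gives
\[
\int_V|{\rm Im}\,h|^2 \le c_0^{-4}(2M)^3 \int_V \frac{|{\rm Im}(m_1(\cdot,w')\overline{m_1(\cdot,w)})|^2}{(|m_1(\cdot,w')|+|m_1(\cdot,w)|)^3} \lesssim |w-w'|.
\]

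Because ${\rm Im}\,h$ is harmonic on $V$, interior mean-value estimates will promote this $L^2$-bound to the pointwise estimate $\|{\rm Im}\,h\|_{L^\infty(U)}\lesssim|w-w'|^{1/2}$, and the Cauchy--Riemann equations together with interior gradient estimates for harmonic functions will give $\|\nabla\,{\rm Re}\,h\|_{L^\infty(U)}\lesssim|w-w'|^{1/2}$; hence $h$ has oscillation $\lesssim|w-w'|^{1/2}$ on $U$. To pin down the value around which $h$ is concentrated, I would evaluate at $w'\in U$: $h(w')=1/m_1(w',w)$, and Cauchy's estimate applied to the holomorphic function $z\mapsto m_1(z,w)$ yields $|m_1(w',w)-1|\lesssim|w-w'|$. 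Combining, $|h-1|\lesssim|w-w'|^{1/2}$ on $U$, and therefore
\[
|m_1(z,w)-m_1(z,w')|=|m_1(z,w)|\,|1-h(z)|\lesssim|w-w'|^{1/2},\quad z\in U.
\]
The range $|w-w'|\ge\varepsilon_0$ is handled trivially from the uniform bound $|m_1(z,w)|\le M$.

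For part (2) the ratio $h$ is no longer globally well-controlled because $m_1(\cdot,w)$ may vanish inside $U$. The plan is to compensate through the Demailly--Koll\'ar theorem \cite{DK} on the semi-continuity of complex singularity exponents, applied to the continuous family $\{m_1(\cdot,w)\}$ exactly as in the proof of Theorem \ref{th:reg_1}: this yields constants $c,M_1>0$ with $\int_V|m_1(\cdot,w)|^{-c}\le M_1$ uniformly in $w$ on any compact set. One then localises the part (1) analysis to the open set $V_\delta:=\{|m_1(\cdot,w)|>\delta\}\cap V$, on which the same reasoning (with the lower bound $\delta$ replacing $c_0$, and admissible disks of radius $\sim\delta$ provided by the Cauchy estimate on $m_1(\cdot,w)$) yields a pointwise bound $|m_1(z,w)-m_1(z,w')|\lesssim\delta^{-N}|w-w'|^{1/2}$ for $z$ sufficiently far from $\partial V_\delta$, with $N=N(n)$ coming from the interior harmonic estimates. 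Chebyshev gives $|V\setminus V_\delta|\le M_1\delta^c$; averaging $|m_1(\cdot,w)-m_1(\cdot,w')|$ over a disk using the Bergman inequality, splitting the integral into its parts over $V_\delta$ and its complement, and finally balancing $\delta$ as an appropriate power of $|w-w'|$ should produce the exponent $1/(2(n+2))$. The main obstacle will be the quantitative bookkeeping: the $n+2$ in the denominator is expected to emerge from the $2n$-dimensional Bergman inequality combined with the quadratic character of the $|{\rm Im}(\bar ab)|^2$ estimate of Lemma \ref{lm:H-positive_3}.

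In both parts the corresponding statement for $K_1$ follows from that for $m_1$ via $K_1(z,w)=m_1(z,w)K_1(w)$, together with the uniform boundedness of $K_1(w)$ on compact sets and the local Lipschitz continuity of $K_1(w)$ supplied by Proposition \ref{prop:continuity}.
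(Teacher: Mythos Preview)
Your argument for part (1) is correct and parallels the paper's; the only difference is that you pass from the $L^2$ bound on ${\rm Im}\,h$ to a pointwise bound on $h-1$ via Cauchy--Riemann and interior gradient estimates, while the paper uses the Borel--Carath\'eodory inequality together with a chain of balls starting from $w$ (where $|h(w)|=|m_1(w,w')-1|\lesssim|w-w'|$).

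Your plan for part (2), however, takes the wrong route and will not yield the stated exponent. The paper does \emph{not} use Demailly--Koll\'ar here. The idea you are missing is that the case ``$m_1(\cdot,w)$ small at $z$'' is the \emph{trivial} one: set $\varepsilon:=|w-w'|^\alpha$; if both $|m_1(z,w)|\le\varepsilon$ and $|m_1(z,w')|\le\varepsilon$ then the difference is at most $2\varepsilon$ outright. In the remaining case, say $|m_1(z,w)|>\varepsilon$, the Lipschitz bound for $m_1(\cdot,w)$ gives $|m_1(\cdot,w)|>\varepsilon/2$ on a ball of radius $\sim\varepsilon$ about $z$, and one runs the part (1) mean-value and Borel--Carath\'eodory argument on that single small ball, picking up only explicit negative powers of $\varepsilon$ (from the volume of the ball and from the denominator $|m_1(\cdot,w)|$). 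Balancing the two cases by choosing $\alpha=\tfrac{1}{2(n+2)}$ gives the stated exponent.

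By contrast, your Demailly--Koll\'ar/Chebyshev/averaging scheme would at best produce a H\"older exponent depending on the non-explicit singularity exponent $c$, so it cannot recover $1/(2(n+2))$; and the averaging step is in any case unnecessary once you see the pointwise dichotomy at $z$.
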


\begin{proof}
$(1)$ First of all, (\ref{eq:H-positive_3}) implies
$$
\int_\Omega \frac{|{\rm Im} \{ m_1(\cdot,w')\overline{m_1(\cdot,w)} \}|^2}{(|m_1(\cdot,w'))|+|m_1(\cdot,w)|)^3}\lesssim {H_1(w',w)}\lesssim |w-w'|.
$$
Take open sets $U',U''$ with $U\subset\subset U'\subset\subset U''\subset\subset \Omega\backslash S_w$.  It follows from the mean-value inequality that
\begin{eqnarray*}
\sup_{U'} \left|{\rm Im}\left\{\frac{m_1(\cdot,w')}{m_1(\cdot,w)}\right\}\right|^2 &\lesssim& \int_{U''} \left|{\rm Im}\left\{\frac{m_1(\cdot,w')}{m_1(\cdot,w)}\right\}\right|^2\\
&\lesssim & \int_\Omega \frac{|{\rm Im} \{ m_1(\cdot,w')\overline{m_1(\cdot,w)} \}|^2}{(|m_1(\cdot,w'))|+|m_1(\cdot,w)|)^3}\\
& \lesssim & |w-w'|.
 \end{eqnarray*}
 Set $h:= \frac{m_1(\cdot,w')}{m_1(\cdot,w)}-1$. Then we have
 $$
 \sup_{U'} |{\rm Im\,}h|\lesssim |w-w'|^{\frac12}.
 $$
 To proceed the proof, we need the following cerebrated Borel-Carath\'eodory inequality
 \begin{equation}\label{eq:BC}
 \sup_{\Delta_r} |f|\le \frac{2r}{R-r}\, \sup_{\Delta_r} {\rm Im\,}f +\frac{R+r}{R-r}\, |f(0)|
 \end{equation}
 where $\Delta_r=\{z\in \mathbb C: |z|<r\}$ and $f\in \mathcal O(\Delta_R)$ for some $R>r$. Take a ball $B_R(w)\subset\subset U'$. Apply (\ref{eq:BC}) to every complex line through $w$, we obtain
 $$
 \sup_{B_{\frac{R}2}(w)} |h| \le 2\sup_{B_{{R}}(w)} {\rm Im\,}h +3 |h(w)|.
  $$
  Note that
  $$
  |h(w)|=|m_1(w,w') -1|=|m_1(w,w')-m_1(w',w')|\lesssim |w-w'|.
  $$
  Thus we obtain
  $$
   \sup_{B_{\frac{R}2}(w)} |h|\lesssim |w-w'|^{\frac12}.
     $$
     Take a chain of balls connecting $w$ and $z$ and apply \eqref{eq:BC} analogously on each ball, we obtain
     $$
     |h(z)|\lesssim |w-w'|^{\frac12},
     $$
     from which (\ref{eq:reg_3}) immediately follows.
     
 $(2)$ Fix $\alpha>0$ for a moment.  Take an open set $U'$ with $U\subset\subset U'\subset\subset \Omega$.  Given   two distinct points $w,w'$ in $U$,  set   $\varepsilon:=|w-w'|^{\alpha}$.   If $\max\{|m_1(z,w)|,|m_1(z,w')|\}\le \varepsilon$,  then
   $$
   |m_1(z,w)-m_1(z,w')|\le 2\varepsilon.
   $$ 
   Now consider the case $\max\{|m_1(z,w)|,|m_1(z,w')|\}> \varepsilon$.  Without loss of generality,  we assume that $|m_1(z,w)|>\varepsilon$.  Since $|m_1(z,w)-m_1(z',w)|\le C|z-z'|$ for $z,z'\in U$,  we have 
   $$
   |m_1(z',w)|>\varepsilon/2,\ \ \ \forall\,z': |z'-z|\le \tilde{\varepsilon}:=\varepsilon/(2C).
   $$
   Here and in what follows  $C$ denotes a generic constant depending only on $U,U',\Omega$.  Since
   $$
\int_\Omega \frac{|{\rm Im} \{ m_1(\cdot,w')\overline{m_1(\cdot,w)} \}|^2}{(|m_1(\cdot,w'))|+|m_1(\cdot,w)|)^3} \le C |w-w'|,
$$
it follows from the mean-value inequality that
\begin{eqnarray*}
\sup_{B_{\tilde{\varepsilon}/2}(z)} \left|{\rm Im}\left\{\frac{m_1(\cdot,w')}{m_1(\cdot,w)}\right\}\right|^2 &\le& C \varepsilon^{-2n} \int_{B_{\tilde{\varepsilon}}(z)} \left|{\rm Im}\left\{\frac{m_1(\cdot,w')}{m_1(\cdot,w)}\right\}\right|^2\\
& = &  C \varepsilon^{-2n} \int_{B_{\tilde{\varepsilon}}(z)} \frac{|{\rm Im} \{ m_1(\cdot,w')\overline{m_1(\cdot,w)} \}|^2}{ |m_1(\cdot,w)|^2}\\
& \le &  C \varepsilon^{-2n-2} \int_{B_{\tilde{\varepsilon}}(z)} |{\rm Im} \{ m_1(\cdot,w')\overline{m_1(\cdot,w)} \}|^2\\
&\le & C \varepsilon^{-2n-2} \int_\Omega \frac{|{\rm Im} \{ m_1(\cdot,w')\overline{m_1(\cdot,w)} \}|^2}{(|m_1(\cdot,w'))|+|m_1(\cdot,w)|)^3}\\
& \le & C |w-w'|^{1-2(n+1)\alpha}.
 \end{eqnarray*}
 Similar as above,  we may deduce from  the Borel-Carath\'eodory inequality that
 $$
\sup_{B_{\tilde{\varepsilon}/4}(z)} |h|\le C|w-w'|^{\frac12-(n+1)\alpha}.
 $$
 Take $\alpha=\frac1{2(n+2)}$,  we obtain 
 $$
 |m_1(z,w')-m_1(z,w)|=|h(z)||m_1(z,w)|\le C|w-w'|^{\frac1{2(n+2)}}.
 $$
\end{proof}

\begin{problem}
 Let $z\in \Omega$ be fixed. Are $m_1(z,\cdot)$ and $K_1(z,\cdot)$ locally $\frac12-$H\"older continuous on $\Omega$?
\end{problem}

\begin{problem}
What about the metric structure or analytic structure of the level set $\{m_p(z,\cdot)=c\}$ where $c\in \mathbb C$?
\end{problem}

\section{ $B_p(z;X)$ and $i\partial\bar{\partial} \log K_p(z;X)$}

For a real-valued upper semicontinuous function $u$ defined on a domain $\Omega\subset \mathbb C^n$,  we define the generalized Levi form  of $u$ by
$$
i\partial\bar{\partial} u(z;X):=\liminf_{r\rightarrow 0+}\frac1{r^2}\left\{\frac1{2\pi}\int_0^{2\pi}u(z+re^{i\theta}X)d\theta-u(z)\right\}
$$
where we identify $X=\sum_j X_j\partial/\partial z_j$ with  $(X_1,\cdots,X_n)$ for the sake of simplicity.   Note that if $u$ is $C^2$ then $i\partial\bar{\partial} u(z;X)$ is the standard Levi form of $u$.

\begin{theorem}\label{th:Levi_1}
For every $p\le 2$ we have
\begin{equation}\label{eq:Levi_1}
i\partial\bar{\partial} \log K_p(z;X) \ge  C(z;X)^2.
\end{equation}
\end{theorem}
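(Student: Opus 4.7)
The plan is to construct a disc-type test function for the minimizer at points $w$ near $z$, then extract the $r^2$-coefficient after averaging over a small circle. Fix $f\in A^\infty(\Omega)$ with $f(z)=0$ and $\|f\|_\infty\le 1$; by the maximum principle applied to $f$, we have $|f|<1$ strictly on $\Omega$. For a parameter $\lambda>0$, I would introduce
\[
g_w(\zeta) := \frac{m_p(\zeta,z)\,(1-|f(w)|^2)^\lambda}{m_p(w,z)\,(1-\overline{f(w)}f(\zeta))^\lambda},
\]
which lies in $A^p(\Omega)$ (the denominator is bounded below by $(1-|f(w)|)^\lambda>0$) and satisfies $g_w(w)=1$. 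Hence $m_p(w)^p\le \|g_w\|_p^p$, i.e.,
\[
\log K_p(w) \ge p\log|m_p(w,z)| - p\lambda\log(1-|f(w)|^2) - \log\int_\Omega|m_p(\cdot,z)|^p|1-\overline{f(w)}f|^{-p\lambda}.
\]

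Next I would average this inequality over $w=z+re^{i\theta}X$ and compute each term's leading behavior in $r$. The first term averages to $0$: since $m_p(z,z)=1\neq 0$, the function $\log|m_p(\cdot,z)|$ is pluriharmonic near $z$. The second term, using $f(w)=re^{i\theta}Xf(z)+O(r^2)$ together with $\log(1-t)=-t+O(t^2)$, averages to $p\lambda r^2|Xf(z)|^2 + O(r^4)$. For the third term, set $A(\theta):=\int_\Omega|m_p(\cdot,z)|^p|1-\overline{f(w_\theta)}f|^{-p\lambda}$; applying Theorem~\ref{th:RP} to the function $f\,m_p(\cdot,z)\in A^p(\Omega)$ (whose value at $z$ is $0$) yields the cancellation identity $\int_\Omega f\,|m_p(\cdot,z)|^p=0$, which kills the $O(r)$ contribution in the expansion of $|1-\overline{f(w_\theta)}f|^{-p\lambda}$. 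A careful expansion then gives
\[
\frac{1}{2\pi}\int_0^{2\pi}A(\theta)\,d\theta = m_p(z)^p\Bigl[\,1+\tfrac{p^2\lambda^2 r^2}{4}|Xf(z)|^2\,M_2 + O(r^3)\Bigr],
\]
with $M_2:=\int_\Omega |f|^2\,|m_p(\cdot,z)|^p/m_p(z)^p$. Jensen's inequality (concavity of $\log$) bounds $-\frac{1}{2\pi}\int\log A(\theta)\,d\theta$ from below by $-\log\frac{1}{2\pi}\int A(\theta)\,d\theta$.

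Combining these three averages and recalling that $\log K_p(z)=-\log m_p(z)^p$ produces
\[
\frac{1}{2\pi}\int_0^{2\pi}\log K_p(z+re^{i\theta}X)\,d\theta - \log K_p(z) \ge r^2|Xf(z)|^2\Bigl[\,p\lambda - \tfrac{p^2\lambda^2}{4}M_2\Bigr] + O(r^3).
\]
Dividing by $r^2$ and taking $\liminf_{r\to 0+}$ gives $i\partial\bar{\partial}\log K_p(z;X)\ge |Xf(z)|^2[p\lambda-p^2\lambda^2 M_2/4]$. Optimizing the quadratic in $\lambda$ (maximum at $\lambda_\ast=2/(pM_2)$ with value $1/M_2$) yields $i\partial\bar{\partial}\log K_p(z;X)\ge |Xf(z)|^2/M_2 \ge |Xf(z)|^2$, since $\|f\|_\infty\le 1$ forces $M_2\le 1$. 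Taking the supremum over admissible $f$ then yields the target bound $C(z;X)^2$.

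The main technical obstacle is the careful bookkeeping for the $r^2$-coefficient of $\frac{1}{2\pi}\int A(\theta)\,d\theta$: one must expand $|1-\overline{f(w_\theta)}f|^{-p\lambda}=(1-2\mathrm{Re}(\overline{f(w_\theta)}f)+|f(w_\theta)|^2|f|^2)^{-p\lambda/2}$ through second order in $r$, and verify that the reproducing-formula identity $\int_\Omega f\,|m_p(\cdot,z)|^p=0$ is exactly what eliminates the linear-in-$r$ contribution. Incidentally, this argument delivers the bound $C(z;X)^2$ uniformly for all $p\ge 1$; the restriction to $p\le 2$ in the statement simply reflects the fact that for $p\ge 2$ a strictly stronger bound $B_p(z;X)^2\ge C(z;X)^2$ is available by a separate argument.
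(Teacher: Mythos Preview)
Your argument is correct and follows essentially the same variational mechanism as the paper, though with a couple of genuine cosmetic differences worth noting. The paper uses the simpler linear perturbation $f_t=m_p(\cdot,z)(1+tf)$ with $t=\varepsilon r e^{-i\theta}\overline{Xf(z)}$ chosen to depend on $\theta$; you use the M\"obius-type factor $(1-\overline{f(w)}f)^{-\lambda}$ together with the normalization by $m_p(w,z)$ so that $g_w(w)=1$ exactly. To first order in $r$ these test functions agree, and the resulting $r^2$-coefficients coincide after the same optimization (your $\lambda_\ast=2/(pM_2)$ corresponds to the paper's $\varepsilon=2/p$ once the paper bounds $\int|f|^2|m_p(\cdot,z)|^p\le m_p(z)^p$). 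The key cancellation $\int_\Omega f\,|m_p(\cdot,z)|^p=0$ is the same in both proofs: you obtain it by feeding $f\,m_p(\cdot,z)$ into the reproducing formula, while the paper isolates it as Lemma~\ref{lm:var_1} by a direct first-variation computation (and that lemma is stated for all $p>0$, which covers the range $0<p<1$ where the reproducing formula is not available). Finally, because your parameter $\lambda$ is $\theta$-independent you need Jensen's inequality to pass from $\frac{1}{2\pi}\int\log A(\theta)\,d\theta$ to $\log\frac{1}{2\pi}\int A(\theta)\,d\theta$; the paper avoids this by letting $t$ vary with $\theta$ and applying Lemma~\ref{lm:var_1} a second time to $f^2$, which kills the mixed $(tf)^2$, $(\overline{tf})^2$ terms pointwise. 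Your closing remark that the bound $C(z;X)^2$ actually holds for all $p\ge 1$ (indeed all $p>0$ via Lemma~\ref{lm:var_1}) is correct; the paper simply records the sharper $B_p(z;X)^2$ for $p\ge 2$ separately.
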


We need the following simple fact.

 \begin{lemma}\label{lm:var_1}
 For every $p>0$ we have
 \begin{equation}\label{eq:Var_1}
 \int_\Omega |m_p(\cdot,z)|^{p}f  = 0
  \end{equation}
  for all $f\in A^\infty(\Omega)$ with $f(z)=0$ and $\|f\|_\infty=1$.
 \end{lemma}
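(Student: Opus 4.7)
The plan is to adapt the variational method used in Lemma 2.12, but now with a \emph{multiplicative} perturbation rather than an additive one. Specifically, given $f\in A^\infty(\Omega)$ with $f(z)=0$ and $\|f\|_\infty=1$, I would consider the one-parameter family
$$
f_t := m_p(\cdot,z)(1+tf) \in A^p(\Omega),\ \ \ t\in\mathbb{C}.
$$
Since $f(z)=0$ and $m_p(z,z)=1$, we have $f_t(z)=1$ for every $t$, so each $f_t$ is a competitor in the defining minimizing problem for $m_p(z)$. Consequently $J(t):=\|f_t\|_p^p$ attains its minimum at $t=0$, which will force $\partial J/\partial t\,|_{t=0}=0$; computing this derivative is exactly the content of \eqref{eq:Var_1}.

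To make this rigorous, I would restrict to $|t|\leq 1/2$ so that $1/2\leq |1+tf|\leq 3/2$ pointwise on $\Omega$ (using $\|f\|_\infty=1$). On this region $|1+tf|^p$ is bounded above and below by positive constants depending only on $p$, and
$$
\frac{\partial}{\partial t}\bigl(|m_p(\cdot,z)|^p|1+tf|^p\bigr) = \frac{p}{2}\,|m_p(\cdot,z)|^p\,|1+tf|^{p-2}(1+\bar{t}\bar{f})f
$$
holds pointwise, with absolute value bounded by $C_p|m_p(\cdot,z)|^p|f|\leq C_p\,|m_p(\cdot,z)|^p$. Since $|m_p(\cdot,z)|^p$ is integrable on $\Omega$, the dominated convergence theorem justifies differentiation under the integral sign, yielding
$$
0 = \frac{\partial J}{\partial t}(0) = \frac{p}{2}\int_\Omega |m_p(\cdot,z)|^p\,f,
$$
which is exactly \eqref{eq:Var_1}.

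The main thing that could go wrong, and what needs a tiny bit of care, is the behaviour of $|1+tf|^{p-2}$ when $0<p<2$ (and in particular when $p<1$, which is covered here since the statement is for all $p>0$, unlike Lemma~2.12). If $1+tf$ were allowed to vanish, this factor would blow up; but the restriction $\|f\|_\infty=1$ together with $|t|\leq 1/2$ keeps $|1+tf|$ uniformly away from zero, so no such pathology arises and no appeal to cancellation on the vanishing locus of $f_t$ (as was needed in Lemma~2.12) is required. This is the point where the hypothesis $f\in A^\infty(\Omega)$ with $\|f\|_\infty=1$ (rather than merely $f\in A^p(\Omega)$) is essential.
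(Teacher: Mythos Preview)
Your proof is correct and takes essentially the same approach as the paper: both use the multiplicative perturbation $f_t = m_p(\cdot,z)(1+tf)$ and exploit that $J(t)=\|f_t\|_p^p$ is minimized at $t=0$. The only cosmetic difference is that the paper writes out the Taylor expansion $|1+tf|^p = 1 + p\,\mathrm{Re}\{tf\} + O(|t|^2)$ and then chooses $t = s e^{-i\arg\int_\Omega |m_p(\cdot,z)|^p f}$ with $s\in\mathbb R$ to force the vanishing, whereas you differentiate under the integral sign directly; your explicit restriction to $|t|\le 1/2$ to keep $|1+tf|^{p-2}$ bounded is a nice touch, especially for $0<p<1$.
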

 \begin{proof}
 Given $f\in A^\infty(\Omega)$ with $f(z)=0$ and $\|f\|_\infty=1$, we define
 $$
 f_t={m_p(\cdot,z)} (1+tf),\ \ \ t\in \mathbb C.
 $$
 Clearly,  $f_t$ belongs to $A^p(\Omega)$ and satisfies $f_t(z)=1$. We then have
 \begin{eqnarray*}
 m_p(z)^p & \le & \int_\Omega |f_t|^p = \int_\Omega |m_p(\cdot,z)|^p\left(1+2{\rm Re}\{tf\}+|t|^2|f|^2\right)^{\frac{p}2}\\
 & = &  \int_\Omega |m_p(\cdot,z)|^p\left(1+p{\rm Re}\{t f\}+O(|t|^2)\right)\\
 & = &  m_p(z)^p+p{\rm Re}\left\{ t \int_\Omega |m_p(\cdot,z)|^{p}f \right\}+O(|t|^2) \end{eqnarray*}
as $t\rightarrow 0$. Take $t=se^{-i\arg \int_\Omega |m_p(\cdot,z)|^{p}f}$ with $s\in \mathbb R$, we immediately get (\ref{eq:Var_1}).
\end{proof}

\begin{proof}[Proof of Theorem \ref{th:Levi_1}]Fix $r>0$ and $\theta\in \mathbb R$ for a moment.
For $t\in \mathbb C$ with $|t|=O(r)$ and $f\in A^\infty(\Omega)$ with $f(z)=0$ and $\|f\|_\infty=1$ we define
 $
 f_t={m_p(\cdot,z)} (1+tf)
 $
 as above. We have
 \begin{eqnarray*}
 |f_t(z+re^{i\theta}X)|^p & = & |m_p(z+re^{i\theta}X,z)|^p |1+tf(z+re^{i\theta}X)|^p\\
 & = & |m_p(z+re^{i\theta}X,z)|^p |1+t re^{i\theta} Xf(z)+O(r^3)|^p\\
 & = &  |m_p(z+re^{i\theta}X,z)|^p \left(1+p{\rm Re}\left\{t re^{i\theta} Xf(z)\right\}+O(r^3)\right)
 \end{eqnarray*}
 and
 \begin{eqnarray*}
 \|f_t\|_p^p & = & \int_\Omega |m_p(\cdot,z)|^p \left(1+2{\rm Re}\{tf\}+|tf|^2\right)^{\frac{p}2}\\
 & = & \int_\Omega |m_p(\cdot,z)|^p \left(1+p{\rm Re}\{tf\} +\frac{p}2|tf|^2+\frac{p(p-2)}2({\rm Re}\{tf\})^2+O(r^3)\right)\\
 & = & \int_\Omega |m_p(\cdot,z)|^p \left(1+p{\rm Re}\{tf\} +\frac{p^2}4 |tf|^2+\frac{p(p-2)}8\left((tf)^2+(\overline{tf})^2\right)+O(r^3)\right)\\
 &\le & m_p(z)^p\left(1+\frac{p^2|t|^2}4 + O(r^3)\right)
  \end{eqnarray*}
  in view of Lemma \ref{lm:var_1}. 
  
  Take $t=\varepsilon r e^{-i\theta}\overline{Xf(z)}$ where $\varepsilon>0$ is a constant to be determined later. Then we have
  $$
  |f_t(z+re^{i\theta}X)|^p=  |m_p(z+re^{i\theta}X,z)|^p \left(1+p\varepsilon r^2 |Xf(z)|^2+O(r^3)\right)
  $$
  and
  $$
  \|f_t\|_p^p\le m_p(z)^p\left(1+\frac{p^2}4 \varepsilon^2 r^2 |Xf(z)|^2 + O(r^3)\right),
    $$
    so that
    $$
    K_p(z+re^{i\theta}X)\ge \frac{ |f_t(z+re^{i\theta}X)|^p}{\|f_t\|_p^p}\ge \frac{ |m_p(z+re^{i\theta}X,z)|^p }{m_p(z)^p}\cdot\frac{1+p\varepsilon r^2 |Xf(z)|^2+O(r^3)}{1+\frac{p^2}4 \varepsilon^2 r^2 |Xf(z)|^2 + O(r^3)}.
    $$
    Thus we have
    \begin{eqnarray*}
    i\partial\bar{\partial} \log K_p(z;X) & \ge & \liminf_{r\rightarrow 0+} \frac1{r^2}\left\{\frac1{2\pi}\int_0^{2\pi} \log  \frac{ |m_p(z+re^{i\theta}X,z)|^p }{m_p(z)^p}\,d\theta-\log K_p(z) \right\}\\
    && + p\varepsilon|Xf(z)|^2-\frac{p^2\varepsilon^2}4 |Xf(z)|^2\\
    &\ge & p\varepsilon(1-p\varepsilon/4) |Xf(z)|^2\\
    & = & |Xf(z)|^2
    \end{eqnarray*}
    when $\varepsilon=2/p$.   Here the second inequality follows by applying the mean-value inequality to the subharmonic function 
    $$
    u(t):=\log |m_p(z+tX,z)/m_p(z)|^p
    $$ 
    with $u(0)=\log 1/m_p(z)^p=\log K_p(z)$. Take supremum over $f$, we obtain (\ref{eq:Levi_1}).
 \end{proof}

 \begin{theorem}\label{th:Levi_2}
For every $p\ge 2$ we have
\begin{equation}\label{eq:Levi_2}
i\partial\bar{\partial} \log K_p(z;X)\ge B_p(z;X)^2.
\end{equation}
\end{theorem}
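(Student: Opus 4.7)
The plan is to adapt the variational argument of Theorem \ref{th:Levi_1}, now using $A^p$ perturbations and the reproducing formula (Theorem \ref{th:RP}) in place of $A^\infty$ perturbations and Lemma \ref{lm:var_1}. Fix $f\in A^p(\Omega)$ with $f(z)=0$ and $\|f\|_p=1$; write $g:=m_p(\cdot,z)$, $\gamma:=Xf(z)$, and for each $\theta\in\R$ introduce the $\theta$-dependent test function
$$
h_{t_\theta}:=g+t_\theta f\in A^p(\Omega),\qquad t_\theta:=\varepsilon r\,\bar\gamma\,e^{-i\theta},
$$
where $\varepsilon>0$ is a parameter to be chosen at the end. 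Since $h_{t_\theta}(z)=1$, we have $K_p(w)\ge|h_{t_\theta}(w)|^p/\|h_{t_\theta}\|_p^p$.

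For the numerator at $w=z+re^{i\theta}X$, the relations $f(w)=re^{i\theta}\gamma+O(r^2)$ and $g(w)=1+O(r)$ give $t_\theta f(w)=\varepsilon r^2|\gamma|^2+O(r^3)$, so by a pointwise expansion of $|1+u|^p$ and $\log(1+u)$,
$$
\log|h_{t_\theta}(w)|^p\ge\log|g(w)|^p+p\varepsilon r^2|\gamma|^2+O(r^3).
$$
Averaging in $\theta$ and using the sub-mean-value inequality for the subharmonic function $\log|g|^p$ yields
$$
\frac1{2\pi}\int_0^{2\pi}\log|h_{t_\theta}(z+re^{i\theta}X)|^p\,d\theta\ge p\varepsilon r^2|\gamma|^2+O(r^3).
$$

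For the denominator I would use the second-order Taylor expansion of $\eta(t):=\|g+tf\|_p^p$ at $t=0$. Since $p\ge2$, the integrand $|g+tf|^p$ is $C^2$ in $t$, and its second partials are dominated (via the elementary inequality $(a+b)^{p-2}\le2^{p-2}(a^{p-2}+b^{p-2})$ together with H\"older) by integrable majorants, so differentiation under the integral sign gives $\eta\in C^2$ and
$$
\eta(t)=m_p(z)^p+p\,{\rm Re}\Big(t\!\int_\Omega|g|^{p-2}\bar g f\Big)+\tfrac{p^2|t|^2}{4}\!\int_\Omega|g|^{p-2}|f|^2+{\rm Re}\Big(\tfrac{p(p-2)}{4}t^2\!\int_\Omega|g|^{p-4}\bar g^2 f^2\Big)+o(|t|^2).
$$
The linear term vanishes by the reproducing formula (Theorem \ref{th:RP}) since $f(z)=0$. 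The crucial observation is that $t_\theta^2=\varepsilon^2r^2\bar\gamma^2e^{-2i\theta}$, so the quadratic ``twist'' term drops out after averaging in $\theta$; combined with the H\"older bound $\int_\Omega|g|^{p-2}|f|^2\le m_p(z)^{p-2}\|f\|_p^2=m_p(z)^{p-2}$ and Jensen's inequality for $\log$, one obtains
$$
\frac1{2\pi}\int_0^{2\pi}\log\|h_{t_\theta}\|_p^p\,d\theta\le-\log K_p(z)+\tfrac{p^2\varepsilon^2r^2|\gamma|^2 K_p(z)^{2/p}}{4}+o(r^2).
$$

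Combining the two estimates, dividing by $r^2$, and letting $r\to 0^+$ gives
$$
i\partial\bar\partial\log K_p(z;X)\ge p\varepsilon|\gamma|^2-\tfrac{p^2\varepsilon^2|\gamma|^2 K_p(z)^{2/p}}{4};
$$
the optimal choice $\varepsilon=2/(pK_p(z)^{2/p})$ produces the lower bound $|\gamma|^2 K_p(z)^{-2/p}$, and taking the supremum over admissible $f$ yields $B_p(z;X)^2$. The main obstacle is obtaining the sharp constant $p^2/4$ rather than a weaker one: estimating the twist integral $\int|g|^{p-4}\bar g^2 f^2$ crudely by H\"older would add a contribution $\tfrac{p(p-2)}{4}m_p(z)^{p-2}$ to the quadratic coefficient, giving only $p(p-1)/2$ and producing the insufficient inequality $i\partial\bar\partial\log K_p(z;X)\ge\tfrac{p}{2(p-1)}B_p(z;X)^2<B_p(z;X)^2$ for $p>2$; the $\theta$-cancellation enabled by the specific choice $t_\theta\propto e^{-i\theta}$ is therefore indispensable.
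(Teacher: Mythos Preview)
Your proof is correct and follows essentially the same variational strategy as the paper: the perturbation $m_p(\cdot,z)+t_\theta f$ with $t_\theta=\varepsilon r\bar\gamma e^{-i\theta}$, the vanishing of the first variation via Lemma~\ref{lm:var_2}/Theorem~\ref{th:RP}, the H\"older bound $\int|g|^{p-2}|f|^2\le m_p(z)^{p-2}$, the $\theta$-average killing the $t_\theta^2$ twist, and the optimization in $\varepsilon$ all match. Your use of Jensen's inequality to bound $\tfrac1{2\pi}\int\log\|h_{t_\theta}\|_p^p\,d\theta\le\log\bigl(\tfrac1{2\pi}\int\|h_{t_\theta}\|_p^p\,d\theta\bigr)$ is a clean packaging (the paper instead expands $\log$ of the ratio directly), and your closing remark on why a crude H\"older bound on the twist term would give only the suboptimal constant $p/(2(p-1))$ is a correct and useful observation.
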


\begin{proof}
Fix $r,\theta$ for a moment. Take $f\in A^p(\Omega)$ with $f(z)=0$ and $\|f\|_p=1$. For $t\in \mathbb C$ with $|t|=O(r)$   we define
$$
f_t:=m_p(\cdot,z)+tf.
$$
Analogously we have
\begin{eqnarray*}
|f_t(z+re^{i\theta}X)|^p & = & |m_p(z+re^{i\theta}X,z)+tf(z+re^{i\theta}X)|^p\\
 & = & |m_p(z+re^{i\theta}X,z)+t re^{i\theta} Xf(z)+O(r^3)|^p\\
 & = &  |m_p(z+re^{i\theta}X,z)|^p \left(1+p{\rm Re}\left\{t re^{i\theta} Xf(z)\right\}+O(r^3)\right)
\end{eqnarray*}
since $m_p(z+re^{i\theta}X,z)=1+O(r)$. A straightforward calculation yields
\begin{eqnarray*}
\frac{\partial |f_t|^p}{\partial t} & = & \frac{p}2 |f_t|^{p-2} \overline{f_t}f\\
\frac{\partial^2 |f_t|^p}{\partial t^2} & = & \frac{p(p-2)}4 |f_t|^{p-4}\left( \overline{f_t}f\right)^2\\
\frac{\partial^2 |f_t|^p}{\partial t\partial \bar{t}} & = & \frac{p^2}4 |f_t|^{p-2} |f|^2.
\end{eqnarray*}
Set $J(t):=\|f_t\|_p^p$. From the proof of Lemma \ref{lm:var_2} we have already known that
$$
\frac{\partial J}{\partial t}(0) = \frac{\partial J}{\partial \bar{t}}(0)=0.
$$
Note that for $|t|\le 1$ we have
$$
\left|\frac{\partial^2 |f_t|^p}{\partial t^2}\right| \le \frac{p(p-2)}4 |f_t|^{p-2} |f|^2\le \frac{p(p-2)}4 (|m_p(\cdot,z)|+|f| )^{p-2} |f|^2=: \frac{p(p-2)}4\cdot g
$$
$$
\left|\frac{\partial^2 |f_t|^p}{\partial t \partial \bar{t}}\right| \le \frac{p^2}4\cdot g,
$$
while H\"older's inequality gives
$$
\int_\Omega g\le \|f\|_p^2 \left\{\int_\Omega (|m_p(\cdot,z)|+|f| )^p\right\}^{1-\frac2p}<\infty.
$$
It follows from the dominated convergence theorem that
$$
\frac{\partial^2 J}{\partial t^2}(0) = \int_\Omega \frac{\partial^2 |f_t|^p}{\partial t^2}(0)=\frac{p(p-2)}4 \int_\Omega |m_p(\cdot,z)|^{p-4}\left(\overline{m_p(\cdot,z)}\,f\right)^2
$$
$$
\frac{\partial^2 J}{\partial t\partial \bar{t}}(0) = \int_\Omega \frac{\partial^2 |f_t|^p}{\partial t\partial\bar{t}}(0)=\frac{p^2}4 \int_\Omega |m_p(\cdot,z)|^{p-2}|f|^2.
$$
Thus we have
\begin{eqnarray*}
J(t) & = & J(0) + {\rm Re}\left\{\frac{\partial^2 J}{\partial t^2}(0) t^2\right\}
+\frac{\partial^2 J}{\partial t\partial \bar{t}}(0) |t|^2 +o(|t|^2)\\
& = & m_p(z)^p + {\rm Re}\left\{\frac{\partial^2 J}{\partial t^2}(0) t^2\right\} + \frac{p^2}4 |t|^2 \int_\Omega |m_p(\cdot,z)|^{p-2}|f|^2 +o(|t|^2)\\
& \le & m_p(z)^p + {\rm Re}\left\{\frac{\partial^2 J}{\partial t^2}(0) t^2\right\} + \frac{p^2}4 m_p(z)^{p-2} |t|^2  + o(|t|^2)
\end{eqnarray*}
in view of H\"older's inequality. Take $t=\varepsilon re^{-i\theta} \overline{Xf(z)}$ as above, we obtain
\begin{eqnarray*}
 &&   K_p(z+re^{i\theta}X)\\
  & \ge & \frac{ |m_p(z+re^{i\theta}X,z)|^p }{m_p(z)^p}\\
    && \cdot\frac{1  +p\varepsilon r^2 |Xf(z)|^2+O(r^3)}{1 + {\rm Re}\left\{\frac{\partial^2 J}{\partial t^2}(0) (\varepsilon re^{-i\theta} \overline{Xf(z)})^2/m_p(z)^p\right\} +\frac{p^2}4 \varepsilon^2 r^2 |Xf(z)|^2/m_p(z)^2 + o(r^2)}.
    \end{eqnarray*}
    Since 
    $$
    \int_0^{2\pi}  {\rm Re}\left\{\frac{\partial^2 J}{\partial t^2}(0) (\varepsilon re^{-i\theta} \overline{Xf(z)})^2/m_p(z)^p\right\} d\theta=0,
    $$
    it follows that
    \begin{eqnarray*}
     i\partial\bar{\partial} \log K_p(z;X) & \ge & p\varepsilon |Xf(z)|^2-\frac{p^2}4 \frac{\varepsilon^2 |Xf(z)|^2}{m_p(z)^2}\\
     & = & p\varepsilon |Xf(z)|^2 \left(1-\frac{p}4 \frac{\varepsilon}{m_p(z)^2}\right) \\
     & = &  |Xf(z)|^2 m_p(z)^2
        \end{eqnarray*}
    when $\varepsilon=\frac2p\,m_p(z)^2$. Take supremum over $f$, we obtain (\ref{eq:Levi_2}).
    \end{proof}
 
 \begin{problem}
 Is it possible to conclude that $i\partial\bar{\partial} \log K_p(z;X) = B_p(z;X)^2$ for $2\le p<\infty$?
 \end{problem}

 By Theorem \ref{th:Levi_1} and Theorem \ref{th:Levi_2},  $\log K_p$ is a continuous strictly psh function on $\Omega$. In particular, we have
 
 \begin{corollary}
 The minimal set of $K_p(z)$ defined by 
 $$
 {\rm Min}_p(\Omega):= \{z\in \Omega: K_p(z)=\inf_{\zeta\in \Omega}\, K_p(\zeta)\}
 $$
  is either empty or a totally real subset of\/ $\Omega$.
 \end{corollary}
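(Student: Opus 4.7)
The plan is to prove the contrapositive: if $\mathrm{Min}_p(\Omega)$ is nonempty, no nonconstant holomorphic disc $\phi:\Delta\to M:=\mathrm{Min}_p(\Omega)$ can exist. Suppose for contradiction that such a $\phi$ exists. Since $\phi$ is nonconstant, after reparametrizing (by shifting the base point and composing with a linear map in $\Delta$) I may assume $\phi(0)=z_0$ and $X:=\phi'(0)\neq 0$.

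The first step is to observe that the extremal function is constant along $\phi$ by the maximum modulus principle. Consider $g:=m_p(\cdot,z_0)/m_p(z_0)\in A^p(\Omega)$, which satisfies $\|g\|_p=1$ and $|g(z_0)|^p=K_p(z_0)$. Since $K_p$ attains its infimum on $M$, for every $\zeta\in\Delta$
\[
|g(\phi(\zeta))|^p\le K_p(\phi(\zeta))\cdot\|g\|_p^p=K_p(z_0)=|g(z_0)|^p.
\]
Thus $|g\circ\phi|$ attains its maximum at the interior point $\zeta=0$, so $g\circ\phi$ is constant on $\Delta$; equivalently, $m_p(\phi(\zeta),z_0)\equiv 1$ for all $\zeta\in\Delta$.

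For the second step I would perturb the extremal function in a direction that resolves $X$. Take the polynomial $h(w):=\sum_j X_j(w_j-(z_0)_j)$, which lies in $A^p(\Omega)$ because $\Omega$ is bounded, vanishes at $z_0$, and satisfies $h(\phi(\zeta))=|X|^2\zeta+O(\zeta^2)\not\equiv 0$. Set $f:=h/\|h\|_p$, so $f(z_0)=0$, $\|f\|_p=1$, and $f\circ\phi\not\equiv 0$. Define $f_t:=m_p(\cdot,z_0)+tf$ for $t\in\mathbb{C}$. By Lemma 2.14 (the vanishing of the first variation of $\|f_t\|_p^p$ at $t=0$, derived there by differentiating under the integral via dominated convergence), we have $\frac{d}{ds}\|f_{s\alpha}\|_p^p\bigl|_{s=0}=0$ for every direction $\alpha\in\mathbb{C}$; hence
\[
\|f_t\|_p^p-m_p(z_0)^p=o(|t|)\qquad\text{as }t\to 0.
\]

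The third step extracts the contradiction. For every $\zeta\in\Delta$, the defining inequality $|f_t(w)|^p\le K_p(w)\|f_t\|_p^p$ applied at $w=\phi(\zeta)\in M$, together with the first step, gives
\[
|1+tf(\phi(\zeta))|^p=|f_t(\phi(\zeta))|^p\le K_p(z_0)\,\|f_t\|_p^p=1+o(|t|).
\]
Pick $\zeta_0\in\Delta$ with $c:=f(\phi(\zeta_0))\neq 0$, and choose $t=s\bar c/|c|$ with $s>0$ so that $tf(\phi(\zeta_0))=s|c|$. By Bernoulli's inequality for $p\ge 1$,
\[
(1+s|c|)^p\ge 1+ps|c|,
\]
so $ps|c|\le o(s)$. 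Dividing by $s$ and letting $s\to 0^+$ forces $|c|=0$, a contradiction. Hence no nonconstant holomorphic disc can be contained in $M$, and $M$ is totally real.

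The conceptual obstacle I expected was to transfer the generalized strict plurisubharmonicity of $\log K_p$ (Theorems 4.1 and 4.3) into a constraint along $\phi$. This is genuinely subtle because $\log K_p$ is only known to be locally Lipschitz and a chain rule for the generalized Levi form of a merely continuous psh function under composition with $\phi$ is delicate: the naive Lipschitz estimate for $\log K_p(\phi(re^{i\theta}))-\log K_p(z_0+re^{i\theta}X)$ is $O(r^2)$, the same order as the gain predicted by $i\partial\bar{\partial}\log K_p(z_0;X)>0$. The argument above bypasses this entirely by going directly through the variational structure: the maximum modulus principle for $g\circ\phi$ and the first-order extremality of Lemma 2.14 are sufficient, so no second-order regularity of $K_p$ or chain rule for generalized Levi forms is needed.
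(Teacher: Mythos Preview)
Your argument is correct and takes a genuinely different route from the paper. The paper deduces the corollary in one line from Theorems~5.1 and~5.2: since $\log K_p$ is a continuous strictly plurisubharmonic function (its generalized Levi form dominates $C(z;X)^2>0$), its minimum set contains no nonconstant holomorphic disc. Concretely, once one knows $i\partial\bar\partial\log K_p(z;X)\ge c|X|^2$ locally, the function $\log K_p-\,c|z-z_0|^2$ is still psh, and its pullback along any holomorphic disc $\phi$ in $\mathrm{Min}_p(\Omega)$ would have an isolated interior maximum at $0$, contradicting the maximum principle. You instead bypass the second-order analysis entirely: you use only the maximum modulus principle to pin down $m_p(\phi(\zeta),z_0)\equiv 1$, and then the vanishing of the \emph{first} variation (Lemma~2.12 in the paper; you cite it as Lemma~2.14) to force the contradiction. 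This yields a self-contained, elementary argument that avoids the potential-theoretic step of upgrading a pointwise liminf-type Levi-form bound to genuine plurisubharmonicity of $\log K_p-\varepsilon|z|^2$---precisely the subtlety you flagged. One small slip: with $h(w)=\sum_j X_j(w_j-(z_0)_j)$ the leading coefficient of $h\circ\phi$ at $\zeta=0$ is $\sum_j X_j^2$, not $|X|^2$, and can vanish (e.g.\ $X=(1,i,0,\ldots,0)$); take $h(w)=\sum_j \overline{X}_j(w_j-(z_0)_j)$ instead, or simply pick a coordinate along which $\phi$ is nonconstant.
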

 
 \begin{problem}
 What about the metric structure of\/  ${\rm Min}_p(\Omega)$?
 \end{problem}

\section{Stability of $m_p$, $K_p$ and $B_p$ as $p$ varies}

We first prove the following 

 \begin{proposition}\label{th:stab_1}
 \begin{enumerate}
 \item[$(1)$]
 $\lim_{s\rightarrow p\pm} K_s(z)$ exists for $p>0$ and
\begin{equation}\label{eq:stab_1}
\lim_{s\rightarrow p+} K_s(z)\le K_p(z) = \lim_{s\rightarrow p-} K_s(z).
\end{equation}
\item[$(2)$] 
 If there exists  $p'>p$ such that $A^{p'}(\Omega)$ lies dense in $A^p(\Omega)$, then
\begin{equation}\label{eq:stab_3}
K_p(z)=\lim_{s\rightarrow p} K_s(z).
\end{equation}
\end{enumerate}
 \end{proposition}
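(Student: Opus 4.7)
The plan is to split the argument into three pieces: a monotonicity observation that gives existence of both one-sided limits, a normal family compactness argument that yields continuity from the left, and a density argument that handles part (2).

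\textbf{Step 1: Monotonicity.} First I would observe that for any $0<s_1<s_2$ and any $f\in A^{s_2}(\Omega)$ with $\|f\|_{s_2}=1$, H\"older's inequality on the bounded domain gives $\|f\|_{s_1}\le|\Omega|^{1/s_1-1/s_2}$, while \eqref{eq:max_1} yields $|f(z)|^{s_1}\le K_{s_1}(z)\|f\|_{s_1}^{s_1}$. Combining these and taking the supremum over such $f$ produces
\[
K_{s_2}(z)^{1/s_2}\le K_{s_1}(z)^{1/s_1}\,|\Omega|^{1/s_1-1/s_2},
\]
so $s\mapsto\bigl(K_s(z)|\Omega|\bigr)^{1/s}$ is monotone decreasing on $(0,\infty)$. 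This immediately produces $\lim_{s\rightarrow p\pm}K_s(z)$ for every $p>0$ together with the estimate $\lim_{s\rightarrow p+}K_s(z)\le K_p(z)\le\lim_{s\rightarrow p-}K_s(z)$.

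\textbf{Step 2: Left continuity.} Next I would upgrade the left inequality to equality by a normal family argument. Pick $s_j\nearrow p$ and an extremal $f_j\in A^{s_j}(\Omega)$ with $\|f_j\|_{s_j}=1$ and $|f_j(z)|^{s_j}=K_{s_j}(z)$ (the supremum in \eqref{eq:max_1} is attained by the usual Bergman inequality $+$ Fatou argument). The Bergman inequality \eqref{eq:BergIneq} gives locally uniform bounds on $\{f_j\}$ since $C_S^{1/s_j}$ stays bounded, so a subsequence converges locally uniformly to some $f_0\in\mathcal O(\Omega)$. On each $\Omega'\subset\subset\Omega$ the uniform convergence of $f_j$ together with $s_j\rightarrow p$ forces $|f_j|^{s_j}\rightarrow|f_0|^p$ uniformly on $\Omega'$, so $\int_{\Omega'}|f_0|^p\le\liminf_j\int_\Omega|f_j|^{s_j}=1$; exhausting $\Omega$ gives $\|f_0\|_p\le 1$, whence $|f_0(z)|^p\le K_p(z)$ by \eqref{eq:max_1}. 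Since \eqref{eq:BergIneq_3} prevents $K_{s_j}(z)$ from degenerating, $f_0(z)\ne 0$ and $|f_0(z)|^p=\lim_j K_{s_j}(z)$, closing the equality.

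\textbf{Step 3: Density argument for part (2).} Assuming $A^{p'}(\Omega)$ dense in $A^p(\Omega)$ for some $p'>p$, given $\varepsilon>0$ I would choose $f\in A^p(\Omega)$ with $f(z)=1$ and $\|f\|_p<m_p(z)+\varepsilon$, approximate it in $A^p(\Omega)$ by $g_k\in A^{p'}(\Omega)$, and (using the Bergman inequality to obtain $g_k(z)\rightarrow 1$) normalize to $\tilde g_k:=g_k/g_k(z)\in A^{p'}(\Omega)\subset A^s(\Omega)$ for every $s\in[p,p']$. Since $|\tilde g_k|^s\le 1+|\tilde g_k|^{p'}$ pointwise on $\Omega$, dominated convergence gives $\|\tilde g_k\|_s\rightarrow\|\tilde g_k\|_p$ as $s\rightarrow p+$, so
\[
\limsup_{s\rightarrow p+}m_s(z)\le\|\tilde g_k\|_p\rightarrow\|f\|_p<m_p(z)+\varepsilon
\]
as $k\rightarrow\infty$. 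Letting $\varepsilon\rightarrow 0$ yields $\limsup_{s\rightarrow p+}m_s(z)\le m_p(z)$, i.e.\ $\liminf_{s\rightarrow p+}K_s(z)\ge K_p(z)$, and Step 1 supplies the matching upper bound. The main obstacle I anticipate is Step 2: the extremals $f_j$ live in different Banach spaces $A^{s_j}(\Omega)$, so the $L^{s_j}$-normalization must be pushed through a limit in which both the integrand $|f_j|^{s_j}$ and the exponent $s_j$ vary; the Bergman inequality combined with an exhaustion handles this, but precisely because no analogous normal family argument is available from the right (Proposition \ref{prop:NonCont} shows the right limit can be strictly smaller than $K_p(z)$), the density hypothesis of part (2) is essential.
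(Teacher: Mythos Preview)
Your proof is correct and follows essentially the same approach as the paper: monotonicity of $s\mapsto(K_s(z)|\Omega|)^{1/s}$ via H\"older for existence of the one-sided limits, a normal-family argument for left continuity, and a density-plus-dominated-convergence argument for part~(2). The only cosmetic difference is that in Step~2 you obtain $\|f_0\|_p\le 1$ by exhausting $\Omega$ by compact subsets (using uniform convergence of $|f_j|^{s_j}$ to $|f_0|^p$ there), whereas the paper appeals to Fatou directly.
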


 \begin{proof}
 (1) Let $0<p<s<\infty$ and $f\in A^s(\Omega)$. By H\"older's inequality, we have
  $$
  \|f\|_p \le \|f\|_s \cdot |\Omega|^{\frac1p-\frac1s},
  $$
  so that
  $$
  K_p(z)^{\frac1p}\ge \frac{|f(z)|}{\|f\|_p} \ge \frac{|f(z)|}{\|f\|_s\cdot |\Omega|^{\frac1p-\frac1s}}.
  $$
  Take supremum over $f\in A^s(\Omega)$, we obtain
  \begin{equation}\label{eq:decreasing}
  |\Omega|^{\frac1p}\cdot K_p(z)^{\frac1p}\ge    |\Omega|^{\frac1s}\cdot K_s(z)^{\frac1s}.
    \end{equation}
  It follows that both $\lim_{s\rightarrow p-}K_s(z)$ and $\lim_{s\rightarrow p+} K_s(z)$ exist and
  $$
  K_p(z)\le \lim_{s\rightarrow p-}K_s(z);\ \ \ K_p(z)\ge \lim_{s\rightarrow p+}K_s(z).
      $$
   To achieve the inequality
  $$
  K_p(z)\ge \lim_{s\rightarrow p-}K_s(z),
    $$
  we first take $f_s\in A^s(\Omega)$ with $\|f_s\|_s=1$ and $|f_s(z)|=K_s(z)^{\frac1s}$. Clearly $\{f_s\}$ forms a normal family so that there exists a sequence $s_j\uparrow p$ with $f_{s_j}$ converging locally uniformly to some ${f}_p\in \mathcal O(\Omega)$. By Fatou's lemma, we have
  $$
  \int_\Omega |{f}_p|^p \le \liminf_{j\rightarrow \infty} \int_\Omega |f_{s_j}|^p
  \le \liminf_{j\rightarrow \infty} \left[\int_\Omega |f_{s_j}|^{s_j}\right]^{\frac{p}{s_j}}\cdot |\Omega|^{1-\frac{p}{s_j}}=1.
    $$
  It follows that
  $$
  K_p(z)\ge \frac{|{f}_p(z)|^p}{\|{f}_p\|_p^p}\ge |{f}_p(z)|^p=\lim_{j\rightarrow\infty} |f_{s_j}(z)|^p = \lim_{j\rightarrow\infty} K_{s_j}(z)^{\frac{p}{s_j}} = \lim_{j\rightarrow\infty} K_{s_j}(z)=\lim_{s\rightarrow p-}K_s(z).
  $$

  (2) Let $\mathcal M_p(\cdot,z)$ be the maximizer in \eqref{eq:max_1}. Suppose there exists a sequence $\{f_j\}\subset A^{p'}(\Omega)$ for some $p'>p$ such that
  $$
  \int_\Omega |f_j-\mathcal M_p(\cdot,z)|^p\rightarrow 0
  $$
  as $j\rightarrow \infty$. It  follows  that for every $0<\varepsilon<1$ there exists $j_\varepsilon\in \mathbb Z^+$ such that
  $$
  \|f_{j_\varepsilon}\|_p\le 1+\varepsilon
  $$
  and
  $$
  |f_{j_\varepsilon}(z)|\ge (1-\varepsilon) K_p(z)^{\frac1p}
    $$
    in view of the mean-value inequality.
  Since
  $$
  |f_{j_\varepsilon}|^s\le 1+|f_{j_\varepsilon}|^{p'}\in L^1(\Omega)
  $$
  for every $s\le p'$, we have
  $$
  \int_\Omega |f_{j_\varepsilon}|^p=\lim_{s\rightarrow p+} \int_\Omega |f_{j_\varepsilon}|^s
    $$
    in view of the dominated convergence theorem.
  Thus
  $$
  \lim_{s\rightarrow p+} K_s(z)\ge \lim_{s\rightarrow p+} \frac{|f_{j_\varepsilon}(z)|^s}{\int_\Omega |f_{j_\varepsilon}|^s}=\frac{|f_{j_\varepsilon}(z)|^p}{\int_\Omega |f_{j_\varepsilon}|^p} \ge  \left(\frac{1-\varepsilon}{1+\varepsilon}\right)^p K_p(z).
   $$
   Since $\varepsilon$ can be arbitrarily small, we obtain $ \lim_{s\rightarrow p+} K_s(z)=K_p(z)$ so that (\ref{eq:stab_3}) holds.
    \end{proof}

    A bounded domain $\Omega$ in $\mathbb C^n$ is said to have positive hyperconvexity index if there exists a negative continuous psh function $\rho$ on $\Omega$ satisfying $-\rho\lesssim \delta^\alpha$ for some $\alpha>0$ (cf. \cite{ChenH-index}). It follows from Proposition 1.4 in \cite{ChenH-index}  that if $\Omega$ has positive hyperconvexity index then $A^p(\Omega)$ lies dense in $A^2(\Omega)$ for some $p>2$. Thus we have

\begin{corollary}\label{cor:stab_2}
If\/ $\Omega$ has positive hyperconvexity index, then
$$
K_2(z)=\lim_{p\rightarrow 2} K_p(z).
$$
 \end{corollary}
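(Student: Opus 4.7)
The plan is to reduce the statement to a direct application of Proposition 6.1/(2) with $p=2$. That proposition asserts that whenever there exists some $p' > p$ with $A^{p'}(\Omega)$ dense in $A^p(\Omega)$, the full two-sided limit $\lim_{s\rightarrow p} K_s(z) = K_p(z)$ holds. So the only thing to verify, in order to conclude $K_2(z) = \lim_{p\rightarrow 2} K_p(z)$, is the existence of some $p' > 2$ such that $A^{p'}(\Omega)$ is dense in $A^2(\Omega)$.

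The density hypothesis is supplied by the cited reference \cite{ChenH-index}. By definition of positive hyperconvexity index, there is a negative continuous plurisubharmonic function $\rho$ on $\Omega$ with $-\rho \lesssim \delta^\alpha$ for some $\alpha>0$. Proposition 1.4 of \cite{ChenH-index} asserts precisely that such a $\rho$ forces $A^{p'}(\Omega)$ to be dense in $A^2(\Omega)$ for some $p' > 2$ (one expects this to be proved by $L^2$-estimates for $\bar{\partial}$ with weight involving $\rho$, combined with truncation and H\"older-type bounds to trade weighted $L^2$-integrability for unweighted $L^{p'}$-integrability). This part of the argument is entirely external to the present paper; the corollary itself simply packages this input.

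With the density result in hand, the proof closes in a single line: invoke Proposition 6.1/(2) with $p = 2$ and the $p'$ provided by \cite{ChenH-index}. The equality \eqref{eq:stab_3} of that proposition then reads $K_2(z) = \lim_{s\rightarrow 2} K_s(z)$, which is exactly the claim. The potentially hard step, namely proving the one-sided limit $\lim_{s \rightarrow 2+} K_s(z) \ge K_2(z)$ (the reverse inequality $\le$ being free from \eqref{eq:decreasing}), has already been carried out inside the proof of Proposition 6.1/(2) by using a $p'$-integrable approximant of the maximizer $\mathcal{M}_2(\cdot,z)$ and applying the dominated convergence theorem; there is nothing further to do. In short, the corollary is a clean consequence of two prior results and requires no new argument.
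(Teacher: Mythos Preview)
Your proposal is correct and matches the paper's approach exactly: the paper also simply invokes Proposition~1.4 of \cite{ChenH-index} to obtain $A^{p'}(\Omega)$ dense in $A^2(\Omega)$ for some $p'>2$, and then applies Proposition~\ref{th:stab_1}/(2) with $p=2$.
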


 Let $E$ be a compact set in $\mathbb C$. Let $\mathcal O({E})$ be the set of  functions which are holomorphic in a neighborhood of $E$ and $L^p_a(E)$ the set of functions in $L^p(E)$ which are holomorphic in $E^\circ$.  It is well-known \cite{HedbergApprox} that $\mathcal O({E})$ lies dense in $L^p_a(E)$ for $p\in [1,2)$. Since $\mathcal O({E})\subset A^{s}(E^\circ)$ for every $s>0$, we have

 \begin{corollary}\label{cor:Stab_3}
 If\/ $\Omega$ is a bounded fat domain in $\mathbb C$, i.e., $\overline{\Omega}^\circ=\Omega$, then  \eqref{eq:stab_3}  holds for $p\in [1,2)$.
 \end{corollary}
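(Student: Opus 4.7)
My plan is to reduce the corollary to Proposition \ref{th:stab_1}/(2) via Hedberg's approximation theorem, using fatness in an essential way to identify $A^{p}(\Omega)$ as a closed subspace of $L^{p}_{a}(\overline{\Omega})$. The strategy has three steps.

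First, set $E:=\overline{\Omega}$. Because $\Omega$ is fat, $E^{\circ}=\overline{\Omega}^{\circ}=\Omega$, so "holomorphic on $E^{\circ}$" is literally the same as "holomorphic on $\Omega$." For any $f\in A^{p}(\Omega)$ I would define $\widetilde f$ on $E$ by $\widetilde f=f$ on $\Omega$ and $\widetilde f=0$ on $\partial\Omega$; then $\widetilde f\in L^{p}(E)$ with $\|\widetilde f\|_{L^{p}(E)}=\|f\|_{A^{p}(\Omega)}$, and $\widetilde f$ is holomorphic on $E^{\circ}=\Omega$. Hence $\widetilde f\in L^{p}_{a}(E)$, and the assignment $f\mapsto\widetilde f$ is an isometric embedding of $A^{p}(\Omega)$ into $L^{p}_{a}(E)$.

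Second, invoke the Hedberg approximation theorem recalled above (\cite{HedbergApprox}): for $1\le p<2$ the space $\mathcal O(E)$ of functions holomorphic in some open neighborhood of $E$ is dense in $L^{p}_{a}(E)$. Applying this to $\widetilde f$ I obtain a sequence $\{g_{j}\}\subset\mathcal O(E)$ with $\|g_{j}-\widetilde f\|_{L^{p}(E)}\to 0$, and therefore $\|g_{j}\!-\!f\|_{A^{p}(\Omega)}\to 0$. Since each $g_{j}$ is holomorphic on a neighborhood of the compact set $\overline{\Omega}$, it is bounded on $\Omega$, and so $g_{j}\in A^{s}(\Omega)$ for every $s>0$. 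In particular, fixing any $p'\in(p,\infty)$ (say $p'=2$), the sequence $\{g_{j}\}$ lies in $A^{p'}(\Omega)$, which shows that $A^{p'}(\Omega)$ is dense in $A^{p}(\Omega)$.

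Finally, I apply Proposition \ref{th:stab_1}/(2) with this choice of $p'$ to conclude
\[
K_{p}(z)=\lim_{s\to p}K_{s}(z).
\]
There is no real obstacle here beyond making sure that fatness is used correctly in Step 1: without $\overline{\Omega}^{\circ}=\Omega$, a function in $A^{p}(\Omega)$ need not be holomorphic on all of $E^{\circ}$, so one cannot view $A^{p}(\Omega)$ as a subspace of $L^{p}_{a}(E)$ and Hedberg's theorem is not directly available. The fat hypothesis is precisely what removes this obstruction.
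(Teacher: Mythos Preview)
Your proposal is correct and follows essentially the same route as the paper: apply Hedberg's theorem to $E=\overline{\Omega}$ (using fatness to ensure $E^{\circ}=\Omega$ so that $A^{p}(\Omega)$ sits inside $L^{p}_{a}(E)$), deduce that $\mathcal O(E)\subset A^{p'}(\Omega)$ is dense in $A^{p}(\Omega)$, and then invoke Proposition~\ref{th:stab_1}/(2). Your write-up is in fact more explicit than the paper's about why the fatness hypothesis is needed for the embedding step.
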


The following proposition can be used to produce plenty of examples with
\begin{equation}\label{eq:NC}
K_2(z)>\lim_{p\rightarrow 2+} K_p(z).
\end{equation}

\begin{proposition}\label{prop:NC}
Let $\Omega=D\backslash S$ where $D$ is a bounded domain in $\mathbb C$ and $S$ is a compact set in $D$ which has positive $2-$capacity but zero $p-$capacity for every $p<2$. Then \eqref{eq:NC} holds.
\end{proposition}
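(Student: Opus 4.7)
The plan is to compare $K_{\Omega,p}(z)$ with $K_{D,p}(z)$ for $p$ near $2$ and exploit the one-sided continuity of $K_{D,\cdot}$ supplied by Proposition \ref{th:stab_1}(1). The key is to show that $S$ is removable for $A^p$ when $p>2$ (forcing $K_{\Omega,p}=K_{D,p}$), but not removable for $A^2$ (so $K_{\Omega,2}>K_{D,2}$ on a dense open subset of $\Omega$).

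For removability at $p>2$, let $p'=p/(p-1)\in(1,2)$. By hypothesis ${\rm Cap}_{p'}(S)=0$, so there exist $\psi_j\in C_0^\infty(\mathbb{C})$ with $\psi_j\equiv 1$ on a neighborhood of $S$ and $\psi_j\to 0$ in $W^{1,p'}(\mathbb{C})$. The Sobolev embedding $W^{1,p'}\hookrightarrow L^{2p'/(2-p')}$, combined with $\psi_j\ge 1$ on $S$, forces $|S|=0$. Given $f\in A^p(\Omega)$, extend $f$ by zero to an element of $L^p(D)$; for $\phi\in C_0^\infty(D)$, split $\phi=\phi(1-\psi_j)+\phi\psi_j$. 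The first piece is compactly supported in $\Omega$, where $\bar{\partial}f=0$ classically, so its contribution vanishes. H\"older's inequality bounds the second:
\[
\Bigl|\int_D f\,\bar{\partial}(\phi\psi_j)\Bigr|\le \|f\|_p\bigl(\|\bar{\partial}\phi\|_\infty\|\psi_j\|_{p'}+\|\phi\|_\infty\|\bar{\partial}\psi_j\|_{p'}\bigr)\longrightarrow 0.
\]
Thus $\bar{\partial}f=0$ distributionally on $D$, Weyl's lemma extends $f$ holomorphically to $D$, and since $|S|=0$ the $L^p$-norms on $\Omega$ and $D$ agree. Hence $A^p(\Omega)=A^p(D)$ isometrically and $K_{\Omega,p}(z)=K_{D,p}(z)$ for every $z\in\Omega$ and every $p>2$.

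At $p=2$, the positive $2$-capacity means $S$ is not $A^2$-removable (classically, the Cauchy transform of the equilibrium measure of $S$ produces $h\in A^2(\Omega)\setminus A^2(D)$). The orthogonal decomposition $A^2(\Omega)=A^2(D)\oplus V$ in $L^2(D)=L^2(\Omega)$ then has $V\ne\{0\}$, with reproducing kernel $K_V\ge 0$ satisfying $K_{\Omega,2}(z,z)=K_{D,2}(z,z)+K_V(z,z)$. The common zero locus of the holomorphic family $V$ is a proper analytic subvariety of $\Omega$, so $K_V(z,z)>0$ on a dense open subset. Combining with Proposition \ref{th:stab_1}(1) applied to $D$,
\[
\lim_{p\to 2+}K_{\Omega,p}(z)=\lim_{p\to 2+}K_{D,p}(z)\le K_{D,2}(z)<K_{\Omega,2}(z)
\]
at such $z$, which is the proposition.

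The main obstacle is the $\bar{\partial}$-extension in the first step---converting the Sobolev $(1,p')$-capacity hypothesis into a clean removability statement $A^p(\Omega)=A^p(D)$ when $p>2$. The cutoff computation above realizes this directly from the paper's hypothesis, bypassing any need to invoke Hedberg's removability theorem as a black box; once in place, the remainder is a standard Hilbert-space decomposition together with the stability result already established.
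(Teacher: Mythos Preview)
Your argument is correct and takes a genuinely different route from the paper's. The paper quotes the Carleson--Hedberg removability criterion to identify $A^p(\Omega)=A^p(D)$ for $p>2$, then argues via boundary behavior: since ${\rm Cap}_2(S)>0$, $S$ is non-polar, so there is a regular boundary point $a\in S$ at which $K_{\Omega,2}(z)\to\infty$ (by Ohsawa's result), whereas $K_{\Omega,p}(z)=K_{D,p}(z)$ remains bounded near $S$; this yields \eqref{eq:NC} for $z$ sufficiently close to $a$. By contrast, you prove the removability step directly from the capacity hypothesis via a cutoff/duality computation, and then work entirely in the interior: the orthogonal splitting $A^2(\Omega)=A^2(D)\oplus V$ with $V\neq\{0\}$ gives $K_{\Omega,2}(z)=K_{D,2}(z)+K_V(z)$, and the stability inequality $\lim_{p\to 2+}K_{D,p}(z)\le K_{D,2}(z)$ from Proposition~\ref{th:stab_1}(1) closes the loop. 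Your approach is more self-contained (it avoids citing the removable-singularity theorem and the boundary behavior of $K_2$), and it actually yields \eqref{eq:NC} on the complement of a discrete set in $\Omega$ rather than only near one boundary point. The paper's argument, on the other hand, is shorter and makes the geometric mechanism---blow-up of $K_2$ at non-polar boundary pieces versus boundedness of $K_p$ across removable sets---more transparent.
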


Recall that the $p-$capacity of $S$ is given by
$$
{\rm Cap}_p(S):=\inf_\phi \int_{\mathbb C} |\nabla \phi|^p
$$
where the infimum is taken over all $\phi\in C_0^\infty(\mathbb C)$ such that $\phi\ge 1$ on $S$. The condition of Proposition \ref{prop:NC} is satisfied for instance, if the $h-$Hausdorff measure $\Lambda_h(S)$ of $S$ is positive and finite where $h(t)=(\log1/t)^{-\alpha}$ for some $\alpha>1$.

 \begin{proof}[Proof of  Proposition \ref{prop:NC}]
   It is a classical result  that $A^p(\Omega)=A^p(D)$ if and only if ${\rm Cap}_q(S)=0$ where $\frac1p+\frac1q=1$ (cf. \cite{Carleson} and \cite{Hedberg}). Hence
   $$
   K_{\Omega,p}(z)=K_{D,p}(z)\le C_n d(S,\partial D)^{-2n}
   $$
   for all $z\in \Omega$ with $d(z,S)\le d(S,\partial D)/2$. On the other hand, since ${\rm Cap}_2(S)>0$, i.e., $S$ is non-polar, so there exists a regular point $a\in S$ for the Dirichlet problem on $\Omega$. Then we have
   $$
   \lim_{z\rightarrow a} K_{\Omega,2}(z)=\infty
   $$
   (see \cite{Ohsawa}). Thus we obtain
   $$
   K_{\Omega,2}(z)> \lim_{p\rightarrow 2+} K_{\Omega,p}(z)
      $$
      whenever $z$ is sufficiently close to $a$.
   \end{proof}

\begin{theorem}\label{th:Stab_3}
\begin{enumerate}
\item[$(1)$]
 For $p>1$ we have
\begin{equation}\label{eq:Stab_3}
\lim_{s\rightarrow p-} m_s(z,w) = m_p(z,w).
\end{equation}
\item[$(2)$]
$
\lim_{s\rightarrow p+} m_s(z,w)
$
exists. Moreover, if $A^{p'}(\Omega)$ lies dense in $A^p(\Omega)$ for some $p'>p$, then
\begin{equation}\label{eq:r-continuity_1}
m_p(z,w)=\lim_{s\rightarrow p+} m_s(z,w).
\end{equation}
\end{enumerate}
The same conclusions also hold for $K_p$.
\end{theorem}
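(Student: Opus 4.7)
The strategy parallels Proposition \ref{th:stab_1}: a normal family argument combined with uniqueness of minimizers. Since $K_p(z,w)=m_p(z,w)K_p(w)$ and the convergence of $K_p(w)$ has already been established in Proposition \ref{th:stab_1}, it suffices to treat $m_p(z,w)$.

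For part $(1)$, fix $w\in\Omega$ and let $s_j\uparrow p$. Set $f_j:=m_{s_j}(\cdot,w)$, so $f_j(w)=1$ and $\|f_j\|_{s_j}=m_{s_j}(w)\le|\Omega|^{1/s_j}$; the Bergman inequality \eqref{eq:BergIneq} then bounds $|f_j|$ locally uniformly, hence $\{f_j\}$ is a normal family. Let $f_0$ be any locally-uniform limit. Then $f_0(w)=1$, and since $|f_j|^{s_j}\to|f_0|^p$ pointwise on $\Omega$, Fatou's lemma combined with $\lim_j K_{s_j}(w)=K_p(w)$ (Proposition \ref{th:stab_1}) yields
$$
\|f_0\|_p^p\le\liminf_j\int_\Omega|f_j|^{s_j}=\lim_j m_{s_j}(w)^{s_j}=K_p(w)^{-1}=m_p(w)^p.
$$
So $f_0$ realizes the minimum in \eqref{eq:MinProb} and by uniqueness (Proposition \ref{prop:uniq}) $f_0=m_p(\cdot,w)$. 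Since every locally-uniform subsequential limit coincides, the full sequence converges locally uniformly, and evaluation at $z$ gives \eqref{eq:Stab_3}. The conclusion for $K_p$ follows immediately.

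For part $(2)$, the same normal family argument applied to $s_j\downarrow p$ produces a locally-uniform subsequential limit $f_\infty$ with $f_\infty(w)=1$ and, by Fatou together with Proposition \ref{th:stab_1},
$$
\|f_\infty\|_p^p\le\liminf_j m_{s_j}(w)^{s_j}=L^p,\qquad L:=\lim_{s\to p+}m_s(w).
$$
Under the density hypothesis, Proposition \ref{th:stab_1} gives $L=m_p(w)$, forcing $f_\infty=m_p(\cdot,w)$ by uniqueness and yielding \eqref{eq:r-continuity_1}. The unconditional existence of $\lim_{s\to p+}m_s(z,w)$ is the main technical step, and my plan is to show that $\{m_s(\cdot,w)\}_{s>p}$ is $L^p$-Cauchy as $s\to p+$, which forces coincidence of all subsequential limits. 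For $p<s_1\le s_2$ with $p>1$, the half-sum $h:=(m_{s_1}(\cdot,w)+m_{s_2}(\cdot,w))/2$ lies in $A^{s_1}$ with $h(w)=1$, so $\|h\|_{s_1}\ge m_{s_1}(w)$, i.e., $\|m_{s_1}(\cdot,w)+m_{s_2}(\cdot,w)\|_{s_1}\ge 2m_{s_1}(w)$. Combining this with Clarkson's inequality in $L^{s_1}$ (both in its $s_1\ge 2$ and $1<s_1<2$ forms) yields an estimate of the shape
$$
\|m_{s_1}(\cdot,w)-m_{s_2}(\cdot,w)\|_{s_1}^{\beta}\le C\bigl(\|m_{s_2}(\cdot,w)\|_{s_1}^{s_1}-m_{s_1}(w)^{s_1}\bigr)
$$
for an explicit $\beta>0$, and the right-hand side tends to zero as $s_1,s_2\to p+$ because $m_{s_1}(w)\to L$ and $\|m_{s_2}(\cdot,w)\|_{s_1}\le|\Omega|^{1/s_1-1/s_2}m_{s_2}(w)\to L$. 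Passing to the $L^p$ norm via $\|\cdot\|_p\le|\Omega|^{1/p-1/s_1}\|\cdot\|_{s_1}$ delivers the Cauchy property in $A^p$, and hence pointwise convergence at $z$.

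The main obstacle is the case $p=1$, where $A^1$ is only strictly convex and Clarkson's inequality is unavailable. For this case I would substitute the refined pointwise estimate \eqref{eq:eleIneq_5} from Proposition \ref{prop:eleIneqs}, applied to the pair $(m_{s_1}(\cdot,w),m_{s_2}(\cdot,w))$ in the spirit of Theorems \ref{th:reg_1} and \ref{th:reg_2}: first control the imaginary part of the ratio $m_{s_2}(\cdot,w)/m_{s_1}(\cdot,w)$ on compacta avoiding the zero set $\{m_1(\cdot,w)=0\}$, and then upgrade to a full Cauchy-type estimate via the Borel--Carath\'eodory inequality exactly as in the proof of Theorem \ref{th:reg_2}.
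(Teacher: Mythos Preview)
Your proposal is correct but takes a genuinely different, more elementary route for most of the argument.

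For part~$(1)$ (and the density case of part~$(2)$) the paper does \emph{not} use normal families plus uniqueness: it instead applies the pointwise inequalities \eqref{eq:eleIneq_3} and \eqref{eq:eleIneq_4} with $a=m_s(\cdot,w)$, $b=m_p(\cdot,w)$, integrates, and kills the cross term via the reproducing formula, obtaining a quantitative bound of the form
\[
\|m_p(\cdot,w)-m_s(\cdot,w)\|_s^s \;\lesssim\; |\Omega|^{1-s/p}m_p(w)^s - m_s(w)^s,
\]
whose right side tends to zero by Proposition~\ref{th:stab_1}. Your normal family $+$ Fatou $+$ uniqueness argument is shorter and avoids both the reproducing formula and the nonlinear inequalities entirely; what you lose is the explicit rate of convergence, which the paper's estimates provide but the theorem statement does not require.

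For the unconditional existence of the one-sided limit in part~$(2)$ with $p>1$, your Clarkson approach is parallel in structure to the paper's (both prove a Cauchy property), but again more classical: the paper uses \eqref{eq:eleIneq_3}--\eqref{eq:eleIneq_4} together with the vanishing of $\int |m_s|^{s-2}\overline{m_s}(m_r-m_s)$ from Lemma~\ref{lm:var_2}, whereas you bypass the reproducing formula by invoking only the minimality $\|(m_{s_1}+m_{s_2})/2\|_{s_1}\ge m_{s_1}(w)$ inside Clarkson. Both approaches give the Cauchy property with comparable effort.

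One caveat on your $p=1$ sketch: inequality \eqref{eq:eleIneq_5} as stated carries the cross term $\mathrm{Re}\{|a|^{-1}\bar a(b-a)\}$, which is adapted to the $A^1$-reproducing formula, not to the $A^{s_1}$-minimizer $a=m_{s_1}(\cdot,w)$. To make the cross term vanish you need the $s_1$-analogue of \eqref{eq:eleIneq_5} (namely $|b|^{s_1}\ge |a|^{s_1}+s_1\mathrm{Re}\{|a|^{s_1-2}\bar a(b-a)\}+c_{s_1}(|a|+|b|)^{s_1-4}|\mathrm{Im}(\bar a b)|^2$), which follows from the same computations in the Appendix. With that adjustment your $p=1$ plan coincides with the paper's: control $\mathrm{Im}(m_r/m_s)$ via this inequality, then conclude that any two subsequential limits $h_1,h_2$ satisfy $\mathrm{Im}(h_1/h_2)\equiv 0$, whence $h_1=h_2$ since both equal $1$ at $w$. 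The Borel--Carath\'eodory step you propose is not actually needed here, because one only has to identify the limit, not get a rate.
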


\begin{proof}
$(1)$ We first consider the case $p>2$. Fix $2< s<p$. Apply  (\ref{eq:eleIneq_3}) with $a=m_s(\cdot,w)$, $b=m_p(\cdot,w)$, and $p$ replaced by $s$, we obtain
\begin{eqnarray*}
 && C_s \int_\Omega  |m_p(\cdot,w)-m_s(\cdot,w)|^s\\
& \le & \int_\Omega | m_p(\cdot,w) |^s - \int_\Omega | m_s(\cdot,w) |^s\\
&& -p{\rm Re} \int_\Omega |m_s(\cdot,w)|^{s-2}\,\overline{m_s(\cdot,w)} \left[ m_p(\cdot,w)-m_s(\cdot,w) \right]\\
& \le & |\Omega|^{1-\frac{s}p} \left[\int_\Omega | m_p(\cdot,w) |^p\right]^{\frac{s}p}-m_s(w)^s\\
& = & |\Omega|^{1-\frac{s}p} m_p(w)^s-m_s(w)^s.
\end{eqnarray*}
Thus we have
\begin{eqnarray*}
|m_p(z,w)-m_s(z,w)|^s & \le & K_s(z)\int_\Omega  |m_p(\cdot,w)-m_s(\cdot,w)|^s\\
& \le & C_s^{-1} K_s(z)\left\{ |\Omega|^{1-\frac{s}p} m_p(w)^s-m_s(w)^s\right\}.
\end{eqnarray*}
This combined with  (\ref{eq:stab_1}) gives (\ref{eq:Stab_3}).

Now suppose $1<p\le 2$. Fix $1<s<p$. Let $f_1,f_2\in A^s(\Omega)$. H\"older's inequality yields
\begin{eqnarray}\label{eq:off_Ineq}
\int_\Omega |f_2-f_1|^s & = & \int_\Omega |f_2-f_1|^s (|f_2|+|f_1|)^{s(s-2)/2}(|f_2|+|f_1|)^{s(2-s)/2}\nonumber\\
& \le & \left\{\int_\Omega |f_2-f_1|^2 (|f_2|+|f_1|)^{s-2} \right\}^{\frac{s}2} \left\{\int_\Omega (|f_2|+|f_1|)^{s} \right\}^{1-\frac{s}2}\nonumber\\
& \le &  \left\{A_s^{-1}\int_\Omega \left[ |f_2|^s-|f_1|^s -s{\rm Re}\left( |f_1|^{s-2}\bar{f}_1 (f_2-f_1)\right)\right] \right\}^{\frac{s}2} \nonumber\\
&& \cdot \left\{\int_\Omega (|f_2|+|f_1|)^{s} \right\}^{1-\frac{s}2}
\end{eqnarray}
in view of (\ref{eq:eleIneq_4}). Take $f_2=m_p(\cdot,w)$ and $f_1=m_s(\cdot,w)$, we obtain
\begin{eqnarray*}
&& |m_p(z,w)-m_s(z,w)|^s\\
 & \le & K_s(z) \int_\Omega |m_p(\cdot,w)-m_s(\cdot,w)|^s \\
& \le & K_s(z) \left\{A_s^{-1} \left[|\Omega|^{1-\frac{s}p} m_p(w)^s-m_s(w)^s\right] \right\}^{\frac{s}2}\\
&& \cdot \left\{ 2^{s-1} \left [ |\Omega|^{1-\frac{s}p} m_p(w)^s+m_s(w)^s\right] \right\}^{1-\frac{s}2},
\end{eqnarray*}
which gives (\ref{eq:Stab_3}).

$(2)$  We first consider the case $p\ge 2$. Let $p\le s<r$. Apply  (\ref{eq:eleIneq_3}) with $a=m_s(\cdot,w)$, $b=m_r(\cdot,w)$, and $p$ replaced by $s$, we obtain
\begin{eqnarray*}
 && C_s \int_\Omega  |m_r(\cdot,w)-m_s(\cdot,w)|^s\\
& \le & \int_\Omega | m_r(\cdot,w) |^s - \int_\Omega | m_s(\cdot,w) |^s\\
&& -p{\rm Re} \int_\Omega |m_s(\cdot,w)|^{s-2}\overline{m_s(\cdot,w)}\,(m_r(\cdot,w)-m_s(\cdot,w))\\
& \le & |\Omega|^{1-\frac{s}r} \left\{\int_\Omega | m_r(\cdot,w) |^r\right\}^{\frac{s}r}-m_s(w)^s\\
& = & |\Omega|^{1-\frac{s}r} m_r(w)^s-m_s(w)^s.
\end{eqnarray*}
Thus we have
\begin{eqnarray}\label{eq:r-continuity_3}
|m_r(z,w)-m_s(z,w)|^s & \le & K_s(z)\int_\Omega  |m_r(\cdot,w)-m_s(\cdot,w)|^s\nonumber\\
& \le & C_s^{-1} K_s(z) \left\{ |\Omega|^{1-\frac{s}r} m_r(w)^s-m_s(w)^s\right\}.
\end{eqnarray}
Since $\lim_{s\rightarrow p+} m_s(w)$ exists, it follows that $\{m_s(z,w)\}$ forms a Cauchy family as $s\rightarrow p+$, so that
$
\lim_{s\rightarrow p+} m_s(z,w)
$
also exists.

Next suppose $1<p< 2$. Let $p\le s<r<2$. Take $f_2=m_r(\cdot,w)$ and $f_1=m_s(\cdot,w)$ in (\ref{eq:off_Ineq}), we obtain
\begin{eqnarray}\label{eq:r-continuity_4}
 |m_r(z,w)-m_s(z,w)|^s
 & \le & K_s(z) \int_\Omega |m_r(\cdot,w)-m_s(\cdot,w)|^s\nonumber \\
& \le & K_s(z) \left\{A_s^{-1} \left[ |\Omega|^{1-\frac{s}r} m_r(w)^s-m_s(w)^s\right] \right\}^{\frac{s}2}\nonumber\\
&& \cdot \left\{ 2^{s-1} \left[ |\Omega|^{1-\frac{s}r} m_r(w)^s+m_s(w)^s \right] \right\}^{1-\frac{s}2},
\end{eqnarray}
from which the assertion immediately follows.

Finally, we deal with the case $p=1$. Let $1\le s<r$. By a similar argument as the case $p\ge 2$ with (\ref{eq:eleIneq_3}) replaced by   (\ref{eq:eleIneq_5}), we obtain
\begin{equation}\label{eq:off-param_4}
  A_1 \int_\Omega  \frac{|{\rm Im}\{m_r(\cdot,w)\overline{m_s(\cdot,w)}\}|^2}{(|m_r(\cdot,w)|+|m_s(\cdot,w)|)^3}
 \le  |\Omega|^{1-\frac{s}r} m_r(w)^s-m_s(w)^s.
\end{equation}
Since $\{m_r(\cdot,w)\}_r$ and $\{m_s(\cdot,w)\}_s$ are normal families, there exist subsequences $\{m_{r_j}(\cdot,w)\}$ and $\{m_{s_j}(\cdot,w)\}$ which converge locally uniformly  to certain holomorphic functions $h_1$ and $h_2$ respectively.  Clearly, we have $h_1(w)=h_2(w)=1$.

For fixed $w$ we set $S_w:=h_1^{-1}(0)\cup h_2^{-1}(0)$. For any open sets $V\subset\subset U \subset\subset \Omega\backslash S_w$ there exists a constant $C\gg 1$ such that
 $$
 C^{-1} \le \min\{m_{r_j}(z,w),m_{s_j}(z,w)\}\le \max\{m_{r_j}(z,w),m_{s_j}(z,w)\}\le C,\ \ \ \forall\,z\in U
 $$
 for all $j\gg 1$. Thus (\ref{eq:off-param_4}) combined with the mean-value inequality gives
 $$
 \left|{\rm Im}\,\frac{m_{r_j}(z,w)}{m_{s_j}(z,w)}\right|^2 \lesssim |\Omega|^{1-\frac{s_j}{r_j}} m_{r_j}(w)^{s_j}-m_{s_j}(w)^{s_j} \rightarrow 0\ \ \ (j\rightarrow \infty),\ \ \ \forall z\in V,
  $$
  which implies that ${\rm Im\,}\frac{h_1}{h_2}=0$ on  $V$, hence on $\Omega\backslash S_w$. It follows that $h_1/h_2={\rm const.}$ on $\Omega\backslash S_w$, hence on $\Omega$. As $h_1(w)=h_2(w)=1$,  we obtain $h_1=h_2$.

  In general, we consider two arbitrary convergent subsequences $\{m_{r^1_j}(\cdot,w)\}$ and $\{m_{r^2_j}(\cdot,w)\}$ of
  $\{m_r(\cdot,w)\}_r$.  Let $\{s_j\}$ be a subsequence of $\{r^2_j\}$ with $s_j<r^1_j$. By the above argument we know that the sequences $\{m_{r^1_j}(\cdot,w)\}$ and $\{m_{s_j}(\cdot,w)\}$ has the same limit, which is also the limit of $\{m_{r^2_j}(\cdot,w)\}$. Thus $\lim_{p\rightarrow 1+} m_p(z,w)$ exists.

 If $A^{p'}(\Omega)$ lies dense in $A^p(\Omega)$ for some $p'>p$, then we have $m_p(w)=\lim_{s\rightarrow p}m_p(w)$. Thus (\ref{eq:r-continuity_3}) together with (\ref{eq:r-continuity_4}) yield (\ref{eq:r-continuity_1}) for $p>1$. Apply (\ref{eq:off-param_4}) with $s=1$ in a similar way as above, we obtain (\ref{eq:r-continuity_1}) for $p=1$. 
 \end{proof}

\begin{corollary}\label{cor:zero}
Let $\Omega$ be a bounded domain with positive hyperconvexity index. If $K_2(\cdot,w)$ is not zero-free for some $w\in \Omega$, then there exists a number $\varepsilon=\varepsilon(w)>0$ such that $K_p(\cdot,w)$ is also not zero-free for $p\in (2-\varepsilon,2+\varepsilon)$.
\end{corollary}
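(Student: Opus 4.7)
The plan is to reduce everything to a one--variable statement and then invoke Hurwitz's theorem together with the stability of $K_p(z,w)$ from Theorem \ref{th:Stab_3}. First, since $\Omega$ has positive hyperconvexity index, Proposition 1.4 of \cite{ChenH-index} yields some $p'>2$ with $A^{p'}(\Omega)$ dense in $A^2(\Omega)$, which is precisely the hypothesis needed for the ``${+}$'' case of Theorem \ref{th:Stab_3}. Combined with the ``${-}$'' case (which is unconditional for $p>1$), this gives pointwise convergence
\[
\lim_{p\to 2}K_p(z,w)=K_2(z,w),\qquad z\in\Omega.
\]

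Next I would upgrade this to locally uniform convergence in $z$. For $p$ in a fixed bounded interval around $2$, the bounds \eqref{eq:BergIneq_3} and \eqref{eq:Holder_3} give $|K_p(z,w)|\le K_p(z)^{1/p}K_p(w)^{1/q}\le C_{K,w}$ uniformly on any compact $K\subset\Omega$. Thus $\{K_p(\cdot,w)\}_p$ is a normal family of holomorphic functions on $\Omega$; any locally uniform limit point agrees with $K_2(\cdot,w)$ pointwise and so must equal it, so the full family converges locally uniformly to $K_2(\cdot,w)$ as $p\to 2$.

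Now pick $\zeta_0\in\Omega$ with $K_2(\zeta_0,w)=0$. Because $K_2(w,w)=m_2(w)^{-2}>0$, the holomorphic function $K_2(\cdot,w)$ is not identically zero on $\Omega$, so its zero set is a proper analytic subvariety. Hence I can choose a unit vector $v\in\mathbb C^n$ and $r>0$ such that the slice map $t\mapsto K_2(\zeta_0+tv,w)$ is holomorphic and not identically zero on the disk $\Delta_r\subset\mathbb C$, while having $0$ as a zero. Apply Hurwitz's theorem on $\Delta_r$ to the sequence $t\mapsto K_p(\zeta_0+tv,w)$, which converges uniformly on $\overline{\Delta_{r/2}}$ to the non--identically--zero limit $t\mapsto K_2(\zeta_0+tv,w)$: for all $p$ sufficiently close to $2$, this function must have a zero inside $\Delta_{r/2}$, producing a zero of $K_p(\cdot,w)$ near $\zeta_0$ and establishing the claimed $\varepsilon=\varepsilon(w)$.

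The only mildly delicate step is the passage from pointwise to locally uniform convergence of $K_p(\cdot,w)$: one needs to know that the limit is uniquely determined (which rules out ``loss of mass'' in the normal family). This is automatic here because pointwise convergence on the open set $\Omega$ forces every cluster point of the normal family to coincide with $K_2(\cdot,w)$ by the identity theorem. Everything else is a routine combination of Theorem \ref{th:Stab_3}, the elementary bound \eqref{eq:BergIneq_3}, and Hurwitz on a generic complex line.
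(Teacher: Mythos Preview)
Your argument is correct and follows essentially the same route as the paper: use the positive hyperconvexity index to feed the density hypothesis into Theorem \ref{th:Stab_3}, obtain convergence $K_p(\cdot,w)\to K_2(\cdot,w)$, and conclude via Hurwitz. The paper phrases this as a two-line contradiction (a sequence of zero-free $K_{p_j}(\cdot,w)$ would force $K_2(\cdot,w)\equiv 0$, impossible since $K_2(w,w)>0$) and applies Hurwitz directly in $\mathbb C^n$, whereas you argue directly and slice to a complex line; your explicit normal-family upgrade from pointwise to locally uniform convergence is a detail the paper leaves implicit.
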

\begin{proof}
Suppose on the contrary that there exists a sequence $p_j\rightarrow 2$ such that $K_{p_j}(\cdot,w)$ is zero-free for all $j$. It follows from Theorem \ref{th:Stab_3} and Hurwitz's theorem that $K_2(\cdot,w)\equiv 0$, which is clearly impossible.
\end{proof}

 \begin{proposition}\label{prop:stab_5}
 \begin{enumerate}
 \item[$(1)$]
 $\lim_{s\rightarrow p\pm} B_s(z;X)$ exists for $p>0$ and
\begin{equation}\label{eq:stab_5}
 B_p(z;X) = \lim_{s\rightarrow p-} B_s(z;X).
\end{equation}
\item[$(2)$] If there exists  $p'>p$ such that $A^{p'}(\Omega)$ lies dense in $A^p(\Omega)$, then
\begin{equation}\label{eq:stab_6}
B_p(z;X)=\lim_{s\rightarrow p} B_s(z;X).
\end{equation}
\end{enumerate}
 \end{proposition}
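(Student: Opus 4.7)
The plan is to mirror the proofs of Proposition \ref{th:stab_1} and Theorem \ref{th:Stab_3} by first establishing stability of the auxiliary quantity $\mathcal M_p(z;X)$ defined in \eqref{eq:max_3}, and then to deduce the conclusions for $B_p(z;X)=K_p(z)^{-1/p}\mathcal M_p(z;X)$ via the already-known stability of $K_p(z)$.

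First I would record a monotonicity. For $0<s<p$ and any $f\in A^p(\Omega)\subset A^s(\Omega)$ with $f(z)=0$, H\"older's inequality gives $\|f\|_s\le|\Omega|^{1/s-1/p}\|f\|_p$; taking the supremum over the smaller family $A^p\subset A^s$ shows that $s\mapsto|\Omega|^{1/s}\mathcal M_s(z;X)$ is non-increasing. In particular $\lim_{s\to p\pm}\mathcal M_s(z;X)$ exist for every $p>0$, and together with $K_p(z)\ge|\Omega|^{-1}>0$ and Proposition \ref{th:stab_1} this yields existence of $\lim_{s\to p\pm}B_s(z;X)$. The monotonicity also delivers $\liminf_{s\to p-}\mathcal M_s(z;X)\ge\mathcal M_p(z;X)$ and $\limsup_{s\to p+}\mathcal M_s(z;X)\le\mathcal M_p(z;X)$.

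For the equality in (1), I would pick $s_j\uparrow p$ and maximizers $f_{s_j}\in A^{s_j}(\Omega)$ with $\|f_{s_j}\|_{s_j}=1$, $f_{s_j}(z)=0$ and $|Xf_{s_j}(z)|=\mathcal M_{s_j}(z;X)$. By the Bergman inequality $\sup_S|f_{s_j}|\le C_S^{1/s_j}$ on any compact $S\subset\Omega$, so $\{f_{s_j}\}$ is a normal family; along a subsequence, $f_{s_j}\to f_\infty\in\mathcal O(\Omega)$ locally uniformly with $f_\infty(z)=0$. The estimate
\[
\int_S|f_{s_j}|^p\le \sup_S|f_{s_j}|^{p-s_j}\cdot\int_\Omega|f_{s_j}|^{s_j}\le C_S^{(p-s_j)/s_j}\longrightarrow 1,
\]
combined with monotone convergence over an exhaustion of $\Omega$, yields $\|f_\infty\|_p\le 1$, while Cauchy's estimates give $|Xf_\infty(z)|=\lim_j\mathcal M_{s_j}(z;X)$. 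Hence $\mathcal M_p(z;X)\ge|Xf_\infty(z)|\ge\lim_j\mathcal M_{s_j}(z;X)$; together with the monotonic lower bound this gives $\lim_{s\to p-}\mathcal M_s(z;X)=\mathcal M_p(z;X)$, and \eqref{eq:stab_5} follows from $\lim_{s\to p-}K_s(z)=K_p(z)$.

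For part (2), I would approximate a maximizer $f_p$ of $\mathcal M_p(z;X)$ by $\widetilde f_j\in A^{p'}(\Omega)$ in $L^p$; the Bergman inequality upgrades this to locally uniform convergence, so in particular $\widetilde f_j(z)\to f_p(z)=0$. The correction $g_j:=\widetilde f_j-\widetilde f_j(z)\in A^{p'}(\Omega)$ then satisfies $g_j(z)=0$, $\|g_j\|_p\to 1$ and $Xg_j(z)\to Xf_p(z)$. For each fixed $j$ and $p\le s\le p'$, the bound $|g_j|^s\le 1+|g_j|^{p'}\in L^1(\Omega)$ lets dominated convergence yield $\|g_j\|_s\to\|g_j\|_p$ as $s\to p+$, so $\liminf_{s\to p+}\mathcal M_s(z;X)\ge|Xg_j(z)|/\|g_j\|_p$; sending $j\to\infty$ gives $\liminf_{s\to p+}\mathcal M_s(z;X)\ge\mathcal M_p(z;X)$. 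With the monotonic upper bound this produces $\lim_{s\to p+}\mathcal M_s(z;X)=\mathcal M_p(z;X)$, and \eqref{eq:stab_6} follows from \eqref{eq:stab_3}. The main obstacle is controlling $\|f_\infty\|_p$ in the left-continuity step: the constraint $\|f_{s_j}\|_{s_j}=1$ lives on a \emph{varying} norm and H\"older cannot be reversed to produce an $L^p$ bound on $f_{s_j}$; the exhaustion argument above circumvents this by exploiting that the Bergman sup-bound $C_S^{1/s_j}$ is continuous in $s_j$ and that the integrability gap $(p-s_j)/s_j$ collapses in the limit, which is the same mechanism that underlies Theorem \ref{th:Stab_3}.
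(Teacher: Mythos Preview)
Your argument is correct and follows essentially the same route as the paper: establish monotonicity of $s\mapsto|\Omega|^{1/s}\mathcal M_s(z;X)$, use a normal-family argument on maximizers to get left-continuity of $\mathcal M_s(z;X)$ (hence of $B_s$ via Proposition~\ref{th:stab_1}), and for (2) approximate the $p$-maximizer by elements of $A^{p'}(\Omega)$, correct by subtracting the value at $z$, and use dominated convergence on $\|g_j\|_s$ together with \eqref{eq:stab_3}. Your compact-exhaustion estimate $\int_S|f_{s_j}|^p\le C_S^{(p-s_j)/s_j}\to 1$ is in fact a cleaner justification of $\|f_\infty\|_p\le 1$ than the terse ``normal family argument'' in the paper; the paper's reference back to the computation in Proposition~\ref{th:stab_1} glosses over exactly the point you flag, namely that H\"older runs the wrong way when $s_j<p$, whereas your exhaustion handles it directly. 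One minor remark: the cross-reference at the very end to Theorem~\ref{th:Stab_3} is inaccurate---that theorem treats $m_s(z,w)$ via the reproducing formula and the inequalities of Proposition~\ref{prop:eleIneqs}, not the Bergman-inequality/exhaustion mechanism you actually use; the correct antecedent is Proposition~\ref{th:stab_1}.
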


 \begin{proof}
 $(1)$ Similar as the proof of Proposition \ref{th:stab_1},  we have
    $$
  |\Omega|^{\frac1p}\cdot |X \mathcal M_p(\cdot,z;X)(z)|\ge    |\Omega|^{\frac1s}\cdot |X \mathcal M_s(\cdot,z;X)(z)|,
      $$
  so that $\lim_{s\rightarrow p\pm} |X \mathcal M_s(\cdot,z;X)(z)|$ exist and
  $$
 |X \mathcal M_p(\cdot,z;X)(z)| \le \lim_{s\rightarrow p-} |X \mathcal M_s(\cdot,z;X)(z)|
 $$
 $$
 |X \mathcal M_p(\cdot,z;X)(z)|\ge \lim_{s\rightarrow p+} |X \mathcal M_s(\cdot,z;X)(z)|.
      $$
      A  normal family argument  yields
      $$
      |X \mathcal M_p(\cdot,z;X)(z)| = \lim_{s\rightarrow p-} |X \mathcal M_s(\cdot,z;X)(z)|.
            $$
            This combined with Proposition \ref{th:stab_1} gives
          $$
          B_p(z;X)=\lim_{s\rightarrow p-} B_s(z;X).
                    $$

$(2)$ Suppose that
  $A^{p'}(\Omega)$ lies dense in $A^p(\Omega)$ for some $p'>p$. We may choose a sequence $f_j$ of functions in $A^{p'}(\Omega)$ such that
  $$
  \int_\Omega |f_j-\mathcal M_p(\cdot,z;X)|^p\rightarrow 0
  $$
  as $j\rightarrow \infty$. It follows that for every $0<\varepsilon<1$ there exists $j_\varepsilon\in \mathbb Z^+$ such that
  $$
  |f_{j_\varepsilon}(z)|\le \varepsilon, \ \ \ |Xf_{j_\varepsilon}(z)| \ge (1-\varepsilon) |X \mathcal M_p(\cdot,z;X)(z)|,
    $$
  and
  $$
\|f_{j_\varepsilon}\|_p\le 1+\varepsilon.
  $$
  Again we have
  $$
  \int_\Omega |f_{j_\varepsilon}|^p=\lim_{s\rightarrow p+} \int_\Omega |f_{j_\varepsilon}|^s.
    $$
    Since
    $$
  \|f_{j_\varepsilon}\|_p \le \|f_{j_\varepsilon}\|_s \cdot |\Omega|^{\frac1p-\frac1s}\ \ \ \text{and}\ \ \ \|f_{j_\varepsilon}\|_s \le \|f_{j_\varepsilon}\|_{p'} \cdot |\Omega|^{\frac1s-\frac1{p'}},
    $$
  it follows that
    $$
   \|f_{j_\varepsilon}\|_s - \|f_{j_\varepsilon}\|_p   \le    \|f_{j_\varepsilon}\|_s-\|f_{j_\varepsilon}\|_s^{\frac{s}p} + \|f_{j_\varepsilon}\|_s^{\frac{s}p} - \|f_{j_\varepsilon}\|_p  \rightarrow 0
   $$
   as $s\rightarrow p+$.
       By using the test function $f_{j_\varepsilon}-f_{j_\varepsilon}(z)$, we obtain
  \begin{eqnarray*}
  \lim_{s\rightarrow p+} \mathcal |X \mathcal M_s(\cdot,z;X)(z)| & \ge & \liminf_{s\rightarrow p+} \frac{|X f_{j_\varepsilon}(z)|}{ \|f_{j_\varepsilon}-f_{j_\varepsilon}(z)\|_s}\\
  & \ge & \liminf_{s\rightarrow p+} \frac{|X f_{j_\varepsilon}(z)|}{ \|f_{j_\varepsilon}\|_s+ |f_{j_\varepsilon}(z)| |\Omega|^{\frac1s}}\\
  &\ge &   \frac{1-\varepsilon}{1+\varepsilon+\varepsilon |\Omega|^{\frac1p}}\cdot |X \mathcal M_p(\cdot,z;X)(z)|.
   \end{eqnarray*}
  Thus (\ref{eq:stab_6}) holds.
 \end{proof}

\section{Boundary behavior of $K_p(z)$}
\subsection{An application of Grothendieck's theorem}

Let us first recall the following result of Grothendieck:
\begin{theorem}[cf.  \cite{Grothendieck} for $p\ge 1$; cf.  \cite{Rudin} for $p>0$]\label{Grothendieck}
	Suppose $0<p<\infty$, and 
	\begin{enumerate}
		\item [$(a)$] $(X, \mu)$ is a finite measure space, i.e. $\mu(X)<\infty$;
		\item [$(b)$] $E$ is a closed subspace of $L^p(X,\mu)$;
		\item [$(c)$] $E\subset L^{\infty}(X,\mu)$.
	\end{enumerate}
Then $E$ is finite-dimensional.
	\end{theorem}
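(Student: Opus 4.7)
My plan is to first apply the closed graph theorem to promote the algebraic inclusion $E\hookrightarrow L^\infty(X,\mu)$ to a continuous one. The space $L^p(X,\mu)$ is a Banach space for $p\geq 1$ and an F-space for $0<p<1$ (complete under the translation-invariant metric $d(f,g)=\int|f-g|^p$), so in either case the closed graph theorem is available. The graph is closed: if $f_n\to f$ in $L^p$ and $f_n\to g$ in $L^\infty$, both convergences (the latter using $\mu(X)<\infty$) pass to subsequences converging in measure, forcing $f=g$ a.e. Using the homogeneity of the quasinorm $\|\cdot\|_p$, continuity of the inclusion yields a constant $C>0$ with
\[
\|f\|_\infty\leq C\,\|f\|_p,\qquad f\in E.
\]

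Second, I would use the finiteness of $\mu$ to bring everything into $L^2$. For $p\leq 2$, H\"older's inequality gives $\|f\|_p\leq \mu(X)^{\frac1p-\frac12}\|f\|_2$, hence $\|f\|_\infty\leq C\,\mu(X)^{\frac1p-\frac12}\|f\|_2$. For $p\geq 2$, the interpolation $\|f\|_p\leq \|f\|_\infty^{1-2/p}\|f\|_2^{2/p}$ combined with the previous step yields $\|f\|_\infty^{2/p}\leq C\|f\|_2^{2/p}$, i.e., $\|f\|_\infty\leq C^{p/2}\|f\|_2$. In either case there is a constant $M>0$ such that
\[
\|f\|_\infty\leq M\,\|f\|_2,\qquad f\in E\subset L^2(X,\mu).
\]

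Finally, I would conclude by the standard orthonormal-basis trick. Let $f_1,\ldots,f_n\in E$ be orthonormal in $L^2(X,\mu)$. For every unit vector $a=(a_1,\ldots,a_n)\in\mathbb C^n$, the element $\sum_j a_jf_j\in E$ has $L^2$-norm $1$, so $|\sum_j a_jf_j(x)|\leq M$ off a null set $N_a$. Choose a countable dense subset $D$ of the unit sphere and set $N:=\bigcup_{a\in D}N_a$; then $|\sum_j a_jf_j(x)|\leq M$ holds for every $a\in D$ and $x\notin N$, hence by continuity in $a$ for every unit vector, and Cauchy--Schwarz duality gives $\sum_j|f_j(x)|^2\leq M^2$ on $X\setminus N$. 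Integrating yields
\[
n=\sum_j\|f_j\|_2^2=\int_X\sum_j|f_j|^2\,d\mu\leq M^2\mu(X),
\]
so $\dim E\leq M^2\mu(X)<\infty$. The main obstacle is the first step in the non-locally-convex regime $0<p<1$: one cannot speak of an operator norm, and must instead invoke the F-space version of the closed graph theorem and extract the pointwise estimate from homogeneity of $\|\cdot\|_p$. The remaining steps are routine comparisons of $L^p$-norms and the orthonormal argument.
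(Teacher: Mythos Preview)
Your argument is correct and is essentially the classical proof (as in Rudin's \emph{Functional Analysis}): closed graph theorem to get $\|f\|_\infty\le C\|f\|_p$ on $E$, interpolation/H\"older to upgrade to $\|f\|_\infty\le M\|f\|_2$, and then the orthonormal trick $n\le M^2\mu(X)$. The handling of the case $0<p<1$ via the F-space closed graph theorem together with homogeneity of the quasinorm is exactly the right maneuver.

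There is nothing to compare against in the paper itself: Theorem~\ref{Grothendieck} is stated there as a known result, with citations to Grothendieck (for $p\ge 1$) and Rudin (for $p>0$), and no proof is supplied. The paper only uses the theorem (via its Corollary) as a black box in the proof of Theorem~\ref{p-kernel estimate}. So your write-up in fact fills in what the paper leaves to the references.
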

	
\begin{corollary}[cf.  \cite{Grothendieck} for $p\ge 1$]
	Let $(X, \mu)$ be a measure space and $0< p<\infty$.  Suppose $\eta\in L^p(X,\mu)$ and $\eta>0$. If \begin{enumerate}
		\item [$(i)$] $E$ is a closed subspace of $L^p(X,\mu)$;
		\item [$(ii)$] $\frac{1}{\eta}\cdot E:=\{\frac1\eta\cdot x:x\in E\}\subset L^{\infty}(X,\mu)$,
	\end{enumerate}
then $E$ is finite-dimensional.
\end{corollary}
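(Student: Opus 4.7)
The plan is to reduce the corollary to the theorem of Grothendieck stated just above by a change-of-measure trick that absorbs the weight $\eta$ into the underlying measure. Specifically, I would introduce the finite measure $d\nu := \eta^p\,d\mu$ on $X$. It is finite because
$$
\nu(X)=\int_X \eta^p\,d\mu = \|\eta\|_p^p<\infty
$$
by hypothesis. Then I would consider the multiplication map $T: L^p(X,\mu)\to L^p(X,\nu)$ defined by $Tf := f/\eta$, which makes sense because $\eta>0$ a.e.

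The key observation is that $T$ is an isometric isomorphism between $L^p(X,\mu)$ and $L^p(X,\nu)$, since
$$
\int_X |f/\eta|^p\,d\nu = \int_X |f/\eta|^p\,\eta^p\,d\mu = \int_X |f|^p\,d\mu.
$$
Hence $T(E)$ is a closed subspace of $L^p(X,\nu)$, verifying condition $(b)$ of Grothendieck's theorem relative to $\nu$. Moreover, because $\eta$ is strictly positive a.e.\ and finite a.e.\ (the latter again from $\eta\in L^p(X,\mu)$), the measures $\mu$ and $\nu$ share the same null sets, so $L^\infty(X,\mu)=L^\infty(X,\nu)$ as Banach spaces. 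Hypothesis (ii) then reads exactly as $T(E)\subset L^\infty(X,\nu)$, verifying condition $(c)$.

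With conditions $(a)$, $(b)$, $(c)$ of Grothendieck's theorem all satisfied for $T(E)\subset L^p(X,\nu)$, I conclude that $T(E)$ is finite-dimensional, and therefore so is $E$ since $T$ is an isomorphism. There is essentially no obstacle beyond verifying that $\mu$ and $\nu$ really do induce the same $L^\infty$ space and that $\nu(X)<\infty$; both are immediate from $0<\eta<\infty$ a.e.\ and $\eta\in L^p(X,\mu)$, respectively.
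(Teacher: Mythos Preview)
Your proposal is correct and follows essentially the same approach as the paper: both introduce the finite measure $d\nu=\eta^p\,d\mu$, transfer $E$ to $\frac{1}{\eta}\cdot E$ inside $L^p(X,\nu)$, and apply Grothendieck's theorem there. Your version is slightly more explicit in naming the map $T$ as an isometric isomorphism and in justifying $L^\infty(X,\mu)=L^\infty(X,\nu)$ via equality of null sets, whereas the paper simply asserts the inclusion $L^\infty(X,\mu)\subset L^\infty(X,\nu)$ and that $\frac{1}{\eta}\cdot E$ is closed in $L^p(X,\nu)$; but the underlying idea is identical.
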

\begin{proof} Set $d\nu:=\eta^p d\mu$. Then $(X,\nu)$ is a finite measure space since $\eta\in L^p(X,\mu)$,  and 
$$
\frac{1}{\eta}\cdot E\subset L^{\infty}(X,\mu)\subset L^{\infty}(X,\nu).
$$
 On the other hand,  it is easy to verify that $\frac{1}{\eta}\cdot E$ is a closed subspace of $L^p(X,\nu)$.  Thus Theorem \ref{Grothendieck} implies that $\frac{1}{\eta}\cdot E$ is finite-dimensional,  so is $E$.
\end{proof}

\begin{proof}[Proof of Theorem \ref{p-kernel estimate}]
We may assume $L=\{(z_1,0,\cdots,0): z_1\in \mathbb C\}$.  It is easy to verify that $\pi_L(\Omega)$ is a bounded domain in $L$.  By the definition of $K_p(z)$ and the fact that $\eta\ge 1$,  it suffices to find $f\in A^p(\Omega)$  depending only on $z_1$ such that  $|f|^p/\eta$ is unbounded on $\Omega$.  
Let $l^p$ be the Banach space of sequences of complex numbers $\{c_j\}_{j=1}^\infty$ with 
$\sum_j |c_j|^p<\infty$.  We set $\phi_j(z)=z_1^j/\|z_1^j\|_{L^p(\Omega)}$ for $j\in \mathbb Z^+$ and
$$
E_p:=\left\{f={\sum}_j c_j \phi_j :  \{c_j\}_{j=1}^\infty\in l^1\ \text{for\ }p\ge 1;  \{c_j\}_{j=1}^\infty\in l^p\ \text{for\ } 0<p < 1\right\}.
$$
Set $f_N=\sum_{j=1}^N c_j\phi_j$.  
Since 
$$
\|f_{N+l}-f_N\|_{L^p(\Omega)}\le \sum_{j=1}^{l} \|c_{N+j} \phi_{N+j}\|_{L^p(\Omega)}=\sum_{j=1}^l  |c_{N+j}| \ \ \text{for\ }p\ge 1
$$
and 
$$
\|f_{N+l}-f_N\|_{L^p(\Omega)}^p \le \sum_{j=1}^l \|c_{N+j} \phi_{N+j}\|_{L^p(\Omega)}^p=\sum_{j=1}^l  |c_{N+j}|^p  \ \ \text{for\ }0<p< 1,
$$
it follows that $E_p$ is a well-defined linear subspace of $A^p(\Omega)$, which is clearly of infinite dimension.  Let $\overline{E}_p$ be the closure of $E_p$ in $A^p(\Omega)$.  For every $f\in \overline{E}_p$, there is a sequence $\{\widehat{f}_k\}\subset E_p$ such that $\widehat{f}_k\rightarrow f$ in $A^p(\Omega)$.  By the Bergman inequality we see that $\widehat{f}_k$ converges locally uniformly to $f$,  so that $f$ depends only on $z_1$.   Now suppose 
	$$
	\frac{1}{\eta^{1/p}} \cdot \overline{E}_p\subset L^{\infty}(\Omega).
	$$
	Since $\overline{E}_p$ is a closed subspace of $L^p(\Omega)$ and $\eta^{1/p}\in L^p(\Omega)$,  we have $\mathrm{dim}\, \overline{E}_p<\infty$ in view of Corollary \ref{Grothendieck},  which   is absurd  since $E_p$ is of infinite dimension.   Thus there exists $f\in \overline{E}_p$ such that $|f|^p/\eta$ is unbounded on $\Omega$. 
	\end{proof}
	
	Recall that the one-sided upper Minkowski dimension of $\partial \Omega$ is given by 
	$$
		\overline{\mathrm{dim}}_{\mathcal{M}}(\partial \Omega):=2n-\varliminf\limits_{\epsilon\to 0}\frac{\log|\Omega \backslash\Omega_{\epsilon}|}{\log \epsilon},
	$$
	where $\Omega_\epsilon:=\{z\in\Omega: \delta(z) >\epsilon\}$ and $|\cdot|$ denotes the volume.   
\begin{corollary}\label{cor:Minkowski}
 If\/ $\alpha< 2n-\overline{\mathrm{dim}}_{\mathcal{M}}(\partial \Omega)$,  then for every complex line $L$ in $\mathbb C^n$ there exists $z_L\in \partial \pi_L(\Omega) $ such that for any $w\in \partial \Omega\cap \pi_L^{-1}(z_L)$,
 $$
  \limsup_{z\rightarrow w} \delta(z)^{\alpha} K_p(z)=+\infty.
 $$
\end{corollary}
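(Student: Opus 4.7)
The plan is to deduce this corollary directly from Theorem \ref{p-kernel estimate} by choosing a weight that encodes the Minkowski-dimension hypothesis and behaves like $\delta^{-\alpha}$ near $\partial\Omega$. Specifically, I would take
$$\eta(z):=1+\delta(z)^{-\alpha}.$$
The case $\alpha\le 0$ is trivial, since then $\delta(z)^{\alpha}$ stays bounded below (or tends to $+\infty$) near $\partial\Omega$ while $K_p\ge|\Omega|^{-1}$ forces the $\limsup$ to be $+\infty$ automatically. So assume $\alpha>0$; then $\eta\ge 1$ is automatic, and $\eta(z)\le 2\delta(z)^{-\alpha}$ whenever $\delta(z)\le 1$. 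Granting for the moment that $\eta\in L^1(\Omega)$, Theorem \ref{p-kernel estimate} produces, for every complex line $L$, a point $z_L\in\partial\pi_L(\Omega)$ such that $\limsup_{z\to w}K_p(z)/\eta(z)=+\infty$ at each $w\in\partial\Omega\cap\pi_L^{-1}(z_L)$; combining this with the boundary comparison $\eta(z)\le 2\delta(z)^{-\alpha}$ immediately upgrades it to
$$\limsup_{z\to w}\delta(z)^\alpha K_p(z)=+\infty,$$
which is the desired conclusion.

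The actual content, then, is the integrability $\delta^{-\alpha}\in L^1(\Omega)$. To establish this I would fix $\gamma$ with $\alpha<\gamma<2n-\overline{\mathrm{dim}}_{\mathcal M}(\partial\Omega)$, possible by hypothesis. Unpacking the definition of the upper Minkowski dimension, the inequality $\gamma<\liminf_{\epsilon\to 0}\log|\Omega\setminus\Omega_\epsilon|/\log\epsilon$ produces an $\epsilon_0>0$ with $\log|\Omega\setminus\Omega_\epsilon|/\log\epsilon>\gamma$ for all $\epsilon\in(0,\epsilon_0]$; since $\log\epsilon<0$, this flips to $|\{z\in\Omega:\delta(z)\le\epsilon\}|\le\epsilon^\gamma$. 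The layer cake representation then yields
$$\int_\Omega\delta^{-\alpha}\,dV=\alpha\int_0^{\mathrm{diam}(\Omega)}s^{-\alpha-1}|\{\delta<s\}|\,ds\le\alpha\int_0^{\epsilon_0}s^{\gamma-\alpha-1}\,ds+\alpha|\Omega|\int_{\epsilon_0}^{\mathrm{diam}(\Omega)}s^{-\alpha-1}\,ds,$$
and both integrals are finite since $\gamma>\alpha>0$. Hence $\eta\in L^1(\Omega)$ and the first paragraph applies.

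There is no serious obstacle here: the corollary is essentially a repackaging of Theorem \ref{p-kernel estimate} once one selects the right weight. The only step requiring any thought is the standard layer-cake conversion from a Minkowski-dimension estimate on $\partial\Omega$ to $L^1$-integrability of a negative power of $\delta$, and this is routine.
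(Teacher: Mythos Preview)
Your argument is correct and matches the paper's: apply Theorem~\ref{p-kernel estimate} with a weight comparable to $\delta^{-\alpha}$, then check that $\delta^{-\alpha}\in L^1(\Omega)$ from the Minkowski-dimension hypothesis (the paper does this via a dyadic sum rather than layer cake, an immaterial difference). One small slip: for $\alpha=0$ the bound $K_p\ge|\Omega|^{-1}$ alone does not force the $\limsup$ to be infinite, but your main argument with $\eta=1+\delta^{-\alpha}\equiv 2$ already covers this case, so nothing is lost.
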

\begin{proof} It suffices to apply Theorem \ref{p-kernel estimate} with $\eta=C\delta^{-\alpha}$ for $C\gg 1$,  in view of the following elementary lemma.  
\end{proof}

\begin{lemma}\label{integrablity of distance function}
	If\/ $\alpha< 2n-\overline{\mathrm{dim}}_{\mathcal{M}}(\partial \Omega)$,  then $\delta^{-\alpha}\in L^{1}(\Omega)$.
\end{lemma}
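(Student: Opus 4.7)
The plan is a simple dyadic shell decomposition adapted to the Minkowski size estimate. First I would dispose of the trivial range $\alpha \le 0$: since $\Omega$ is bounded, $\delta$ is bounded above on $\Omega$, so $\delta^{-\alpha}$ is itself bounded and the conclusion is immediate. So assume $0 < \alpha < 2n - d$, where $d := \overline{\mathrm{dim}}_{\mathcal{M}}(\partial\Omega)$.

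Next I would unpack the hypothesis quantitatively. The definition in the excerpt gives $\liminf_{\epsilon \to 0^+} \frac{\log|\Omega\setminus\Omega_\epsilon|}{\log\epsilon} = 2n - d$. Pick $\eta > 0$ small enough that $\alpha + \eta < 2n - d$. By definition of $\liminf$, there exists $\epsilon_0 > 0$ such that for every $\epsilon \in (0,\epsilon_0)$,
$$
\frac{\log|\Omega\setminus\Omega_\epsilon|}{\log\epsilon} \ > \ 2n - d - \eta.
$$
Since $\log\epsilon < 0$ for such $\epsilon$, multiplying reverses the inequality and yields the pointwise volume estimate
$$
|\Omega\setminus\Omega_\epsilon| \ \le \ \epsilon^{\,2n-d-\eta}, \qquad \epsilon \in (0,\epsilon_0).
$$

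Now I would decompose $\Omega = \Omega_{2^{-k_0}} \cup \bigcup_{k\ge k_0} A_k$, where $A_k := \{z\in\Omega : 2^{-k-1} < \delta(z) \le 2^{-k}\}$ and $k_0$ is chosen so $2^{-k_0} \le \epsilon_0$. On $\Omega_{2^{-k_0}}$, $\delta^{-\alpha}$ is bounded, which contributes a finite integral. On each shell $A_k$ we have $\delta^{-\alpha} \le 2^{(k+1)\alpha}$ pointwise, while $A_k \subset \Omega\setminus\Omega_{2^{-k-1}}$ gives $|A_k| \le 2^{-(k+1)(2n-d-\eta)}$ by the previous step. Therefore
$$
\int_{A_k} \delta^{-\alpha} \ \le \ 2^{(k+1)\alpha}\cdot 2^{-(k+1)(2n-d-\eta)} \ = \ 2^{-(k+1)(2n-d-\eta-\alpha)}.
$$

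Finally I would sum over $k \ge k_0$: the geometric series $\sum_k 2^{-(k+1)(2n-d-\eta-\alpha)}$ converges because $2n - d - \eta - \alpha > 0$ by our choice of $\eta$, so $\int_\Omega \delta^{-\alpha} < \infty$. There is no substantive obstacle; the only mildly delicate point is unraveling the $\liminf$ correctly to extract the estimate $|\Omega\setminus\Omega_\epsilon| \le \epsilon^{2n-d-\eta}$, with the sign reversal coming from $\log\epsilon < 0$.
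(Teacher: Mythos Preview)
Your approach is essentially identical to the paper's: extract a volume bound $|\Omega\setminus\Omega_\epsilon|\lesssim\epsilon^{\beta}$ for some $\beta>\alpha$ from the Minkowski dimension hypothesis, then sum $\delta^{-\alpha}$ over dyadic shells. One slip: the inclusion $A_k\subset\Omega\setminus\Omega_{2^{-k-1}}$ is false, since points of $A_k$ satisfy $\delta(z)>2^{-k-1}$ and hence lie in $\Omega_{2^{-k-1}}$, not its complement in $\Omega$. The correct containment is $A_k\subset\Omega\setminus\Omega_{2^{-k}}$, giving $|A_k|\le 2^{-k(2n-d-\eta)}$; this only changes your shell estimate by a fixed multiplicative constant and the geometric series still converges, so the argument goes through with this correction.
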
 
\begin{proof} Take $\alpha<\beta<2n-\overline{\mathrm{dim}}_{\mathcal{M}}(\partial \Omega)$.  Then we have 
$$
\varliminf\limits_{\epsilon\to 0}\frac{\log|\Omega \backslash\Omega_{\epsilon}|}{\log \epsilon}>\beta,
$$
 so that $|\Omega \backslash\Omega_{\epsilon}|\le C \epsilon^{\beta}$.  Thus
	\begin{align*}
		\int_{\Omega}\delta^{-\alpha}&=\int_{\Omega_1}\delta^{-\alpha}+\sum^{\infty}_{j=0}\int_{\Omega_{2^{-j-1}}\backslash\Omega_{2^{-j}}}\delta^{-\alpha}\\
		&\leq  C'+ \sum^{\infty}_{j=0} 2^{\alpha(j+1)}|\Omega_{2^{-j-1}}\backslash\Omega_{2^{-j}}|\\
		&\leq  C'+ 2^{\alpha} C \sum^{\infty}_{j=0} 2^{-(\beta-\alpha)j} \\
		& \leq \infty.
	\end{align*}
	\end{proof}
	
	\subsection{An application of $K_p$   to properly discontinuous groups}
For a domain $\Omega\subset\subset \mathbb C^n$ we denote by ${\rm Aut}(\Omega)$ the group of holomorphic automorphisms of $\Omega$. A subgroup $G$ of ${\rm Aut}(\Omega)$ is said to be properly discontinuous if for every compact set $S\subset \Omega$ there are only a finite number of elements $g\in G$ with $S\cap g(S)\neq \emptyset$. A well-known result of H. Cartan states that $\Omega/G$ is a complex space if $G$ is properly discontinuous.

Let $L(G_z)$ denote the set of limit points of $G_z:=\{g(z):g\in G\}$.
Set
$$
L(G):=\bigcup_{z\in G} L(G_z).
$$
Since $G$ is properly discontinuous, we have $L(G)\subset \partial \Omega$.

\begin{proposition}\label{th:p-holo}
Let $\Omega\subset \mathbb C^n$ be a bounded simply-connected domain and $G\subset {\rm Aut}(\Omega)$ a properly discontinuous group. For any $0<p<\infty$ and any $w\in L(G)$, there exists $f\in A^p(\Omega)$ such that
$$
\limsup_{z\rightarrow w} |f(z)|=\infty.
$$
\end{proposition}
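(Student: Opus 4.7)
The plan is to combine the $K_p$-transformation rule with the Banach--Steinhaus theorem. Since $w\in L(G)$, I can choose $z_0\in\Omega$ and a sequence of distinct $g_j\in G$ with $z_j:=g_j(z_0)\to w$.

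The first step is to show $K_p(z_j)\to\infty$. Because $\Omega$ is simply-connected, the transformation rule \eqref{eq:trans_2} yields $K_p(z_j)=K_p(z_0)\,|J_{g_j}(z_0)|^{-2}$, so it suffices to verify $|J_{g_j}(z_0)|\to 0$. Pick a polydisc $P$ with $z_0\in P\subset\overline{P}\subset\Omega$ and set $N:=\#\{h\in G:h(\overline{P})\cap\overline{P}\neq\emptyset\}$, which is finite by proper discontinuity. If $z\in g_i(\overline{P})\cap g_j(\overline{P})$ then $g_i^{-1}g_j$ sends a point of $\overline{P}$ into $\overline{P}$, so $g_i^{-1}g_j$ belongs to the above finite set; consequently the family $\{g_j(P)\}$ covers $\Omega$ with multiplicity at most $N$. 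Summing and changing variables,
\[
\sum_j\int_P|J_{g_j}|^2\;=\;\sum_j\mathrm{vol}(g_j(P))\;\le\; N\cdot\mathrm{vol}(\Omega)\;<\;\infty,
\]
so $\int_P|J_{g_j}|^2\to 0$. The mean value inequality applied to the plurisubharmonic function $|J_{g_j}|^2$ on $P$ then forces $|J_{g_j}(z_0)|^2\to 0$, as desired.

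With $K_p(z_j)\to\infty$ in hand, consider the evaluation functionals $T_j:A^p(\Omega)\to\mathbb{C}$ given by $T_j(f):=f(z_j)$. Each $T_j$ is continuous by the Bergman inequality, and by the extremal characterization \eqref{eq:max_1} we have $\|T_j\|=K_p(z_j)^{1/p}\to\infty$. For $p\ge 1$, $A^p(\Omega)$ is a Banach space (Proposition \ref{prop:Banach}); for $0<p<1$, it is a complete F-space under the translation-invariant metric $d(f,g)=\|f-g\|_p^p$. In either setting the Banach--Steinhaus theorem applies: if $\{T_j f\}$ were bounded for every $f\in A^p(\Omega)$, the family $\{T_j\}$ would be equicontinuous and thus of uniformly bounded operator norm, contradicting $\|T_j\|\to\infty$. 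Hence there exists $f\in A^p(\Omega)$ with $\sup_j|f(z_j)|=\infty$; passing to a subsequence $|f(z_{j_k})|\to\infty$ with $z_{j_k}\to w$, so $\limsup_{z\to w}|f(z)|=\infty$.

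The principal technical step is the proper-discontinuity/multiplicity estimate that forces $|J_{g_j}(z_0)|\to 0$; once this blow-up of $K_p$ along the orbit is secured, Banach--Steinhaus delivers the conclusion uniformly in $p>0$.
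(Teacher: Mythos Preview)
Your proof is correct and follows the same overall strategy as the paper: use the transformation rule \eqref{eq:trans_2} to show $K_p(z_j)\to\infty$, then invoke the uniform boundedness principle on the evaluation functionals $f\mapsto f(z_j)$. The one substantive difference is that where the paper simply cites the Poincar\'e--Siegel result $\sum_{g\in G}|J_g(z_0)|^2<\infty$ to obtain $|J_{g_j}(z_0)|\to 0$, you supply a self-contained bounded-multiplicity argument (which is in effect the classical proof of that result); this makes your argument more elementary at the cost of a few extra lines. You are also more explicit than the paper about the case $0<p<1$, correctly noting that $A^p(\Omega)$ is then only a complete $F$-space and that the Banach--Steinhaus theorem still applies in that generality.
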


\begin{proof}
By a classical result of Poincar\'e-Siegel (cf. \cite{Siegel}) we know that
\begin{equation}\label{eq:Siegel}
\sum_{g\in G} |J_g(z)|^2<\infty,\ \ \ \forall\,z\in \Omega.
\end{equation}
Take $z_0\in \Omega$ and $\{g_j\}\subset \Gamma$ such that $z_j:=g_j(z_0)\rightarrow w$. By Proposition \ref{prop:trans_2} we have
$$
K_p(z_0)=K_p(z_j) |J_{g_j}(z_0)|^2,
$$
so that
$$
\sum_{j=1}^\infty \frac{K_p(z_0)}{K_p(z_j)} =\sum_{j=1}^\infty |J_{g_j}(z)|^2<\infty
$$
in view of (\ref{eq:Siegel}), which implies
$$
\lim_{j\rightarrow \infty} K_p(z_j)=\infty.
$$

Suppose on the contrary that
$$
{\sup}_j\ |f(z_j)|<\infty,\ \ \ \forall\,f\in A^p(\Omega).
$$
By the Bergman inequality, we see that the continuous linear functional
$$
F_j: f\in A^p(\Omega)\mapsto f(z_j)
$$
 satisfies
$
\sup_j |F_j(f)|<\infty,
$
so that $\sup_j \|F_j\|<\infty$ in view of the Banach-Steinhaus theorem. Since
$$
\|F_j\|=\sup_{f\in A^p(\Omega)} \frac{|f(z_j)|}{\|f\|_p}=K_p(z_j)^{1/p},
$$
we obtain $\sup_j K_p(z_j)<\infty$, a contradiction!
\end{proof}

\begin{corollary}
  For any neighborhood $U$ of $w\in L(G)$,  the Hausdorff dimension of $\partial \Omega\cap U$ is no less than $2n-1$.
 \end{corollary}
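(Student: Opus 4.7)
The plan is to argue by contradiction, combining Proposition \ref{th:p-holo} with a topological no--separation fact and a classical removable singularity theorem for $L^{1}$ distributional solutions of $\bar\partial$.

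First I would take $w\in L(G)\cap U$ and, by Proposition \ref{th:p-holo} with $p=2$, choose $f\in A^{2}(\Omega)$ with $\limsup_{z\to w}|f(z)|=\infty$. Assume for contradiction that the Hausdorff dimension of $\partial\Omega\cap U$ is strictly less than $2n-1$. Pick an open ball $B$ centered at $w$ with $\overline{B}\subset U$ and set $E:=\partial\Omega\cap B$; then $E$ is closed in $B$ with Hausdorff dimension $<2n-1$, and in particular $\mathcal H^{2n-1}(E)=0$.

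The first key step is topological: since Hausdorff dimension dominates topological dimension (Szpilrajn), $\dim_{\mathrm{top}}E\le 2n-2$, and the classical separation theorem of dimension theory gives that $B\setminus E$ is connected. But
\[
B\setminus E=(B\cap\Omega)\sqcup(B\setminus\overline\Omega)
\]
is a disjoint union of two open sets, and $B\cap\Omega\neq\emptyset$ because $w\in\partial\Omega$. Connectedness forces $B\setminus\overline\Omega=\emptyset$, so $B\subset\overline\Omega$ and $B\cap\Omega=B\setminus E$. The second key step is a removable singularity argument. Since $f\in L^{2}(\Omega)$ and $B$ is bounded, H\"older gives $f\in L^{1}(B\cap\Omega)$; extending $f$ by $0$ on the Lebesgue--null set $E$ puts $f\in L^{1}_{\mathrm{loc}}(B)$, and $\bar\partial f=0$ holds distributionally on $B\setminus E$. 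By the removable singularity theorem of Harvey--Polking---closed sets of vanishing $(2n-1)$-Hausdorff measure are removable for distributional $\bar\partial$ acting on $L^{1}_{\mathrm{loc}}$---we deduce $\bar\partial f=0$ on all of $B$. By hypoellipticity $f$ is holomorphic on $B$, hence bounded in a smaller ball around $w$, contradicting $\limsup_{z\to w}|f(z)|=\infty$.

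The main obstacle is the removable singularity step: it requires the precise Hausdorff measure hypothesis $\mathcal H^{2n-1}(E)=0$ (rather than the codimension-two condition that would give pluripolarity), and a careful citation of Harvey--Polking is needed. The topological no--separation fact is standard dimension theory, and the rest---producing $f$ via Proposition \ref{th:p-holo}, dismembering $B\setminus E$ into its two open halves, and reading off the contradiction from hypoellipticity---is routine.
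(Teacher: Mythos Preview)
Your overall architecture---contradiction, non-separation, removable singularity---matches the paper's, but the removable singularity step as stated is wrong, and this is not a citation technicality.

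You claim that closed sets $E$ with $\mathcal H^{2n-1}(E)=0$ are removable for distributional $\bar\partial$ acting on $L^{1}_{\mathrm{loc}}$. Take $n=1$, $E=\{0\}\subset\mathbb D$, and $f(z)=1/z$. Then $\mathcal H^{1}(\{0\})=0$, $f\in L^{1}(\mathbb D)$, and $\bar\partial f=0$ on $\mathbb D\setminus\{0\}$, yet $\bar\partial f=\pi\delta_{0}$ on $\mathbb D$. So $E$ is \emph{not} removable for $L^{1}_{\mathrm{loc}}$. The Harvey--Polking threshold for $A^{p}$ is $\Lambda_{2n-p'}(E)=0$ with $\tfrac1p+\tfrac1{p'}=1$; thus $\mathcal H^{2n-1}(E)=0$ only buys you removability for $p$ near $\infty$, and $\mathcal H^{2n-2}(E)=0$ is what you would need for $p=2$. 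Since the contradiction hypothesis only gives $\Lambda_{\alpha}(E)=0$ for some $\alpha<2n-1$ (possibly $\alpha>2n-2$), neither $p=1$ nor your fixed $p=2$ works.

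The repair---and this is exactly what the paper does---is to let the exponent $p$ depend on $\alpha$. First assume $\Lambda_{\alpha}(\partial\Omega\cap U)=0$ for some $\alpha<2n-1$; your non-separation argument goes through unchanged, giving $B\cap\Omega=B\setminus E$. Now set $p=\tfrac{2n-\alpha}{2n-\alpha-1}$, so that $\alpha=2n-p'$ and Harvey--Polking yields $A^{p}(B\setminus E)=A^{p}(B)$. Only then invoke Proposition~\ref{th:p-holo} with this $p$ (it is valid for every $0<p<\infty$) to produce $f\in A^{p}(\Omega)$ unbounded near $w$; the extension across $E$ then gives the contradiction. Equivalently, as the paper phrases it, $K_{B\setminus E,p}=K_{B,p}$ stays bounded near $w$ while $K_{B\setminus E,p}\ge K_{\Omega,p}\to\infty$.
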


 \begin{proof}
 Suppose on the contrary that there exist $\alpha<2n-1$ and a neighborhood $U$ of $w$ such that
 $$
 \Lambda_\alpha(\partial\Omega\cap U)=0,
 $$
 where $\Lambda_\alpha$ means the $\alpha-$dimensional Hausdorff measure. 
 It follows that $\partial\Omega\cap U$ is a polar set, so that $D:=U \backslash \partial\Omega$ is connected and $D\subset \Omega$. Since $K_{D,p}(z)\ge K_{\Omega,p}(z)$ for $z\in D$, we infer from the proof of Proposition \ref{th:p-holo} that
 $$
 \limsup_{z\rightarrow w} K_{D,p}(z)=\infty,\ \ \ \forall\, 0<p<\infty.
 $$
 On the other hand, thanks to a theorem on removable singularity due to Harvey-Polking \cite{HP}, we have $A^p(D)=A^p(U)$ for
  $
 p=\frac{2n-\alpha}{2n-\alpha-1},
 $
 so that
 $$
  \limsup_{z\rightarrow w} K_{D,p}(z)<\infty,
   $$
   a contradiction!
 \end{proof}

Let  $w\in L(G)$. We would like to ask  the following

 \begin{problem}
  Does there exist $f\in A^\infty(\Omega)$ which can not be extended holomorphically across $w$?
 \end{problem}

 \begin{problem}
 Is it possible to conclude that $\Lambda_{2n-1}(\partial \Omega\cap U)>0$ for any neighborhood $U$ of $w$?
 \end{problem}

\subsection{Comparison of $K_p(z)$ and $K_2(z)$}

\begin{theorem}\label{th:Comp_1}
Let $\Omega$ be a bounded pseudoconvex domain with $C^2-$boundary. Then there exist  constants $\gamma,C>0$ such that the following estimates hold near $\partial \Omega:$
\begin{eqnarray}
\frac{K_p(z)^{\frac1p}}{K_2(z)^{\frac12}} & \le & C\, \delta(z)^{\frac12-\frac1p} |\log \delta(z)|^{\frac{n(p-2)}{2p\gamma}},\ \ \   p>2,\label{eq:comp_1}\\
\frac{K_p(z)^{\frac1p}}{K_2(z)^{\frac12}} & \ge & C^{-1}\, \delta(z)^{\frac12-\frac1p} |\log \delta(z)|^{-\frac{(n+\gamma)(p-2)}{2p\gamma}},\ \ \   p<2.\label{eq:comp_2}
\end{eqnarray} 
\end{theorem}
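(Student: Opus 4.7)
The plan is to use the sub-multiplicativity inequality \eqref{eq:Holder_6} to reduce both estimates to a single lower bound on an auxiliary kernel $K_r(z)$, which I then establish via the Ohsawa--Takegoshi extension theorem combined with the Diederich--Fornaess theorem. For \eqref{eq:comp_1} I apply \eqref{eq:Holder_6} with $r = 2$ and $q = 2p/(p-2) > 2$, obtaining
\[
\frac{K_p(z)^{1/p}}{K_2(z)^{1/2}} \leq \frac{1}{K_q(z)^{1/q}}.
\]
For \eqref{eq:comp_2} I apply \eqref{eq:Holder_6} with $r = p$ and the decomposition $1/p = 1/2 + 1/t$, where $t = 2p/(2-p) > 0$, obtaining
\[
\frac{K_p(z)^{1/p}}{K_2(z)^{1/2}} \geq K_t(z)^{1/t}.
\]
A direct computation gives $q(\tfrac12 - \tfrac1p) = -t(\tfrac12 - \tfrac1p) = 1$ and $q(p-2)/(2p) = -t(p-2)/(2p) = 1$, so in both cases the target inequality is equivalent to the single claim
\[
K_r(z) \geq C^{-r}\,\delta(z)^{-1}\,|\log \delta(z)|^{\beta_r}, \qquad r \in \{q, t\},
\]
with $\beta_q = -n/\gamma$ and $\beta_t = 1 + n/\gamma$.

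To prove this key lower bound I invoke the Diederich--Fornaess theorem, which supplies $\gamma \in (0, 1)$ and a negative continuous plurisubharmonic function $\rho$ on $\Omega$ with $-\rho \asymp \delta^{\gamma}$; consequently $-\log(-\rho)$ is plurisubharmonic on $\Omega$ and behaves like $-\gamma \log \delta$ near $\partial\Omega$. Given $z_0$ near the boundary, I apply the Ohsawa--Takegoshi extension theorem to the point $\{z_0\}$ with a plurisubharmonic weight built from $-\log(-\rho)$ together with a truncated logarithmic pole at $z_0$; this produces $f \in \mathcal{O}(\Omega)$ with $f(z_0) = 1$, the baseline bound $\int_\Omega |f|^2 \leq C\,\delta(z_0)^{2}$, and an additional weighted $L^2$ control against $(-\rho)^{-\alpha}$ for a small $\alpha > 0$. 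To upgrade to $L^r$, I write $|f|^r = |f|^{2}\cdot|f|^{r-2}$ and apply the Bergman inequality $|f(w)| \leq C\,\delta(w)^{-n}\,\|f\|_{L^2}$ to the second factor, then dyadically decompose $\Omega$ into the shells $\{2^{-k-1} \leq \delta < 2^{-k}\}$ down to scale $\delta(z_0)$. On each shell the Diederich--Fornaess weight tames the singularity $\delta^{-n(r-2)}$, and summation over the roughly $|\log\delta(z_0)|$ relevant scales produces an $L^r$ estimate with the correct $|\log\delta(z_0)|^{\beta_r/r}$ factor, so that $K_r(z_0) \geq \|f\|_r^{-r}$ yields the required lower bound.

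The main obstacle will be the sharp bookkeeping of the Ohsawa--Takegoshi weight. The weight must simultaneously (i) remain genuinely plurisubharmonic, which forces it to be expressed through $-\log(-\rho)$ rather than the non-plurisubharmonic $\log\delta$, so that the Diederich--Fornaess index $\gamma$ enters the exponents as $1/\gamma$; and (ii) be strong enough for the dyadic sum to converge with the claimed exponent $|\beta_r|$. The extra unit in $\beta_t = 1 + n/\gamma$ versus $\beta_q = -n/\gamma$ reflects the asymmetry between the upper bound at $p > 2$, where one is allowed to lose a $|\log\delta|^{n/\gamma}$ factor, and the lower bound at $p < 2$, where one must in fact gain an extra factor of $|\log\delta|$; this extra gain should come from invoking the weighted, rather than the unweighted, version of the Ohsawa--Takegoshi estimate.
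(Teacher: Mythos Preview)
Your reduction via the submultiplicativity inequality \eqref{eq:Holder_6} is algebraically correct, and you arrive at the requirement
\[
K_r(z)\;\ge\; C^{-r}\,\delta(z)^{-1}\,|\log\delta(z)|^{\beta_r}
\]
for $r=q=2p/(p-2)$ (when $p>2$) and $r=t=2p/(2-p)$ (when $1<p<2$); note that in both regimes the auxiliary exponent satisfies $r>2$, and in fact $r\to\infty$ as $p\to 2$. This is where the approach breaks down: even the weaker statement that $K_r(z)\to\infty$ at $\partial\Omega$ for every $r>2$ on a bounded pseudoconvex domain with $C^2$ boundary is listed in the paper as an open problem. Your reduction therefore trades the original theorem for a strictly harder one.

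The sketched proof of the key lower bound also has a concrete gap. Ohsawa--Takegoshi yields $f$ with $f(z_0)=1$ and $\int_\Omega|f|^2\le C\delta(z_0)^2$ (and, with a Diederich--Fornaess weight, control of $\int_\Omega|f|^2\delta^{-\alpha\gamma}$ for some small $\alpha>0$). Your upgrade $|f|^r=|f|^2|f|^{r-2}$ together with $|f(w)|\le C\delta(w)^{-n}\|f\|_2$ leads to $\int_\Omega|f|^2\delta^{-n(r-2)}$, and since $n(r-2)$ can be arbitrarily large while the Diederich--Fornaess weight only absorbs an exponent $<1$, the dyadic sum diverges. There is no localisation of $f$ near $z_0$ coming out of Ohsawa--Takegoshi that would let you stop the decomposition ``down to scale $\delta(z_0)$''.

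The paper proceeds quite differently and avoids any lower bound on $K_r$ for $r>2$. For \eqref{eq:comp_1} it starts from the extremal $f\in A^p(\Omega)$ for $K_p(z)$, localises it via a cutoff of the pluricomplex Green function (using Blocki's estimate on the sublevel set $\{g_\Omega(\cdot,z)\le -1\}$), and applies the Donnelly--Fefferman $L^2$ estimate to correct $\bar\partial$; this produces $F\in A^2(\Omega)$ with $F(z)=K_p(z)^{1/p}$ and $\|F\|_2^2$ controlled by a H\"older bound over a shell of width $\sim\delta(z)|\log\delta(z)|^{n/\gamma}$. For \eqref{eq:comp_2} it interpolates through a weighted Bergman kernel $K_{\alpha,\varepsilon}$ with weight $(-\rho+\varepsilon)^\alpha$, comparing $K_p$ to $K_{\alpha,\varepsilon}$ by H\"older and $K_{\alpha,\varepsilon}$ to $K_2$ by another Donnelly--Fefferman estimate with the additional plurisubharmonic weight $-\alpha\log(-\rho+\varepsilon)$. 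In both parts the essential input is $L^2$ $\bar\partial$-theory, not a direct $L^r$ estimate.
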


\begin{proof}
The argument is essentially similar as \cite{ChenFu11} (see also \cite{ChenH-index}). Recall that there exists a smooth negative psh function $\rho$ on $\Omega$ such that $-\rho\asymp \delta^\gamma$ for some $\gamma>0$ (cf. \cite{DiederichFornaess77}). It then follows from a very useful estimate of Blocki \cite{BlockiGreen} for the pluricomplex Green function that there is a constant $C>1$ such that for any $z$ sufficiently close to $\partial \Omega$,
\begin{equation}\label{eq:Blocki}
\{g_\Omega(\cdot,z)\le -1\}\subset \left\{\zeta\in\Omega:C^{-1} \delta(z)|\log \delta(z)|^{-\frac1\gamma}\le \delta(\zeta) \le C\delta(z)|\log\delta(z)|^{\frac{n}\gamma}\right\},
\end{equation}
where $g_\Omega(\zeta,z)$ is the pluricomplex Green function defined by 
$$
  g_\Omega(\zeta,z)=\sup\left\{u(\zeta):u\in PSH^-(\Omega)\ \text{and}\  u(\zeta)\le \log |\zeta-z|+O(1)\ {\rm near\ }z\right\}.
    $$
      Note that $g_\Omega(\cdot,z)$ is a continuous negative psh function on $\Omega\backslash \{z\}$ which satisfies
     \begin{equation}\label{eq:DF}
     -i\partial\bar{\partial} \log (-g_\Omega(\cdot,z))\ge i\partial \log (-g_\Omega(\cdot,z))\wedge \bar{\partial} \log (-g_\Omega(\cdot,z))
     \end{equation}
     as currents. For $p>2$ and $z\in \Omega$ we take $f\in A^p(\Omega)$ with $\|f\|_p=1$ and $f(z)=K_p(z)^{\frac1p}$.  Let $\chi:\mathbb R\rightarrow [0,1]$ be a cut-off function such that $\chi|_{(-\infty,-\log 2]}=1$ and $\chi|_{[0,\infty)}=0$. Set 
     \begin{equation}\label{eq:v}
     v:=f\bar{\partial}\chi(-\log(-g_\Omega(\cdot,z))).
          \end{equation}
       By \eqref{eq:DF} we have
       $$
    i\bar{v}\wedge v \le |f|^2 |\chi'(-\log(-g_\Omega(\cdot,z)))|^2 \cdot \left[     -i\partial\bar{\partial} \log (-g_\Omega(\cdot,z))\right].
               $$   
     The Donnelly-Fefferman estimate (cf. \cite{DonnellyFefferman}, see also \cite{BerndtssonCharpentier}, \cite{BlockiDF}) then yields  a solution of the equation
  $
 \bar{\partial} u=v
 $ (in the sense of distributions)
 such that
 \begin{eqnarray*}
 \int_\Omega |u|^2 e^{-2n g_\Omega(\cdot,z)} & \le & C_0 \int_\Omega  |f|^2 |\chi'(-\log(-g_\Omega(\cdot,z)))|^2 e^{-2ng_\Omega(\cdot,z)}\\
 & \le & C_n \int_{\{\zeta\in\Omega:\delta(\zeta) \le C\delta(z)|\log\delta(z)|^{\frac{n}\gamma}\} } |f|^2\ \ \ ({\rm by\ }(\ref{eq:Blocki}))\\
 & \le & C_n |\{\zeta\in\Omega:\delta(\zeta) \le C\delta(z)|\log\delta(z)|^{\frac{n}\gamma}\}|^{1-\frac2p}\|f\|_p^2\\
 & \le & C \delta(z)^{1-\frac2p} |\log\delta(z)|^{\frac{n(p-2)}{p\gamma}}
 \end{eqnarray*}
 where the third inequality follows from H\"older's inequality. Here for the sake of simplicity we use the symbol $C$ to denote any large constant depending only on $\Omega$.
 Set
 $$
 F:=f\chi(-\log(-g_\Omega(\cdot,z)))-u.
 $$
 Clearly, we have $F\in {\mathcal O}(\Omega)$. Since $g_\Omega(\zeta,z)=\log |\zeta-z|+O(1)$ as $\zeta\rightarrow z$ and $u$ is holomorphic in a neighborhood of $z$, it follows that $u(z)=0$, i.e., $F(z)=f(z)=K_p(z)^{\frac1p}$. Moreover, we have
 \begin{eqnarray*}
 \int_\Omega |F|^2 & \le & 2\int_\Omega |f \chi(-\log(-g_\Omega(\cdot,z)))|^2+2\int_\Omega |u|^2\\
 & \le & C \delta(z)^{1-\frac2p} |\log\delta(z)|^{\frac{n(p-2)}{p\gamma}}
  \end{eqnarray*}
 since $g_\Omega(\cdot,z)<0$.
 Thus we get
 $$
 K_2(z)^{\frac12}\ge \frac{|F(z)|}{\|F\|_2}\ge C^{-1}\,K_p(z)^{\frac1p}  \delta(z)^{\frac1p-\frac12} |\log\delta(z)|^{-\frac{n(p-2)}{2p\gamma}},
 $$
i.e., (\ref{eq:comp_1}) holds.

 Next we define 
$$
A^2_{\alpha,\varepsilon}(\Omega):=\left\{f\in \mathcal O(\Omega): \|f\|_{\alpha,\varepsilon}^2:=\int_\Omega |f|^2 (-\rho+\varepsilon)^\alpha<\infty\right\},\ \ \ \alpha,\varepsilon>0.
$$
Let $K_{\alpha,\varepsilon}$ denote the Bergman kernel associated to the Hilbert space $A^2_{\alpha,\varepsilon}(\Omega)$. We first compare $K_p(z)$ with $K_{\alpha,\varepsilon}(z)$ for $\alpha=\frac1\gamma(\frac2p-1)$ and $\varepsilon=\delta(z)^\gamma$ when $p<2$. Take $f\in A^2_{\alpha,\varepsilon}(\Omega)$ such that $\|f\|_{\alpha,\varepsilon}=1$ and $f(z)=K_{\alpha,\varepsilon}(z)^{\frac12}$. By H\"older's inequality we have
\begin{eqnarray*}
\int_\Omega |f|^p & = & \int_\Omega |f|^p (-\rho+\varepsilon)^{\frac1\gamma(1-\frac{p}2)} (-\rho+\varepsilon)^{-\frac1\gamma(1-\frac{p}2)}\\
& \le & \left[\int_\Omega |f|^2 (-\rho+\varepsilon)^\alpha\right]^{\frac{p}2}\left[\int_\Omega (-\rho+\varepsilon)^{-\frac1\gamma}\right]^{1-\frac{p}2}\\
& \le & C \left[\int_{\zeta\in\Omega} (\delta(\zeta)+\delta(z))^{-1}\right]^{1-\frac{p}2}\\
&\le & C |\log\delta(z)|^{1-\frac{p}2}.
\end{eqnarray*}
It follows that 
\begin{equation}\label{eq:comp_3}
K_p(z)^{\frac1p} \ge \frac{|f(z)|}{\|f\|_p}\ge C^{-1} K_{\alpha,\varepsilon}(z)^{\frac12} |\log\delta(z)|^{\frac12-\frac1p}.
\end{equation}
Now we compare $K_{\alpha,\varepsilon}(z)$ with $K_2(z)$. Set $\psi:=-\alpha\log(-\rho+\varepsilon)$. Clearly, $\psi$ is psh on $\Omega$. Similar as above, we first take $f\in A^2(\Omega)$ with $\|f\|_2=1$ and $f(z)=K_2(z)^{\frac12}$ then  solve the equation $\bar{\partial}u=v$ (where $v$ is given by \eqref{eq:v}) with the following estimate
 \begin{eqnarray*}
 \int_\Omega |u|^2 e^{-\psi-2n g_\Omega(\cdot,z)} & \le & C_0 \int_\Omega  |f|^2 |\chi'(-\log(-g_\Omega(\cdot,z)))|^2  e^{-\psi-2ng_\Omega(\cdot,z)}\\
 & \le & C_n \int_{S_z} |f|^2(-\rho+\varepsilon)^\alpha\\
 & \le & \sup_{S_z}\left\{(-\rho+\varepsilon)^\alpha\right\} \cdot \|f\|_2^2\\
 & \le & C \delta(z)^{\frac2p-1} |\log\delta(z)|^{\frac{n(2-p)}{p\gamma}},
 \end{eqnarray*}
where $S_z:=\{\zeta\in\Omega:\delta(\zeta) \le C\delta(z)|\log\delta(z)|^{\frac{n}\gamma}\}$. Thus $ F:=f\chi(-\log(-g_\Omega(\cdot,z)))-u$ is a holomorphic function on $\Omega$ which satisfies $F(z)=f(z)=K_2(z)^{\frac12}$ and
 $$
 \|F\|_{\alpha,\varepsilon}\le C \delta(z)^{\frac1p-\frac12} |\log\delta(z)|^{\frac{n(2-p)}{2p\gamma}}. 
 $$
 Thus
 $$
 K_{\alpha,\varepsilon}(z)^{\frac12}\ge \frac{|F(z)|}{\|F\|_{\alpha,\varepsilon}}\ge C^{-1} K_2(z)^{\frac12}  \delta(z)^{\frac12-\frac1p} |\log\delta(z)|^{-\frac{n(2-p)}{2p\gamma}}.  
 $$
 This together with (\ref{eq:comp_3}) yield (\ref{eq:comp_2}).
    \end{proof}

    \begin{theorem}\label{th:Comp_2}
Let $\Omega$ be a bounded pseudoconvex domain with $C^2-$boundary. For every $2\le p<2+\frac2n$  there exists a  constant $C=C_{p,\Omega}>0$ such that the following estimate holds near $\partial \Omega:$
\begin{equation}\label{eq:comp_6}
\frac{K_p(z)^{\frac1p}}{K_2(z)^{\frac12}} \ge C^{-1}\, \delta(z)^{\frac{(n+1)(p-2)}{2p}} |\log \delta(z)|^{-\frac{(n+1)(p-2)}{2p\gamma}}.
\end{equation} 
Here $\gamma$ is the same as above.
\end{theorem}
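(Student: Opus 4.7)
The proof adapts the Donnelly--Fefferman construction from Theorem~\ref{th:Comp_1}$(2)$, but now the goal is to produce, starting from the Bergman extremal $f_2 \in A^2(\Omega)$ (with $\|f_2\|_2 = 1$ and $f_2(z) = K_2(z)^{1/2}$), a holomorphic function $F$ on $\Omega$ with $F(z) = K_2(z)^{1/2}$ and controlled $L^p$-norm (rather than weighted $L^2$-norm). Truncate $f_2$ by $\chi(-\log(-g_\Omega(\cdot,z)))$ as before and correct by $u$, where $\bar\partial u = f_2\bar\partial\chi(-\log(-g_\Omega(\cdot,z)))$ is solved via Donnelly--Fefferman relative to the weight $\psi + 2ng_\Omega(\cdot,z)$ with $\psi = -\alpha\log(-\rho+\varepsilon)$ and $\varepsilon = \delta(z)^\gamma$ for a free parameter $\alpha>0$. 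Setting $F := f_2\chi(-\log(-g_\Omega(\cdot,z))) - u$, one has $F(z) = K_2(z)^{1/2}$, so the estimate will follow from
\[
\|F\|_p \le C\delta(z)^{-(n+1)(p-2)/(2p)}|\log\delta(z)|^{(n+1)(p-2)/(2p\gamma)}
\]
via $K_p(z)^{1/p} \ge |F(z)|/\|F\|_p = K_2(z)^{1/2}/\|F\|_p$.

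Split $\|F\|_p \le \|f_2\chi(-\log(-g_\Omega(\cdot,z)))\|_p + \|u\|_p$. For the first piece, the B{\l}ocki estimate \eqref{eq:Blocki} confines the support to a shell $S_z$ with $\delta(\zeta) \gtrsim \delta(z)|\log\delta(z)|^{-1/\gamma}$ and $|S_z|\lesssim \delta(z)|\log\delta(z)|^{n/\gamma}$; combine the identity $|f_2(\zeta)|^2 = |K_2(\zeta,z)|^2/K_2(z) \le K_2(\zeta)$ from \eqref{eq:Holder_3} with $K_2(\zeta)\le C\delta(\zeta)^{-2n}$ to control $\sup_{S_z}|f_2|$, then estimate $\int_{S_z}|f_2|^p \le (\sup_{S_z}|f_2|)^{p-2}\|f_2\|_2^2$. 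For the correction $u$, the Donnelly--Fefferman inequality gives
\[
\int_\Omega |u|^2(-\rho+\varepsilon)^\alpha e^{-2ng_\Omega(\cdot,z)} \le C\int_{\mathrm{supp}(\chi')}|f_2|^2(-\rho+\varepsilon)^\alpha \lesssim \delta(z)^{\alpha\gamma}|\log\delta(z)|^{\alpha n};
\]
combined with the holomorphicity of $u$ off $\mathrm{supp}(v)$ and the pointwise Bergman inequality on balls of radius $\delta(\zeta)/2$, one obtains a pointwise bound on $|u(\zeta)|$. Writing $|u|^p = |u|^{p-2}|u|^2$ and integrating yields the $L^p$-bound on $u$.

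The main obstacle is extracting the sharp exponent $(n+1)$ in place of the $2n$ one would get from the naive Bergman estimate $|f_2(\zeta)|\lesssim \delta(\zeta)^{-n}$ alone: that crude argument produces $\|f_2\chi(-\log(-g_\Omega(\cdot,z)))\|_p \lesssim \delta(z)^{-n(p-2)/p}$, which is weaker than the target by a factor $\delta(z)^{(n-1)(p-2)/(2p)}$. Sharpening to $(n+1)$ requires a careful choice of $\alpha$ that balances the weighted $L^2$-bound on $u$ against the pointwise Bergman estimate, and it is precisely this balance that forces the restriction $p < 2+2/n$ (equivalently, $n(p-2)<2$, keeping the relevant weighted integrals convergent). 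Carrying out this balancing step---identifying the optimal $\alpha$ and verifying that the exponents combine to yield exactly $(n+1)(p-2)/(2p)$---is the most delicate part of the argument.
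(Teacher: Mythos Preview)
Your plan has a genuine gap: the bottleneck is the term $\|f_2\chi(-\log(-g_\Omega(\cdot,z)))\|_p$, and no choice of the Donnelly--Fefferman weight $\alpha$ affects it. The parameter $\alpha$ enters only the weighted $L^2$-estimate for $u$; the cutoff term depends solely on $f_2$ and the geometry of $\{g_\Omega(\cdot,z)\le -1\}$. With the only pointwise bound available for general $f_2\in A^2(\Omega)$ being $|f_2(\zeta)|\le K_2(\zeta)^{1/2}\lesssim \delta(\zeta)^{-n}$ (the Bergman inequality on balls), you are stuck with $\int_{S_z}|f_2|^p\le (\sup_{S_z}|f_2|)^{p-2}\|f_2\|_2^2\lesssim \delta(z)^{-n(p-2)}$, hence $\|F\|_p\gtrsim \delta(z)^{-n(p-2)/p}$, which for $n\ge 2$ is strictly weaker than the target $\delta(z)^{-(n+1)(p-2)/(2p)}$. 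The ``careful choice of $\alpha$'' you propose cannot repair this, and your heuristic that $p<2+2/n$ arises from ``keeping the relevant weighted integrals convergent'' is not grounded in the construction you describe: the weight $(-\rho+\varepsilon)^\alpha$ is bounded, so there is no convergence issue.

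The paper's route is structurally different. It passes through an intermediate weighted kernel $K_{-\alpha}$ associated to the weight $\delta^{-\alpha}$ (a \emph{negative} power, not $(-\rho+\varepsilon)^\alpha$), and factors the comparison as $K_p\gtrsim K_{-\alpha}\gtrsim K_2$. The first inequality is where the $(n+1)$ appears: for $f$ extremal for $K_{-\alpha}$, the function $|f|^2\delta^{-\alpha}$ is psh, so the mean-value inequality on \emph{anisotropic polydiscs} of volume $\asymp\delta(\zeta)^{n+1}$ yields $|f(\zeta)|^2\lesssim \delta(\zeta)^{\alpha-(n+1)}$. Plugging this into the co-area formula gives $\int_\Omega|f|^p<\infty$ precisely when $\alpha=(n+1)(p-2)/p$, and the constraint $\alpha<1$ (needed so that $\int\delta^{-\alpha}<\infty$) is exactly $p<2+2/n$. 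The second inequality $K_{-\alpha}\gtrsim K_2$ is then a Donnelly--Fefferman construction into the weighted space $A^2_{-\alpha}$, using Theorem~1.1 of \cite{Chen14} to handle the singular weight $\delta^{-\alpha}$. The key idea you are missing is this anisotropic mean-value step, which extracts the sharp boundary exponent $n+1$ rather than $2n$.
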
     \begin{proof}
     For $0\le \alpha<1$ we define
     $$
     A^2_{-\alpha}(\Omega):=\left\{f\in \mathcal O(\Omega): \|f\|_{-\alpha}^2:=\int_\Omega|f|^2 \delta^{-\alpha}<\infty\right\}.
     $$
     Let $K_{-\alpha}$ denote the Bergman kernel associated to $A^2_{-\alpha}(\Omega)$. We first compare $K_p(z)$ with $K_{-\alpha}(z)$. Let $f\in A^2_{-\alpha}(\Omega)$. Since $\log |f|^2-\alpha\log \delta$ is psh on $\Omega$, so is $|f|^2\delta^{-\alpha}$. Apply the mean-value inequality to the psh function $|f|^2\delta^{-\alpha}$ on certain polydisc with center $z$ and volume $\asymp \delta(z)^{n+1}$, we have 
           \begin{equation}\label{eq:comp_4}
        |f(z)|^2\delta(z)^{-\alpha}\le C_n  \delta(z)^{-n-1} \|f\|_{-\alpha}^2. 
        \end{equation}
       It follows that 
      \begin{eqnarray}\label{eq:comp_4'}
       I_p(\varepsilon,f) & := &  \int_{\{\delta=\varepsilon\}} |f|^p dS\le \sup_{\{\delta=\varepsilon\}} |f|^{p-2}\cdot I_2(\varepsilon,f)
       \nonumber \\
        & \le & C_n \varepsilon^{-\frac{(n+1-\alpha)(p-2)}2}\|f\|_{-\alpha}^{p-2}\cdot I_2(\varepsilon,f).
             \end{eqnarray}
             Here $dS$ denotes the surface element. 
             Note that 
             $$
             \alpha=\frac{(n+1-\alpha)(p-2)}2 \iff \alpha=\frac{(n+1)(p-2)}p
             $$
             and 
             $$
             \alpha<1 \iff p<2+\frac2n.
             $$
             We fix such $\alpha$ and take $f\in A^2_{-\alpha}(\Omega)$ with $f(z)=K_{-\alpha}(z)^{\frac12}$ and $\|f\|_{-\alpha}=1$.    
             For certain $\varepsilon_0\ll1$, 
      \begin{eqnarray*}
      \int_\Omega |f|^p & = & \int_{\{\delta\ge \varepsilon_0\}} |f|^p +\int_0^{\varepsilon_0} I_p(\varepsilon,f)d\varepsilon\\
      & \le & C+ C_n\int_0^{\varepsilon_0}  \varepsilon^{-\alpha}I_2(\varepsilon,f) d\varepsilon \ \ \ \  \ (\text{by}\ \eqref{eq:comp_4'})\\
      & = & C+C_n \|f\|_{-\alpha}^2=C+C_n
            \end{eqnarray*}
          where $C=C_{p,\Omega}$. Thus
          \begin{equation}\label{eq:comp_5}
          K_p(z)^{\frac1p}\ge \frac{|f(z)|}{\|f\|_p}\ge C^{-1} K_{-\alpha}(z)^{\frac12}.
                    \end{equation}
           Next we compare $K_{-\alpha}(z)$  with $K_2(z)$.   Put
$$
\varphi=2ng_\Omega(\cdot,z)-\log(-g_\Omega(\cdot,z)+1).
$$
Take $f\in A^2(\Omega)$ with $f(z)=K_2(z)^{\frac12}$ and $\|f\|_2=1$. If $v$ is given as above, then 
$$
i\bar{v}\wedge v \le |f|^2 |\chi'(-\log(-g_\Omega(\cdot,z)))|^2\, \frac{(-g_\Omega(\cdot,z)+1)^2}{g_\Omega(\cdot,z)^2}\cdot {i\partial \bar\partial{\varphi}}.
$$
 By Theorem 1.1 in \cite{Chen14} we may solve $\bar{\partial}u=v$ with the following estimate
\begin{eqnarray*}
\int_\Omega |u|^2e^{-\varphi}\delta^{-\alpha} & \le & C_{\alpha,\Omega} \int_\Omega |f|^2 |\chi'(-\log(-g_\Omega(\cdot,z)))|^2 \, \frac{(-g_\Omega(\cdot,z)+1)^2}{g_\Omega(\cdot,z)^2}\, e^{-\varphi}\delta^{-\alpha}\\
& \le & C_{\alpha,\Omega} \int_{\{\zeta\in\Omega:\delta(\zeta) \ge C^{-1}\delta(z)|\log\delta(z)|^{-\frac{1}\gamma}\}} |f|^2\delta^{-\alpha}\ \ \ ({\rm by\ }\eqref{eq:Blocki})\\
& \le & C_{\alpha,\Omega}\, \delta(z)^{-\alpha} |\log\delta(z)|^{\frac{\alpha}\gamma}.
\end{eqnarray*}
It follows that 
$$
F:=\chi(-\log(-g_\Omega(\cdot,z)))f-u
$$
is a holomorphic function on $\Omega$ satisfying $F(z)=f(z)=K_2(z)^{\frac12}$ and 
$$
\int_\Omega |F|^2 \delta^{-\alpha}\le C_{\alpha,\Omega}\, \delta(z)^{-\alpha} |\log\delta(z)|^{\frac{\alpha}\gamma}.
$$
Thus 
$$
K_{-\alpha}(z)\ge \frac{|F(z)|^2}{\|F\|_{-\alpha}^2} \ge C_{\alpha,\Omega}^{-1}\,K_2(z) \delta(z)^{\alpha} |\log\delta(z)|^{-\frac{\alpha}\gamma}.
$$
This together with (\ref{eq:comp_5}) yield (\ref{eq:comp_6}).
        \end{proof}
        
\section{$A^p(\Omega)$ as a linear space}
\subsection{The $p-$Bergman projection}
  Let $(X,d)$ be a metric space,   $Y$ a set in $X$ and $x\in X$.  Following \cite{Singer},  we say that $y_0\in Y$ is an element of\/ {\it best approximation} of $x$ if 
$$
d(x,y_0)=\inf_{y\in Y} d(x,y).
$$
Let $\mathcal P_Y(x)$ be the set of all such elements,  which defines  a map $\mathcal P_Y:X\rightarrow Y$ called the\/ {\it metric projection} from $X$ onto $Y$.     $Y$ is called\/ {\it proximinal} (respectively\/ {\it Chebyshev}) if $\mathcal P_Y(x)\neq \emptyset$ (respectively $\mathcal P_Y(x)$ is a singleton) for each $x\in X$.  

\begin{proposition}\label{prop:p-projection}
\begin{enumerate}
\item[$(1)$] $A^p(\Omega)$ is proximinal in $L^p(\Omega)$ for $0<p\le \infty$.
\item[$(2)$] If\/ $h_0\in \mathcal P_{A^p(\Omega)}(f)$ with $1<p<\infty$,  then  
\begin{equation}\label{eq:p-projection}
\int_\Omega |f-h_0|^{p-2} \overline{f-h_0}\cdot h=0,\ \ \ \forall\,h\in A^p(\Omega).
\end{equation} 
Here for a function $g\in L^p(\Omega)$ one defines $|g|^{p-2}\bar{g}=0$ on $g^{-1}(0)$.  
\item[$(3)$]  
Let $1<p<\infty$.  For any $f\in L^p(\Omega)$, 
 $h_0\in \mathcal P_{A^p(\Omega)}(f)$ and $h\in A^p(\Omega)$,  we have 
 \begin{equation}\label{eq:p-projection_2}
 \frac1{4^{p+3}}\|h-h_0\|_p^p \le \|f-h\|_p^p-\|f-h_0\|_p^p,\ \ \  2\le p<\infty,
 \end{equation}
  \begin{eqnarray}\label{eq:p-projection_3}
 C_p^{-1}\|h-h_0\|_p^p & \le & \left(\|f-h\|_p^p-\|f-h_0\|_p^p\right)^{\frac{p}2} \\
 && \times \left(\|f-h\|_p^p+\|f-h_0\|_p^p\right)^{1-\frac{p}2},\ \ \ 1<p\le 2\nonumber
 \end{eqnarray}
 where $C_p$ depends only on $p$.
In particular,  $A^p(\Omega)$ is a Chebyshev set  of $L^p(\Omega)$ for $1<p<\infty$.
\item[$(4)$] If\/ $h_0\in A^p(\Omega)$ satisfies \eqref{eq:p-projection} for some $1<p<\infty$,  then $h_0\in \mathcal P_{A^p(\Omega)}(f)$.  
\end{enumerate}
\end{proposition}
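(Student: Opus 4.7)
My plan is to treat the four parts in order, with the common engine being a variational calculation combined with the elementary inequalities of Proposition~\ref{prop:eleIneqs}.

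For (1), given $f\in L^p(\Omega)$ I would take a minimizing sequence $\{h_j\}\subset A^p(\Omega)$ with $\|f-h_j\|_p\to d:=\inf_{h\in A^p(\Omega)}\|f-h\|_p$. The triangle inequality for $p\ge 1$, or the $p$-subadditive estimate $\|h_j\|_p^p\le\|f\|_p^p+\|f-h_j\|_p^p$ when $0<p<1$, shows that $\{h_j\}$ is bounded in $L^p$; by the Bergman inequality it is normal, so up to a subsequence $h_{j_k}\to h_0\in\mathcal O(\Omega)$ locally uniformly. Fatou's lemma then gives $\|f-h_0\|_p\le d$, hence $h_0\in A^p(\Omega)\cap\mathcal P_{A^p(\Omega)}(f)$. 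The case $p=\infty$ is handled identically via Montel's theorem, since a minimizing sequence is uniformly bounded.

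For (2), fix $h\in A^p(\Omega)$ and set $h_t:=h_0+th$ for $t\in\mathbb C$. Then $J(t):=\|f-h_t\|_p^p$ attains its minimum at $t=0$, and outside the null set $\{f-h_0=0\}$ one has
\[
\frac{\partial|f-h_t|^p}{\partial t}=-\tfrac{p}{2}|f-h_t|^{p-2}\,\overline{(f-h_t)}\,h.
\]
For $|t|\le1$ this derivative is dominated by $\tfrac{p}{2}(|f-h_0|+|h|)^{p-1}|h|$, which lies in $L^1(\Omega)$ by H\"older's inequality (using $p>1$). The dominated convergence theorem yields $\partial_tJ(0)=-\tfrac{p}{2}\int_\Omega|f-h_0|^{p-2}\overline{(f-h_0)}\,h$, and the necessary condition $\partial_tJ(0)=0$ is exactly \eqref{eq:p-projection}. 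This mirrors the proof of Lemma~\ref{lm:var_2}.

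For (3), I would apply Proposition~\ref{prop:eleIneqs} with $a=f-h_0$, $b=f-h$ (so $b-a=h_0-h\in A^p(\Omega)$) and integrate over $\Omega$. When $p\ge 2$, inequality \eqref{eq:eleIneq_3} reads
\[
|f-h|^p\ge|f-h_0|^p+p\,\mathrm{Re}\bigl\{|f-h_0|^{p-2}\overline{(f-h_0)}(h_0-h)\bigr\}+\tfrac{1}{4^{p+3}}|h_0-h|^p,
\]
and by part (2) the middle term integrates to zero, giving \eqref{eq:p-projection_2}. When $1<p\le 2$, \eqref{eq:eleIneq_4} furnishes the analogous inequality with the quadratic term $A_p|h_0-h|^2(|f-h_0|+|f-h|)^{p-2}$; after integration the variational identity again kills the linear piece, so
\[
A_p\int_\Omega|h_0-h|^2(|f-h_0|+|f-h|)^{p-2}\le\|f-h\|_p^p-\|f-h_0\|_p^p.
\]
Splitting $|h_0-h|^p=\bigl(|h_0-h|^2(|f-h_0|+|f-h|)^{p-2}\bigr)^{p/2}\cdot(|f-h_0|+|f-h|)^{p(2-p)/2}$ and applying H\"older's inequality with conjugate exponents $2/p$ and $2/(2-p)$, exactly as in the proof of Lemma~\ref{lm:H-positive_2}, together with the convexity bound $\int_\Omega(|f-h_0|+|f-h|)^p\le 2^{p-1}(\|f-h_0\|_p^p+\|f-h\|_p^p)$, produces \eqref{eq:p-projection_3}. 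The Chebyshev property is then immediate: if $h$ also lies in $\mathcal P_{A^p(\Omega)}(f)$, the right-hand sides of \eqref{eq:p-projection_2}/\eqref{eq:p-projection_3} vanish, forcing $h=h_0$.

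Finally, (4) is essentially (3) read in reverse. Suppose $h_0\in A^p(\Omega)$ satisfies \eqref{eq:p-projection}. For an arbitrary $h\in A^p(\Omega)$, the function $h_0-h$ again lies in $A^p(\Omega)$, so hypothesis \eqref{eq:p-projection} makes the linear term in \eqref{eq:eleIneq_3} (respectively \eqref{eq:eleIneq_4}) integrate to zero; discarding the nonnegative remainder yields $\|f-h\|_p^p\ge\|f-h_0\|_p^p$, i.e., $h_0\in\mathcal P_{A^p(\Omega)}(f)$. The main technical obstacle throughout is the justification of differentiation under the integral sign in (2): because $f$ is only $L^p$ rather than holomorphic, the zero set $\{f-h_0=0\}$ may have positive measure and $|f-h_t|^{p-2}$ can blow up there, so one cannot rely on any local analytic control of $f-h_t$; instead the uniform pointwise bound $(|f-h_0|+|h|)^{p-1}|h|$, together with H\"older's inequality for $p>1$, is what carries the argument through.
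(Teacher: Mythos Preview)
Your argument for parts (1)--(3) is correct and coincides with the paper's proof essentially line for line: the same normal-family/Fatou argument for (1), the same calculus-of-variations computation (mirroring Lemma~\ref{lm:var_2}) for (2), and the same application of \eqref{eq:eleIneq_3}/\eqref{eq:eleIneq_4} followed by H\"older for (3).

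For part (4) you take a different route from the paper. You reuse the pointwise inequalities \eqref{eq:eleIneq_3}/\eqref{eq:eleIneq_4} with $a=f-h_0$, $b=f-h$: since the linear term integrates to zero by hypothesis, the nonnegative remainder forces $\|f-h\|_p^p\ge\|f-h_0\|_p^p$ directly. This is correct and self-contained, but requires splitting into the cases $p\ge2$ and $1<p\le2$. The paper instead picks an actual minimizer $h_0'\in\mathcal P_{A^p(\Omega)}(f)$ (available by (1)) and writes
\[
\|f-h_0\|_p^p=\int_\Omega|f-h_0|^{p-2}\overline{f-h_0}\,(f-h_0)=\int_\Omega|f-h_0|^{p-2}\overline{f-h_0}\,(f-h_0')\le\|f-h_0\|_p^{p-1}\|f-h_0'\|_p,
\]
the middle equality using \eqref{eq:p-projection} with $h=h_0-h_0'$; this gives $\|f-h_0\|_p\le\|f-h_0'\|_p$, and then uniqueness from (3) forces $h_0=h_0'$. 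The paper's argument works uniformly in $1<p<\infty$ via a single H\"older step but leans on both (1) and the Chebyshev conclusion of (3); yours avoids that circularity at the cost of the case split.
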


\begin{proof}
$(1)$ Take a sequence $\{h_j\}\subset A^p(\Omega)$ such that $\|f-h_j\|_p\rightarrow d:=\inf_{h\in A^p(\Omega)}\|f-h\|_p$.  Since 
$$
\|h_j\|_p\le \|f\|_p+\|f-h_j\|_p\le C<\infty,  
$$
it follows from the Bergman inequality that there is a subsequence $\{h_{j_k}\}$ converges locally uniformly to some $h_0\in \mathcal O(\Omega)$,  which satisfies $\|h_0\|_p\le C$.  Thus
$$
d\le \|f-h_0\|_p\le \liminf_{k\rightarrow \infty} \|f-h_{j_k}\|_p=d,
$$
so that $h_0\in \mathcal P_{A^p(\Omega)}(f)$.

$(2)$  For every $h\in A^p(\Omega)$ the function $J(t):=\|f-h_0+th\|_p^p$ attains the  minimum at $t=0$.  Thus
$$
0=\frac{\partial J}{\partial t}(0)=\frac{p}2 \int_\Omega |f-h_0|^{p-2} \overline{f-h_0}\cdot h.
$$

$(3)$ We first consider the case $2\le p<\infty$.  
Substitute  $a=f-h_0$,   $b=f-h$ into \eqref{eq:eleIneq_3}  and integration over $\Omega$,  we obtain
\begin{eqnarray*}
\frac1{4^{p+3}}\int_\Omega |h-h_0|^p & \le & \|f-h\|_p^p-\|f-h_0\|_p^p\\
&& -p\mathrm{Re}\int_\Omega |f-h_0|^{p-2}\overline{f-h_0}(h_0-h)\\
& = & \|f-h\|_p^p-\|f-h_0\|_p^p
\end{eqnarray*} 
in view of \eqref{eq:p-projection}.  

Next,  suppose  $1<p\le 2$.  
 Similar as above,  we use  \eqref{eq:eleIneq_4} instead of \eqref{eq:eleIneq_3} to get 
$$
 A_p \int_\Omega |h-h_0|^2(|f-h_0|+| f-h |)^{p-2}\\
 \le  \|f-h\|_p^p-\|f-h_0\|_p^p.
$$
On the other hand,  H\"older's inequality gives
\begin{eqnarray*}
 \int_\Omega |h-h_0|^p
& \le & \left\{\int_\Omega |h-h_0|^2(|f-h_0|+| f-h |)^{p-2} \right\}^{\frac{p}2} \left\{\int_\Omega ( |f-h_0|+|f-h| )^{p} \right\}^{1-\frac{p}2}\\
& \le & C_p \left(\|f-h\|_p^p-\|f-h_0\|_p^p\right)^{\frac{p}2} \left(\|f-h\|_p^p+\|f-h_0\|_p^p\right)^{1-\frac{p}2}.
\end{eqnarray*}

$(4)$
Take $h_0'\in \mathcal P_{A^p(\Omega)}(f)$ so that
$$
\int_\Omega |f-h_0'|^{p-2} \overline{f-h_0'}\cdot h=0,\ \ \ \forall\,h\in A^p(\Omega).
$$
Since $h_0-h_0'\in A^p(\Omega)$,  we have
\begin{eqnarray*}
\|f-h_0\|_p^p & = & \int_\Omega |f-h_0|^{p-2}\overline{f-h_0}\cdot (f-h_0)\\
&  = & \int_\Omega |f-h_0|^{p-2}\overline{f-h_0}\cdot (f-h_0')\\
& \le & \|f-h_0\|_p^{p-1} \|f-h_0'\|_p,
\end{eqnarray*} 
so that $\|f-h_0\|_p\le \|f-h_0'\|_p$,  which in turn implies $h_0=h_0'$ by uniqueness of the best approximation.
\end{proof}

\begin{remark}
For $p=2$,  the Pythagorean theorem yields
$$
\|h-h_0\|_2^2=\|f-h\|_2^2-\|f-h_0\|_2^2,
$$
which implies that \eqref{eq:p-projection_2} and \eqref{eq:p-projection_3} are optimal.
\end{remark}

  For $f\in L^p(\Omega)\ (1<p<\infty)$  let $P_p(f)\in A^p(\Omega)$ be the unique element of  best approximation  of $f$.   It is reasonable to call the  metric projection
$P_p:f\mapsto P_p(f)$ the\/ {\it $p-$Bergman projection} on $\Omega$.  Clearly,  the restriction of $P_p$ to $A^p(\Omega)$ is the identity map.  Note that $P_2$ is the standard Bergman projection.  

\begin{proposition}
\begin{enumerate}
\item[$(1)$] If $f\in L^p(\Omega)$ is a function such that $\bar{\partial}f$ is well-defined,  then $Q_p(f):=f-P_p(f)$ is the $L^p-$minimal solution of $\bar{\partial}u=\bar{\partial} f.$
\item[$(2)$]  For any $f\in L^p(\Omega)$ and $F\in \mathrm{Aut}(\Omega)$,  we have 
$$
P_p(f)\circ F\cdot J_F^{2/p}=P_p\left(f\circ F\cdot J_F^{2/p}\right)
$$
whenever $\Omega$ is simply-connected; in particular,  if $f\in P_p^{-1}(0)$,  then $f\circ F\cdot J_F^{2/p}\in P_p^{-1}(0)$.
\item[$(3)$] There exists a constant $C_p>0$ such that for all $f,g\in L^p(\Omega)$ 
$$
\|P_p(f)-P_p(g)\|_p \le
\left\{
\begin{array}{ll}
 C_p (\|f\|_p+\|f-g\|_p)^{1-1/p}\|f-g\|_p^{1/p} & \text{for\ } 2< p<\infty;\\
 C_p (\|f\|_p+\|f-g\|_p)^{1/2}\|f-g\|_p^{1/2}  & \text{for\ } 1 < p \le 2.
\end{array}
\right.
$$
\item[$(4)$] For any $f\in L^\infty(\Omega)$,  there exists a sequence $p_j\uparrow \infty$ such that $P_{p_j}(f)$ converges locally uniformly to some $h_\infty\in A^\infty(\Omega)$ which is an element of best approximation of $f$ in $L^\infty(\Omega)$.
\item[$(5)$] Let $d_p(f)$ denote the distance of  $f\in L^p(\Omega)$ to $A^p(\Omega)$ for $0<p\le \infty$.  Then we have 
$$
d_\infty(f)=\lim_{p\rightarrow \infty} d_p(f),\ \ \ \forall\,f\in L^\infty(\Omega).
$$
\end{enumerate}

\end{proposition}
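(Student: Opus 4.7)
The plan is to dispatch parts $(1)$ and $(2)$ directly from the definition of $P_p$, reduce $(3)$ to the inequalities \eqref{eq:p-projection_2} and \eqref{eq:p-projection_3} of Proposition \ref{prop:p-projection}, and devote the main effort to $(4)$, from which $(5)$ falls out by a subsequence argument; the chief technical difficulty lies in $(4)$.

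For $(1)$, since $P_p(f)\in A^p(\Omega)$ is holomorphic we have $\bar\partial Q_p(f)=\bar\partial f$; any other $L^p$-solution $u$ satisfies $f-u\in A^p(\Omega)$, so $\|u\|_p\ge\|f-P_p(f)\|_p=\|Q_p(f)\|_p$ by the minimizing property of $P_p$. For $(2)$, simple connectedness of $\Omega$ together with nonvanishing of $J_F$ allows a single-valued holomorphic branch of $J_F^{2/p}$ on $\Omega$; the operator $T_F:g\mapsto g\circ F\cdot J_F^{2/p}$ is then an isometry of $L^p(\Omega)$ by the change-of-variables formula and restricts to a bijection of $A^p(\Omega)$. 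Consequently $T_F(P_p(f))$ solves the same minimizing problem as $P_p(T_Ff)$, and uniqueness of best approximation (Proposition \ref{prop:p-projection}$(3)$) forces the stated identity.

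For $(3)$, set $h_f:=P_p(f)$ and $h_g:=P_p(g)$. The triangle inequality together with $\|g-h_g\|_p\le\|g-h_f\|_p$ yields $\|f-h_g\|_p\le 2\|f-g\|_p+\|f-h_f\|_p$, and combined with $\|f-h_f\|_p\le\|f\|_p$ (from comparison with $0\in A^p(\Omega)$) and the elementary estimate $a^p-b^p\le p\,a^{p-1}(a-b)$ one obtains
\[
\|f-h_g\|_p^p-\|f-h_f\|_p^p\le C_p(\|f\|_p+\|f-g\|_p)^{p-1}\|f-g\|_p .
\]
Substituting this into \eqref{eq:p-projection_2} produces the $1/p$-H\"older estimate when $p\ge 2$. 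For $1<p\le 2$, I substitute into \eqref{eq:p-projection_3}, using additionally $\|f-h_g\|_p^p+\|f-h_f\|_p^p\le C(\|f\|_p+\|f-g\|_p)^p$; the exponents $p(p-1)/2$ and $p(1-p/2)$ add to $p/2$ and yield the $1/2$-H\"older estimate.

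The core argument is $(4)$, from which $(5)$ follows. The bound $\|P_p(f)\|_p\le 2\|f\|_p\le 2\|f\|_\infty|\Omega|^{1/p}$ together with the Bergman inequality gives $\sup_S|P_p(f)|\le C_S^{1/p}\cdot 2\|f\|_\infty|\Omega|^{1/p}$ on every compact $S\subset\Omega$, a right-hand side that remains bounded as $p\to\infty$. Thus $\{P_p(f)\}$ is a normal family; extract a subsequence $P_{p_j}(f)\to h_\infty$ locally uniformly, with $h_\infty\in\mathcal O(\Omega)$, and passing to the limit in the bound yields $\|h_\infty\|_{L^\infty(S)}\le 2\|f\|_\infty$ for every compact $S$, whence $h_\infty\in A^\infty(\Omega)$ with $\|h_\infty\|_\infty\le 2\|f\|_\infty$. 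To see $h_\infty$ is an element of best approximation of $f$ in $L^\infty(\Omega)$, fix $h\in A^\infty(\Omega)$; then $\|f-P_{p_j}(f)\|_{p_j}\le\|f-h\|_{p_j}\to\|f-h\|_\infty$ as $j\to\infty$. For the matching lower bound, take a compact exhaustion $S_n\nearrow\Omega$: uniform convergence of $P_{p_j}(f)$ to $h_\infty$ on $S_n$ combined with the classical fact that $\|g\|_{L^{p}(S_n)}\to\|g\|_{L^\infty(S_n)}$ as $p\to\infty$ for bounded measurable $g$ gives $\liminf_j\|f-P_{p_j}(f)\|_{p_j}\ge\liminf_j\|f-P_{p_j}(f)\|_{L^{p_j}(S_n)}=\|f-h_\infty\|_{L^\infty(S_n)}$, so $\|f-h_\infty\|_{L^\infty(S_n)}\le\|f-h\|_\infty$ for every $n$, and letting $n\to\infty$ proves $\|f-h_\infty\|_\infty\le\|f-h\|_\infty$. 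Part $(5)$ is then immediate: the same two inequalities force $\lim_j\|f-P_{p_j}(f)\|_{p_j}=\|f-h_\infty\|_\infty=d_\infty(f)$, and since every subsequence of $\{P_p(f)\}$ admits a further locally uniformly convergent subsubsequence producing the same numerical limit $d_\infty(f)$ (though possibly a different minimizer), the full limit $\lim_{p\to\infty}d_p(f)=d_\infty(f)$ follows. The main obstacle is combining in $(4)$ a locally uniform limit $P_{p_j}(f)\to h_\infty$ with varying $L^{p_j}$ norms, that is, passing to the limit simultaneously in the exponent and in a non-stationary integrand while upgrading the locally uniform bound on $h_\infty$ to a global $L^\infty$ control.
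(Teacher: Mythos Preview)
Your argument is correct and follows essentially the same route as the paper for parts $(1)$--$(4)$: the minimality characterization for $(1)$, the isometry $T_F$ for $(2)$, the reduction to \eqref{eq:p-projection_2}--\eqref{eq:p-projection_3} via $a^p-b^p\le pa^{p-1}(a-b)$ for $(3)$, and the normal-family extraction plus localization to compacta for $(4)$. The one place you diverge is $(5)$: the paper first observes that $|\Omega|^{-1/p}d_p(f)$ is nondecreasing in $p$ (from H\"older's inequality) so the full limit $\lim_{p\to\infty}d_p(f)$ exists a priori, and only then identifies it; you instead invoke the subsequence--subsubsequence principle, re-running the normal-family argument of $(4)$ along an arbitrary sequence $p_k\to\infty$. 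Both are valid; the paper's monotonicity step is slightly cleaner since it separates existence of the limit from its identification, whereas your route reuses $(4)$ wholesale but requires tracking that the numerical limit $d_\infty(f)$ is independent of the extracted subsequence even though the limiting minimizer $h_\infty$ may not be.
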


\begin{proof}

 $(1)$  If $u'$ is another $L^p-$solution,  then $h:=Q_p(f)-u'\in A^p(\Omega)$ and we have 
$$
\|u'\|_p=\|Q_p(f)-h\|_p=\|f-(P_p(f)+h)\|_p\ge \|f-P_p(f)\|_p = \|Q_p(f)\|_p.
$$

$(2)$ For any $h\in A^p(\Omega)$,  we have
$$
\int_\Omega |f-h|^p\ge \int_\Omega |f-P_p(f)|^p,
$$
so that
$$
\int_\Omega |f\circ F-h\circ F|^p |J_F|^2 \ge \int_\Omega |f\circ F-P_p(f)\circ F|^p|J_F|^2,
$$
that is
$$
\int_\Omega \left|f\circ F\cdot J_F^{2/p}-h\circ F \cdot J_F^{2/p} \right|^p 
 \ge \int_\Omega \left|f\circ F \cdot J_F^{2/p} -P_p(f)\circ F \cdot J_F^{2/p} \right|^p.
$$
Note that 
$f\in L^p(\Omega)$ (respectively $h\in A^p(\Omega)$) if and only if $f\circ F\cdot J_F^{2/p}\in L^p(\Omega)$ (respectively $h\circ F \cdot J_F^{2/p}\in A^p(\Omega)$),   thus
$$
P_p(f)\circ F\cdot J_F^{2/p}=P_p\left(f\circ F\cdot J_F^{2/p}\right).
$$

$(3)$ Since $\|Q_p(f)\|_p$ is the distance from $f$ to $A^p(\Omega)$,  it is easy to see that
$$
\left| \|Q_p(f)\|_p-\|Q_p(g)\|_p\right|\le \|f-g\|_p,\ \ \ \forall\,f,g\in L^p(\Omega).
$$ 
We first consider the case $2\le p<\infty$.  
By \eqref{eq:p-projection_2},  we have
\begin{eqnarray*}
\frac1{4^{p+3}}\int_\Omega |P_p(f)-P_p(g)|^p & \le & \|f-P_p(g)\|_p^p-\|f-P_p(f)\|_p^p\\
& \le & p\left(\|f-P_p(g)\|_p+\|f-P_p(f)\|_p\right)^{p-1}\\
&& \times \left( \|f-P_p(g)\|_p- \|f-P_p(f)\|_p\right)\\
& \le & p\left(\|f-g\|_p+\|Q_p(g)\|_p+\|Q_p(f)\|_p\right)^{p-1}\\
&& \times \left( \|f-g\|_p+\|Q_p(g)\|_p- \|Q_p(f)\|_p\right)\\
& \le & C_p' (\|f\|_p+\|f-g\|_p)^{p-1}\|f-g\|_p.
\end{eqnarray*}
The case $1<p\le 2$ can be verified analogously by using \eqref{eq:p-projection_3} instead of \eqref{eq:p-projection_2}.

$(4)$ Set $h_p:=P_p(f)$.  Since
$$
\|h_p\|_p=\|f-Q_p(f)\|_p\le \|f\|_p+\|Q_p(f)\|_p\le 2\|f\|_p\le 2\|f\|_\infty |\Omega|^{\frac1p},
$$
it follows from the Bergman inequality that for any $z\in \Omega$
$$
|h_p(z)|\le \left\{C_n\delta(z)^{-2n}\right\}^{\frac1p}\|h_p\|_p\le 2 \left\{C_n|\Omega|\delta(z)^{-2n}\right\}^{\frac1p}\|f\|_\infty,
$$
so that $\{h_p\}$ forms a normal family and there exists a sequence $h_{p_j}$ converges locally uniformly to certain $h_\infty\in \mathcal O(\Omega)$.  Moreover,  
$$
|h_\infty(z)|=\lim_{j\rightarrow \infty} |h_{p_j}(z)|\le 2\|f\|_\infty,\ \ \ \forall\,z\in \Omega.
$$
On the other hand,  we have
\begin{equation}\label{eq:proj-infin_1}
\|f-h_{p_j}\|_{p_j}\le \|f-h\|_{p_j},\ \ \ \forall\,h\in A^\infty(\Omega).
\end{equation}
For any open set $U\subset\subset \Omega$,  we have
\begin{equation}\label{eq:proj-infin_2}
\|f-h_{p_j}\|_{L^{p_j}(\Omega)} \ge \|f-h_{p_j}\|_{L^{p_j}(U)}\ge \|f-h_{\infty}\|_{L^{p_j}(U)}-\|h_{p_j}-h_\infty\|_{L^{p_j}(U)}.
\end{equation}
Note that
\begin{equation}\label{eq:proj-infin_3}
\|h_{p_j}-h_\infty\|_{L^{p_j}(U)} \le \|h_{p_j}-h_\infty\|_{L^{\infty}(U)} |U|^{\frac1{p_j}}\rightarrow 0\ \ \ (j\rightarrow \infty).
\end{equation}
Use $\eqref{eq:proj-infin_1}\sim \eqref{eq:proj-infin_3}$ with $p_j\rightarrow \infty$,  we obtain
$$
\|f-h_\infty\|_{L^\infty(U)}\le \|f-h\|_{L^\infty(\Omega)},\ \ \ \forall\,h\in A^\infty(\Omega).
$$ 
Since $U$ can be arbitrarily  chosen,  we have
$$
\|f-h_\infty\|_\infty\le \|f-h\|_\infty,\ \ \ \forall\,h\in A^\infty(\Omega),
$$
i.e.,  $h_\infty$ is an element of best approximation of $f$ in $L^\infty(\Omega)$.

\item[$(5)$] We first show that $c=\lim_{p\rightarrow \infty} d_p(f)$ exists.  Note that for $p'>p>1$
$$
d_p(f)=\|Q_p(f)\|_p\le \|Q_{p'}(f)\|_p\le \|Q_{p'}(f)\|_{p'}|\Omega|^{\frac1p-\frac1{p'}}=d_{p'}(f)|\Omega|^{\frac1p-\frac1{p'}}.
$$
This means that $|\Omega|^{-\frac1p}d_p(f)$ is nondecreasing in $p$,  so that  $c=\lim_{p\rightarrow \infty} d_p(f)$ exists. 

Next,  since 
$$
d_p(f) \le \|f-h\|_p,\ \ \ \forall\,h\in A^\infty(\Omega),  
$$
so by letting $p\rightarrow \infty$ we obtain $c\le \|f-h\|_\infty$ for all $h\in A^\infty(\Omega)$,  i.e.,  $c\le d_\infty(f)$. On the other hand,  it follows from \eqref{eq:proj-infin_2} and \eqref{eq:proj-infin_3} that
$$
c \ge \|f-h_\infty\|_{L^\infty(U)}
$$ 
for any  open set $U\subset\subset \Omega$.
Thus $c\ge \|f-h_\infty\|_{L^\infty(\Omega)}=d_\infty(f)$.
\end{proof}

\begin{problem}
Is it possible to conclude that $h_p\rightarrow h_\infty$  locally uniformly  as $p\rightarrow \infty$?
\end{problem}

The following elementary properties of $P_p$  are known even in more general cases (compare \cite{Singer}).  

\begin{proposition}\label{prop:proj-properties}
\begin{enumerate}
\item[$(1)$]  $L^p(\Omega)$ admits a decomposition as follows
$$
L^p(\Omega)=A^p(\Omega)\oplus P_p^{-1}(0),\ \ \ f=P_p(f)+Q_p(f).
$$
\item[$(2)$] $P_p^{-1}(0)$ is  $\mathbb C-$starlike in the following sense
$$
f\in P_p^{-1}(0)\Rightarrow cf\in P_p^{-1}(0),\ \ \ \forall\,c\in \mathbb C.
$$
\item[$(3)$] $P_p$ is a linear operator if and only if $P_p^{-1}(0)$ is a linear subspace of $L^p(\Omega).$
\end{enumerate}
\end{proposition}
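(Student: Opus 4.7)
My plan is to derive all three assertions from the variational characterization of best approximations already established in Proposition 8.1, parts (2) and (4): for $1<p<\infty$ and $f \in L^p(\Omega)$, an element $h_0 \in A^p(\Omega)$ equals $P_p(f)$ if and only if
$$
\int_\Omega |f-h_0|^{p-2}\,\overline{f-h_0}\cdot h = 0, \ \ \ \forall\,h\in A^p(\Omega).
$$
Specializing to $h_0=0$ gives the convenient description that $f\in P_p^{-1}(0)$ if and only if $\int_\Omega |f|^{p-2}\bar f\cdot h = 0$ for every $h\in A^p(\Omega)$, which I shall refer to as the orthogonality relation. Given this, none of the three parts requires any new analytic input; the work is essentially bookkeeping.

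For (1), I will first dispose of existence by writing $f = P_p(f) + Q_p(f)$ and checking $Q_p(f)\in P_p^{-1}(0)$; this is immediate because the orthogonality relation characterizing $P_p(f)$ is precisely the orthogonality relation for $Q_p(f)$ at $h_0=0$, so Proposition 8.1(4) forces $P_p(Q_p(f))=0$. For uniqueness, I suppose $f = h+g$ with $h\in A^p(\Omega)$ and $g\in P_p^{-1}(0)$; substituting $g = f-h$ into the orthogonality relation for $g$ produces the characterizing relation for $h$ as $P_p(f)$, forcing $h = P_p(f)$ and hence $g = Q_p(f)$.

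Part (2) amounts to the scaling identity $|cf|^{p-2}\overline{cf} = |c|^{p-2}\bar c\,|f|^{p-2}\bar f$, which shows that the orthogonality relation is preserved under multiplication by any $c\in\mathbb{C}$ (the case $c=0$ being trivial, with the convention $|g|^{p-2}\bar g \equiv 0$ on $g^{-1}(0)$ already adopted in Proposition 8.1). For (3), the forward implication is the standard fact that the kernel of any linear map is a subspace. For the converse I use (1) and (2): given $f_1,f_2\in L^p(\Omega)$ and $\alpha,\beta\in\mathbb{C}$, I decompose each $f_i = P_p(f_i)+Q_p(f_i)$ and write
$$
\alpha f_1+\beta f_2 = \bigl[\alpha P_p(f_1)+\beta P_p(f_2)\bigr] + \bigl[\alpha Q_p(f_1)+\beta Q_p(f_2)\bigr];
$$
the first bracket lies in $A^p(\Omega)$, while by (2) combined with the assumed linearity of $P_p^{-1}(0)$ the second bracket lies in $P_p^{-1}(0)$. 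Uniqueness from (1) then identifies these brackets with $P_p(\alpha f_1+\beta f_2)$ and $Q_p(\alpha f_1+\beta f_2)$ respectively, proving linearity of $P_p$.

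I anticipate no genuine obstacle: the proposition is essentially a repackaging of Proposition 8.1 in the language of the decomposition $L^p(\Omega) = A^p(\Omega)\oplus P_p^{-1}(0)$. The only subtlety worth flagging is the nonlinearity of the duality map $g\mapsto |g|^{p-2}\bar g$ for $p\neq 2$, which is precisely the reason (2) can only assert $\mathbb{C}$-starlikeness of $P_p^{-1}(0)$ rather than its linearity, and therefore the reason (3) is a nontrivial dichotomy rather than a tautology.
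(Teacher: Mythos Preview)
Your proposal is correct. The route differs slightly from the paper's: the paper argues directly from the defining minimality of $P_p$ rather than through the orthogonality characterization of Proposition~8.1. For (1), the paper simply observes that
\[
\|Q_p(f)-h\|_p=\|f-(P_p(f)+h)\|_p\ge \|f-P_p(f)\|_p=\|Q_p(f)\|_p
\]
gives $Q_p(f)\in P_p^{-1}(0)$, and records $A^p(\Omega)\cap P_p^{-1}(0)=\{0\}$ (any $h\in A^p(\Omega)$ is its own best approximation). For (2), the paper invokes the homogeneity $P_p(cf)=cP_p(f)$, which is immediate from the definition of the metric projection. Your approach via the orthogonality relation is equally valid and arguably more systematic; in (1) you also go a bit further by proving \emph{uniqueness} of the decomposition explicitly, which the paper leaves implicit in the $\oplus$ notation (and which, since $P_p^{-1}(0)$ is not a subspace, does not follow merely from the trivial intersection). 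Part (3) is essentially identical in both treatments.
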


\begin{proof}
$(1)$ Since 
$$
\|f-(P_p(f)+h)\|_p\ge \|f-P_p(f)\|_p,\  \ \ \forall\,h\in A^p(\Omega),
$$
it follows that $Q_p(f)\in P_p^{-1}(0)$.  It is  also easy to verify that $A^p(\Omega)\cap P_p^{-1}(0)=\{0\}$.     Thus $L^p(\Omega)=A^p(\Omega)\oplus P_p^{-1}(0)$.   

$(2)$ By definition of $P_p$,  we have $P_p(cf)=cP_p(f)$,  $\forall\,c\in \mathbb C,\,f\in L^p(\Omega)$.   Thus 
$$
f\in P_p^{-1}(0)\Rightarrow cf\in P_p^{-1}(0),\ \ \ \forall\,c\in \mathbb C.
$$

$(3)$
 For any $f_1,f_2\in L^p(\Omega)$ and $c_1,c_2\in \mathbb C$,    we have
$$
P_p(c_1f_1+c_2f_2)+Q_p( c_1f_1+c_2f_2 ) = c_1f_1+c_2f_2 = c_1P_p(f_1)+c_2P_p(f_2)+ c_1 Q_p(f_1)+c_2Q_p(f_2).
$$
Thus
 $$
 P_p(c_1f_1+c_2f_2)=c_1P_p(f_1)+c_2P_p(f_2) \Rightarrow Q_p(c_1f_1+c_2f_2)=c_1 Q_p(f_1)+c_2Q_p(f_2);
 $$
 in particular,  if $P_p$ is linear,  then $P_p^{-1}(0)$ is a linear subspace of $L^p(\Omega)$.  On the other hand,  if $P^{-1}(0)$ is linear,  then $c_1 Q_p(f_1)+c_2Q_p(f_2)\in P_p^{-1}(0)$,  so that
 $$
 \| c_1 Q_p(f_1)+c_2Q_p(f_2)\|_p\le   \| c_1 Q_p(f_1)+c_2Q_p(f_2)-h\|_p,\ \ \ \forall\,h\in A^p(\Omega), 
 $$
 that is
 $$
 \| (c_1f_1+c_2f_2)- (c_1P_p(f_1)+c_2P_p(f_2))\|_p \le \| (c_1f_1+c_2f_2)- (c_1P_p(f_1)+c_2P_p(f_2))-h \|_p
 $$
 for all $h\in A^p(\Omega)$,  i.e.,  $P_p(c_1f_1+c_2f_2)=c_1P_p(f_1)+c_2P_p(f_2)$.

\end{proof}

\begin{proposition}
If\/ $\Omega$ is the unit disc $\Delta$,  then we have
\begin{enumerate}
\item[$(1)$] $P_p$ is nonlinear when $p\neq 2;$
\item[$(2)$] $P_p^{-1}(0)$ contains an infinite-dimensional linear subspace. 
\end{enumerate}
\end{proposition}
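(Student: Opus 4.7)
The natural approach is to prove (2) first, since its construction furnishes the counterexamples for (1). For (2), I would exhibit the explicit subspace
\[
E:=\mathrm{span}\bigl\{\bar z\,|z|^{2k}:k\ge 0\bigr\}\subset L^p(\Delta).
\]
Any $f\in E$ has the form $f(z)=\bar z\,\phi(|z|^2)$ for some polynomial $\phi$, so
\[
|f|^{p-2}\bar f=|z|^{p-2}\,|\phi(|z|^2)|^{p-2}\,\overline{\phi(|z|^2)}\cdot z
\]
is the product of $z$ and a purely radial function. Testing against the monomials $z^n$ ($n\ge 0$), which span a dense subspace of $A^p(\Delta)$ and on which the continuous linear functional $h\mapsto\int_\Delta|f|^{p-2}\bar f\,h\,dA$ is determined (because $|f|^{p-2}\bar f\in L^q(\Delta)$ with $1/p+1/q=1$), angular integration yields zero for each $n\ge 0$. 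Proposition \ref{prop:p-projection}(4) then places $E\subset P_p^{-1}(0)$, and since the functions $\bar z\,|z|^{2k}$ are clearly linearly independent, $E$ is infinite-dimensional.

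For (1), I would invoke Proposition \ref{prop:proj-properties}(3): it suffices to exhibit $f_1,f_2\in P_p^{-1}(0)$ whose sum is not in $P_p^{-1}(0)$. Take $f_1=\bar z\in E$ and $f_2=c\bar z^2$ for small $c\in\mathbb C\setminus\{0\}$; the membership $f_2\in P_p^{-1}(0)$ follows from the same monomial-testing argument, since $|c\bar z^2|^{p-2}\overline{c\bar z^2}$ is $z^2$ times a radial function. Using the elementary identity $|\bar z+c\bar z^2|^2=|z|^2|1+\bar c z|^2$, testing against $h\equiv 1$ gives
\[
I(c):=\int_\Delta|\bar z+c\bar z^2|^{p-2}(z+\bar c z^2)\,dA=\int_\Delta|z|^{p-2}|1+\bar c z|^{p-2}\,z(1+\bar c z)\,dA.
\]
Expanding $|1+\bar c z|^{p-2}=1+(p-2)\,\mathrm{Re}(\bar c z)+O(|c|^2)$ uniformly on $\overline{\Delta}$, the constant term and the pure-$z^2$ term drop out under angular integration, and I expect to be left with
\[
I(c)=\tfrac{p-2}{2}\,c\int_\Delta|z|^p\,dA+O(|c|^2)=\frac{c\,(p-2)\,\pi}{p+2}+O(|c|^2),
\]
which is nonzero for any $p\ne 2$ once $|c|$ is sufficiently small. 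Proposition \ref{prop:p-projection}(2) then forces $\bar z+c\bar z^2\notin P_p^{-1}(0)$, so $P_p^{-1}(0)$ fails to be a linear subspace and $P_p$ is nonlinear.

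The step that will require the most care is justifying the uniform expansion of $|1+\bar c z|^{p-2}$ on $\overline{\Delta}$, since for $p<2$ this factor is potentially singular. However $1+\bar c z$ vanishes only at $z=-\bar c^{-1}$, which for $|c|<1$ lies outside $\overline{\Delta}$, and in fact $|1+\bar c z|\ge 1-|c|>0$ on $\overline{\Delta}$; hence $|1+\bar c z|^{p-2}$ is uniformly bounded there in both regimes $p>2$ and $1<p<2$. The binomial expansion of $(1+s)^{(p-2)/2}$ with $s=2\,\mathrm{Re}(\bar c z)+|c|^2|z|^2$ then produces a remainder bounded by $C(p)|c|^2$ uniformly in $z$, and after multiplication by the integrable weight $|z|^{p-2}$ (valid because $p>1$) this gives the $O(|c|^2)$ control. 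Everything else reduces to angular-integration bookkeeping, completing both claims.
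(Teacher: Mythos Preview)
Your argument is correct. For part~(1) it coincides with the paper's proof almost verbatim: both compute $\int_\Delta |\bar z+t\bar z^2|^{p-2}\overline{\bar z+t\bar z^2}$ via the factorisation $\bar z(1+t\bar z)$, expand $|1+t\bar z|^{p-2}$ to first order, and extract the nonvanishing term $\tfrac{p-2}{2}\,t\int_\Delta |z|^p$.

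For part~(2) your route differs genuinely from the paper's and is cleaner. The paper works with \emph{radial} functions $\lambda(|z|)$; for these the angular integration kills $\int |\lambda|^{p-2}\bar\lambda\cdot z^k$ only for $k\ge 1$, leaving the scalar constraint $\int_0^1 r|\lambda(r)|^{p-2}\lambda(r)\,dr=0$. To produce a linear subspace satisfying this nonlinear constraint, the paper takes $f_j=(\chi_{\mathcal A_{2j-1}}-\chi_{\mathcal A_{2j}})\eta(|z|)$ on pairwise disjoint annuli, so that for any linear combination $f=\sum c_jf_j$ the disjoint supports give $|f|^{p-2}\bar f=\sum |c_j|^{p-2}\bar c_j\,|f_j|^{p-2}\bar f_j$, reducing the check to each $f_j$. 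Your choice $E=\mathrm{span}\{\bar z\,|z|^{2k}\}$ exploits a different mechanism: every $f\in E$ carries the single angular mode $e^{-i\theta}$, so $|f|^{p-2}\bar f$ is $z$ times a radial function and is therefore orthogonal (in the duality sense) to \emph{all} monomials $z^n$, $n\ge 0$, with no residual scalar constraint. This avoids the annulus bookkeeping entirely; the price is that you must invoke density of polynomials in $A^p(\Delta)$, which the paper's test against all $z^k$ sidesteps by the same implicit appeal.
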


\begin{proof}
$(1)$ We first claim that $P_p({\bar{z}}^k)=0$ for each $k\in \mathbb Z^+$.  To see this,  simply use polar coordinates to verify that
$$
\int_\Delta |{\bar{z}}^k|^{p-2} z^k\cdot h=0, \ \ \ \forall\,h\in A^p(\Delta),  
$$
so that Proposition \ref{prop:p-projection}/(2) applies.

Set $f_t(z)=\bar{z}+t{\bar{z}}^2$,  $t\in \mathbb C$.  For any $p\neq 2$,  we have
$$
|f_t(z)|^{p-2}=|z|^{p-2}\left(|1+t\bar{z}|\right)^{\frac{p-2}2}=|z|^{p-2}\left(1+(p-2)\mathrm{Re}(t\bar{z})+O(|t|^2)\right)
$$
as $t\rightarrow 0$.  This gives
\begin{eqnarray*}
\int_\Delta |f_t|^{p-2}\bar{f}_t & = & \int_\Delta |z|^{p-2}\left(1+(p-2)\mathrm{Re}(t\bar{z})+O(|t|^2)\right) \left(z+\bar{t} z^2\right)\\
& = & (p-2) \int_\Delta |z|^{p-2}z \mathrm{Re}(t\bar{z})+O(|t|^2)\\
& = & \frac{p-2}2 \int_\Delta |z|^{p-2}z \left(t\bar{z}+\bar{t}z\right)+O(|t|^2)\\
& = &  \frac{p-2}2 \int_\Delta |z|^{p}\cdot t +O(|t|^2),
 \end{eqnarray*}
 so that $\int_\Delta |f_t|^{p-2}\bar{f}_t\neq 0$ when $|t|\ll 1$.   By Proposition \ref{prop:p-projection}/(2),  we  conclude that 
 $$
 P_p(\bar{z}+t{\bar{z}}^2)\neq 0=P_p(\bar{z})+tP_p(\bar{z}^2).
 $$
 
 $(2)$ For any measurable function $\lambda$ on the interval  $[0,1]$,  we have 
 $$
 \lambda(|z|)\in L^p(\Delta) \iff \int_0^1 r |\lambda(r)|^p dr<\infty
 $$
and 
$$
\int_\Delta |\lambda(|z|)|^{p-2}\overline{\lambda(|z|)}\cdot z^k=0,\ \ \ \forall\,k\in \mathbb Z^+,
$$
by using polar coordinates.  This together with
 Proposition \ref{prop:p-projection}/(2) yield that $\lambda(|z|)\in P_p^{-1}(0)$ if and only if 
 $$
 \int_\Delta |\lambda(|z|)|^{p-2}\overline{\lambda(|z|)}=2\pi \int_0^1 r |\lambda(r)|^{p-2}{\lambda(r)}dr=0.
 $$
 Fix any positive continuous function $\eta$ on $[0,1]$ with $\int_0^1 r \eta(r)^{p-1} dr=\infty$, i.e.  $\int_\Delta \eta(|z|)^{p-1}=\infty$.  We may take a sequence of mutually disjoint annuli $\{\mathcal A_j\}$ in $\Delta$ such that 
 $$
 \int_{\mathcal A_{2j-1}} \eta(|z|)^{p-1}=\int_{\mathcal A_{2j}}\eta(|z|)^{p-1},\ \ \ \forall\,j\in \mathbb Z^+.
 $$
 Set $f_j=\left(\chi_{\mathcal A_{2j-1}}-\chi_{\mathcal A_{2j}}\right)\eta(|z|)$,  where $\chi_\cdot$ stands for the characteristic function.   Since
 $$
 \int_\Delta |f_j|^{p-2}\overline{f_j}= \int_{\mathcal A_{2j-1}} \eta(|z|)^{p-1}-\int_{\mathcal A_{2j}}\eta(|z|)^{p-1}=0,
 $$
 so $f_j\in P_p^{-1}(0)$.  Thus for any $f=\sum_{j=1}^N c_j f_j$,  we have
 $$
 \int_\Delta |f|^{p-2}\overline{f}=\sum_{j=1}^N |c_j|^{p-2}\bar{c}_j\int_\Delta |f_j|^{p-2}\overline{f_j}=0,
 $$
 so that $f\in P_p^{-1}(0)$,  i.e.,
 $
 \mathrm{span}\{f_1,f_2,\cdots\}\subset P_p^{-1}(0).
 $
 Clearly,   these functions $f_1,f_2,\cdots$ are linear independent.  
\end{proof}

\begin{problem}
When is $A^p(\Omega)$ complemented in $L^p(\Omega)$, i.e., $L^p(\Omega)=A^p(\Omega)\oplus \mathcal N$ where $\mathcal N$ is a closed linear subspace?
\end{problem}

Note that for every strongly pseudoconvex domain $\Omega\subset \mathbb C^n$ the (linear) Bergman projection $P_2$ maps $L^p(\Omega)$ continuously onto $A^p(\Omega)$ for $1<p<\infty$ (cf.  \cite{Lanzani}; see also  \cite{Zhu} for the special case of the unit ball);  thus $P_2^{-1}(0)\cap L^p(\Omega)$ is a closed linear subspace of $L^p(\Omega)$,  i.e.,    $A^p(\Omega)$ is complemented in $L^p(\Omega)$.  

\subsection{Bases in $A^p(\Omega)$}
A sequence $\{x_j\}$ is said to be a (Schauder)\/ {\it basis} for a normed linear space $X$ if,  for each $x \in X$,  there is a\/ {\it unique} sequence of scalars $\{c_j\}$ such that $x =\sum_{j=1}^\infty  c_j x_j$,  where the series converges in norm to $x$.  We say that $\{x_j\}$ a\/ {\it basic sequence} if it is a basis for its closed linear span $\overline{\mathrm{span}(\{x_j\})}$.  A basic question is

\begin{problem}
Does $A^p(\Omega)$ admit a  basis for all $1\le p<\infty$?
\end{problem}
 
 Set
$$
 \mathcal H^\alpha_{p,z}:=\left\{ f\in A^p(\Omega): \partial^{(\alpha)}f(z)=1\ \text{and}\ \partial^{(\beta)}f(z)=0,\forall\ \beta\prec \alpha  \right\}.
 $$ 
 Given two multi-indices   $\alpha=(\alpha_1,\cdots,\alpha_n)$ and $\beta=(\beta_1,\cdots,\beta_n)$,    define $\beta\prec \alpha$ $\iff$ $|\beta|<|\alpha|$ or $|\beta|=|\alpha|$ and $\beta_j=\alpha_j$ for $j<k$ while $\beta_k>\alpha_k$.

 \begin{proposition}\label{prop:basic}
There exist a subsequence $\alpha^{j_1}\prec \alpha^{j_2}\prec\cdots$ and functions $f_{j_k}\in \mathcal H^{\alpha^{j_k}}_{p,z}$,   $k=1,2,\cdots$,  such that $\{f_{j_k}\}$ forms a basic sequence.  
  \end{proposition}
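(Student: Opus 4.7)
My approach is an iterative Mazur-type construction of a basic sequence, adapted so that the vector chosen at each step lies in $\mathcal H^{\alpha^{j_k}}_{p,z}$. Recall the standard basic-sequence criterion: a sequence $\{g_k\}$ of nonzero vectors in $A^p(\Omega)$ is basic provided one can find $\varepsilon_k>0$ with $\prod_k(1+\varepsilon_k)<\infty$ such that
\[
\|y\|_p\le(1+\varepsilon_k)\|y+\lambda g_{k+1}\|_p,\qquad \forall\,y\in\mathrm{span}(g_1,\ldots,g_k),\ \lambda\in\mathbb C.
\]
I begin with any $f_{j_1}\in\mathcal H^{\alpha^{j_1}}_{p,z}$ (for example $\alpha^{j_1}=0$, $f_{j_1}=m_p(\cdot,z)$). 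Inductively, given $f_{j_1},\ldots,f_{j_k}$, set $E_k:=\mathrm{span}(f_{j_1},\ldots,f_{j_k})$ and $\varepsilon_k:=2^{-k}$, take an $(\varepsilon_k/3)$-net $\{y_i\}_{i=1}^{N_k}$ on the unit sphere of $E_k$, and use Hahn--Banach to select $y_i^\ast\in A^p(\Omega)^\ast$ with $\|y_i^\ast\|=1$ and $y_i^\ast(y_i)=1$. Everything then reduces to the following assertion, which is the main obstacle.

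\emph{Key Lemma.} For any finite family $\{y_i^\ast\}_{i=1}^N\subset A^p(\Omega)^\ast$ and any multi-index $\alpha^{(0)}$, there exist $\alpha\succ\alpha^{(0)}$ and $f\in\mathcal H^\alpha_{p,z}$ with $y_i^\ast(f)=0$ for every $i$.

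I plan to prove the Key Lemma by an annihilator/dimension count. Put $W:=\bigcap_i\ker y_i^\ast$ and $V_\alpha:=\{g\in A^p(\Omega):\partial^{(\beta)}g(z)=0\text{ for all }\beta\prec\alpha\}$. By Cauchy's estimates combined with the Bergman inequality, each evaluation $\partial^{(\beta)}(\cdot)(z)$ is a continuous functional on $A^p(\Omega)$, and testing on the monomials $(w-z)^\gamma\in A^p(\Omega)$ shows that the family $\{\partial^{(\beta)}(\cdot)(z)\}_\beta$ is linearly independent. Hence $V_\alpha$ has finite codimension $|\{\beta:\beta\prec\alpha\}|$, with $V_\alpha^\perp=\mathrm{span}\{\partial^{(\beta)}(\cdot)(z):\beta\prec\alpha\}$ and $W^\perp=\mathrm{span}(y_1^\ast,\ldots,y_N^\ast)$. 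Since both subspaces have finite codimension, the standard identity $(V_\alpha\cap W)^\perp=V_\alpha^\perp+W^\perp$ holds, and the Key Lemma is equivalent to showing that $\partial^{(\alpha)}(\cdot)(z)\notin V_\alpha^\perp+W^\perp$ for some $\alpha\succ\alpha^{(0)}$. If this were to fail for every such $\alpha$, one could write
\[
L_\alpha:=\partial^{(\alpha)}(\cdot)(z)-\sum_{\beta\prec\alpha}d_\beta^{(\alpha)}\,\partial^{(\beta)}(\cdot)(z)\in W^\perp.
\]
Choosing any $\alpha^{(0)}\prec\alpha_1\prec\cdots\prec\alpha_{N+1}$, the leading term $\partial^{(\alpha_j)}(\cdot)(z)$ of $L_{\alpha_j}$ does not appear in any $L_{\alpha_i}$ with $i<j$; so a top-down coefficient elimination in $\sum_j\lambda_j L_{\alpha_j}=0$, using linear independence of the $\partial^{(\beta)}(\cdot)(z)$, forces $\lambda_{N+1}=\lambda_N=\cdots=\lambda_1=0$. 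This would produce $N+1$ linearly independent vectors in the at-most-$N$-dimensional $W^\perp$, a contradiction.

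Granted the Key Lemma, I set $\alpha^{j_{k+1}}\succ\alpha^{j_k}$ and pick $f_{j_{k+1}}\in\mathcal H^{\alpha^{j_{k+1}}}_{p,z}$ with $y_i^\ast(f_{j_{k+1}})=0$. For a unit $y\in E_k$, choose $y_i$ in the net with $\|y-y_i\|_p<\varepsilon_k/3$; since $y_i^\ast(y+\lambda f_{j_{k+1}})=y_i^\ast(y)$, this gives $\|y+\lambda f_{j_{k+1}}\|_p\ge|y_i^\ast(y)|\ge 1-\varepsilon_k/3$, and hence the Mazur inequality $\|y\|_p\le(1-\varepsilon_k/3)^{-1}\|y+\lambda f_{j_{k+1}}\|_p\le(1+\varepsilon_k)\|y+\lambda f_{j_{k+1}}\|_p$. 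The inequality forces $f_{j_{k+1}}\notin E_k$ (since $f_{j_{k+1}}\neq 0$ because $\partial^{(\alpha^{j_{k+1}})}f_{j_{k+1}}(z)=1$), so linear independence of $\{f_{j_l}\}$ is preserved. Iteration, together with $\prod_k(1+2^{-k})<\infty$ and the basic-sequence criterion, completes the construction. The principal difficulty is the Key Lemma, whose proof rests on the explicit description of $V_\alpha^\perp$ and on the triangular structure of $\{L_\alpha\}$ with respect to the $\prec$-order.
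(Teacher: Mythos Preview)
Your proof is correct and follows essentially the same Mazur--Banach construction as the paper: an $\varepsilon$-net on the unit sphere of the current span, Hahn--Banach functionals, and then a vector in the intersection of their kernels with the finite-codimensional subspace $V_\alpha=\mathcal L^\alpha_{p,z}$.

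The one noteworthy difference is in the Key Lemma. You insist that the new vector lie in $\mathcal H^{\alpha}_{p,z}$ for some specified $\alpha$, which forces you to show $\partial^{(\alpha)}(\cdot)(z)\notin V_\alpha^\perp+W^\perp$ and leads to the annihilator computation and the triangular independence argument for the $L_{\alpha_j}$. The paper avoids this entirely: it only asks for a \emph{nonzero} $f$ in $\bigcap_k\ker F_k^\ast\cap \mathcal L^\alpha_{p,z}$, which is immediate since this is a finite-codimensional subspace of the infinite-dimensional space $A^p(\Omega)$. The index $\alpha^{j_{k+1}}$ is then read off \emph{a posteriori} as the first nonzero Taylor coefficient of $f$ at $z$ (after which one normalizes to land in $\mathcal H^{\alpha^{j_{k+1}}}_{p,z}$; the Mazur inequality is invariant under scaling of $f$). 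Your route works, but the paper's is shorter: it trades your duality/linear-algebra step for a one-line dimension count.
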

 
The proof of this proposition is a mimic of Mazur's construction of basic sequences in an infinite-dimensional Banach space  (cf.  \cite{Carothers},  p.  34--35).  We first show the following
 
 \begin{lemma}\label{lm:basic}
 Let $Y$ be a finite-dimensional subspace of $A^p(\Omega)$,  $1\le p<\infty$.  Then,  given $\varepsilon>0$ and multi-index $\alpha$,  there exists a nonzero $f\in A^p(\Omega)$ with $\partial^{(\beta)}f(z)=0$ for all $\beta\prec \alpha$,  such that
 $$
 \|g\|_p\le (1+\varepsilon)\|g+cf\|_p,\ \ \ \forall\,g\in Y,\,c\in \mathbb C.
 $$  
 \end{lemma}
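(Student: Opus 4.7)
The plan is to adapt Mazur's classical construction of basic sequences to our setting, where the only new twist is the interpolation condition $\partial^{(\beta)}f(z)=0$ for all $\beta\prec\alpha$. I would proceed as follows.

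\textbf{Step 1 (Compactness and a finite net).} Since $Y$ is finite-dimensional, its unit sphere $S_Y:=\{g\in Y:\|g\|_p=1\}$ is norm-compact. Fix $\delta:=\varepsilon/(1+\varepsilon)$ and choose a finite $\delta$-net $\{g_1,\dots,g_N\}\subset S_Y$: for every $g\in S_Y$ there is some $i$ with $\|g-g_i\|_p<\delta$.

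\textbf{Step 2 (Norming functionals).} For each $i$, since $A^p(\Omega)$ is a Banach space (Proposition \ref{prop:Banach}), the Hahn--Banach theorem produces a continuous linear functional $\phi_i\in A^p(\Omega)^*$ with $\|\phi_i\|=1$ and $\phi_i(g_i)=1$.

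\textbf{Step 3 (Finding $f$).} Let $\mathcal B_\alpha:=\{\beta:\beta\prec\alpha\}$, which is a finite set. By the Bergman inequality combined with Cauchy's estimates, each evaluation-of-derivative map $f\mapsto\partial^{(\beta)}f(z)$ is a continuous linear functional on $A^p(\Omega)$. Consider the intersection
\[
V:=\bigcap_{i=1}^N\ker\phi_i\ \cap\ \bigcap_{\beta\in\mathcal B_\alpha}\ker\bigl(f\mapsto\partial^{(\beta)}f(z)\bigr).
\]
This is the intersection of finitely many kernels of continuous linear functionals, hence a closed subspace of finite codimension. Since $A^p(\Omega)$ is infinite-dimensional for any bounded domain $\Omega$ (e.g.\ the monomials $z^k$ are linearly independent elements of $A^p(\Omega)$), we conclude $V\neq\{0\}$, so we may pick any nonzero $f\in V$.

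\textbf{Step 4 (The estimate).} Let $g\in Y\setminus\{0\}$ and $c\in\mathbb C$; by homogeneity we may assume $\|g\|_p=1$. Choose $i$ with $\|g-g_i\|_p<\delta$. Since $\phi_i(f)=0$ and $\phi_i(g_i)=1$, the reverse triangle inequality gives
\[
\|g+cf\|_p\ \ge\ \|g_i+cf\|_p-\|g-g_i\|_p\ \ge\ |\phi_i(g_i+cf)|-\delta\ =\ 1-\delta\ =\ \frac1{1+\varepsilon},
\]
that is $\|g\|_p=1\le(1+\varepsilon)\|g+cf\|_p$, as required. The vanishing conditions $\partial^{(\beta)}f(z)=0$ for $\beta\prec\alpha$ hold by construction.

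\textbf{Expected obstacle.} The only non-cosmetic point is ensuring $V\neq\{0\}$, i.e.\ that finitely many linear conditions (the norming functionals and the finite-order jet at $z$) do not exhaust $A^p(\Omega)$. This reduces to infinite-dimensionality of $A^p(\Omega)$, which is immediate from linear independence of the monomials; no serious analysis is needed. The rest is the standard Mazur argument, with the only added observation being that the jet-vanishing constraints are merely finitely many more linear conditions of the same nature, so they fit seamlessly into the codimension count.
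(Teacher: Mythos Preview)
Your proof is correct and follows essentially the same Mazur-type argument as the paper: a finite net on $S_Y$, Hahn--Banach norming functionals, and a nonzero $f$ in the finite-codimensional intersection of their kernels with the jet-vanishing subspace. The only cosmetic difference is your choice of net parameter $\delta=\varepsilon/(1+\varepsilon)$ versus the paper's $\varepsilon/2$, which makes the final inequality exact rather than requiring $1-\varepsilon/2\ge 1/(1+\varepsilon)$.
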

 
 \begin{proof}
Since $Y$ is finite-dimensional,    so the unit sphere $S_Y$ of $Y$  is compact.    Thus we can take a finite $\varepsilon/2-$net $g_1,\cdots,g_m$ for $S_Y$.  Thanks to the Hahn-Banach theorem,  there exists for every $1\le k\le m$ a continuous linear functional $F_k^\ast$ on $A^p(\Omega)$ such that $\|F^\ast_k\|=1$ and $F^\ast_k(g_k)=1$.   Note that the linear subspace 
 $$
 \mathcal L^\alpha_{p,z}:=\left\{f\in A^p(\Omega): \partial^{(\beta)}f(z)=0,\forall\,\beta\prec \alpha\right\}
 $$
 is of finite-codimension in $A^p(\Omega)$.  Since $A^p(\Omega)$ is infinite-dimensional,   so the finite-codimensional subspace
 $$
 \bigcap_{k=1}^m\mathrm{Ker}F^\ast_k \,\cap \mathcal L^\alpha_{p,z}
 $$
 must contain a nonzero element $f$.  
 
 Given $g\in Y$ with $\|g\|_p=1$,   take $k$ such that $\|g-g_k\|_p<\varepsilon/2$.  We have
 \begin{eqnarray*}
 \|g+cf\|_p & \ge & \|g_k+cf\|_p-\varepsilon/2\\
 & \ge & F^\ast_k(g_k+cf)-\varepsilon/2\\
 & = & 1-\varepsilon/2\ge \frac1{1+\varepsilon},
 \end{eqnarray*}
 i.e.,  $\|g\|_p\le (1+\varepsilon)\|g+cf\|_p$.  Since the inequality is homogeneous,  we are done. 
 \end{proof}
 
 We also need the following useful criterion due to Banach.
 
 \begin{theorem}[cf.  \cite{Carothers},  Theorem 3.2]
 A sequence $\{x_j\}$ of nonzero vectors in a Banach space $X$ is a basic sequence if and only if there exists a constant $C>0$ such that
 $$
 \|c_1x_1+\cdots+c_l x_l\|\le C\|c_1x_1+\cdots + c_m x_m\|
 $$
 for all scalars $c_j$ and all $l<m$.
 \end{theorem}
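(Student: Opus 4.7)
The plan is to prove both directions by working with the partial-sum projections attached to the sequence. Throughout, let $Y:=\overline{\mathrm{span}\{x_j\}}$, and for each $n\in\mathbb Z^+$ define the formal truncation $P_n\bigl(\sum_{j=1}^m c_j x_j\bigr):=\sum_{j=1}^{\min(n,m)} c_j x_j$, which is a well-defined linear map on the dense subspace $E:=\mathrm{span}\{x_j\}\subset Y$ (well-defined because $x_1,x_2,\ldots$ are linearly independent; this independence follows in the $(\Leftarrow)$ direction from applying the assumed inequality to finite sums, and in the $(\Rightarrow)$ direction from uniqueness of expansions).

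For the direction $(\Leftarrow)$, the hypothesis is precisely the statement that $\|P_n\|_{E\to E}\le C$ uniformly in $n$. I would extend each $P_n$ by continuity to a bounded operator on $Y$ with the same bound. Uniqueness of expansions then follows by applying $P_n-P_{n-1}$: if $\sum_j c_j x_j=0$ in $Y$, then $c_n x_n=0$ and hence $c_n=0$ for all $n$. For existence, given $x\in Y$ and $\varepsilon>0$, pick $y\in E$ with $\|x-y\|<\varepsilon/(2C+2)$; since $P_n y=y$ for all $n$ large, the uniform bound $\|P_n\|\le C$ gives $\|P_n x-x\|\le \|P_n(x-y)\|+\|y-x\|\le(C+1)\|x-y\|<\varepsilon$. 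Thus $P_n x\to x$ in $Y$, and writing $P_n x=\sum_{j\le n}c_j^{(n)} x_j$, the uniqueness argument (applied to $P_n x-P_{n-1}x$) shows $c_j^{(n)}$ is independent of $n\ge j$, producing the desired expansion $x=\sum_j c_j x_j$.

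For the direction $(\Rightarrow)$, each $x\in Y$ has a unique expansion $x=\sum_j c_j(x)x_j$, so the truncations $P_n x=\sum_{j\le n}c_j(x)x_j$ are unambiguously defined linear maps on $Y$ with $P_n x\to x$. The task is to prove $\sup_n\|P_n\|<\infty$, and I would do this by the standard open-mapping trick: equip $Y$ with the auxiliary norm
\[
\vertiii{x}:=\sup_n\|P_n x\|,
\]
which is finite for each $x$ because $P_n x\to x$, and clearly satisfies $\vertiii{x}\ge\|x\|$. Then I would verify that $(Y,\vertiii{\cdot})$ is a Banach space: given a $\vertiii{\cdot}$-Cauchy sequence $\{y_k\}$, each $\{P_n y_k\}_k$ is $\|\cdot\|$-Cauchy, so $P_n y_k\to z_n$ in $\|\cdot\|$, and $y_k\to y:=z_\infty$ in $\|\cdot\|$; a short argument using continuity of the coefficient functionals on the finite-dimensional space $P_n Y$ identifies $z_n$ with $P_n y$, and a Fatou-type estimate shows $\vertiii{y_k-y}\to 0$. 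Since the identity $(Y,\vertiii{\cdot})\to(Y,\|\cdot\|)$ is continuous and bijective between Banach spaces, the open mapping theorem yields $\vertiii{x}\le C\|x\|$ for some constant $C$, which is exactly the asserted inequality.

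The only genuinely delicate step is the completeness of $(Y,\vertiii{\cdot})$, and within that the identification $z_n=P_n y$: one must verify that the coefficients of $y_k$ in the basis $\{x_j\}$ converge, which requires using the fact that on any fixed finite span $\mathrm{span}\{x_1,\ldots,x_N\}$ the coordinate functionals are automatically continuous, together with $\|P_N y_k-P_N y_{k'}\|\le \vertiii{y_k-y_{k'}}$. Everything else is bookkeeping with the triangle inequality and the standing bound $\|P_n\|_{\vertiii{\cdot}\to\|\cdot\|}\le 1$.
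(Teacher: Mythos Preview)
The paper does not prove this theorem; it is stated with a citation to Carothers (Theorem 3.2) and used as a black box in the proof of Proposition \ref{prop:basic}. So there is no ``paper's own proof'' to compare against.

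That said, your argument is the standard one and is correct. The $(\Leftarrow)$ direction is clean. In the $(\Rightarrow)$ direction your outline via the auxiliary norm $\vertiii{x}=\sup_n\|P_n x\|$ and the open mapping theorem is the textbook route; the completeness verification is indeed the only substantive step. One small caution about your sketch of that step: the phrase ``continuity of the coefficient functionals on the finite-dimensional space $P_n Y$'' is doing more work than it looks. What you actually need is that the limit $\lim_k c_j(y_k)$ exists and is independent of which $P_n$ (with $n\ge j$) you compute it through; this gives stabilized coefficients $c_j$ with $z_n=\sum_{j\le n}c_j x_j$. You then still have to show $z_n\to y$ in $\|\cdot\|$ (a $3\varepsilon$ argument using the $\vertiii{\cdot}$-Cauchy condition and $P_n y_K\to y_K$ for a fixed large $K$), after which $y=\sum_j c_j x_j$ and hence $P_n y=z_n$ by uniqueness of expansions. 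You cannot directly invoke continuity of $e_j^*$ on all of $Y$, since that is equivalent to the boundedness of $P_j-P_{j-1}$ you are trying to establish. With that clarification, the proof goes through as you describe.
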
 
 
 \begin{proof}[Proof of Proposition \ref{prop:basic}]
 Given $\varepsilon>0$,   take a sequence of positive numbers $\{\varepsilon_k\}$ such that 
 $\prod_{k=1}^\infty (1+\varepsilon_k) \le 1+\varepsilon$.   First,  choose $\alpha^{j_1}=0$ and
 $f_{j_1}=m_p(\cdot,z)$.  By Lemma \ref{lm:basic},  there exist a multi-index $\alpha^{j_2}$ with $\alpha^{j_1}\prec \alpha^{j_2}$ and a nonzero function $f_{j_2}\in \mathcal H^{\alpha^{j_2}}_{p,z}$ such that 
 $$
 \|g\|_p\le (1+\varepsilon_1)\|g+cf_{j_2}\|_p,\ \ \ \forall\,g\in \mathrm{span}\{f_{j_1}\},\,c\in \mathbb C.
 $$
 Next,  choose $\alpha^{j_3}$ with $\alpha^{j_2}\prec \alpha^{j_3}$ and a nonzero function $f_{j_3}\in \mathcal H^{\alpha^{j_3}}_{p,z}$ such that 
 $$
 \|g\|_p\le (1+\varepsilon_2)\|g+cf_{j_3}\|_p,\ \ \ \forall\,g\in \mathrm{span}\{f_{j_1},f_{j_2}\},\,c\in \mathbb C.
 $$
 Continue this procedure,  we get a sequence $\{f_{j_k}\}$ with 
 $$
 \|c_1f_{j_1}+\cdots+c_l f_{j_l} \|_p \le C \|c_1f_{j_1}+\cdots + c_m f_{j_m}\|_p
 $$
 for all scalars $c_j$ and all $l<m$,  where $C:= \prod_{k=1}^\infty (1+\varepsilon_k) \le 1+\varepsilon$.  Thus $\{f_{j_k}\}$ is a basic sequence in view of Banach's criterion.
 \end{proof}
 
 \begin{problem}
 Does there exist a sequence $\{f_\alpha\}$ with $f_\alpha\in \mathcal H^\alpha_{p,z}$ such that it gives a basic sequence,  or even a basis?
 \end{problem}
 
 \begin{remark}
 The following high-order minimizing problem
$$
m_p^{(\alpha)}(z):=\inf\left\{\|f\|_p: f\in \mathcal H^\alpha_{p,z} \right\}
$$
also admits a unique  minimizer $m_p^{(\alpha)}(\cdot,z)$ for $1\le p<\infty$.  An elegant observation due to Bergman states that 
$\{m_2^{(\alpha)}(\cdot,z)/\|m_2^{(\alpha)}(\cdot,z)\|_2\}_\alpha$ is a complete orthonormal basis of $A^2(\Omega)$. 
\end{remark}
 
 \section{Appendix}

 {\bf Proof of (\ref{eq:eleIneq_1}).} A straightforward calculation shows
\begin{eqnarray*}
 (|b|^{p-2}+|a|^{p-2}) |b-a|^2 & = & |b|^p + |a|^p + |b|^{p-2}|a|^2+|a|^{p-2}|b|^2\\
&& -2{\rm Re}( |b|^{p-2}\bar{b}a)-2{\rm Re}( |a|^{p-2}\bar{a}b)\\
 (|b|^{p-2}-|a|^{p-2})(|b|^2-|a|^2) & = & |b|^p + |a|^p - |b|^{p-2}|a|^2 - |a|^{p-2}|b|^2.
\end{eqnarray*}
Summing up, we obtain the following basic equality:
\begin{eqnarray}\label{eq:identity}
&& (|b|^{p-2}+|a|^{p-2}) |b-a|^2 + (|b|^{p-2}-|a|^{p-2})(|b|^2-|a|^2)\nonumber\\
& = & 2|b|^p+2|a|^p-2{\rm Re}( |b|^{p-2}\bar{b}a)-2{\rm Re} (|a|^{p-2}\bar{a}b)\nonumber\\
& = & 2 {\rm Re}\left\{(|b|^{p-2}\bar{b}-|a|^{p-2}\bar{a})(b-a)\right\}.
\end{eqnarray}

For every $p\ge 2$ we have
$$
|b-a|^{p-2}\le (|a|+|b|)^{p-2}\le 2^{p-2} \max\{|a|^{p-2},|b|^{p-2}\}\le 2^{p-2}(|a|^{p-2}+|b|^{p-2}).
$$
This combined with (\ref{eq:identity}) gives (\ref{eq:eleIneq_1}).

{\bf Proof of (\ref{eq:eleIneq_2}).}
Let $1<p\le 2$.   The Newton-Lebnitz formula yields
\begin{eqnarray*}
|b|^{p-2}\bar{b}-|a|^{p-2}\bar{a} & = & \int_0^1 \frac{d}{dt} \left\{|a+t(b-a)|^{p-2}\cdot\overline{a+t(b-a)}\right\} dt\\
& = &(\bar{b}-\bar{a}) \int_0^1 |a+t(b-a)|^{p-2} dt\\
&& + (p-2) \int_0^1  |a+t(b-a)|^{p-4} {\rm Re}\left\{t|b-a|^2+a(\bar{b}-\bar{a})\right\}\overline{a+t(b-a)}dt,
\end{eqnarray*}
so that
\begin{eqnarray}\label{eq:Basic_Eq}
&& (|b|^{p-2}\bar{b}-|a|^{p-2}\bar{a})(b-a)\nonumber\\
& = & |b-a|^2 \int_0^1 |a+t(b-a)|^{p-2}\nonumber\\
&& +  (p-2)  \int_0^1  |a+t(b-a)|^{p-4} {\rm Re}\left\{t|b-a|^2+a(\bar{b}-\bar{a})\right\}\overline{t|b-a|^2+a(\bar{b}-\bar{a})}dt.
\end{eqnarray}
It follows that
\begin{eqnarray}\label{eq:Basic_Ineq}
&& {\rm Re} \left\{ (|b|^{p-2}\bar{b}-|a|^{p-2}\bar{a})(b-a)\right\}\nonumber\\
& = & |b-a|^2 \int_0^1 |a+t(b-a)|^{p-2}\nonumber\\
&& +  (p-2)  \int_0^1  |a+t(b-a)|^{p-4} \left|{\rm Re}\left\{t|b-a|^2+a(\bar{b}-\bar{a})\right\}\right|^2dt\nonumber\\
& = & (p-1) |b-a|^2 \int_0^1 |a+t(b-a)|^{p-2} dt \nonumber\\
&& +(2-p) \left|{\rm Im}\left\{a \bar{b} \right\}\right|^2 \int_0^1  |a+t(b-a)|^{p-4} dt
\end{eqnarray}
since
\begin{equation}\label{eq:eleIneq_0}
 \left({\rm Re}\{\bar{a}(b-a)\}+t|b-a|^2\right)^2 +\left({\rm Im}\{\bar{a}b\}\right)^2 =  |b-a|^2|a+t(b-a)|^2.
\end{equation}
\eqref{eq:Basic_Ineq} implies \eqref{eq:eleIneq_2} since
$$
|a+t(b-a)|=|(1-t)a+tb|\le |a|+|b|,\ \ \ 0\le t\le 1.
$$

{\bf Proof of (\ref{eq:eleIneq_4}).}
Set
\begin{eqnarray*}
\eta(t) &:=& |a+t(b-a)|^2=|a|^2+2t{\rm Re}\{\bar{a}(b-a)\}+t^2|b-a|^2\\
\kappa(t) &:=& \eta(t)^{p/2}=|a+t(b-a)|^p.
\end{eqnarray*}
A straightforward calculation yields
$$
\kappa'(t)=\frac{p}2\eta(t)^{\frac{p}2-1}\eta'(t)=p |a+t(b-a)|^{p-2} \left({\rm Re}\{\bar{a}(b-a)\}+t|b-a|^2\right)
$$
and
\begin{eqnarray*}
\kappa''(t) & = & \frac{p}2 \eta(t)^{\frac{p}2-1}\eta''(t)+\frac{p}2\left(\frac{p}2-1\right)\eta(t)^{\frac{p}2-2}\eta'(t)^2\\
& = & p|b-a|^2 |a+t(b-a)|^{p-2} \\
&& +p(p-2)|a+t(b-a)|^{p-4}\left({\rm Re}\{\bar{a}(b-a)\}+t|b-a|^2\right)^2.
\end{eqnarray*}
This combined with \eqref{eq:eleIneq_0} gives
\begin{eqnarray}\label{eq:ident_5}
\kappa''(t) & = & p|a+t(b-a)|^{p-4}\left({\rm Im}\{\bar{a}b\}\right)^2\nonumber\\
&& + p(p-1) |a+t(b-a)|^{p-4}\left({\rm Re}\{\bar{a}(b-a)\}+t|b-a|^2\right)^2.
\end{eqnarray}
Thus we have
\begin{eqnarray}\label{eq:KeyIneq_5}
\kappa''(t) & \ge & p\min\{1,p-1\}  |a+t(b-a)|^{p-4}\nonumber\\
&& \cdot \left\{\left({\rm Im}\{\bar{a}b\}\right)^2+\left({\rm Re}\{\bar{a}(b-a)\}+t|b-a|^2\right)^2 \right\}\nonumber\\
& = &  p\min\{1,p-1\}|b-a|^2 |a+t(b-a)|^{p-2}.
\end{eqnarray}
On the other hand, itegral by parts gives
$$
\kappa(1)=\kappa(0)+\kappa'(0)+ \int_0^1 (1-t)\kappa''(t)dt,
$$
that is,
\begin{equation}\label{eq:KeyIdent}
|b|^p  = |a|^p + p{\rm Re}\{|a|^{p-2}\bar{a}(b-a)\} +  \int_0^1 (1-t)\kappa''(t)dt.
\end{equation}
This combined with (\ref{eq:KeyIneq_5}) gives
\begin{eqnarray}\label{eq:Key_Ineq5}
|b|^p & \ge & |a|^p + p{\rm Re}\{|a|^{p-2}\bar{a}(b-a)\}\nonumber\\
&& +  p\min\{1,p-1\}|b-a|^2\int_0^1 (1-t) |a+t(b-a)|^{p-2}dt.
\end{eqnarray}

Since $|a+t(b-a)|\le |a|+|b|$,  we see that (\ref{eq:eleIneq_4}) follows from (\ref{eq:Key_Ineq5}).

{\bf Proof of (\ref{eq:eleIneq_3}).}
Suppose $p>2$. Note that
$$
I(t):=\int_0^1 (1-t) |a+t(b-a)|^{p-2}\ge \int_0^1 (1-t)||a|-t|b-a||^{p-2}.
$$
If $|a|\ge |b-a|/2$, then
$$
I(t)\ge |b-a|^{p-2} \int_0^{1/4}(1-t)(1/2-t)^{p-2}\ge \frac7{4^{p+3}} |b-a|^{p-2}.
$$
If $|a|\le |b-a|/2$, then
$$
I(t)\ge |b-a|^{p-2} \int_{3/4}^1 (1-t)(t-1/2)^{p-2}dt \ge \frac1{4^{p+3}} |b-a|^{p-2}.
$$
These combined with (\ref{eq:Key_Ineq5}) gives (\ref{eq:eleIneq_3}).

{\bf Proof of (\ref{eq:eleIneq_5}).} This follows directly from (\ref{eq:ident_5}) and (\ref{eq:KeyIdent}).

\bigskip

{\bf Acknowledgements.} The authors would like to thank  Yuanpu Xiong for a number of valuable discussions and the referee for valuable comments.

\end{document}